\def\mf#1{\mathfrak{#1}}
\def\mc#1{\mathcal{#1}}
\def\mb#1{\mathbb{#1}}
\def\tx#1{\textrm{#1}}
\def\R{\mathbb{R}}
\def\C{\mathbb{C}}
\def\Q{\mathbb{Q}}
\def\A{\mathbb{A}}
\def\Z{\mathbb{Z}}
\def\N{\mathbb{N}}
\def\lmod{\setminus}
\def\ol#1{\overline{#1}}
\def\ul#1{\underline{#1}}
\def\hat{\widehat}
\def\rw{\rightarrow}
\def\lw{\leftarrow}
\def\lrw{\longrightarrow}
\def\hrw{\hookrightarrow}
\def\lw{\leftarrow}
\def\sm{\smallsetminus}
\def\<{\langle}
\def\>{\rangle}
\newenvironment{mytitle}
{\begin{center}\large\sc}
{\end{center}}
\newtheorem{thm}{Theorem}[subsection]
\newtheorem{lem}[thm]{Lemma}
\newtheorem{pro}[thm]{Proposition}
\newtheorem{cor}[thm]{Corollary}
\newtheorem{dfn}[thm]{Definition}
\newtheorem{fct}[thm]{Fact}
\newtheorem{cnds}[thm]{Conditions}
\numberwithin{equation}{section}
\newlength{\sumcorr}
\def\ssum#1{\setlength{\sumcorr}{(\widthof{$\displaystyle\sum_{#1}$}-\widthof{$\displaystyle\sum$})/2} \hspace{-\sumcorr}\sum_{#1}\hspace{-\sumcorr} }
\begin{document}

\begin{mytitle} Global rigid inner forms and multiplicities of discrete automorphic representations \end{mytitle}
\begin{center} Tasho Kaletha \end{center}

\begin{abstract}
We study the cohomology of certain Galois gerbes over number fields. This cohomology provides a bridge between refined local endoscopy, as introduced in \cite{KalRI}, and classical global endoscopy. As particular applications, we express the canonical adelic transfer factor that governs the stabilization of the Arthur-Selberg trace formula as a product of normalized local transfer factors, we give an explicit constriction of the pairing between an adelic $L$-packet and the corresponding $S$-group (based on the conjectural pairings in the local setting) that is the essential ingredient in the description of the discrete automorphic spectrum of a reductive group, and we give a proof of some expectations of Arthur.
\end{abstract}
{\let\thefootnote\relax\footnotetext{This research is supported in part by NSF grant DMS-1161489.}}

\tableofcontents

\section{Introduction}

Let $F$ be a number field and let $G$ be a connected reductive group defined  over $F$. A central question in the theory of automorphic forms is the  decomposition of the right regular representation of $G(\A)$ on the Hilbert space $L^2_\tx{disc}(Z(\A)G(F)\lmod G(\A))$ or slight variants of it. Work of Labesse-Langlands \cite{LL79}, Langlands \cite{Lan83}, and Kottwitz \cite{Kot84}, on the Arthur-Selberg trace formula and its stabilization has provided a conjectural answer to this question. It is believed that there exists a group $L_F$ having the Weil group $W_F$ of $F$ as a quotient. Admissible tempered discrete homomorphisms $\phi$ from $L_F$ into the $L$-group of $G$ correspond to $L$-packets of tempered representations of $G(\A)$ and each tempered discrete automorphic representation belongs to one of these $L$-packets. However, not all representations in a given $L$-packet $\Pi_\phi$ are automorphic. It is expected that there exists a complex-valued pairing $\<-,-\>$ between a finite group $\mc{S}_\phi$ (closely related to the centralizer in $\hat G$ of the image of $\phi$) and the packet $\Pi_\phi$ corresponding to $\phi$, which realizes each $\pi \in \Pi_\phi$ as (the character of) some representation of the group $\mc{S}_\phi$. The number
\[ m(\phi,\pi) = |\mc{S}_\phi|^{-1}\sum_{x \in \mc{S}_\phi} \<x,\pi\>  \]
is then a non-negative integer and is expected to give the contribution of the pair $(\pi,\phi)$ to the $G(\A)$-representation $L^2_\tx{disc}(Z(\A)G(F)\lmod G(\A))$. In other words, the multiplicity of $\pi$ in the discrete spectrum is expected to be the sum of the numbers $m(\phi,\pi)$ as $\phi$ runs over all equivalence classes of parameters with $\pi \in \Pi_\phi$. Arthur \cite{ArtUARC,ArtUARGM} has extended this conjecture to encompass discrete automorphic representations that are non-tempered. We refer the reader to \cite{BR94} for more details.

One of the main goals of this paper is to give an explicit construction of the pairing $\<-,-\>$ based on the local conjectures formulated in \cite{KalRI}. Such an explicit construction was previously known only under the assumption that $G$ is quasi-split. In that case, the pairing  $\<-,-\>$ can be given as a product over all places of $F$ of (again conjectural) local pairings $\<-,-\>_v$ between the local analogs of $\mc{S}_\phi$ and $\Pi_\phi$. Our contribution here is to remove the assumption that $G$ is quasi-split. The main difficulty that arises once this assumption is dropped is that the local pairings $\<-,-\>_v$ stop being well-defined, even conjecturally. This is a reflection of the fact that there is no canonical normalization of the Langlands-Shelstad endoscopic transfer factor. Since the endoscopic character identities tie the values of the local pairing with the values of the transfer factor, we cannot expect to have a well-defined local pairing without having a normalization of the transfer factor. The problem runs deeper, however: Examples as basic as the case of inner forms of $\tx{SL}_2$ show that the local analog of $\mc{S}_\phi$ doesn't afford any pairing with the local analog of $\Pi_\phi$ that realizes elements of $\Pi_\phi$ as characters of $\mc{S}_\phi$.

One can attempt to deal with these  difficulties in an ad-hoc way. For example, one could fix an arbitrary normalization of the transfer factor. This is certainly sufficient for the purposes of stabilizing the geometric side of the trace formula.  However, it is insufficient for the purposes of interpreting the spectral side of the stabilized trace formula, and in particular for the construction of the global pairing. For example, it doesn't resolve the problem that the local $\mc{S}_\phi$-group doesn't afford a pairing with the local $L$-packet. One could also introduce an ad-hoc modification of the local $\mc{S}_\phi$-group, but then it is not clear what normalizations of the transfer factor correspond to pairings on such a modification.

In this paper, we overcome these difficulties by developing a bridge between the refined local endoscopy introduced in \cite[\S5]{KalRI} and classical global endoscopy. The results of \cite[\S5]{KalRI} resolve the local problems listed above. Namely, based on refinements of the notions of inner twist and endoscopic datum, they provide a natural normalization of the Langlands-Shelstad transfer factor and a modification of the classical local $\mc{S}_\phi$-group together with a precise conjectural description of the internal structure of local $L$-packets. The latter takes the form of a canonically normalized perfect pairing between the local $L$-packet and the modified $\mc{S}_\phi$-group. Using these objects, we obtain in the present paper the following results.

\begin{enumerate}
\item We prove that the product of the normalized transfer factors of \cite[\S5.3]{KalRI} is equal to the canonical adelic transfer factor involved in the stabilization of the trace formula (Proposition \ref{pro:tfprod}, resp. Equation \eqref{eq:tfprod1}).
\item We give an explicit formula for the global pairing $\<-,-\>$ in terms of the conjectural local pairings introduced in \cite[\S5]{KalRI}. Moreover, we prove that the global pairing $\<-,-\>$ is independent of the auxiliary cohomology classes involved in its construction and is thus canonically associated to the group $G$ (Proposition \ref{pro:pairprod}, resp. Equation \eqref{eq:pairprod1}).
\item We prove an expectation of Arthur \cite[Hypothesis 9.5.1]{Art13} about the existence of globally coherent collections of local mediating functions (Subsection \ref{sub:arthur}).
\end{enumerate}

Let us recall that the normalized local transfer factor and the conjectural local pairings depend on two refinements of the local endoscopic set-up: A rigidification of the local inner twist datum \cite[\S5.1]{KalRI} and a refinement of the local endoscopic datum \cite[\S5.3]{KalRI}. If we change either of these refinements, the normalized local transfer factors and local pairings also change. An obvious question is then, whether it is necessary to introduce the global analogs of these refinements into the global theory of endoscopy. The answer to this question is: no. The global theory of endoscopy needs no refinement. In particular, the results of the present paper fit seamlessly with the established stabilization of the Arthur-Selberg trace formula.

However, we do need a bridge between refined local endoscopy and usual global endoscopy. The basis of refined local endoscopy was the cohomology functor $H^1(u \rw W)$ introduced in \cite[\S3]{KalRI}. In order to relate this object to the global setting it is necessary to introduce certain Galois gerbes over number fields and study their cohomology. There are localization maps between these global cohomology groups and the local cohomology groups $H^1(u \rw W)$. These localization maps are the foundation of the bridge between refined local and global endoscopy.

The roles of the local and global cohomology groups are thus quite different. The local cohomology groups influence the normalizations in local endoscopy and must therefore enter into the statement of the local theorems and conjectures. On the other hand, the global groups serve to produce coherent collections of local objects, but the global objects obtained from these coherent collections are independent of the global cohomology classes used. This means that the global cohomology groups are much less present in the statements of global theorems and conjectures, but they are indispensable in their proofs.

Besides establishing a clear conjectural picture for general reductive groups, these results provide an important step towards proving these conjectures. This is due to the fact that the approach developed by Arthur \cite{ArtUARC,ArtUARGM} to prove both the local Langlands correspondence and the description of the discrete automorphic spectrum using the stabilization of the trace formula relies on the interplay between the local and the global conjectures. This approach was successfully carried out for quasi-split symplectic and orthogonal groups in \cite{Art13}, but its application to non-quasi-split groups was impeded by the lack of proper normalizations of the local objects, both transfer factors and spectral pairings. This paper, in conjunction with \cite{KalRI}, removes this obstruction.

Building on our results, Ta\"ibi has recently treated some non-quasi-split symplectic and orthogonal groups \cite{Taib}. Another example of Arthur's approach being carried out in the setting of non-quasi-split groups was given in \cite{KMSW}, where inner forms of unitary groups were treated. The results of \cite{KMSW} were obtained prior to the present work and use Kottwitz's theory of isocrystals with additional structure \cite{Kot85}, \cite{Kot97}, \cite{KotBG} to refine the local endoscopic objects instead. That theory works very well for reductive groups that have connected centers and satisfy the Hasse principle. When the center is not connected, however, not all inner forms can be obtained via isocrystals. Moreover, when the Hasse principle fails, we do not see a way to use the global theory of isocrystals to build a bridge between local and global endoscopy. The constructions of the present paper remove these conditions and work uniformly for all connected reductive groups. Moreover, our statement of the local conjecture is very closely related to the work of Adams-Barbasch-Vogan \cite{ABV92} on real groups as well as to Arthur's statement given in \cite{Art06}. The relationship with \cite{ABV92} was partially explored in \cite[\S5.2]{KalRI} and the relationship with \cite{Art06} is discussed at the end of the current paper.

We will now explain in some detail the construction of the global pairing. While in the body of the paper we treat general connected reductive groups, for the purposes of the introduction we assume that $G$ is semi-simple and simply connected. This case in easier to describe, because the notation is simpler and because we do not need to address the possible failure of the Hasse principle. At the same time, this case is in some sense the hardest from the point of view of local inner forms, so it will serve as a good illustration.

First let us assume in addition that $G$ is quasi-split and briefly recall the situation in this case. One fixes a Whittaker datum for $G$, which is a pair $(B,\psi)$ of a Borel subgroup of $G$ defined over the number field $F$ and a generic character $\psi : U(\A_F) \rw \C^\times$  of the unipotent radical $U$ of $B$ that is trivial on the subgroup $U(F)$ of $U(\A_F)$. For each place $v$ of $F$, restriction of $\psi$ to the subgroup $U(F_v)$ of $U(\A_F)$ provides a local Whittaker datum $(B,\psi_v)$ for $G$. Given a discrete global generic Arthur parameter $ \phi : L_F \rw {^LG}$, let $\phi_v : L_{F_v} \rw {^LG}$ be its localization at each place $v$ of $F$. We consider the global centralizer group $S_\phi=\tx{Cent}(\phi(L_F),\hat G)$ as well as, for each place $v$ of $F$, its local analog $S_{\phi_v}=\tx{Cent}(\phi(L_ {F_v}),\hat G)$. We also set $\mc{S}_\phi=\pi_0(S_\phi)$ and $\mc{S}_{\phi_v}=\pi_0(S_{\phi_v})$. In fact, since $\phi$ is discrete and $G$ is semi-simple we don't need to take $\pi_0$ in the global case, but we do need to take $\pi_0$  in the local case, as $\phi_v$ will usually not be discrete. There is a conjectural packet $\Pi_{\phi_v}$ of irreducible admissible tempered representations of $G(F_v)$ and a conjectural pairing $\<-,-\>_v : \mc {S}_{\phi_v} \times \Pi_{\phi_v} \rw \C$ such that $\<-,\pi_v\>_v$ is an irreducible character of the finite group $\mc{S}_{\phi_v}$ for each $\pi_v \in \Pi_{\phi_v}$. Once the $L$-packets are given, this pairing is uniquely determined by the endoscopic character identities taken with respect to the transfer factor that is normalized according to the local Whittaker datum $(B,\psi_v)$. According to a conjecture of Shahidi \cite[\S9]{Sha90}, the local pairing is expected to satisfy $\<-,\pi_v\>_v=1$ for the unique $(B,\psi_v)$-generic member $\pi_v \in \Pi_{\phi_v}$. The global packet is defined as $\Pi_\phi = \{ \pi = \otimes  \pi_v| \pi_v \in \Pi_{\phi_v}, \<-,\pi_v\>_v = 1 \textrm{ for a.a. } v  \}$. Via the natural map $\mc{S}_{\phi} \rw \mc{S}_{\phi_v}$ each local pairing  $\<-,\pi_v\>_v$ can be restricted to $\mc{S}_\phi$ and the global  pairing
\[ \<-,-\> : \mc{S}_\phi \times \Pi_\phi \rw \C \]
is defined as the product $\<x,\pi\>=\prod_v \<x,\pi_v\>_v$.

We now drop the assumption that $G$ is quasi-split, while still  maintaining the assumption that it is semi-simple and simply connected.  There exists a quasi-split semi-simple and simply connected group $G^*$  and an isomorphism $\xi : G^* \rw G$ defined over $\ol{F}$ such that  for each $\sigma \in \Gamma=\tx{Gal}(\ol{F}/F)$ the automorphism $\xi^{-1}\sigma(\xi)$ of  $G^*$ is inner. We fix this data, thereby realizing $G$ as an inner twist of $G^*$. For a moment we turn to the quasi-split group $G^*$ and fix objects as above: a global Whittaker datum $(B,\psi)$ for $G^*$ with localizations $(B,\psi_v)$, and a discrete  generic Arthur parameter $\phi : L_F \rw {^LG^*}$ with localizations $\phi_v$. The groups $S_\phi$ and $S_{\phi_v}$ are also defined as above and are subgroups of $\hat G^*$. We also keep the group $\mc{S}_\phi=\pi_0(S_\phi)$, but the conjecture of \cite[\S5.4]{KalRI} tells us that we need to replace the group $\mc{S}_{\phi_v}$ by $\pi_0(S_{\phi_v}^+)$, where $S_{\phi_v}^+$ is the preimage of $S_{\phi_v}$ in $\hat G^*_\tx{sc}$, the simply connected cover of the (in this case adjoint) group $\hat G^*$. We will now obtain for each place $v$ of $F$ a conjectural local pairing $\<-,-\>_{z_v} : \pi_0(S_{\phi_v}^+) \times \Pi_{\phi_v}(G) \rw \C$, but for this we need to endow $G$ with the structure of a rigid inner twist of $G^*$ at the place $v$, which is a lift $z_v \in Z^1(u_v \rw  W_v,Z(G^*_\tx{sc}) \rw G^*_\tx{sc})$ of the element of $Z^1(\Gamma_v,G^*_\tx{ad})$ given by $\sigma \mapsto \xi^{-1}\sigma(\xi)$. Here we are using the cohomology functor $H^1(u_v \rw W_v,-)$ defined in \cite{KalRI} as  well as the corresponding cocycles. Once $z_v$ is fixed, according to \cite[\S5.4]{KalRI} there exists a pairing $\<-,-\>_{z_v} : \pi_0(S_{\phi_v}^+) \times \Pi_{\phi_v}(G) \rw \C$ such that for each $\pi_v \in \Pi_{\phi_v}(G)$ the function $\<-,\pi_v\>_{z_v}$ is an irreducible character of $\pi_0(S_{\phi_v}^+)$ and such that the endoscopic character identities are satisfied with respect to the normalization of the transfer factor given in \cite[\S5.3]{KalRI}, which involves both the local Whittaker datum $(B,\psi_v)$ and the datum $z_v$. As we noted above, the normalization of the transfer factor is established in \cite[\S5.3]{KalRI} unconditionally and the endoscopic character identities specify the pairing $\<-,\pi_v\>_{z_v}$ uniquely, provided it can be shown to exist. We now define the global packet as $\Pi_\phi(G)=\{\pi=\otimes\pi_v| \pi_v \in \Pi_{\phi_v}(G), \<-,\pi_v\>_{z_v}=1 \textrm{ for a.a. } v \}$. Letting $S_\phi^+$ be the preimage in $\hat G^*_\tx{sc}$ of the global centralizer group $S_\phi$ and taking the product of the local pairings over all places $v$ we obtain a global pairing
\[ \<-,-\> : S_\phi^+ \times \Pi_\phi(G) \rw \C. \]
We must now discuss the dependence of this global pairing on the choices of $z_v$. At each place $v$ there will usually be several choices for $z_v$. Changing from one choice to another affects both the transfer factor at that place as well as the local pairing $\<-,-\>_{z_v}$. It is then clear that the global pairing that we have defined depends on the choices of $z_v$ at each place $v$. What we will show however is that if we impose a certain coherence condition on the collection $(z_v)_v$ of local cocycles (here $v$ runs over the set of places of $F$) then the pairing becomes independent of this choice. One instance of this coherence should be that the product over all places of the normalized transfer factors is equal to the canonical adelic transfer factor involved in the stabilization of the Arthur-Selberg trace formula. Another instance of this coherence should be that the global pairing is independent of the collection $(z_v)_v$ used in its construction, and furthermore descends from the group $S_\phi^+$ to its quotient $\mc{S}_\phi$.

It is this coherence that necessitates the development of the global Galois gerbes and their cohomology, and this work takes up a large part of this paper. We construct a functor $H^1(P_{\dot V} \rw \mc{E}_{\dot V})$ that assigns to each affine algebraic group $G$ defined over $F$ and each finite central subgroup $Z \subset G$ defined over $F$ a set $H^1(P_{\dot V} \rw \mc{E}_{\dot V}, Z \rw G)$. When $G$ is abelian this set is an abelian group. At each place $v$ of $F$ there is a functorial localization map $H^1(P_{\dot V} \rw \mc{E}_{\dot V}, Z \rw G) \rw H^1(u_v \rw W_v,Z \rw G)$. A collection $(z_v)_v$ of local cohomology classes is coherent if it is in the image of the total localization map.

The construction of the global Galois gerbes and the study of their cohomology is considerably more involved than that of their local counterpart. Let us briefly recall that to construct the local gerbe we consider the pro-finite algebraic group
\[ u_v = \varprojlim_{E_v/F_v,n} \tx{Res}_{E_v/F_v}\mu_n/\mu_n \] over the given local field $F_v$. Here $E_v$ runs over the finite Galois extensions of $F_v$ contained in $\ol{F_v}$ and $n$ runs over all natural numbers. We show that $H^1(\Gamma_v,u_v)$ vanishes and $H^2(\Gamma_v,u_v)$ is pro-cyclic with a distinguished generator. The isomorphism class of the gerbe is determined by this generator and the gerbe itself is unique up to (essentially) unique isomorphism due to the vanishing of $H^1(\Gamma_v,u_v)$. Using this one introduces for each affine algebraic group $G$ and each finite central subgroup $Z \subset G$, both defined over $F_v$, a cohomology set denoted by $H^1(u_v \rw W_v,Z \rw G)$. Two central features are the surjectivity of the map $\tx{Hom}(u_v,Z)^{\Gamma_v} \rw H^2(\Gamma_v,Z)$ evaluating a homomorphism at the distinguished generator of $H^2(\Gamma,u_v)$ for all finite multiplicative groups $Z$, and the existence of a Tate-Nakayama-type isomorphism for the cohomology set $H^1(u_v \rw W_v,Z \rw T)$ whenever $T$ is a torus (and a generalization thereof when $G$ is connected and reductive).

Now consider a number field $F$. The analog of the groups $\tx{Res}_{E_v/F_v}\mu_n/\mu_n$ is more technical. To describe it, let $S$ be a finite set of places of $F$ containing the archimedean places. Fix a finite Galois extension $E/F$ unramified outside of $S$ and a natural number $N$ that is a unit away from $S$. We then define the finite $\tx{Gal}(E/F)$-module
\[ M_{E,S,N} = \tx{Maps}(\tx{Gal}(E/F) \times S_E,\frac{1}{N}\Z/\Z)_{0,0}. \]
Here $S_E$ is the set of all places of $E$ lying above $S$ and we are considering all maps $f$ from the finite set $\tx{Gal}(E/F) \times S_E$ to the finite abelian group $\frac{1}{N}\Z/\Z$ which satisfy the two conditions
\[ \forall w \in S_E: \sum_{\sigma \in \tx{Gal}(E/F)} f(\sigma,w) = 0 \quad\tx{and}\quad \forall \sigma \in \tx{Gal}(E/F): \sum_{w \in S_E} f(\sigma,w)=0. \]
Let $\dot S_E \subset S_E$ be a set of lifts for the places of $S$. We consider the submodule $M_{E,\dot S_E,N} \subset M_{E,S,N}$ consisting of all $f$ that satisfy the condition
\[ f(\sigma,w) \neq 0 \Rightarrow \sigma^{-1}w \in \dot S_E. \]
We take this submodule as the character module of a finite multiplicative group defined over $O_{F,S}$ and consider its $O_S$-points
\[ P_{E,\dot S_E,N}(O_S) = \tx{Hom}(M_{E,\dot S_E,N},O_S^\times), \]
where $O_S^\times$ is the group of units in the ring of integers of $F_S$, the maximal extension of $F$ unramified outside of $S$. After placing multiple technical restrictions on the set $S$ and its lift $\dot S_E$ we are able to show that $H^2(\Gamma_S,P_{E,\dot S_E,N}(O_S))$ has a distinguished element (even though it is no longer pro-cyclic). As we vary $E$, $S$, and $N$, the groups $P_{E,\dot S_E,N}$ form an inverse system and the transition maps identify the distinguished cohomology classes. This gives a distinguished element in $\varprojlim H^2(\Gamma,P_{E,\dot S_E,N})$. Let $P_{\dot V}$ be the inverse limit of the $P_{E,\dot S_E,N}$. The natural map $H^2(\Gamma,P_{\dot V}) \to \varprojlim H^2(\Gamma,P_{E,\dot S_E,N})$ is surjective, but unlike in the local case, it is not injective. Nonetheless, with some additional work we are able to obtain a canonical element of $H^2(\Gamma,P_{\dot V})$ that lifts the distinguished element of $\varprojlim H^2(\Gamma,P_{E,\dot S_E,N})$. As in the local case, the map $\tx{Hom}(P_{\dot V},Z) \rw H^2(\Gamma,Z)$ given by the canonical class is surjective. The cohomology group $H^1(\Gamma,P_{\dot V})$ vanishes, and so do its local analogs $H^1(\Gamma_v,P_{\dot V})$. The canonical class specifies an isomorphism class of gerbes $\mc{E}_{\dot V}$ for the absolute Galois group $\Gamma$ of the number field $F$ bound by $P_{\dot V}$. This isomorphism class leads to a cohomology functor $H^1(P_{\dot V} \rw \mc{E}_{\dot V})$ that comes equipped with functorial localization maps $H^1(P_{\dot V} \rw \mc{E}_{\dot V}) \rw H^1(u_v \rw W_v)$.

For an algebraic torus $T$ (and more generally for a connected reductive group $G$), we prove a Tate-Nakayama-type duality theorem that identifies the group $H^1(P_{\dot V} \rw \mc{E}_{\dot V},Z \rw T)$ with an abelian group constructed from the character module of $T$. We furthermore give an explicit formula for the localization maps in terms of this duality. The proof of the duality theorem proceeds through the study of cohomology groups $H^1(P_{E,\dot S_E,N} \rw \mc{E}_{E,\dot S_E,N},Z \rw T)$ that are related to $H^1(P_{\dot V} \rw \mc{E}_{\dot V},Z \rw T)$. They are defined relative to the isomorphism class of gerbes $\mc{E}_{E,\dot S_E,N}$ for the Galois group $\Gamma_S$ of the extension $F_S/F$ bound by $P_{E,\dot S_E,N}(O_S)$ that is given by the distinguished element of $H^2(\Gamma_S,P_{E,\dot S_E,N}(O_S))$. We formulate and prove a duality theorem for these cohomology groups first. We then construct inflation maps that relate these cohomology groups for different $E,S,N$ to each other and to $H^1(P_{\dot V} \rw \mc{E}_{\dot V},Z \rw T)$ and obtain the duality theorem for the latter by taking the colimit over $E,S,N$. A technical difficulty that is presented by the gerbe $\mc{E}_{E,\dot S_E,N}$ is that it has non-trivial automorphisms, because $H^1(\Gamma_S,P_{E,\dot S_E,N}(O_S))$ doesn't vanish. This implies that for an arbitrary affine algebraic group $G$ defined over $O_{F,S}$ the cohomology set $H^1(P_{E,\dot S_E,N} \rw \mc{E}_{E,\dot S_E,N},Z \rw G)$ depends on the particular realization of $\mc{E}_{E,\dot S_E,N}$, and not just on its isomorphism class. In the case of tori we are able to circumvent this difficulty by first using a specific realization $\mc{\dot E}_{E,\dot S_E,N}$, depending on various choices, and proving the duality theorem for this realization. The duality theorem implies certain group-theoretic properties of $H^1(P_{E,\dot S_E} \rw \mc{\dot E}_{E,\dot S_E},Z \rw T)$, where we have now taken the colimit over all $N$, that allow us to conclude that the automorphisms of $\mc{\dot E}_{E,\dot S_E}$ act trivially on its cohomology. As a consequence we see that
the cohomology group $H^1(P_{E,\dot S_E} \rw \mc{E}_{E,\dot S_E},Z \rw T)$ depends only on the isomorphism class of $\mc{E}_{E,\dot S_E}$. Once the case of tori is handled, its generalization to connected reductive groups follows without much additional effort.

The reader familiar with \cite{KalRI} will notice that our treatment of the Tate-Nakayama isomorphism in the global case is less direct and less explicit than in the local case. While it leads to the same result, it would be difficult to use it for explicit computations. Recently Ta\"ibi has been able to find a more direct and explicit construction of the canonical class and of the Tate-Nakayama isomorphism in the global case that is much closer to our treatment of the local case \cite{TaibGRI}, and will likely prove useful for explicit computations, for example of automorphic multiplicities.

Let us now comment on the role of the canonical class. The results discussed so far can all be obtained using a gerbe whose isomorphism class is given by an arbitrary element of $H^2(\Gamma,P_{\dot V})$ that lifts the distinguished element of $\varprojlim H^2(\Gamma,P_{E,\dot S_E,N})$, not necessarily the canonical class. Consider again the localization maps $H^1(P_{\dot V} \rw \mc{E}_{\dot V}) \rw H^1(u_v \rw W_v)$. Using the Tate-Nakayama isomorphism it can be shown that when $G$ is connected and reductive, the localization of a given element of $H^1(P_{\dot V} \rw \mc{E}_{\dot V},Z \to G)$ is trivial at almost all places. It turns out that for applications to automorphic forms this, while necessary, is not quite sufficient. One needs a strengthening of this result that begins with the observation that the localization maps are already well-defined on the level of cocycles, namely as maps
\[ Z^1(P_{\dot V} \rw \mc{E}_{\dot V},Z \to G) \rw Z^1(u_v \rw W_v,Z \to G)/B^1(F_v,Z). \]
It then makes sense to ask if, given an element of $Z^1(P_{\dot V} \rw \mc{E}_{\dot V},Z \to G)$, its localization at alomst all places is \emph{unramified}, i.e. an element of $Z^1(u_v \rw W_v,Z \to G)/B^1(F_v,Z)$ that is inflated from $Z^1(\tx{Gal}(F_v^\tx{ur}/F_v),G(O_{F_v^{\tx{ur}}}))$. It is a result of Ta\"ibi that this is true \cite[Proposition 6.1.1]{TaibGRI} if the global gerbe $\mc{E}_{\dot V}$ is the one given by the canonical class, but can fail \cite[Proposition 6.3.1]{TaibGRI} if one uses a global gerbe corresponding to another lift to $H^2(\Gamma,P_{\dot V})$ of the distinguished element of $\varprojlim H^2(\Gamma,P_{E,\dot S_E,N})$.

We will now give a brief overview of the contents of this paper. Section \ref{sec:cohf} is devoted to the construction of the cohomology set $H^1(P_{\dot V} \rw \mc{E}_{\dot V},Z \rw G)$. We begin in Subsection \ref{sub:tatetori} with a review of Tate's description of the cohomology groups $\hat H^i(\tx{Gal}(E/F),T(O_{E,S}))$ of algebraic tori and prove some results about the compatibility of this description with variations of $E$ and $S$. In Subsection \ref{sub:h2z} we derive a description of the cohomology group $H^2(\Gamma,Z)$ for a finite multiplicative group $Z$ defined over $F$. This description is the basis of the study of the finite multiplicative groups $P_{E,\dot S_E,N}$ in Subsection \ref{sub:pes}. After defining these groups we show that $H^2(\Gamma_S,P_{E,\dot S_E,N}(O_S))$ has a distinguished element and study how this element varies with $E$, $S$, and $N$. We also define the pro-finite multiplicative group $P_{\dot V}$ and discuss its relationship with the local groups $u_v$. In Subsection \ref{sub:h1van} we prove the vanishing of $H^1(\Gamma,P_{\dot V})$ and $H^1(\Gamma_v,P_{\dot V})$. These technical results allow us to specify, in Subsection \ref{sub:canclass}, a canonical lift $\xi \in H^2(\Gamma,P_{\dot V})$ of the distinguished element of $\varprojlim H^2(\Gamma,P_{E,\dot S_E,N})$. In Subsection \ref{sub:coh} we define the cohomology set $H^1(P_{\dot V} \rw \mc{E}_{\dot V},Z \rw G)$ for an arbitrary affine algebraic group $G$ and a finite central subgroup $Z$, both defined over $F$, as well as the localization maps. These constructions also rely crucially on the vanishing results of Subsection \ref{sub:h1van}. In Subsection \ref{sub:tn+} we formulate and prove the Tate-Nakayama-type duality theorem for $H^1(P_{\dot V} \rw \mc{E}_{\dot V},Z \rw T)$ when $T$ is a torus and give a formula for the localization map on the dual side. The cohomology groups $H^1(P_{E,\dot S_E} \rw \mc{\dot E}_{E,\dot S_E},Z \rw T)$ are defined and studied in this Subsection. In Subsection \ref{sub:tn+g} we extend the duality theorem to the case of connected reductive groups. One implication of the results of Subsections \ref{sub:tn+} and \ref{sub:tn+g} is the description in terms of linear algebra of the coherent collections of local cohomology classes, i.e. those which arise as the localization of global cohomology classes. This linear algebraic description has an immediate interpretation in terms of the Langlands dual group.

Section \ref{sec:mult} contains the applications to automorphic forms. Subsections \ref{sub:endolg} and \ref{sub:cohfam} constitute the bridge between refined local and global endoscopy. Given a global inner twist we construct a coherent collection of rigid local inner twists, and given a global endoscopic datum we construct a coherent collection of refined local endoscopic data. The local collections depend on some choices, but these choices are ultimately seen to have no influence on the global objects obtained from the local collections. In Subsection \ref{sub:tfprod} we show that the product of normalized local transfer factors is equal to the canonical adelic transfer factor, and in Subsection \ref{sub:mult} we show that the product of normalized local pairings descends to a canonical global pairing. In both cases we use the coherent local collections obtained from the global inner twist and the global endoscopic datum, but the product of normalized local transfer factors and the product of normalized local pairings are independent of these collections. Both product formulas contain some extra factors that one might not initially expect. These factors are explicit and non-conjectural and their appearance is just a matter of formulation, as we discuss in the next paragraph.

In the final Subsection \ref{sub:arthur} we summarize the results of Section \ref{sec:mult} using the language of \cite{Art06}. This subsection carries little mathematical content, but provides a dictionary between the language of \cite{KalRI} and that of \cite{Art06}. More precisely, it shows that the local conjecture of \cite{KalRI} implies (and is in fact equivalent to) a stronger version of the local conjecture of \cite{Art06}. We believe that this dictionary can be useful, because both languages have their advantages. The formulation of \cite{Art06} is from the point of view of the simply connected cover of the given reductive group and has the advantage that the passage between local and global endoscopy and the global product formulas take a slightly simpler form, in which in particular the extra factors referred to above are not visible, because they become part of the local normalizations. The formulation of \cite{KalRI} is from the point of view of the reductive group itself and has the advantage of making the local statements more flexible and transparent. In particular, certain basic local manipulations, like Levi descent, are easier to perform in this setting. Moreover, this language accommodates the notions of pure and rigid inner twists, which have been shown in the work of Adams-Barbasch-Vogan \cite{ABV92} on real groups to play an important role in the study of the local conjecture and in particular in its relationship to the geometry of the dual group.

In this paper we have restricted ourselves to discussing only tempered local and global parameters. This restriction is made just to simplify notation and because the problems we are dealing with here are independent of the question of temperedness. The extension to general Arthur parameters is straightforward and is done in the same way as described in \cite{ArtUARC}. We refer the reader to \cite{Taib} for an example of our results being used in the non-tempered setting.

A remark may be in order about the relationship between the cohomological constructions of this paper and those of \cite{KotBG}. While both papers study the cohomology of certain Galois gerbes, the gerbes used in the two papers are very different. The gerbes of \cite{KotBG} are bound by multiplicative groups whose character modules are torsion-free. In the local case they are the so called Dieudonne- and weight-gerbes of \cite{LR87} and are bound by split multiplicative groups. On the other hand, the gerbes of \cite{KalRI} and the present paper are bound by non-split multiplicative groups whose character modules are both torsion and divisible. This makes the difficulties involved in studying them and the necessary techniques quite different. It is at the moment not clear to us what the relationship between the two global constructions is. The relationship between the two local constructions is discussed in \cite{KalRIBG}.

It should be clear to the reader that this paper owes a great debt to the ideas of Robert Kottwitz. The author wishes to express his gratitude to Kottwitz for introducing him to this problem and generously sharing his ideas. The author thanks Olivier Ta\"ibi for his careful reading of the manuscript and for his many comments and suggestions that lead to considerable improvements of both the results and their exposition. For example, Ta\"ibi's comments led the author to develop the material in \S\ref{sub:h1van},\S\ref{sub:canclass}, and the statement and proof of Lemma \ref{lem:ramification} and its necessity were communicated by him. The author also thanks James Arthur for his interest and support and Alexander Schmidt for bringing to the author's attention the results in \cite{NSW08} on the cohomology of Galois groups of number fields with ramification conditions.

\section{Notation}

Throughout the paper, $F$ will denote a fixed number field, i.e. a finite extension of the field $\Q$ of rational numbers. We will often use finite Galois extensions $E$ of $F$, which are all assumed to lie within a fixed algebraic closure $\ol{F}$ of $F$. For any such $E/F$, we denote by $\Gamma_{E/F}$ the finite Galois group $\tx{Gal}(E/F)$, which is the quotient of the absolute Galois group $\Gamma_F=\tx{Gal}(\ol{F}/F)$ by its subgroup $\Gamma_E$. We will write $I_E=\A_E^\times$ for the idele group of $E$ and $C(E)=I_E/E^\times$ for the idele class group of $E$.

We write $V_E$ for the set of all places (i.e. valuations up to equivalence) of $E$ and $V_{E,\infty}$ for the subset of archimedean valuations. A subset $S \subset V_E$ will be called \emph{full}, if it is the preimage of a subset of the set of places $V_\Q$ of $\Q$. If $K/E/F$ is a tower of extensions and $S \subset V_E$, we write $S_K$ for the set of all places of $K$ lying above $S$, and we write $S_F$ for the set of all places of $F$ lying below $S$. We write $p: S_K \rw S_E$ for the natural projection map. Given a subset $S \subset V_\Q$, we write $\N_S$ for the set of those natural numbers whose prime decomposition involves only primes contained in $S$. We equip $\N_S$ with the partial order given by divisibility. Given a subset $S \subset V_F$, we write $\N_S$ for $\N_{S_\Q}$ by abuse of notation.

If $V_{F,\infty} \subset S \subset V_F$ is a set of places, and $E/F$ is a finite Galois extension unramified outside of $S$ we will borrow the following notation from \cite[\S VIII.3]{NSW08}. Let $O_{E,S}$ be the ring of $S$-integers of $E$, i.e. the subring of $E$ consisting of elements whose valuation at all places away from $S_E$ is non-negative. Let $I_{E,S} = \prod_{w \in S_E} E_w^\times$, $U_{E,S}=\prod_{w \notin S_E} O_{E_w}^\times$. Then $J_{E,S}=I_{E,S} \times U_{E,S}$ is the subgroup of $E$-ideles which are units away from $S$. We set $C_{E,S}=I_{E,S}/O_{E,S}^\times$ and $C_S(E)=I_E/E^\times U_{E/S}$.

Taking the limit of these objects over all finite Galois extensions $E/F$ that are unramified away from $S$ we obtain the following. We denote by $F_S$ the maximal extension of $F$ that is unramified away from $S$. We write $\Gamma_S$ for the Galois group of $F_S/F$. We denote by $O_S$ the direct limit of $O_{E,S}$ and by $I_S$ the direct limit of $I_{E,S}$, the limits being taken over all finite extensions $F_S/E/F$. We denote by $C_S$ the direct limit of either $C_{E,S}$ or $C_S(E)$. These two limits are the same \cite[Prop 8.3.6]{NSW08} and are in turn equal to the
quotient $C(F_S)/U_S$, where $C(F_S) = \varinjlim_{F_S/E/F} C(E)$ is the idele class group of $F_S$, and $U_S=\varinjlim_{F_S/E/F} U_{E,S}$. In the special case $S=V_F$ we will drop the subscript $S$ from the notation, thereby obtaining for example $I=\A_{\ol{F}}^\times$ and $C=\A_{\ol{F}}^\times/\ol{F}^\times$.

Given two finite sets $X,Y$, we write $\tx{Maps}(X,Y)$ for the set of all maps from $X$ to $Y$. If $X,Y$ are endowed with an action of $\Gamma_F$, or a related group, we endow $\tx{Maps}(X,Y)$ with the action given by $\sigma(f)(x)=\sigma(f(\sigma^{-1}x))$.

Given an abelian group $A$, we write $A[n]$ for the subgroup of $n$-torsion elements for any $n \in \N$, and we write $\exp(A)$ for the exponent of $A$, allowing $\tx{exp}(A)=\infty$. We follow the standard conventions of orders of pro-finite groups.

If $B$ is a profinite group acting on a (sometimes abelian) group $X$, we write $H^i(B,X)$ for the set of continuous cohomology classes, where $i \geq 0$ if $X$ is abelian and $i=0,1$ if $X$ is non-abelian. The group $X$ is understood to have the discrete topology \emph{unless} it is explicitly defined as an inverse limit, in which case we endow it with the inverse limit topology. When $B$ is finite and $X$ is abelian we also have the modified cohomology groups $\hat H^i(B,X)$ defined for all $i \in \Z$ by Tate. For us usually $B=\Gamma_{E/F}$ for some finite Galois extension $E/F$. In this case we will write $N_{E/F} : X \rw X$ for the norm map of the action of $\Gamma_{E/F}$ on $X$, and we will denote by $X^{N_{E/F}}$ its kernel. We will also write $\Z[\Gamma_{E/F}]$ for the group ring of $\Gamma_{E/F}$ and $I_{E/F}$ for its augmentation ideal. Then $I_{E/F}X$ is a subgroup of $X$.

\section{Cohomology with $\ol{F}$-coefficients} \label{sec:cohf}

\subsection{Some properties of Tate duality over $F$} \label{sub:tatetori}
The purpose of this subsection is to review Tate's description \cite{Tate66} of the cohomology group $H^1(\Gamma_{E/F},T(O_{E,S}))$, where $T$ is an algebraic torus defined over $F$ and split over a finite Galois extension $E/F$ and $S$ is a (finite or infinite) set of places of $F$ subject to some conditions. We will also establish some facts about this description that we have been unable to find in the literature.

We assume that $S$ satisfies the following.
\begin{cnds} \label{cnds:tate}
\begin{enumerate}
	\item $S$ contains all archimedean places and all places that ramify in $E$.
	\item Every ideal class of $E$ has an ideal with support in $S_E$.
\end{enumerate}
\end{cnds}
Let $T$ be an algebraic torus defined over $F$ and split over $E$. We write $X=X^*(T)$ and $Y=X_*(T)$. The group $T(O_{E,S}) = \tx{Hom}(X,O_{E,S}^\times) = Y \otimes O_{E,S}^\times$ is a $\Gamma_{E/F}$-module. In \cite{Tate66} Tate provides a description of the cohomology groups $\hat H^i(\Gamma_{E/F},T(O_{E,S}))$. Consider the abelian group $\Z[S_E]$ consisting of formal finite integral linear combinations $\sum_{w \in S_E} n_w [w]$ of elements of $S_E$, and its subgroup $\Z[S_E]_0$ defined by the property $\sum_{w \in S_E} n_w=0$. Tate constructs a canonical cohomology class $\alpha_3(E,S) \in H^2(\Gamma_{E/F},\tx{Hom}(\Z[S_E]_0,O_{E,S}^\times))$ and shows that cup product with this class induces for all $i \in \Z$ an isomorphism
\begin{equation} \label{eq:tnisotorus} \hat H^{i-2}(\Gamma_{E/F},Y[S_E]_0) \rw \hat H^i(\Gamma_{E/F},T(O_{E,S})). \end{equation}
We have written here $Y[S_E]_0$ as an abbreviation for $\Z[S_E]_0 \otimes Y$. For our applications, it will be important to know how this isomorphism behaves when we change $S$ and $E$. Before we can discuss this, we need to review the construction of the class $\alpha_3(E,S)$.

Tate considers the group $\tx{Hom}((B_{E,S}),(A_{E,S}))$ consisting of triples of homomorphisms $(f_3,f_2,f_1)$ making the following diagram commute
\[ \xymatrix{
(A_{E,S}):\ 1\ar[r]&O_{E,S}^\times\ar[r]^{a'}&J_{E,S}\ar[r]^a&C(E)\ar[r]&1\\
(B_{E,S}):\ 0\ar[r]&\Z[S_E]_0\ar[r]^{b'}\ar[u]^{f_3}&\Z[S_E]\ar[r]^b\ar[u]^{f_2}&\Z\ar[r]\ar[u]^{f_1}&0\\
}\]
The two sequences $(A_{E,S})$ and $(B_{E,S})$ are exact, with $a$ being the natural projection, which is surjective due to Conditions \ref{cnds:tate}, and $b$ being the augmentation map $b(\sum_w n_w[w])=\sum_w n_w$. It is clear that extracting the individual entries of a triple $(f_3,f_2,f_1)$ provides maps
\[ \xymatrix{
&\tx{Hom}((B_{E,S}),(A_{E,S}))\ar[dl]\ar[d]\ar[dr]&\\
\tx{Hom}(\Z[S_E]_0,O_{E,S}^\times)&\tx{Hom}(\Z[S_E],J_{E,S})&\tx{Hom}(\Z,C(E))
}\]
The product of the vertical and right-diagonal maps is an injection and induces an injection on the level of $H^2(\Gamma_{E/F},-)$. The class $\alpha_3(E,S)$ is the image of a class $\alpha(E,S) \in H^2(\Gamma_{E/F},\tx{Hom}((B_{E,S}),(A_{E,S})))$ under the left diagonal map. The class $\alpha(E,S)$ itself is the unique class mapping to the pair of classes
\[ (\alpha_2(E,S),\alpha_1(E)) \in H^2(\Gamma_{E/F},\tx{Hom}(\Z[S_E],J_{E,S})) \times H^2(\Gamma_{E/F},\tx{Hom}(\Z,C(E))), \]
which we will now describe.

The class $\alpha_1(E) \in H^2(\Gamma_{E/F},\tx{Hom}(\Z,C(E)))$ is the fundamental class associated by global class field theory to the extension $E/F$. Using the Shapiro\-isomorphism
\[ H^2(\Gamma_{E/F},\tx{Hom}(\Z[S_E],J_{E,S}) \cong \prod_{v \in S} H^2(\Gamma_{E_{\dot v}/F_v},J_{E,S}),  \]
where for each $v \in S$ we have chosen an arbitrary lift $\dot v \in S_E$, Tate defines the class $\alpha_2(E,S) \in H^2(\Gamma_{E/F},\tx{Hom}(\Z[S_E],J_{E,S}))$ to correspond to the element $(\alpha_{E_{\dot v}/F_v})_{v \in S}$ of the right hand side of the above isomorphism, where each $\alpha_{E_{\dot v}/F_v}$ is the image of the fundamental class in $H^2(\Gamma_{E_{\dot v}/F_v},E_{\dot v}^\times)$ under the natural inclusion $E_{\dot v}^\times \rw J_{E,S}$.

Having recalled the construction of \eqref{eq:tnisotorus} we are now ready to study how it varies with respect to $S$ and $E$.

\begin{lem} \label{lem:tnstori}
Let $S \subset S' \subset V_F$. The inclusion $S_E \rw S'_E$ provides maps $\Z[S_E]_0 \rw \Z[S'_E]_0$ and $O_{E,S} \rw O_{E,S'}$ which fit in the commutative diagram
\[ \xymatrix{
\hat H^{i-2}(\Gamma_{E/F},Y[S_E]_0)\ar[d]\ar[r]&\hat H^{i}(\Gamma_{E/F},T(O_{E,S}))\ar[d]\\
\hat H^{i-2}(\Gamma_{E/F},Y[S'_E]_0)\ar[r]&\hat H^{i}(\Gamma_{E/F},T(O_{E,S'}))
} \]
\end{lem}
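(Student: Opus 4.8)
The plan is to trace the construction of the isomorphism \eqref{eq:tnisotorus} through each of its ingredients and check that every one of them is compatible with the inclusion $S \subset S'$. The isomorphism is cup product with the class $\alpha_3(E,S)$, so it suffices to show that the map $\tx{Hom}(\Z[S_E]_0, O_{E,S}^\times) \rw \tx{Hom}(\Z[S'_E]_0, O_{E,S'}^\times)$ — which is induced on the one side by restriction along $\Z[S_E]_0 \hrw \Z[S'_E]_0$ and on the other by the inclusion $O_{E,S}^\times \hrw O_{E,S'}^\times$ — carries $\alpha_3(E,S)$ to $\alpha_3(E,S')$. Granting this, functoriality of the cup product in both arguments gives the commuting square: the left vertical map $\hat H^{i-2}(\Gamma_{E/F}, Y[S_E]_0) \rw \hat H^{i-2}(\Gamma_{E/F}, Y[S'_E]_0)$ is induced by $Y[S_E]_0 \rw Y[S'_E]_0$, the right vertical map by $T(O_{E,S}) \rw T(O_{E,S'})$, and naturality of cup product with a compatible pair of coefficient classes is exactly what forces the square to commute.

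So everything reduces to the compatibility $\alpha_3(E,S) \mapsto \alpha_3(E,S')$. For this I would work one level up, with the class $\alpha(E,S) \in H^2(\Gamma_{E/F}, \tx{Hom}((B_{E,S}),(A_{E,S})))$. There is an evident morphism of short exact sequences $(B_{E,S}) \rw (B_{E,S'})$ (the inclusions $\Z[S_E] \hrw \Z[S'_E]$ and $\Z[S_E]_0 \hrw \Z[S'_E]_0$, and the identity on $\Z$) and a morphism $(A_{E,S}) \rw (A_{E,S'})$ (the inclusions $O_{E,S}^\times \hrw O_{E,S'}^\times$ and $J_{E,S} \hrw J_{E,S'}$, and the identity on $C(E)$); together these induce $\tx{Hom}((B_{E,S}),(A_{E,S})) \rw \tx{Hom}((B_{E,S'}),(A_{E,S'}))$ compatibly with the three projection maps of the diagram displayed after Conditions \ref{cnds:tate}. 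Since $\alpha(E,S)$ is characterized as the unique lift of the pair $(\alpha_2(E,S), \alpha_1(E))$, and $\alpha_3$ is extracted via the left-diagonal projection, it is enough to check that $\alpha_1(E) \mapsto \alpha_1(E')$ (trivial: $\alpha_1(E)$ does not depend on $S$ and the map on $\tx{Hom}(\Z, C(E))$ is the identity) and that $\alpha_2(E,S) \mapsto \alpha_2(E,S')$ under the map $\tx{Hom}(\Z[S_E], J_{E,S}) \rw \tx{Hom}(\Z[S'_E], J_{E,S'})$. This last point is where the combinatorics of the Shapiro isomorphism enter.

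Thus the heart of the argument — and the step I expect to be the main obstacle, in the sense that it requires care rather than ingenuity — is the following: under the inclusion $\Z[S_E] \hrw \Z[S'_E]$ the Shapiro identification $H^2(\Gamma_{E/F}, \tx{Hom}(\Z[S_E], J_{E,S})) \cong \prod_{v \in S} H^2(\Gamma_{E_{\dot v}/F_v}, J_{E,S})$ becomes a projection $\prod_{v \in S'} (\cdots) \rw \prod_{v \in S} (\cdots)$ — discarding the coordinates $v \in S' \sm S$ — composed with the change-of-coefficients $J_{E,S} \hrw J_{E,S'}$ on the remaining coordinates. One subtlety is that the choice of lifts $\dot v \in S_E$ used to trivialize the Shapiro map for $S$ and the choice for $S'$ must be taken compatibly (i.e. the lifts for $v \in S$ agree), which is harmless. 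Granting this description, the coordinate of $\alpha_2(E,S)$ at $v \in S$ is the image of the local fundamental class in $H^2(\Gamma_{E_{\dot v}/F_v}, E_{\dot v}^\times)$ under $E_{\dot v}^\times \rw J_{E,S}$, and the coordinate of $\alpha_2(E,S')$ at the same $v$ is the image of the same local fundamental class under $E_{\dot v}^\times \rw J_{E,S'}$; since $E_{\dot v}^\times \rw J_{E,S} \hrw J_{E,S'}$ equals $E_{\dot v}^\times \rw J_{E,S'}$, the two agree. The remaining coordinates of $\alpha_2(E,S')$, indexed by $S' \sm S$, are discarded by the projection, so $\alpha_2(E,S) \mapsto \alpha_2(E,S')$ as required, completing the proof.
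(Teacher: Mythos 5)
There is a genuine gap at the very first reduction. You propose to deduce the commutativity of the square from the claim that "the map $\tx{Hom}(\Z[S_E]_0,O_{E,S}^\times) \rw \tx{Hom}(\Z[S'_E]_0,O_{E,S'}^\times)$ induced by restriction along $\Z[S_E]_0 \hrw \Z[S'_E]_0$ and by $O_{E,S}^\times \hrw O_{E,S'}^\times$ carries $\alpha_3(E,S)$ to $\alpha_3(E,S')$." No such map exists: $\tx{Hom}$ is contravariant in its first argument, so the inclusion $\Z[S_E]_0 \hrw \Z[S'_E]_0$ induces a restriction map $\tx{Hom}(\Z[S'_E]_0,-) \rw \tx{Hom}(\Z[S_E]_0,-)$, i.e.\ in the opposite direction; the same problem affects your claimed morphism $\tx{Hom}((B_{E,S}),(A_{E,S})) \rw \tx{Hom}((B_{E,S'}),(A_{E,S'}))$ and your description of the Shapiro picture, where the map you actually describe is the projection $\prod_{v\in S'} \rw \prod_{v \in S}$, which compares the restriction of $\alpha_2(E,S')$ with the image of $\alpha_2(E,S)$ rather than sending $\alpha_2(E,S)$ to $\alpha_2(E,S')$. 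The two Tate classes are not related by a direct map of coefficient modules; they can only be compared through a correspondence, either a common source (the paper's route: the groups $\tx{Hom}_S(\Z[S'_E]_0,O_{E,S'}^\times)$ of homomorphisms carrying $\Z[S_E]_0$ into $O_{E,S}^\times$, with a class $\alpha_3(E,S,S')$ mapping to $\alpha_3(E,S)$ by restriction and to $\alpha_3(E,S')$ by inclusion) or a common target $\tx{Hom}(\Z[S_E]_0,O_{E,S'}^\times)$ in which one shows the two images agree.

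Moreover, even after fixing the directions, your argument compares only $\alpha_1$ and $\alpha_2$ and then asserts the conclusion for $\alpha_3$. But $\alpha_3(E,S)$ is not directly characterized by $(\alpha_2,\alpha_1)$; it is extracted from the lift $\alpha(E,S) \in H^2(\Gamma_{E/F},\tx{Hom}((B_{E,S}),(A_{E,S})))$, whose existence and uniqueness rest on an exact sequence and the injectivity on $H^2$ of the product of the vertical and right-diagonal maps. To transfer the $\alpha_2,\alpha_1$ comparison to $\alpha_3$ you must redo this lifting argument in whichever intermediate setting you choose; this is exactly the nontrivial step in the paper's proof, where the exactness of
\[ 0\rw\tx{Hom}_S((B_{E,S'}),(A_{E,S'}))\rw \tx{Hom}_S(\Z[S'_E],J_{E,S'}) \times \tx{Hom}(\Z,C(E))\rw \tx{Hom}(\Z[S'_E],C(E)) \rw 0 \]
is verified (surjectivity uses that $\Z[S'_E]$ is free and $a: J_{E,S} \rw C(E)$ is surjective, which relies on Conditions on $S$), and a class $\alpha_2(E,S,S')$ is constructed via a mixed Shapiro decomposition $\prod_{v \in S} H^2(\Gamma_{E_{\dot v}/F_v},J_{E,S}) \times \prod_{v \in S'\sm S} H^2(\Gamma_{E_{\dot v}/F_v},J_{E,S'})$. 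Your identification of the essential local input — that at each $v \in S$ both classes come from the same local fundamental class pushed into $J_{E,S}$ resp.\ $J_{E,S'}$ — is correct and is indeed the heart of the matter, but without the correspondence framework and the lifting/uniqueness step the proof as written does not go through.
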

\begin{proof}
The top horizontal map is given by cup product with the class $\alpha_3(E,S) \in H^2(\Gamma_{E/F},\tx{Hom}(\Z[S_E]_0,O_{E,S}))$, while the bottom horizontal map is given by cup product with the analogous class $\alpha_3(E,S')$. We must relate the two classes. For this, let $\tx{Hom}_S(\Z[S'_E],J_{E,S'})$ be the subgroup of $\tx{Hom}(\Z[S'_E],J_{E,S'})$ consisting of those homomorphisms which map the subgroup $\Z[S_E]$ of $\Z[S'_E]$ into the subgroup $J_{S,E}$ of $J_{S',E}$. We define analogously $\tx{Hom}_S(\Z[S'_E]_0,O_{E,S'}^\times)$ and $\tx{Hom}_S((B_{E,S'}),(A_{E,S'}))$. We have the maps
\[ \tx{Hom}(\Z[S_E]_0,O_{E,S}^\times) \lw \tx{Hom}_S(\Z[S'_E]_0,O_{E,S'}^\times) \rw \tx{Hom}(\Z[S'_E]_0,O_{E,S'}^\times), \]
the left one given by restriction to $\Z[S_E]_0$ and the right one given by the obvious inclusion. It will be enough to show that there exists a class
\[ \alpha_3(E,S,S') \in H^2(\Gamma_{E/F},\tx{Hom}_S(\Z[S'_E]_0,O_{E,S'}^\times)) \]
mapping to $\alpha_3(E,S)$ via the left map and to $\alpha_3(E,S')$ via the right map. In order to do this, we consider
\[ \tx{Hom}((B_{E,S}),(A_{E,S})) \lw \tx{Hom}_S((B_{E,S'}),(A_{E,S'})) \rw \tx{Hom}((B_{E,S'}),(A_{E,S'})) \]
and show that there exists a class
\[ \alpha(E,S,S') \in H^2(\Gamma_{E/F},\tx{Hom}_S((B_{E,S'}),(A_{E,S'}))) \]
mapping to $\alpha(E,S)$ under the left map and mapping to $\alpha(E,S')$ under the right map. Indeed we have the sequence
\[\begin{aligned} 0\rw\tx{Hom}_S((B_{E,S'}),(A_{E,S'}))&\rw \tx{Hom}_S(\Z[S'_E],J_{E,S'}) \times \tx{Hom}(\Z,C(E))\rw\\
&\rw \tx{Hom}(\Z[S'_E],C(E)) \rw 0
\end{aligned}\]
in which the last map sends $(f_2,f_1)$ to $a\circ f_2 - f_1\circ b$. This map is surjective, because $\tx{Hom}(\Z[S'_E],J_{E,S})$ is a subgroup of $\tx{Hom}_S(\Z[S'_E],J_{E,S'})$, and since $\Z[S'_E]$ is free and $a:J_{E,S} \rw C(E)$ is surjective we see that $f_2 \mapsto a\circ f_2$ is already surjective. The above sequence is thus exact. We obtain maps
\[ \xymatrix{
&\tx{Hom}_S((B_{E,S'}),(A_{E,S'}))\ar[dl]\ar[d]\ar[dr]&\\
\tx{Hom}_S(\Z[S'_E]_0,O_{E,S'}^\times)&\tx{Hom}_S(\Z[S'_E],J_{E,S'})&\tx{Hom}(\Z,C(E)),
}\]
the product of the vertical and right-diagonal maps is an injection and the long exact cohomology sequence associated to the above exact sequence shows that this injection induces an injection on the level of $H^2(\Gamma_{E/F},-)$. Under the Shapiro isomorphism the group $H^2(\Gamma_{E/F},\tx{Hom}_S(\Z[S'_E],J_{E,S'}))$ is identified with
\[ \prod_{v \in S} H^2(\Gamma_{E_{\dot v}/F_v},J_{E,S}) \times \prod_{v \in S' \sm S} H^2(\Gamma_{E_{\dot v}/F_v},J_{E,S'}). \]
We apply Tate's construction of $\alpha_2(E,S')$ in this setting and obtain a class $\alpha_2(E,S,S') \in H^2(\Gamma_{E/F},\tx{Hom}_S(\Z[S'_E],J_{E,S}))$. The pair $(\alpha_2(E,S,S'),\alpha_1(E))$ is seen to belong to the group $H^2(\Gamma_{E/F},\tx{Hom}_S((B_{E,S'}),(A_{E,S'})))$. This is the class $\alpha(E,S,S')$ that we wanted to construct, and the class $\alpha_3(E,S,S')$ is the image of $\alpha(E,S,S')$ under the left diagonal map above.

\end{proof}

We will now discuss the variation of \eqref{eq:tnisotorus} with respect to $E$. This has been studied by Kottwitz in \cite[\S8.3-\S8.5]{KotBG}. Since the discussion there is embedded in the theory of $B(G)$, we will reproduce the arguments here, in a modified form suggested to us by Ta\"ibi.

Let $K/E/F$ be a tower of finite Galois extensions and $S \subset V_F$ a set of places which satisfies Conditions \ref{cnds:tate} with respect to both $E$ and $K$. We consider the two maps
\[ \xymatrix{\Z[S_E]\ar@<1ex>[r]^{p_{K/E}}&\Z[S_K]\ar@<1ex>[l]^{j_{K/E}}}, \]
where $p_{K/E}([w])=\sum_{u|w} |\Gamma_{K/E,u}| \cdot [u]$ and $j_{K/E}([u])=[p(u)]$, with $p : S_K \to S_E$ being the natural projection. Written in slightly different form, we have
\begin{eqnarray*}
p_{K/E}(\sum_{w \in S_E} n_w [w])&=&\sum_{u \in S_K} |\Gamma_{K/E,u}|\cdot n_{p(u)}\cdot [u]\\
j_{K/E}(\sum_{u \in S_K} n_u [u])&=&\sum_{w \in S_E} (\sum_{u|w} n_u) \cdot [w].
\end{eqnarray*}

Both maps are equivariant with respect to the action of $\Gamma_{K/F}$ and satisfy $j_{K/E} \circ p_{K/E}=[K:E] \cdot \tx{id}$ and $p_{K/E} \circ j_{K/E} = N_{K/E}$. The restriction of $p_{K/E}$ to $\Z[S_E]_0$ takes image in $\Z[S_K]_0$, and conversely the restriction of $j_{K/E}$ to $\Z[S_K]_0$ takes image in $\Z[S_E]_0$.

For any $\Gamma_{E/F}$-module $M$ we write $M[S_E]=M \otimes_\Z \Z[S_E]$. The $\Gamma_{K/F}$\-equivariance of $p_{K/E}$ and $j_{K/E}$ implies in particular $j_{K/E} \circ N_{K/F} = N_{K/F} \circ j_{K/E} = [K:E]N_{E/F} \circ j_{K/E}$ and $N_{K/F}\circ p_{K/E} = p_{K/E} \circ N_{K/F} = [K:E]p_{K/E} \circ N_{E/F}$. This shows that if we interpret the cohomology groups $\hat H^{-1}(\Gamma_{E/F},M[S_E]_0)$ resp. $\hat H^0(\Gamma_{E/F},M[S_E]_0)$ as the kernel resp. cokernel of the norm map $N_{E/F} : (M[S_E]_0)_{\Gamma_{E/F}} \to M[S_E]_0^{\Gamma_{E/F}}$, then $j_{K/E}$ and $p_{K/E}$ induce maps
\begin{eqnarray*}
j_{K/E} : \hat H^{0}(\Gamma_{K/F},M[S_K]_0) \to \hat H^{0}(\Gamma_{E/F},M[S_E]_0)\\
p_{K/E} : \hat H^{-1}(\Gamma_{E/F},M[S_E]_0) \to \hat H^{-1}(\Gamma_{K/F},M[S_K]_0).
\end{eqnarray*}
However, we will need the map $j_{K/E}$ in degree $-1$ and the map $p_{K/E}$ in degree $0$, where their existence is slightly less obvious.

\begin{lem} The map $p_{K/E} : H^0(\Gamma_{E/F},M[S_E]_0) \to H^0(\Gamma_{K/F},M[S_K]_0)$ transports $N_{E/F}(M[S_E]_0)$ into $N_{K/F}(M[S_K]_0)$ and thus induces a well-defined map
\[ p_{K/E} : \hat H^{0}(\Gamma_{E/F},M[S_E]_0) \to \hat H^{0}(\Gamma_{K/F},M[S_K]_0). \]
\end{lem}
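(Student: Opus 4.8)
The plan is to produce an explicit preimage under $N_{K/F}$: given $x \in M[S_E]_0$, I will exhibit $y \in M[S_K]_0$ with $N_{K/F}(y) = p_{K/E}(N_{E/F}(x))$, and the natural candidate is $y = s(x)$ for a set-theoretic section $s$ of the projection $p$. First I fix, for each $w \in S_E$, a lift $\dot w \in S_K$ with $p(\dot w) = w$, and let $s\colon \Z[S_E]\to\Z[S_K]$ be the $\Z$-linear map $[w]\mapsto[\dot w]$, extended to $s = \mathrm{id}_M\otimes s\colon M[S_E]\to M[S_K]$. It commutes with the augmentation maps, so it carries $M[S_E]_0 = M\otimes\Z[S_E]_0$ into $M[S_K]_0$; here I use that $\Z[S_E]_0$ is a direct summand of $\Z[S_E]$, so that $M\otimes(-)$ preserves exactness of $0\to\Z[S_E]_0\to\Z[S_E]\to\Z\to 0$. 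Note that $s$ is not $\Gamma$-equivariant, which is exactly why the statement is not completely obvious.

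The one identity to record is $N_{K/E}\circ s = p_{K/E}$ as maps $M[S_E]\to M[S_K]$, where $N_{K/E}$ denotes the norm for the inflated $\Gamma_{K/E}$-action on $M[S_K]$ (which acts trivially on the factor $M$). This is an orbit-count: $\Gamma_{K/E}$ acts transitively on the places of $K$ above a given $w\in S_E$, the stabilizer of $u\mid w$ having order $|\Gamma_{K/E,u}|$, so $N_{K/E}([\dot w]) = \sum_{\tau\in\Gamma_{K/E}}[\tau\dot w] = \sum_{u\mid w}|\Gamma_{K/E,u}|\,[u] = p_{K/E}([w])$. Next I decompose the big norm along the tower: choosing coset representatives $\tilde\sigma\in\Gamma_{K/F}$ of $\Gamma_{K/E}$ indexed by $\sigma\in\Gamma_{E/F}$, one has $N_{K/F} = \bigl(\sum_\sigma\tilde\sigma\bigr)\circ N_{K/E}$ on every $\Gamma_{K/F}$-module. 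Applying this to $s(x)$ and using the identity above gives $N_{K/F}(s(x)) = \sum_\sigma\tilde\sigma\cdot p_{K/E}(x)$. Finally I use the $\Gamma_{K/F}$-equivariance of $p_{K/E}$, already established, together with the fact that the $\Gamma_{K/F}$-action on $M[S_E]$ factors through $\Gamma_{E/F}$ — so $\tilde\sigma$ acts on $M[S_E]$ as $\sigma$ — to get $\sum_\sigma\tilde\sigma\cdot p_{K/E}(x) = p_{K/E}\bigl(\sum_\sigma\sigma x\bigr) = p_{K/E}(N_{E/F}(x))$. Hence $p_{K/E}(N_{E/F}(x)) = N_{K/F}(s(x))$ with $s(x)\in M[S_K]_0$, which is the claimed inclusion; the well-definedness of the induced map on $\hat H^0$ follows (the compatibility with invariants being automatic from $\Gamma_{K/F}$-equivariance of $p_{K/E}$).

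I do not anticipate a real obstacle: once the section $s$ is in hand the argument is purely formal. The only delicate point — and the conceptual core — is the bookkeeping between the two norms $N_{K/E}$ and $N_{K/F}$: although $s$ itself is not equivariant, the composite $N_{K/E}\circ s$ coincides with the equivariant map $p_{K/E}$, after which the coset decomposition of $N_{K/F}$ closes the argument. The same trick, with the roles of $p$ and $j$ and the two degrees interchanged, will dispatch the companion statement about $j_{K/E}$ in degree $-1$.
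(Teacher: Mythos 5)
Your proof is correct and takes essentially the same route as the paper: one chooses a set-theoretic section $s$ of $p : S_K \to S_E$ and verifies the identity $p_{K/E}(N_{E/F}(x)) = N_{K/F}(s(x))$ for $x \in M[S_E]_0$. The paper leaves this identity as a ``short computation''; your factorization via $N_{K/E}\circ s = p_{K/E}$ together with the coset decomposition $N_{K/F} = \bigl(\sum_\sigma \tilde\sigma\bigr)\circ N_{K/E}$ is simply that computation carried out explicitly.
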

\begin{proof} Choose a section $s : S_E \to S_K$ of the natural projection $p : S_K \to S_E$. Given $\sum_w n_w[w] \in M[S_E]_0$ we have $\sum_w n_w [s(w)] \in M[S_K]_0$. A short computation reveals that $p_{K/E}N_{E/F}(\sum_w n_w[w])=N_{K/F}(\sum_w n_w [s(w)])$.
\end{proof}

In order to obtain the map $j_{K/E}$ in degree $-1$ we need to assume that $M=Y$ is \emph{torsion-free}. Then $j_{K/E} \circ N_{K/F} = [K:E]N_{E/F} \circ j_{K/E}$ implies that there is a well-defined map
\begin{eqnarray*}
j_{K/E} : \hat H^{-1}(\Gamma_{K/F},Y[S_K]_0) \to \hat H^{-1}(\Gamma_{E/F},Y[S_E]_0).
\end{eqnarray*}

\begin{lem}[Kottwitz] \label{lem:tninftori}
The diagrams
\[ \xymatrix{
	\hat H^{-1}(\Gamma_{E/F},Y[S_E]_0)\ar[r]&H^1(\Gamma_{E/F},T(O_{E,S}))\ar[d]^{\tx{Inf}}\\
	\hat H^{-1}(\Gamma_{K/F},Y[S_K]_0)\ar[u]_{j_{K/E}}\ar[r]&H^1(\Gamma_{K/F},T(O_{K,S}))\\
}\]
and
\[ \xymatrix{
	\hat H^0(\Gamma_{E/F},Y[S_E]_0)\ar[r]\ar[d]^{p_{K/E}}&H^2(\Gamma_{E/F},T(O_{E,S}))\ar[d]^{\tx{Inf}}\\
	\hat H^0(\Gamma_{K/F},Y[S_K]_0)\ar[r]&H^2(\Gamma_{K/F},T(O_{K,S}))\\
}\]
commute, where all horizontal maps are given by \eqref{eq:tnisotorus}.
\end{lem}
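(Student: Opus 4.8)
The plan is to verify the two claimed commutative squares by tracing the construction of the Tate-Nakayama isomorphism \eqref{eq:tnisotorus} through the inflation maps, reducing everything to compatibility statements about the fundamental classes $\alpha_1(E)$, $\alpha_2(E,S)$, $\alpha_3(E,S)$ under passage from $E$ to $K$. Since \eqref{eq:tnisotorus} is cup product with $\alpha_3(E,S) \in H^2(\Gamma_{E/F},\tx{Hom}(\Z[S_E]_0,O_{E,S}^\times))$, and inflation is a ring homomorphism for cup products, the key point will be a projection-formula-type identity: inflating $\alpha_3(E,S)$ from $\Gamma_{E/F}$ to $\Gamma_{K/F}$ and then pushing along the map $\tx{Hom}(\Z[S_E]_0,O_{E,S}^\times) \to \tx{Hom}(\Z[S_K]_0,O_{K,S}^\times)$ induced by $j_{K/E}$ on the source and the inclusion $O_{E,S}^\times \hrw O_{K,S}^\times$ on the target should relate to $\alpha_3(K,S)$ in the appropriate way. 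First I would set up, for a $\Gamma_{E/F}$-module $M$ and a $\Gamma_{K/F}$-module $N$ together with a pairing, the standard formula $\tx{Inf}(a) \cup b' = \text{(pushforward of)}\, (a \cup \text{(corestriction-type image of)}\, b')$; in our situation the maps $j_{K/E}$ and $p_{K/E}$ play the roles of corestriction/restriction on the lattice side, and this is exactly why the lemma pairs $j_{K/E}$ with the degree $-1$ diagram (where $j_{K/E}$ lands in $\hat H^{-1}$) and $p_{K/E}$ with the degree $0$ diagram.

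Next I would carry out the reduction in two stages, mirroring the definition of $\alpha_3$ via $\alpha$ via $(\alpha_2,\alpha_1)$. Stage one: verify the analogous compatibility at the level of $\tx{Hom}((B_{E,S}),(A_{E,S}))$, i.e.\ produce from $j_{K/E}$ (on the $B$-complexes) and the natural inclusions $J_{E,S} \hrw J_{K,S}$, $C(E) \hrw C(K)$ (on the $A$-complexes) a map of Hom-complexes, and show that $\tx{Inf}(\alpha(E,S))$ maps to $\alpha(K,S)$ under it (possibly up to the expected factor $[K:E]$ coming from $j_{K/E}\circ p_{K/E} = [K:E]$). Because the injection into $H^2$ of the product $\tx{Hom}(\Z[S_E],J_{E,S}) \times \tx{Hom}(\Z,C(E))$ detects classes, it suffices to check this on the two factors. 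For the $\tx{Hom}(\Z,C(E))$ factor this is the classical statement that inflation sends the global fundamental class $\alpha_1(E)$ to $[K:E]\cdot\alpha_1(K)$ (compatibility of fundamental classes in the tower $F \subset E \subset K$), which I would cite from class field theory. For the $\tx{Hom}(\Z[S_E],J_{E,S})$ factor, I would use the Shapiro description: $\alpha_2$ corresponds to the tuple of local fundamental classes $(\alpha_{E_{\dot v}/F_v})_{v\in S}$, and under the transition map (induced by $j_{K/E}$, which on the $v$-component amounts to summing over $u \mid v$) this reduces again to compatibility of local fundamental classes in towers, together with bookkeeping of the local degrees $[K_{\dot u}:E_{\dot v}]$ which is precisely what the factors $|\Gamma_{K/E,u}|$ in $p_{K/E}$ encode. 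Stage two is then purely formal: push the stage-one identity along the left diagonal map to get the statement for $\alpha_3$, and feed it into the cup-product naturality to obtain the two diagrams.

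The main obstacle I expect is the careful handling of the asymmetry between $j_{K/E}$ and $p_{K/E}$ and the associated degree factors: the map $j_{K/E}$ is the "norm-like" map with $j_{K/E}\circ p_{K/E} = [K:E]$, so one must be scrupulous about whether inflation introduces a factor of $[K:E]$ and about the distinction, made explicitly in the excerpt, that $j_{K/E}$ is needed in degree $-1$ and $p_{K/E}$ in degree $0$ (the degrees where their existence is "slightly less obvious" — $p_{K/E}$ in degree $0$ via the section $s$, and $j_{K/E}$ in degree $-1$ only after assuming $Y$ torsion-free). One has to ensure that after transporting along the Tate-Nakayama isomorphism these maps land in the cohomology degrees $1$ and $2$ as drawn, and that the projection formula is applied with the correct variance so that no spurious torsion obstruction appears; this is exactly the place where the torsion-freeness hypothesis on $Y$ is used. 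A secondary, more technical nuisance will be checking that the various auxiliary Hom-subgroups (the analogs of $\tx{Hom}_S$ from the previous lemma, now for the tower $K/E$) are set up so that the relevant sequences stay exact and the injectivity-into-$H^2$ arguments go through verbatim; but this is routine diagram-chasing once the framework is in place, and Kottwitz's treatment in \cite[\S8.3-\S8.5]{KotBG}, which the authors explicitly follow, provides the template.
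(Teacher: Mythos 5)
Your outline correctly captures the second half of the argument (reduce the Tate--Nakayama compatibility to a comparison of the classes $\alpha_3$, then of $\alpha=(\alpha_2,\alpha_1)$ via the injection into the product of $H^2$'s, then use compatibility of global and local fundamental classes under inflation together with the Shapiro bookkeeping), but there are two genuine gaps. First, the degree $-1$ square cannot be handled by ``inflation is a ring homomorphism for cup products'': there is no inflation map on $\hat H^{-1}(\Gamma_{E/F},\cdot)$, and the class $j_{K/E}\bar\Lambda$ that gets cupped with $\alpha_3(E,S)$ lives precisely there, so no identity of the form $\tx{Inf}(a\cup b)=\tx{Inf}(a)\cup\tx{Inf}(b)$ (nor the restriction/corestriction projection formula, which concerns subgroups rather than the quotient $\Gamma_{K/F}\twoheadrightarrow\Gamma_{E/F}$, and in any case $j_{K/E},p_{K/E}$ are coefficient-module maps, not change-of-group maps) applies. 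The paper's proof supplies the missing idea: choose a surjection $Y'\to Y$ from a finite free $\Z[\Gamma_{E/F}]$-module with kernel $Y''$, check that $j_{K/E}$ and $p_{K/E}$ intertwine the connecting homomorphisms $\hat H^{-1}(\cdot,Y[S_\bullet]_0)\to\hat H^0(\cdot,Y''[S_\bullet]_0)$ (which are given by the norm on $Y'$), and use that $H^1(\Gamma_{E/F},T(O_{E,S}))\to H^2(\Gamma_{E/F},T''(O_{E,S}))$ is injective because $T'$ is induced; this reduces the first diagram to the second applied to $T''$. Your remark that ``torsion-freeness of $Y$ is used here'' gestures at the right place but does not replace this dimension-shifting step, which is where the real work for the first square lies.

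Second, your key identity is set up with the wrong map. The correct statement (and the one the paper proves) is that $\tx{Inf}\,\alpha_3(E,S)$ and $p_{K/E}^*\,\alpha_3(K,S)$ agree \emph{exactly} in $H^2(\Gamma_{K/F},\tx{Hom}(\Z[S_E]_0,O_{K,S}^\times))$; the factor $[K:E]$ you anticipate is absorbed because the $\Z$-component of $p_{K/E}$, namely $p_1:\Z\to\Z$, is multiplication by $[K:E]$, matching $\tx{Inf}\,\alpha_1(E)=[K:E]\,\alpha_1(K)$ from class field theory, while the $\alpha_2$-components match via the local fundamental classes (with $|\Gamma_{K/E,u}|$ accounting for the local degrees, using that corestriction $H^2(\Gamma_{K/F,u},K_u^\times)\to H^2(\Gamma_{K/F,w},K_w^\times)$ is an isomorphism). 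Your version, pushing the inflated class into $\tx{Hom}(\Z[S_K]_0,O_{K,S}^\times)$ via $j_{K/E}$ ``up to the factor $[K:E]$,'' is not correct: composing the true identity with $j_{K/E}^*$ gives $N_{K/E}^*\,\alpha_3(K,S)$, which is not $[K:E]\,\alpha_3(K,S)$ in general, and any identity holding only up to $[K:E]$ could not yield the on-the-nose commutativity asserted in the lemma. Once the identity is stated with $p_{K/E}$, the degree $0$ square follows from the adjunction $\langle p_{K/E}^*\beta,\Lambda\rangle=\langle\beta,p_{K/E}\Lambda\rangle$ for the pairing into $T(O_{K,S})$, and the degree $-1$ square then follows by the dimension-shifting reduction above.
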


\begin{proof}
We present Ta\"ibi's modification of Kottwitz's arguments. As a first step, we will show that the commutativity of the second diagram implies that of the first. We choose a finitely generated free $\Z[\Gamma_{E/F}]$-module $Y'$ with a $\Gamma_{E/F}$-equivariant surjection $Y' \to Y$ and let $Y''$ be the kernel of this surjection. This leads to the exact sequences $1 \to T'' \to T' \to T \to 1$
of tori defined over $F$ and split over $E$, whose cocharacter modules are given by $Y''$, $Y'$, and $Y$. The sequence of $O_{E,S}$-points of these tori is also exact -- this follows from $H^1(\Gamma_{E,S},T''(O_S)) \cong H^1(\Gamma_{E,S},O_S^\times)^{\tx{rk}(T')}=0$ by \cite[Prop. 8.3.11]{NSW08} and Conditions \ref{cnds:tate}. The same is true for the sequence of $O_{K,S}$-points.

\begin{lem}
The diagram
\[ \xymatrix{
	\hat H^{-1}(\Gamma_{E/F},Y[S_E]_0)\ar[r]&\hat H^0(\Gamma_{E/F},Y''[S_E]_0)\ar[d]^{p_{K/E}}\\
	\hat H^{-1}(\Gamma_{K/F},Y[S_K]_0)\ar[u]_{j_{K/E}}\ar[r]&\hat H^0(\Gamma_{K/F},Y''[S_K]_0)
}\]
commutes, where the horizontal maps are the connecting homomorphisms.
\begin{proof}
This is an immediate computation. Recall that the connecting homomorphism is given by the norm map on $Y'$.
\end{proof}
\end{lem}
Since $T'$ is induced, the connecting homomorphism $H^1(\Gamma_{E/F},T(O_{E,S})) \to H^2(\Gamma_{E/F},T''(O_{E,S}))$ and its $K/F$-analog are injective. By naturality of inflation the commutativity of the first diagram in Lemma \ref{lem:tninftori} is equivalent to the commutativity of
\[ \xymatrix{
	\hat H^{-1}(\Gamma_{E/F},Y[S_E]_0)\ar[r]&H^2(\Gamma_{E/F},T''(O_{E,S}))\ar[d]^{\tx{Inf}}\\
	\hat H^{-1}(\Gamma_{K/F},Y[S_K]_0)\ar[u]_{j_{K/E}}\ar[r]&H^2(\Gamma_{K/F},T''(O_{K,S}))\\
}\]
where the horizontal maps are now given by composing the Tate-Nakayama isomorphisms with the connecting homomorphisms. But the Tate-Nakayama isomorphisms are given by cup-product with the classes $\alpha_3(E,S)$ and $\alpha_3(K,S)$. Since cup-product respects connecting homomorphisms, we now see that the commutativity of the above diagram is implied by the commutativity of the second diagram in Lemma \ref{lem:tninftori}, applied to the torus $T''$.

To prove the latter, we observe that the cup-product in degrees $(0,2)$ is simply given by tensor product and hence the desired commutativity follows from the following lemma relating the Tate classes $\alpha_3(E,S)$ and $\alpha_3(K,S)$.

\begin{lem} The image of $\alpha_3(E,S)$ under the inflation map
\[ H^2(\Gamma_{E/F},\tx{Hom}(\Z[S_E]_0,O_{E,S})) \to H^2(\Gamma_{K/F},\tx{Hom}(\Z[S_E]_0,O_{K,S})) \]
coincides with the image of $\alpha_3(K,S)$ under the map \[ H^2(\Gamma_{K/F},\tx{Hom}(\Z[S_K]_0,O_{K,S})) \to H^2(\Gamma_{K/F},\tx{Hom}(\Z[S_E]_0,O_{K,S})) \] induced by $p_{K/E}$.
\end{lem}
\begin{proof}
The map $p_{K/E} : \Z[S_E] \to \Z[S_K]$ induces a $\Gamma_{K/F}$-equivariant map of exact sequences $(B_{E,S}) \to (B_{K,S})$, whose components we denote by $p_3$, $p_2$, and $p_1$ for short. Explicitly, $p_3$ and $p_2$ are given by the same formula that defines $p_{K/E}$, while $p_1 : \Z \to \Z$ is multiplication by $[K:E]$. Recalling that $\alpha_3(E,S)$ is the image of $\alpha(E,S) \in H^2(\Gamma_{E/F},\tx{Hom}((B_{E,S}),(A_{E,S})))$, it will be enough to show that the image of $\alpha(E,S)$ under the inflation map
\[ H^2(\Gamma_{E/F},\tx{Hom}((A_{E,S}),(B_{E,S}))) \to H^2(\Gamma_{K/F},\tx{Hom}((A_{E,S}),(B_{K,S}))) \]
coinsides with the image of $\alpha(K,S)$ under the map
\[ H^2(\Gamma_{K/F},\tx{Hom}((A_{K,S}),(B_{K,S}))) \to H^2(\Gamma_{K/F},\tx{Hom}((A_{E,S}),(B_{K,S}))) \]
given by $p=(p_3,p_2,p_1)$. Recalling that $\alpha(E,S)$ is determined by $\alpha_2(E,S) \in H^2(\Gamma_{E/F},\tx{Hom}(\Z[S_E],J_{E,S}))$ and $\alpha_1(E,S) \in H^2(\Gamma_{E/F},\tx{Hom}(\Z,C(E)))$, it is enough to prove the above statement for $\alpha_2$ and $\alpha_1$, with respect to the maps $p_2$ and $p_1$.

Identifying $\tx{Hom}(\Z,C(E))$ with $C(E)$, the statement about $\alpha_1$ is immediate from the compatibility of the canonical classes in global class field theory with inflation.

For the statement about $\alpha_2$ we consider the commutative diagram

\[\xymatrix{
H^2(\Gamma_{E/F},\tx{Hom}(\Z[S_E],J_{E,S}))\ar[d]^{\tx{Inf}}\ar[r]&\prod_{v \in S} H^2(\Gamma_{E/F,w},J_{E,S})\ar[d]\\
H^2(\Gamma_{K/F},\tx{Hom}(\Z[S_E],J_{K,S}))\ar[r]&\prod_{v \in S} H^2(\Gamma_{K/F,w},J_{K,S})\\
H^2(\Gamma_{K/F},\tx{Hom}(\Z[S_K],J_{K,S}))\ar[u]_{p_2}\ar[r]&\prod_{v \in S} H^2(\Gamma_{K/F,u},J_{K,S})\ar[u]\\
}\]
where on the right we choose arbitrarily for each $v \in S$ places $w \in S_E$ and $u \in S_K$, with $u|w|v$. The horizontal maps are the Shapiro isomorphisms. The top right vertical map is again inflation, induced by the projection $\Gamma_{K/F,w} \to \Gamma_{E/F,w}$ and the inclusion $J_{E,S} \to J_{K,S}$, while the bottom right vertical map is given by $[K_u:E_w] \cdot \tx{cor}$, with corestriction induced by the inclusion $\Gamma_{K/F,u} \to \Gamma_{K/F,w}$. Now $\alpha_2(E,S)$ in the top left corner corresponds to the collection $(\alpha_{E_w/F_v})_{v \in S}$ in the top right corner, where $\alpha_{E_w/F_v}$ is the image of the canonical class in $H^2(\Gamma_{E/F,w},E_w^\times)$ under the natural inclusion $E_w^\times \to J_{E,S}$. Thus we are led to consider the diagram
\[ \xymatrix{
H^2(\Gamma_{E/F,w},J_{E,S})\ar[d]&H^2(\Gamma_{E/F,w},E_w^\times)\ar[l]\ar[d]\\
H^2(\Gamma_{K/F,w},J_{K,S})&H^2(\Gamma_{K/F,w},K_w^\times)\ar[l]\\
H^2(\Gamma_{K/F,u},J_{K,S})\ar[u]&H^2(\Gamma_{K/F,u},K_u^\times)\ar[l]\ar[u]
}
\]
where $K_w = E_w \otimes_E K$, the horizontal maps are the natural inclusions, the left vertical maps are as in the previous digram, and the right vertical maps are the corresponding maps -- the top is inflation, and the bottom is $[K:E]$ times corestriction. We now have to show that the canonical classes in $H^2(\Gamma_{E/F,w},E_w^\times)$ and $H^2(\Gamma_{K/F,u},K_u^\times)$ meet in $H^2(\Gamma_{K/F,w},K_w^\times)$. But the corestriction homomorphism $H^2(\Gamma_{K/F,u},K_u^\times) \to H^2(\Gamma_{K/F,w},K_w^\times)$ is an isomorphism whose inverse is given by the projection $K_w^\times \to K_u^\times$ followed by restriction along the inclusion $\Gamma_{K/F,u} \to \Gamma_{K/F,w}$. Composing both right vertical maps with this inverse gives on the one hand $[K:E] \cdot \tx{id}$ on $H^2(\Gamma_{K/F,u},K_u^\times)$, and on the other hand the usual inflation map $H^2(\Gamma_{E/F,w},E_w^\times) \to H^2(\Gamma_{K/F,u},K_u^\times)$ and local class field theory asserts that the fundamental classes for $E_w/F_v$ and $K_u/F_v$ meet under these two maps.
\end{proof}
With this the proof of Lemma \ref{lem:tninftori} is complete.
\end{proof}

We will soon need a variant of Lemma \ref{lem:tninftori} for finite multiplicative groups instead of tori. As we saw above, in this case the map $j_{K/E}$ does not induce a map on the level of $\hat H^{-1}$. In order to state the result, we will need to work with a map in the opposite direction, that for torsion-free $M=Y$ is an inverse of $j_{K/E}$. We will now discuss this inverse and then restate the lemma with it. Fix an arbitrary section $s : S_E \rw S_K$ of the natural projection $p : S_K \rw S_E$ and let $s_! : \Z[S_E]_0 \rw \Z[S_K]_0$ be the map sending $\sum_{w \in S_E} n_w [w]$ to $\sum_{w \in S_E} n_w [s(w)]$.

\begin{lem} \label{lem:tnshriek}
Assume that for each $\sigma \in \Gamma_{K/E}$ there exists $u \in S_K$ such that $\sigma u = u$. For any $\Gamma_{E/F}$-module $M$ the map $s_!$ induces a well-defined map
\[ !:\hat H^{-1}(\Gamma_{E/F},M[S_E]_0) \rw \hat H^{-1}(\Gamma_{K/F},M[S_K]_0) \]
that is functorial in $M$ and independent of the choice of $s$. When $M=Y$ is torsion-free it is a right-inverse to the map $j_{K/E} : \hat H^{-1}(\Gamma_{K/F},M[S_K]_0) \rw \hat H^{-1}(\Gamma_{E/F},M[S_E]_0)$.
\end{lem}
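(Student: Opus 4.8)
The plan is to verify each of the three assertions in turn: (i) that $s_!$ descends to cohomology in degree $-1$; (ii) that the induced map is independent of the section $s$; and (iii) that it is functorial in $M$ and, when $M=Y$ is torsion-free, right-inverse to $j_{K/E}$. For (i), I would work with the concrete description of $\hat H^{-1}$ as the kernel of the norm: an element of $\hat H^{-1}(\Gamma_{E/F},M[S_E]_0)$ is represented by $x \in (M[S_E]_0)_{\Gamma_{E/F}}$ with $N_{E/F}x=0$ in $M[S_E]_0$. The issue is that $s_!$ is \emph{not} $\Gamma$-equivariant, so one must check that it nonetheless carries norm-zero coinvariants to norm-zero coinvariants, well-defined up to the augmentation ideal. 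The key computational observation is the one already used in the lemma preceding this one: for $\sum_w n_w[w] \in M[S_E]_0$ one has $p_{K/E}N_{E/F}(\sum_w n_w[w]) = N_{K/F}(\sum_w n_w[s(w)]) = N_{K/F}(s_!(\sum_w n_w[w]))$. Combined with the identity $j_{K/E}\circ p_{K/E} = [K:E]\cdot\mathrm{id}$ and the fact that $j_{K/E}s_! = \mathrm{id}$ on $\Z[S_E]_0$ (since $p(s(w))=w$), this pins down the relationship between $N_{K/F}s_!x$ and $N_{E/F}x$; in particular $N_{E/F}x=0$ forces $j_{K/E}(N_{K/F}s_!x) = [K:E]N_{E/F}x = 0$, and then I would use torsion-freeness, or in the general-$M$ case a more careful argument, to conclude $N_{K/F}s_!x$ actually vanishes rather than merely being killed by $j_{K/E}$. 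This last point — upgrading ``$j_{K/E}(N_{K/F}s_!x)=0$'' to ``$N_{K/F}s_!x=0$'' for arbitrary $M$ — is where the hypothesis that some $u\in S_K$ is fixed by each $\sigma\in\Gamma_{K/E}$ must enter: it guarantees that $\Z[S_K]_0$ has no ``extra'' $N_{K/E}$-torsion coming from the $\Gamma_{K/E}$-action, so that the relevant piece of $\hat H^0(\Gamma_{K/E},M[S_K]_0)$ is controlled. I expect verifying this to be the main obstacle; everything else is bookkeeping.

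For (ii), independence of $s$: two sections $s,s'$ differ by $[s'(w)] = \sigma_w[s(w)]$ for suitable $\sigma_w\in\Gamma_{K/E}$, so $s'_!x - s_!x = \sum_w n_w(\sigma_w - 1)[s(w)]$. Each summand lies in the augmentation ideal $I_{K/F}\cdot M[S_K]$ — more precisely in $I_{K/E}\cdot M[S_K]_0$ after noting the result stays in the degree-zero part — hence represents $0$ in $\hat H^{-1}$. I would also need to confirm that changing $s'$ back to $s$ respects the coinvariants, but since we are already passing to $(-)_{\Gamma_{K/F}}$ this is automatic. This step is short.

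For (iii), functoriality in $M$ is immediate from the formula for $s_!$, which is induced by a map $\Z[S_E]_0\to\Z[S_K]_0$ and tensored with $M$. For the right-inverse claim when $M=Y$ is torsion-free: on the level of modules $j_{K/E}\circ s_! = \mathrm{id}_{\Z[S_E]_0}$ by the section property, hence $j_{K/E}\circ s_! = \mathrm{id}$ after tensoring with $Y$, and this identity of module maps descends to the identity on $\hat H^{-1}$ because both $s_!$ (by part (i)) and $j_{K/E}$ (already established in the excerpt, using torsion-freeness) are well-defined on $\hat H^{-1}$. So $! $ followed by $j_{K/E}$ is the identity on $\hat H^{-1}(\Gamma_{E/F},Y[S_E]_0)$, as claimed. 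I would close by remarking that the map $!$ depends, a priori, on $K/E$ but the construction is evidently compatible with composition in towers, which is what will be needed later.
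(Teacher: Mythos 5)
There is a genuine gap, and it sits exactly where you declare the argument to be short. The step you flag as the main obstacle --- showing $N_{K/F}(s_!x)=0$ --- is in fact immediate from the identity you yourself quote: $N_{K/F}(s_!x)=p_{K/E}(N_{E/F}x)$, so $N_{E/F}x=0$ forces $N_{K/F}(s_!x)=0$ on the nose, for arbitrary $M$, with no appeal to $j_{K/E}$, to torsion-freeness, or to the fixed-point hypothesis (this is essentially what the paper does, by expanding the norm and counting stabilizers $|\tx{Stab}(u,\Gamma_{K/E})|$). Your proposed detour --- deduce $j_{K/E}(N_{K/F}s_!x)=0$ and then ``upgrade'' --- is not a proof: $j_{K/E}$ has a large kernel on $M[S_K]_0$, torsion-freeness is unavailable for general $M$ (the lemma is later applied with $M=A^\vee$ torsion, which is the case that matters), and the mechanism you attribute to the hypothesis (``no extra $N_{K/E}$-torsion'') is not an argument.

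The real content of the hypothesis, and the missing step in your proposal, is the independence of the section together with the descent to coinvariants. Writing $s'_!x-s_!x=\sum_w n_w(\sigma_w-1)[s(w)]$ with $\sigma_w\in\Gamma_{K/E}$, observe that $n_w[s(w)]$ does \emph{not} lie in $M[S_K]_0$, so each summand is only visibly an element of $I_{K/F}M[S_K]\cap M[S_K]_0$, which need not coincide with $I_{K/F}M[S_K]_0$; it is the latter you must land in, since $\hat H^{-1}$ is computed from coinvariants of the degree-zero module, and your parenthetical ``after noting the result stays in the degree-zero part'' does not bridge this. This is precisely where the hypothesis enters: choosing $u_0\in S_K$ with $\sigma_w u_0=u_0$, one has $n_w[s(w)]-n_w[u_0]\in M[S_K]_0$, and since $\sigma_w\in\Gamma_{K/E}$ fixes $u_0$ and acts trivially on $M$, $(\sigma_w-1)\bigl(n_w[s(w)]-n_w[u_0]\bigr)=n_w(\sigma_w-1)[s(w)]$, giving the required membership. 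You also never verify that $s_!$ carries $I_{E/F}M[S_E]_0$ into $I_{K/F}M[S_K]_0$, which is needed for the map to be well defined on classes at all; in the paper this follows by combining the independence claim just described with the equivariance $\sigma(s_!(x))=(\sigma s)_!(\sigma x)$ for $\sigma\in\Gamma_{K/F}$. Your treatment of functoriality and of the right-inverse property ($j_{K/E}\circ s_!=\tx{id}$ already at the level of modules) is fine.
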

\begin{proof}
Let $s,s'$ be two sections and let $x \in M[S_E]_0$ be given as $x=\sum_w n_w[w]$ with $n_w \in M$. We claim that $s_!(x) = s'_!(x)$ in $H_0(\Gamma_{K/F},M[S_K]_0)$. We have
\[
s_!(x)-s'_!(x) = \sum_w (n_{w}[s(w)]-n_{w}[s'(w)]),
\]
where the sum is taken over the finite set $\{w \in S_E|n_w \neq 0\}$. It is enough to show that each summand belongs to $I_{K/F}M[S_K]_0$, so fix $w_0 \in \{w \in S_E|n_w \neq 0\}$ and let $\sigma \in \Gamma_{K/E}$ be such that $\sigma(s(w_0))=s'(w_0)$. By assumption there exists $u_0 \in S_K$ with $\sigma u_0=u_0$. Then we have
\[ s_!(x)-s'_!(x) = (n_{w_0}[s(w_0)]-n_{w_0}[u_0]) -\sigma(n_{w_0}[s(w_0)]-n_{w_0}[u_0]), \]
noting that $\Gamma_{K/E}$ acts trivially on $M$, and the claim is proved.

One checks easily that for $x \in M[S_E]_0$ and $\sigma \in \Gamma_{K/F}$ we have $\sigma(s_!(x)) = (\sigma s)_!(\sigma x)$. From this and the claim we just proved it follows that $s_!$ maps $I_{E/F}M[S_E]_0$ to $I_{K/F}M[S_K]_0$. We conclude that $s_!$ determines a well-defined map $H_0(\Gamma_{E/F},M[S_E]_0) \rw H_0(\Gamma_{K/F},M[S_K]_0)$ and that this map is independent of $s$.

We now argue that if $x \in M[S_E]_0$ satisfies $N_{E/F}(x)=0$, then $N_{K/F}(s_!(x))=0$. Write $x=\sum_{w \in S_E} n_w [w]$. Then $s_!(x)=\sum_{u \in S_K} \delta_{u,sp(u)}n_{p(u)}[u]$, where $\delta_{u,u'}$ is equal to $1$ if $u=u'$ and to $0$ otherwise. A quick computation shows
\[ N_{K/F}(s_!(x))=\sum_{u \in S_K}\sum_{\tau \in \Gamma_{E/F}} |\tx{Stab}(s(\tau^{-1}p(u)),\Gamma_{K/E})|\tau n_{\tau^{-1}p(u)}[u]. \]
For fixed $u \in S_K$ and $\tau \in \Gamma_{E/F}$ let $\sigma \in \Gamma_{K/F}$ be such that $\sigma u = s(\tau^{-1}p(u))$. Then
\[ \tx{Stab}(s(\tau^{-1}p(u)),\Gamma_{K/E}) = \sigma\tx{Stab}(u,\Gamma_{K/F})\sigma^{-1} \cap \Gamma_{K/E}. \]
Since $E/F$ is Galois, the cardinality of this group is equal to the cardinality of $\tx{Stab}(u,\Gamma_{K/E})$. We thus conclude that
\[ N_{K/F}(s_!(x))=\sum_{u \in S_K}|\tx{Stab}(u,\Gamma_{K/E})|\sum_{\tau \in \Gamma_{E/F}}\tau n_{\tau^{-1}p(u)}[u]. \]
The inner sum vanishes due to the assumption $N_{E/F}(x)=0$.
\end{proof}

\begin{cor} \label{cor:tnshriek}
Under the assumptions of Lemma \ref{lem:tnshriek}, for $M=Y$ torsion-free the diagram
\[ \xymatrix{
	\hat H^{-1}(\Gamma_{E/F},Y[S_E]_0)\ar[r]\ar[d]^{!}&H^1(\Gamma_{E/F},T(O_{E,S}))\ar[d]^{\tx{Inf}}\\
	\hat H^{-1}(\Gamma_{K/F},Y[S_K]_0)\ar[r]&H^1(\Gamma_{K/F},T(O_{K,S}))\\
}\]
commutes and both vertical maps are isomorphisms.
\end{cor}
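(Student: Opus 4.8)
The plan is to derive the corollary formally from Lemma~\ref{lem:tninftori} and Lemma~\ref{lem:tnshriek}, essentially without new computation. Write $\tx{TN}_E$ and $\tx{TN}_K$ for the two horizontal maps; they are the Tate--Nakayama isomorphisms of \eqref{eq:tnisotorus}, and in particular bijective, because $S$ satisfies Conditions~\ref{cnds:tate} with respect to both $E$ and $K$ by the standing hypotheses.

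The first step is to note that under the present assumptions $j_{K/E}$ is itself an isomorphism in degree $-1$. Indeed, the first diagram of Lemma~\ref{lem:tninftori} asserts the identity $\tx{Inf} \circ \tx{TN}_E \circ j_{K/E} = \tx{TN}_K$, and since $\tx{TN}_K$ is injective this forces $j_{K/E}$ to be injective. On the other hand, because $Y = X_*(T)$ is torsion-free and every element of $\Gamma_{K/E}$ fixes some place of $S_K$, Lemma~\ref{lem:tnshriek} produces a right inverse of $j_{K/E}$ --- the map $!$ --- so $j_{K/E}$ is also surjective. Hence $j_{K/E}$ is bijective and $! = j_{K/E}^{-1}$; this already gives that the left vertical arrow of the corollary is an isomorphism, and the same identity $\tx{Inf} \circ \tx{TN}_E \circ j_{K/E} = \tx{TN}_K$, now with all of $\tx{TN}_E$, $\tx{TN}_K$, $j_{K/E}$ bijective, shows that the right vertical arrow $\tx{Inf}$ is bijective as well.

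Commutativity of the square is then purely formal: precomposing the identity $\tx{Inf} \circ \tx{TN}_E \circ j_{K/E} = \tx{TN}_K$ with $! = j_{K/E}^{-1}$ on the right yields $\tx{Inf} \circ \tx{TN}_E = \tx{TN}_K \circ\, !$, which is exactly the assertion of the corollary. (Equivalently, one applies the commutative square of Lemma~\ref{lem:tninftori} to the class $!(y)$ and uses $j_{K/E}(!(y)) = y$.)

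I do not expect a serious obstacle; the only real point of care is verifying that the standing data --- the tower $K/E/F$ of finite Galois extensions, Conditions~\ref{cnds:tate} for both $E$ and $K$, torsion-freeness of $Y$, and the fixed-point hypothesis on $S_K$ --- genuinely cover the hypotheses of both lemmas invoked. As a fallback, bijectivity of $\tx{Inf}$ can also be obtained directly from the inflation--restriction sequence for $F \subseteq E \subseteq K$ with coefficients in $T(O_{K,S}) = Y \otimes O_{K,S}^\times$, using $T(O_{K,S})^{\Gamma_{K/E}} = T(O_{E,S})$ together with the vanishing of $H^1(\Gamma_{K/E}, O_{K,S}^\times)$, which follows by inflation from the vanishing of $H^1(\Gamma_{E,S}, O_S^\times)$ already used in the proof of Lemma~\ref{lem:tninftori}; but the formal argument is shorter and moreover makes the independence of $!$ from the auxiliary section $s$ automatic.
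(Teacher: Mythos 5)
Your argument is correct, and it differs from the paper's proof in one substantive way: the paper takes as input the bijectivity of the right vertical map, quoting Lemma \ref{lem:inf1} (proved just afterwards via inflation--restriction and the vanishing of $H^1(\Gamma_{E,S},T(O_S))$ from \cite[Prop.\ 8.3.11]{NSW08}), and then reads off from the first diagram of Lemma \ref{lem:tninftori} that $j_{K/E}$ is an isomorphism, with $!$ its inverse by Lemma \ref{lem:tnshriek}. You instead prove the bijectivity of $j_{K/E}$ directly from the two lemmas already available: injectivity because $\tx{Inf}\circ\tx{TN}_E\circ j_{K/E}=\tx{TN}_K$ is injective, and surjectivity because $!$ is a right inverse; bijectivity of $\tx{Inf}$ then comes out as a consequence rather than going in as an input. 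Both routes are sound and both rest on the identity $!=j_{K/E}^{-1}$; what yours buys is a proof that is self-contained at this point of the text, avoiding the forward reference to Lemma \ref{lem:inf1} and the arithmetic input behind it (at this step), while the paper's version is shorter given that Lemma \ref{lem:inf1} is needed later anyway and in fact yields the stronger statement about inflation all the way to $\Gamma_S$. One small caveat on your fallback remark: the vanishing of $H^1(\Gamma_{K/E},O_{K,S}^\times)$ does follow, as you say, from $H^1(\Gamma_{E,S},O_S^\times)=0$ by injectivity of inflation, but note that this again smuggles in exactly the NSW input of Lemma \ref{lem:inf1}, so the fallback is really a variant of the paper's route rather than of yours.
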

\begin{proof}
Lemma \ref{lem:inf1} below shows that the right vertical map is an isomorphism. Then so ist the left vertical map in Lemma \ref{lem:tninftori}, and then $!$ must be its inverse by Lemma \ref{lem:tnshriek}, from which the commuttivity of the square follows.
\end{proof}

We now want to relate the group $H^i(\Gamma_{E/F},T(O_{E,S}))$ to $H^i(\Gamma_S,T(O_S))$ and $H^i(\Gamma,T(\ol{F}))$ in the cases $i=1,2$.

\begin{lem} \label{lem:inf1}
	The inflation map $H^i(\Gamma_{E/F},T(O_{E,S})) \rw H^i(\Gamma_S,T(O_S))$ is bijective for $i=1$ and injective for $i=2$.
\end{lem}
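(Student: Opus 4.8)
The plan is to read off the statement from the five-term inflation--restriction exact sequence attached to the short exact sequence of profinite groups
\[ 1 \rw \tx{Gal}(F_S/E) \rw \Gamma_S \rw \Gamma_{E/F} \rw 1. \]
This sequence is available because $E/F$ is unramified outside $S$, so that $E \subset F_S$ and $H := \tx{Gal}(F_S/E)$ is an open normal subgroup of $\Gamma_S$ with quotient $\Gamma_{E/F}$; the coefficient module $T(O_S)$ is a discrete $\Gamma_S$-module, so Hochschild--Serre applies.

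The first step is to identify the $H$-invariants of the coefficient module: I claim $T(O_S)^H = T(O_{E,S})$ as $\Gamma_{E/F}$-modules. Since $T$ splits over $E$, hence over $F_S$, one has $T(O_S) = Y \otimes_\Z O_S^\times$ with $\Gamma_S$ acting diagonally and acting on $Y = X_*(T)$ through $\Gamma_{E/F}$, so $H$ acts trivially on $Y$. An element of $O_S$ fixed by $H$ lies in $F_S^H = E$ and is an $S$-integer there, so $(O_S^\times)^H = O_{E,S}^\times$; and since $Y$ is a free $\Z$-module of finite rank, taking $H$-invariants commutes with $Y \otimes (-)$. Hence $T(O_S)^H = Y \otimes O_{E,S}^\times = T(O_{E,S})$.

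With this identification the five-term exact sequence reads
\[ 0 \rw H^1(\Gamma_{E/F},T(O_{E,S})) \xrightarrow{\tx{Inf}} H^1(\Gamma_S,T(O_S)) \xrightarrow{\tx{Res}} H^1(H,T(O_S))^{\Gamma_{E/F}} \rw H^2(\Gamma_{E/F},T(O_{E,S})) \xrightarrow{\tx{Inf}} H^2(\Gamma_S,T(O_S)). \]
So inflation is injective in degree $1$ unconditionally, and it is an isomorphism in degree $1$ and injective in degree $2$ as soon as $H^1(H,T(O_S)) = 0$. Since $H$ acts trivially on the free module $Y$, we have $T(O_S) \cong (O_S^\times)^{\oplus \tx{rk}(T)}$ as $H$-modules, so the whole statement reduces to the vanishing $H^1(\tx{Gal}(F_S/E),O_S^\times) = 0$.

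That vanishing is the only real input, and it is \cite[Prop.~8.3.11]{NSW08}: note that $F_S$ is also the maximal extension of $E$ unramified outside $S_E$ and that $O_S$ is its ring of $S_E$-integers, so the cited result applies with base field $E$, using that $S$ (equivalently $S_E$) satisfies Conditions~\ref{cnds:tate}. The remaining points are purely bookkeeping --- justifying the change of base field from $F$ to $E$ in the notation $O_S$, and the commutation of $H$-invariants with the filtered colimit defining $O_S$ and with $Y\otimes(-)$ --- so I would regard the cohomological vanishing imported from \cite{NSW08} as the crux of the argument.
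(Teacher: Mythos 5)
Your proof is correct and is essentially the paper's own argument: both run the inflation--restriction sequence for $\tx{Gal}(F_S/E) \subset \Gamma_S$ with coefficients $T(O_S)$ and reduce everything to the vanishing $H^1(\tx{Gal}(F_S/E),O_S^\times)=0$ from \cite[Prop.~8.3.11]{NSW08} under Conditions \ref{cnds:tate}, using that $T$ splits over $E$. The only difference is that you spell out the identification $T(O_S)^{\tx{Gal}(F_S/E)} = T(O_{E,S})$, which the paper leaves implicit.
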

\begin{proof}
The inflation-restriction sequence takes the form
\[ \begin{aligned}
1&\rw H^1(\Gamma_{E/F},T(O_{E,S})) \rw H^1(\Gamma_S,T(O_S))\rw  H^1(\Gamma_{E,S},T(O_S))\rw \\
&\rw H^2(\Gamma_{E/F},T(O_{E,S}))\rw H^2(\Gamma_S,T(O_S))
\end{aligned} \]
Since $T$ splits over $E$ we have
\[ H^1(\Gamma_{E,S},T(O_S)) \cong H^1(\Gamma_{E,S},O_S^\times)^{\tx{rk}(T)} = 0 \]
by \cite[Prop. 8.3.11]{NSW08} and Conditions \ref{cnds:tate} and the proof is complete.
\end{proof}

\begin{lem} \label{lem:inf2}
Assume that for each $w \in V_E$ there exists $w' \in S_E$ with the property that $\tx{Stab}(w,\Gamma_{E/F})=\tx{Stab}(w',\Gamma_{E/F})$. The inflation map $H^i(\Gamma_S,T(O_S)) \rw H^i(\Gamma,T(\ol{F}))$ is injective for $i=1,2$.
\end{lem}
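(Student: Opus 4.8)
The plan is to factor the map through the cohomology of $T(F_S)$. Note that $\Gamma$ acts on $T(O_S)$ and $T(F_S)$ through its quotient $\Gamma_S$, that $T(O_S)\subset T(F_S)$ as $\Gamma_S$-modules, and that $T(F_S)=T(\ol F)^{\Gamma_{F_S}}$. Hence the map in question factors as
\[ H^i(\Gamma_S,T(O_S))\lrw H^i(\Gamma_S,T(F_S))\lrw H^i(\Gamma,T(\ol F)), \]
the first arrow induced by the inclusion, the second being inflation. That inflation is injective: for $i=1$ this is automatic, and for $i=2$ it follows from the five-term inflation-restriction sequence, since $H^1(\Gamma_{F_S},T(\ol F))=H^1(\Gamma_{F_S},\mathbb{G}_m)^{\rk(T)}=0$ by Hilbert 90, as $T$ splits over $E\subset F_S$. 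So everything reduces to showing that $H^i(\Gamma_S,T(O_S))\to H^i(\Gamma_S,T(F_S))$ is injective for $i=1,2$.

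The next step is to pass to finite level. By Conditions \ref{cnds:tate}(1) the splitting field of $T$ is unramified outside $S$, so, writing both sides as colimits over the finite Galois $E/F$ unramified outside $S$, namely $H^i(\Gamma_S,T(O_S))=\varinjlim_E H^i(\Gamma_{E/F},T(O_{E,S}))$ and $H^i(\Gamma_S,T(F_S))=\varinjlim_E H^i(\Gamma_{E/F},T(E))$, we may restrict to $E$ splitting $T$. For $i=1$ the first system already computes the colimit by Lemma \ref{lem:inf1}; for $i=2$ its transition maps are injective, because for $E\subset E'$ the kernel of $\tx{Inf}\colon H^2(\Gamma_{E/F},T(O_{E,S}))\to H^2(\Gamma_{E'/F},T(O_{E',S}))$ is controlled by $H^1(\Gal(E'/E),T(O_{E',S}))$, which vanishes since $H^1(\Gal(E'/E),O_{E',S}^\times)\hrw H^1(\Gamma_{E,S},O_S^\times)=0$ by \cite[Prop.~8.3.11]{NSW08} and Conditions \ref{cnds:tate} (cf.\ Lemma \ref{lem:inf1}). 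Thus it suffices to show $H^i(\Gamma_{E/F},T(O_{E,S}))\to H^i(\Gamma_{E/F},T(E))$ is injective for each such $E$. Writing $T(E)=\varinjlim_{S'}T(O_{E,S'})$ over the finite sets $S'\supset S$ — each again satisfying Conditions \ref{cnds:tate} — and combining Tate's isomorphism \eqref{eq:tnisotorus} with its compatibility with enlarging the set of places (Lemma \ref{lem:tnstori}), this last map is identified with the colimit over $S'$ of the maps $\hat H^{i-2}(\Gamma_{E/F},Y[S_E]_0)\to\hat H^{i-2}(\Gamma_{E/F},Y[S'_E]_0)$ induced by the inclusion $\Z[S_E]_0\hrw\Z[S'_E]_0$. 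So the statement reduces to the injectivity of these maps, and here the stabilizer hypothesis enters; in particular Tate's theorem is only ever invoked for finite sets of places, so no subtlety about $S'=V_F$ arises.

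Fix finite $S\subset S'$. From the exact sequence of $\Gamma_{E/F}$-modules $0\to Y[S_E]_0\to Y[S'_E]_0\to Y[S'_E\sm S_E]\to 0$, where $Y[S'_E\sm S_E]:=\Z[S'_E\sm S_E]\otimes Y$, the kernel of the map in question is the image of the connecting homomorphism $\hat H^{i-3}(\Gamma_{E/F},Y[S'_E\sm S_E])\to\hat H^{i-2}(\Gamma_{E/F},Y[S_E]_0)$, so it is enough that $\hat H^{i-3}(\Gamma_{E/F},Y[S'_E]_0)\to\hat H^{i-3}(\Gamma_{E/F},Y[S'_E\sm S_E])$ be surjective. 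Decompose $\Z[S'_E]=\bigoplus_O\Z[O]$ over the $\Gamma_{E/F}$-orbits $O$ of places in $S'_E$; for a representative $w\in O$ one has $\Z[O]\otimes Y\cong\tx{Ind}_{\Gamma_{E/F,w}}^{\Gamma_{E/F}}Y$, so Shapiro's lemma gives $\hat H^k(\Gamma_{E/F},Y[S'_E])=\bigoplus_O\hat H^k(\Gamma_{E/F,w},Y)$ and identifies the augmentation map $\hat H^k(\Gamma_{E/F},Y[S'_E])\to\hat H^k(\Gamma_{E/F},Y)$ coming from $0\to Y[S'_E]_0\to Y[S'_E]\to Y\to 0$ with the sum $\sum_O\tx{cor}_{\Gamma_{E/F,w}}^{\Gamma_{E/F}}$ of corestrictions. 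Hence the image of $\hat H^k(\Gamma_{E/F},Y[S'_E]_0)$ in $\bigoplus_O\hat H^k(\Gamma_{E/F,w},Y)$ is the kernel of that sum, and, projecting onto the components indexed by orbits inside $S'_E\sm S_E$, surjectivity onto $\hat H^k(\Gamma_{E/F},Y[S'_E\sm S_E])$ follows as soon as, for each place $w$ whose orbit lies in $S'_E\sm S_E$, the subgroup $\tx{cor}_{\Gamma_{E/F,w}}^{\Gamma_{E/F}}(\hat H^k(\Gamma_{E/F,w},Y))$ of $\hat H^k(\Gamma_{E/F},Y)$ lies in the sum of the analogous subgroups attached to orbits inside $S_E$. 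By hypothesis there is $w'\in S_E$ with $\Gamma_{E/F,w'}=\tx{Stab}(w',\Gamma_{E/F})=\tx{Stab}(w,\Gamma_{E/F})=\Gamma_{E/F,w}$, and then the two corestrictions literally coincide. Taking $k=i-3\in\{-2,-1\}$ settles both cases $i=1,2$.

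The heart of the proof, and the only place where the arithmetic of $S$ is genuinely used, is the final step: passing through Tate's isomorphism and its functoriality in the set of places to rewrite the problem in terms of the inclusion $\Z[S_E]_0\hrw\Z[S'_E]_0$, and then extracting from the stabilizer hypothesis exactly the statement that every corestriction class coming from a place outside $S$ is already realized by a place of $S$. The first two steps are routine but demand care with the two nested colimits and with the vanishing inputs that keep the transition maps injective; one must also ensure the hypothesis remains available at each finite level arising in the colimit over $E$ (this is what forces $S$ to be taken large in the applications). For $i=2$ an alternative avoiding the colimit over $E$ is to map further into $H^2$ of the $S$-idele group $\bigoplus_{v\in S}H^2(F_v,T)$, whose kernel is governed by the $S$-idele class group, and to combine this with the finite-level result; I would expect that to be the technically smoothest route.
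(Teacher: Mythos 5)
Your finite-level mechanism is correct and genuinely different from the paper's: where you use the exact sequence $0 \to Y[S_E]_0 \to Y[S'_E]_0 \to Y[S'_E\sm S_E] \to 0$ together with Shapiro and the identification of the augmentation map with a sum of corestrictions, the paper instead uses the stabilizer hypothesis to build a $\Gamma_{E/F}$-equivariant retraction $f: V_E \to S_E$, which makes $Y[S_E]_0 \to Y[S'_E]_0$ (indeed $\to Y[V_E]_0$) split injective on $\hat H^i$ for all $i$ at once; your corestriction argument is a valid, non-split alternative at that step, and for $i=1$ your overall reduction is fine, since by Lemma \ref{lem:inf1} everything can be kept at the single level $E$, where Conditions \ref{cnds:tate} (also for every finite $S' \supset S$) and the stabilizer hypothesis are available.

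The gap is in the case $i=2$. Since Lemma \ref{lem:inf1} gives only injectivity, not surjectivity, of $H^2(\Gamma_{E/F},T(O_{E,S})) \to H^2(\Gamma_S,T(O_S))$, your argument must prove injectivity of $H^2(\Gamma_{E'/F},T(O_{E',S})) \to H^2(\Gamma_{E'/F},T(E'))$ for \emph{every} finite Galois $E'/F$ inside $F_S$ containing $E$. But for such $E'$ you invoke Tate's isomorphism \eqref{eq:tnisotorus} and Lemma \ref{lem:tnstori} for the pair $(E',S)$, which requires Conditions \ref{cnds:tate}(2) for $E'$ (triviality of the $S$-class group of $E'$ — this is exactly what makes $J_{E',S} \to C(E')$ surjective), and you invoke the stabilizer hypothesis for places of $E'$ relative to $\Gamma_{E'/F}$. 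Neither is implied by the hypotheses of the lemma, which concern only $E$; your parenthetical remark that "the hypothesis remains available at each finite level ... in the applications" cannot be used here, because $S$ and $E$ are fixed in the statement and $H^2(\Gamma_S,T(O_S))$ genuinely contains classes living only at levels $E' \supsetneq E$. This is precisely why the paper's proof of the $i=2$ case never decomposes $H^2(\Gamma_S,\cdot)$ along finite quotients: it applies inflation-restriction along the closed subgroup $\Gamma_{E,S}$ and the five-lemma, so that the only finite-level input is at $E$ itself (handled by Lemma \ref{lem:tnstori} with $S'=V_F$ and the retraction), while the remaining ingredient, injectivity of $H^2(\Gamma_{E,S},O_S^\times) \to H^2(\Gamma_E,\ol{F}^\times)$, is proved directly with the sequences $1 \to O_S^\times \to I_S \to C_S \to 1$ and $1 \to \ol{F}^\times \to I \to C \to 1$, needing no class-group or stabilizer conditions beyond level $E$. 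Your sketched alternative via $\bigoplus_{v\in S}H^2(F_v,T)$ is too vague to close this; to repair the proof you would need to replace the colimit over $E'$ by an argument of this inflation-restriction type (or otherwise control $H^2(\Gamma_{E,S},T(O_S)) \to H^2(\Gamma_E,T(\ol{F}))$).
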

\begin{proof}
We begin by constructing a $\Gamma_{E/F}$-invariant left-inverse of the natural inclusion $S_E \rw V_E$. For each $v \in V_F$ choose one $\dot v \in V_E$ and one $\ddot v \in S_E$ such that $p(\dot v)=v$ and $\tx{Stab}(\dot v,\Gamma_{E/F})=\tx{Stab}(\ddot v,\Gamma_{E/F})$. If $v \in S$ then we demand $\dot v = \ddot v$. Define a map $f : V_E \rw S_E$ as follows. Given $w \in V_E$ let $v=p(w)$ and choose $\sigma \in \Gamma_{E/F}$ such that $w=\sigma \dot v$. Set $f(w)=\sigma \ddot v$. The place $\sigma\ddot v$ is independent of the choice of $\sigma$ and so this map is well-defined. It is moreover $\Gamma_{E/F}$-equivariant, because given $\tau \in \Gamma_{E/F}$ we have $p(\tau w)=v$, $\tau w = \tau\sigma\dot v$, and hence $f(\tau w)=\tau\sigma\ddot v= \tau f(w)$. By construction $f$ is left-inverse to the natural inclusion $S_E \rw V_E$.

Now $f$ extends linearly to a map $Y[V_E] \rw Y[S_E]$, which in turn restricts to a map $Y[V_E]_0 \rw Y[S_E]_0$ that is $\Gamma_{E/F}$-equivariant and left-inverse to the natural inclusion $Y[S_E]_0 \rw Y[V_E]_0$. We conclude that for all $i \in \Z$ the map $\hat H^i(\Gamma_{E/F},Y[S_E]_0) \rw \hat H^i(\Gamma_{E/F},Y[V_E]_0)$ has a left-inverse and hence must be injective.

Applying Lemmas \ref{lem:tnshriek} and \ref{lem:inf1} we obtain the diagram
\[ \xymatrix{
\hat H^{-1}(\Gamma_{E/F},Y[S_E]_0)\ar[r]^\cong\ar[d]&H^1(\Gamma_{E/F},T(O_{E,S}))\ar[r]^\cong\ar[d]&H^1(\Gamma_S,T(O_S))\ar[d]\\
\hat H^{-1}(\Gamma_{E/F},Y[V_E]_0)\ar[r]^\cong&H^1(\Gamma_{E/F},T(E))\ar[r]^\cong&H^1(\Gamma,T(\ol{F}))
}\]
and the injectivity of the left vertical map implies the injectivity of the right vertical map.

To treat the case $i=2$ we consider the diagram
\[\xymatrix{
0\ar[r]&H^2(\Gamma_{E/F},T(O_{E,S}))\ar[r]\ar[d]&H^2(\Gamma_S,T(O_S))\ar[r]\ar[d]&H^2(\Gamma_{E,S},T(O_S))\ar[d]\\
0\ar[r]&H^2(\Gamma_{E/F},T(E))\ar[r]&H^2(\Gamma,T(\ol{F}))\ar[r]&H^2(\Gamma_E,T(\ol{F}))
}\]
By the five-lemma, we need to show the injectivity of the left and right vertical maps. The injectivity of the left vertical map follows from Lemma \ref{lem:tnstori} and the injectivity of $\hat H^0(\Gamma_{E/F},Y[S_E]_0) \rw \hat H^0(\Gamma_{E/F},Y[V_E]_0)$. Turning to the right vertical map, since $T$ splits over $E$ it is enough to prove the injectivity of the inflation map $H^2(\Gamma_{E,S},O_S^\times) \rw H^2(\Gamma_E,\ol{F}^\times)$. For this we consider the exact sequences $1 \rw O_S^\times \rw I_S \rw C_S \rw 1$ and $1 \rw \ol{F}^\times \rw I \rw C \rw 1$ and obtain the diagram

\[\xymatrix{
	{\underbrace{H^1(\Gamma_{E,S},C_S)}_{=0}}\ar[r]&H^2(\Gamma_{E,S},O_S^\times)\ar[r]\ar[d]&H^2(\Gamma_{E,S},I_S)\ar[d]\\
	{\overbrace{H^1(\Gamma_{E},C)}^{=0}}\ar[r]&H^2(\Gamma_{E},\ol{F}^\times)\ar[r]&H^2(\Gamma_{E},I)\\
}
\]
The map on the right is the map
\[ \varinjlim_{F_S/K/E}H^2(\Gamma_{K/E},I_{K,S}) \rw \varinjlim_{\ol{F}/K/E}H^2(\Gamma_{K/E},I_K), \]
where the transition maps in both limits are the usual inflation maps and the map $H^2(\Gamma_{K/E},I_{K,S}) \rw H^2(\Gamma_{K/E},I_K)$ is induced by the inclusion $I_{K,S} \rw I_K$. Since $I_{K,S}$ is a direct factor of $I_K$, the map $H^2(\Gamma_{K/E},I_{K,S}) \rw H^2(\Gamma_{K/E},I_K)$ is injective. Moreover, the vanishing of $H^1(\Gamma_{K/E},I_K)$ and $H^1(\Gamma_{K/E},I_{K,S})$ for all $K$ shows that the transition maps in the two limits are injective. It follows that the map $H^2(\Gamma_{E,S},I_S) \rw H^2(\Gamma_{E},I)$ in the above diagram is also injective, and then so is the map $H^2(\Gamma_{E,S},O_S^\times) \rw H^2(\Gamma_E,\ol{F}^\times)$. This completes the proof of the case $i=2$.
\end{proof}

\subsection{A description of $H^2(\Gamma,Z)$} \label{sub:h2z}

Let $Z$ be a finite multiplicative group defined over $F$. Write $A=X^*(Z)$ and $A^\vee=\tx{Hom}(A,\Q/\Z)$. The purpose of this Subsection is to establish a functorial isomorphism
\begin{equation} \label{eq:tnziso} \Theta: \varinjlim_{E,S} \hat H^{-1}(\Gamma_{E/F},A^\vee[S_E]_0) \rw H^2(\Gamma,Z). \end{equation}
We can think of $A^\vee[S_E]_0=\Z[S_E]_0 \otimes A^\vee$ either as formal finite linear combinations of elements of $S_E$ with coefficients in $A^\vee$, the sum of which equals zero, or as maps $S_E \to A^\vee$ the sum of whose values is zero. When we use the latter interpretation we will sometimes write $\tx{Maps}(S_E,A^\vee)_0$ instead.

\begin{fct} \label{fct:pair1}
	Let $n$ be a multiple of $\exp(Z)$. We have a functorial isomorphism
	\[ \Phi_{A,n}: A^\vee \rw \tx{Hom}(\mu_n,Z),\qquad a(\Phi_{A,n}(\lambda)(x)) = x^{n\lambda(a)}, \]
	where $a \in A$, $\lambda \in A^\vee$, and $x \in \mu_n$. If $n|m$ and $x \in \mu_m$ we have
	\[ \Phi_{A,m}(\lambda)(x)=\Phi_{A,n}(\lambda)(x^\frac{m}{n}). \]
\end{fct}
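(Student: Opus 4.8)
The plan is to reduce the statement to the anti-equivalence $Z\mapsto X^*(Z)=A$ between finite multiplicative groups over $F$ and finite continuous $\Gamma$-modules. Under it, for any finite multiplicative $\ol{F}$-group $Z'$ one has a natural $\Gamma$-equivariant identification $\tx{Hom}_{\ol{F}}(Z',Z)=\tx{Hom}_\Z(X^*(Z),X^*(Z'))$, where $\Gamma$ acts on the right-hand side through its actions on the character groups. I would apply this with $Z'=\mu_n$: since $\mu_n$ is defined over $\Q$, its character group $X^*(\mu_n)=\Z/n\Z$ is $\Gamma$-fixed, generated by the tautological character $\mu_n\hrw\mb{G}_m$. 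This already produces a natural $\Gamma$-equivariant isomorphism $\tx{Hom}(\mu_n,Z)\cong\tx{Hom}_\Z(A,\Z/n\Z)$, with $\Gamma$ acting on the right through $A$.

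Since by hypothesis $n$ is a multiple of $\exp(Z)=\exp(A)$, the group $A$ is annihilated by $n$, so every homomorphism $A\rw\Q/\Z$ takes values in $(\Q/\Z)[n]=\tfrac{1}{n}\Z/\Z$; using the canonical isomorphism $\tfrac{1}{n}\Z/\Z\xrightarrow{\ \sim\ }\Z/n\Z$ (multiplication by $n$, $\tfrac{k}{n}+\Z\mapsto k+n\Z$) I then identify $A^\vee=\tx{Hom}_\Z(A,\Q/\Z)$ with $\tx{Hom}_\Z(A,\Z/n\Z)$. Composing the two identifications yields the asserted isomorphism $\Phi_{A,n}:A^\vee\rw\tx{Hom}(\mu_n,Z)$, which is functorial in $Z$ and $\Gamma$-equivariant by construction. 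To recover the explicit formula, I would unwind the anti-equivalence: $\lambda\in A^\vee$ corresponds to the $\Gamma$-map $a\mapsto n\lambda(a)\in\Z/n\Z=X^*(\mu_n)$, hence to the homomorphism $\mu_n\rw Z$ whose composite with any character $a\in A=X^*(Z)$ is the character $x\mapsto x^{n\lambda(a)}$ of $\mu_n$ — that is, $a(\Phi_{A,n}(\lambda)(x))=x^{n\lambda(a)}$. Since the characters $a\in A$ separate the points of $Z(\ol{F})$, this relation determines $\Phi_{A,n}(\lambda)$ uniquely and may be taken as the definition.

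For the compatibility, note that $n|m$ forces $m$ to be a multiple of $\exp(Z)$ as well, so both $\Phi_{A,n}$ and $\Phi_{A,m}$ are defined, and the claim is the one-line substitution $a(\Phi_{A,m}(\lambda)(x))=x^{m\lambda(a)}=(x^{m/n})^{n\lambda(a)}=a(\Phi_{A,n}(\lambda)(x^{m/n}))$ for $x\in\mu_m$ and $a\in A$, the relevant integer exponent being $mk/n$ when $\lambda(a)=\tfrac{k}{n}+\Z$ (well-defined modulo $m$ since $x^m=1$); as this holds for all $a\in A$ and characters separate points, $\Phi_{A,m}(\lambda)(x)=\Phi_{A,n}(\lambda)(x^{m/n})$. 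I do not anticipate a genuine obstacle here: the only thing requiring attention is keeping straight the two canonical identifications (the one defining $X^*(\mu_n)$ via $\mu_n\hrw\mb{G}_m$, and $\tfrac{1}{n}\Z/\Z\cong\Z/n\Z$) together with the exponent bookkeeping. As an alternative to invoking the anti-equivalence one could verify directly that the displayed formula defines an additive map $A^\vee\rw\tx{Hom}(\mu_n,Z)$ and that it is bijective by the order count $|A^\vee|=|A|=|Z(\ol{F})|=|Z(\ol{F})[n]|=|\tx{Hom}(\mu_n,Z)|$.
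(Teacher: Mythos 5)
Your argument is correct: the identification $\tx{Hom}_{\ol{F}}(\mu_n,Z)\cong\tx{Hom}_\Z(A,\Z/n\Z)$ via the anti-equivalence of diagonalizable groups with their character modules, combined with $A^\vee\cong\tx{Hom}_\Z(A,\Z/n\Z)$ using $\exp(A)\mid n$, yields exactly the map with the stated formula, and your exponent bookkeeping for the compatibility $\Phi_{A,m}(\lambda)(x)=\Phi_{A,n}(\lambda)(x^{m/n})$ checks out. The paper states this as a Fact without proof, and your verification is precisely the routine argument it leaves implicit (the closing cardinality remark should be supplemented by the evident injectivity, e.g.\ by evaluating at a primitive $n$-th root of unity, but the main argument does not rely on it).
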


Choose a finite Galois extension $E/F$ splitting $Z$ and a finite full set $S \subset V_F$ satisfying the following conditions.

\begin{cnds} \label{cnds:tnz}
\begin{enumerate}
\item $S$ contains all archimedean places and all places that ramify in $E$.
\item Every ideal class in $C(E)$ contains an ideal supported on $S_E$.
\item $\exp(Z) \in \N_S$.
\item For each $w \in V_E$ there exists $w' \in S_E$ with $\tx{Stab}(w,\Gamma_{E/F})=\tx{Stab}(w',\Gamma_{E/F})$.
\end{enumerate}
\end{cnds}
Note that such finite sets exist and if $S$ satisfies these conditions and $S' \supset S$, then $S'$ also satisfies these conditions.

\begin{fct} \label{fct:pair2}
	Let $n$ be a multiple of $\exp(Z)$. We have a functorial isomorphism
	\[ \Phi_{A,S,n} : \tx{Maps}(S_E,A^\vee)_0 \rw \tx{Hom}(\tx{Maps}(S_E,\mu_n)/\mu_n,Z), \]
	which for $g \in \tx{Maps}(S_E,A^\vee)_0, f \in \tx{Maps}(S_E,\mu_n), a \in A$ is given by the formula
	\[ a(\Phi_{A,S,n}(g)(f)) = \prod_{w \in S_E} f(w)^{ng(w,a)}. \]
	If $n|m$ then $\Phi_{A,S,m}(g)(f)=\Phi_{A,S,n}(g)(f^\frac{m}{n})$.
\end{fct}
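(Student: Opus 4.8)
The plan is to identify $\Phi_{A,S,n}$ with the composite of two canonical isomorphisms of $\Gamma_{E/F}$-modules; bijectivity, functoriality in $Z$ (equivalently in $A=X^*(Z)$), and the compatibility under $n\mid m$ then all drop out from the corresponding properties of the two steps. One should first record that the displayed formula makes sense at all: since $A$ is killed by $\exp(Z)$, hence by $n$, for $g\in\tx{Maps}(S_E,A^\vee)_0$ and $a\in A$ one has $g(w,a)\in\tfrac1n\Z/\Z$, so $ng(w,a)\in\Z/n\Z$ and $f(w)^{ng(w,a)}\in\mu_n$ is unambiguous; moreover replacing the representative $f$ by $cf$ with $c\in\mu_n$ multiplies $\prod_{w\in S_E}f(w)^{ng(w,a)}$ by $c^{n\sum_{w}g(w,a)}=1$, as $\sum_w g(w,a)=\big(\sum_w g(w)\big)(a)=0$. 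Hence the formula descends to $\tx{Maps}(S_E,\mu_n)/\mu_n$ and manifestly defines a homomorphism into $Z(\ol F)=\tx{Hom}(A,\ol F^\times)$.

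The first isomorphism is
\[ \tx{Maps}(S_E,A^\vee)_0 \;=\; \Z[S_E]_0\otimes_\Z A^\vee \;\xrightarrow{\ \sim\ }\; \tx{Hom}\big(A,\;\Z[S_E]_0\otimes_\Z\tfrac1n\Z/\Z\big) \;=\; \tx{Hom}(A,\Lambda),\qquad \Lambda:=\tx{Maps}(S_E,\tfrac1n\Z/\Z)_0. \]
The first equality is the identification already recorded in the text; the middle arrow combines three elementary facts: $A^\vee=\tx{Hom}(A,\Q/\Z)=\tx{Hom}(A,\tfrac1n\Z/\Z)$ because $A$ is $n$-torsion; $\Z[S_E]_0$ is free of finite rank, so $\tx{Hom}(A,-)$ commutes with $\Z[S_E]_0\otimes_\Z(-)$; and tensoring the split exact sequence $0\to\Z[S_E]_0\to\Z[S_E]\to\Z\to 0$ with $\tfrac1n\Z/\Z$ identifies $\Z[S_E]_0\otimes_\Z\tfrac1n\Z/\Z$ with $\Lambda$. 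Concretely, $g=\sum_{w\in S_E}[w]\otimes g(w)$ is carried to the homomorphism $\phi_g\colon a\mapsto\big(w\mapsto g(w,a)\big)$, which lands in $\tx{Hom}(A,\Lambda)$ since $\sum_w g(w,a)=0$.

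The second isomorphism rests on the pairing
\[ \big(\tx{Maps}(S_E,\mu_n)/\mu_n\big)\times\Lambda\;\rw\;\mu_n,\qquad \big(f,(c_w)_w\big)\longmapsto\prod_{w\in S_E}f(w)^{nc_w}, \]
in which $(x,c)\mapsto x^{nc}$ is the canonical perfect pairing $\mu_n\times\tfrac1n\Z/\Z\to\mu_n$. It is the descent of the evident perfect pairing $\tx{Maps}(S_E,\mu_n)\times\tx{Maps}(S_E,\tfrac1n\Z/\Z)\to\mu_n$: after one restricts the second variable to $\Lambda$, the left radical becomes exactly the diagonal $\mu_n$ and the right radical becomes trivial, so the induced pairing has trivial radical on each side and, both groups being finite, is perfect; it therefore identifies $\Lambda=\tx{Hom}\big(\tx{Maps}(S_E,\mu_n)/\mu_n,\,\mu_n\big)$. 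Feeding this into the first step gives $\tx{Hom}(A,\Lambda)=\tx{Hom}\big(A,\tx{Hom}(\tx{Maps}(S_E,\mu_n)/\mu_n,\,\mu_n)\big)$, and interchanging the two arguments -- both sides being the group of biadditive maps $A\times\big(\tx{Maps}(S_E,\mu_n)/\mu_n\big)\to\mu_n$ -- together with $\tx{Hom}(A,\mu_n)=\tx{Hom}(A,\ol F^\times)=Z$ (once more because $A$ is $n$-torsion) yields
\[ \tx{Hom}(A,\Lambda)\;\xrightarrow{\ \sim\ }\;\tx{Hom}\big(\tx{Maps}(S_E,\mu_n)/\mu_n,\,Z\big). \]
Tracing $\phi_g$ through this map produces $f\mapsto\big(a\mapsto\prod_{w}f(w)^{ng(w,a)}\big)$, which is precisely $\Phi_{A,S,n}(g)$; so $\Phi_{A,S,n}$ is the composite of the two displayed isomorphisms, and in particular an isomorphism. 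Each step is a natural construction, hence $\Gamma_{E/F}$-equivariant and functorial in $Z$; the one point to keep in mind is that $\mu_n$ carries its tautological Galois action while $\tfrac1n\Z/\Z$ is a trivial Galois module, so the two are abstractly but not $\Gamma_{E/F}$-equivariantly isomorphic -- which is exactly why the pairing is expressed through the canonical $x^{nc}$ and not through a choice of primitive root of unity.

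Finally, for $n\mid m$ the source $\tx{Maps}(S_E,A^\vee)_0$ does not depend on $n$ (it uses only $A^\vee=\tx{Hom}(A,\Q/\Z)$), whereas the inclusion $\tfrac1n\Z/\Z\irw\tfrac1m\Z/\Z$ induces $\Lambda\irw\Lambda'$, with $\Lambda'=\tx{Maps}(S_E,\tfrac1m\Z/\Z)_0$; under the perfect pairings above this inclusion is dual to the surjection $\tx{Maps}(S_E,\mu_m)/\mu_m\thrw\tx{Maps}(S_E,\mu_n)/\mu_n$ given by $f\mapsto f^{m/n}$, as a one-line comparison of $\prod_w f(w)^{mc_w}$ with $\prod_w\big(f(w)^{m/n}\big)^{nc_w}$ for $c_w\in\tfrac1n\Z/\Z$ shows. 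Combined with the descriptions of $\Phi_{A,S,n}$ and $\Phi_{A,S,m}$ as the above composites, this gives $\Phi_{A,S,m}(g)(f)=\Phi_{A,S,n}(g)(f^{m/n})$. The whole argument is formal; the only steps that need a moment's care -- and hence the closest thing here to an obstacle -- are the two book-keeping matches across the duality: that passing to the quotient by the diagonal $\mu_n$ corresponds to imposing the subscript $(\cdot)_0$, and that restriction along $\tfrac1n\Z/\Z\irw\tfrac1m\Z/\Z$ dualizes to the $(m/n)$-th power map. Alternatively, one could deduce the statement from Fact~\ref{fct:pair1} by applying $\Z[S_E]_0\otimes_\Z(-)$ throughout and keeping track of the $(\cdot)_0$ subscripts, but the direct route above is cleaner.
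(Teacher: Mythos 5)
Your argument is correct. The paper offers no proof of this statement—it is recorded as a Fact, i.e.\ a routine verification left to the reader—and your two-step factorization (the tensor–hom identifications $\tx{Maps}(S_E,A^\vee)_0\cong \tx{Hom}(A,\Lambda)$ together with the perfect, Galois-equivariant pairing of $\tx{Maps}(S_E,\mu_n)/\mu_n$ against the sum-zero maps into $\frac{1}{n}\Z/\Z$) is precisely the direct check the paper leaves implicit, with the only delicate points (descent of the formula past the diagonal $\mu_n$, equivariance via $x\mapsto x^{nc}$ rather than a chosen root of unity, and the $n\mid m$ compatibility) handled correctly.
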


Let $\alpha_3(E,S) \in Z^2(\Gamma_{E/F},\tx{Hom}(\Z[S_E]_0,O_{E,S}^\times))$ represent Tate's class discussed in Subsection \ref{sub:tatetori}. We have
\[ \tx{Hom}(\Z[S_E]_0,O_{E,S}^\times) = \tx{Maps}(S_E,O_{E,S}^\times)/O_{E,S}^\times \hrw \tx{Maps}(S_E,O_{S}^\times)/O_{S}^\times, \]
where we are identifying $O_{E,S}^\times$ and $O_S^\times$ as the subgroups of constant functions. By \cite[Proposition 8.3.4]{NSW08} the group $O_S^\times$ is $\N_S$-divisible. For any $n \in \N_S$ the $n$-th power map fits into the exact sequence
\[ 1 \rw \frac{\tx{Maps}(S_E,\mu_n)}{\mu_n}\rw \frac{\tx{Maps}(S_E,O_{S}^\times)}{O_{S}^\times} \rw \frac{\tx{Maps}(S_E,O_{S}^\times)}{O_{S}^\times} \rw 1. \]
Fix a co-final sequence $n_i \in \N_S$ as well as functions
\[ k_i :\frac{\tx{Maps}(S_E,O_{S}^\times)}{O_{S}^\times} \to \frac{\tx{Maps}(S_E,O_{S}^\times)}{O_{S}^\times} \]
satisfying $k_i(x)^{n_i}=x$ and $k_{i+1}(x)^\frac{n_{i+1}}{n_i}=k_i(x)$. Then we have
\[ dk_i\alpha_3(E,S) \in Z^{3,2}(\Gamma_S,\Gamma_{E/F},\tx{Maps}(S_E,\mu_{n_i})/\mu_{n_i})\]
where we are using the notation $Z^{i,j}$ from \cite[\S4.3]{KalRI}.

We now define the map
\[ \Theta_{E,S} : \hat H^{-1}(\Gamma_{E/F},A^\vee[S_E]_0) \rw H^2(\Gamma_S,Z(O_S)),\quad g \mapsto dk_i\alpha_3(E,S) \sqcup_{E/F} g, \]
where we have identified $A^\vee[S_E]_0$ with $\tx{Maps}(S_E,A^\vee)_0$, and have employed the unbalanced cup product of \cite[\S4.3]{KalRI} as well as the pairing
\begin{equation} \label{eq:pair2}  \tx{Maps}(S_E,\mu_{n_i})/\mu_{n_i} \otimes \tx{Maps}(S_E,A^\vee)_0 \rw Z \end{equation}
provided by $\Phi_{E,S,n_i}$ of Fact \ref{fct:pair2}. Here we must choose $n_i$ to be a multiple of $\exp(Z)$, which is possible since $n_i$ are a co-final sequence in $\N_S$ and $\exp(Z) \in \N_S$ by assumption. Moreover, the map $\Theta_{E,S}$ is independent of the choice of $n_i$.

\begin{pro} \label{pro:tnz_es}
	The map $\Theta_{E,S}$ is a functorial injection independent of the choices of $\alpha_3(E,S)$ and $k_i$.
\end{pro}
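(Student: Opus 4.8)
The plan is to verify each asserted property of $\Theta_{E,S}$ in turn: well-definedness, independence of the auxiliary choices $\alpha_3(E,S)$ and $k_i$, functoriality in $Z$, and injectivity. First I would check well-definedness at the level of cohomology. The recipe sends a class $g \in \hat H^{-1}(\Gamma_{E/F},A^\vee[S_E]_0)$ to $dk_i\alpha_3(E,S) \sqcup_{E/F} g$. One must confirm that the unbalanced cup product of \cite[\S4.3]{KalRI} is defined on the chosen cochain groups (the cocycle $dk_i\alpha_3(E,S) \in Z^{3,2}(\Gamma_S,\Gamma_{E/F},\tx{Maps}(S_E,\mu_{n_i})/\mu_{n_i})$ paired against a representative of $g$ in $A^\vee[S_E]_0$ via \eqref{eq:pair2}) and descends to cohomology, i.e. that replacing $g$ by a cohomologous cocycle changes the cup product by a coboundary; this is the standard bilinearity of cup products with respect to the differential, combined with the fact that $dk_i\alpha_3(E,S)$ is itself closed. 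One also checks the output genuinely lands in $H^2(\Gamma_S,Z(O_S))$: the degree $(3,2)$ paired with degree $(-1)$ in the $\Gamma_{E/F}$-variable gives total degree $2$ in $\Gamma_S$, and the $\mu_{n_i}$-coefficients become $Z$-coefficients through $\Phi_{E,S,n_i}$.

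Next, independence of the choices. For $k_i$: two choices $k_i, k_i'$ differ by a cochain valued in $\tx{Maps}(S_E,\mu_{n_i})/\mu_{n_i}$ (since both satisfy $k_i(x)^{n_i}=x$), and applying $d$ and taking the cup product with $g$ produces a difference that is a coboundary — here one uses that $g$, being a class in $\hat H^{-1}$, is represented by a norm-zero element, which is precisely what makes the unbalanced cup product with an exact term vanish in cohomology. The paper has already asserted this independence just before the proposition, so the proof should point to the same mechanism. For $\alpha_3(E,S)$: any two cocycle representatives differ by a coboundary in $Z^2(\Gamma_{E/F},\tx{Hom}(\Z[S_E]_0,O_{E,S}^\times))$, and after pushing forward to $\tx{Maps}(S_E,O_S^\times)/O_S^\times$, taking an $n_i$-th root via $k_i$, and applying $d$, the ambiguity again becomes a coboundary after cupping with the norm-zero class $g$. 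Functoriality in $Z$ (equivalently, contravariant functoriality in $A$) follows from the functoriality of $\Phi_{A,S,n}$ recorded in Fact \ref{fct:pair2}: a map $Z \to Z'$ of finite multiplicative groups over $F$ induces $A' \to A$, hence $A'^\vee[S_E]_0 \to A^\vee[S_E]_0$ no wait — the arrow goes $A^\vee \to A'^\vee$ since $(-)^\vee$ is covariant — and one checks the square relating $\Theta_{E,S}$ for $Z$ and $Z'$ commutes, which reduces to naturality of the pairing \eqref{eq:pair2} and of the cup product.

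The substantive part — and the step I expect to be the main obstacle — is injectivity. The natural strategy is to exhibit $\Theta_{E,S}$ as the composite of the Tate--Nakayama isomorphism \eqref{eq:tnisotorus} with a connecting-type map arising from a Kummer sequence, and then argue that composite is injective. Concretely, the exact sequence $1 \to \tx{Maps}(S_E,\mu_{n_i})/\mu_{n_i} \to \tx{Maps}(S_E,O_S^\times)/O_S^\times \xrightarrow{n_i} \tx{Maps}(S_E,O_S^\times)/O_S^\times \to 1$ is, after identifying $\tx{Hom}(\Z[S_E]_0, -)$ with $\tx{Maps}(S_E,-)/(\text{constants})$, a resolution-type sequence relating the coefficients $Z$ (via $\mu_{n_i}$ and $\Phi$) to the torus-type coefficients $O_S^\times$. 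The map $g \mapsto dk_i \alpha_3(E,S) \sqcup g$ should be recognized as cup product with $\alpha_3(E,S)$ followed by the boundary map of this Kummer sequence; since cup product with $\alpha_3(E,S)$ is the Tate--Nakayama isomorphism \eqref{eq:tnisotorus} (in the relevant degree, using Conditions \ref{cnds:tnz} which imply Conditions \ref{cnds:tate}), injectivity of $\Theta_{E,S}$ reduces to injectivity of that boundary map on the image of $\hat H^{-1}$. That in turn amounts to showing the preceding term in the long exact sequence vanishes or that the relevant $n_i$-divisibility does not collapse the class, for which one uses that $O_S^\times$ is $\N_S$-divisible \cite[Proposition 8.3.4]{NSW08} together with Condition \ref{cnds:tnz}(3). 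The delicate bookkeeping is matching the unbalanced cup product formalism of \cite[\S4.3]{KalRI} with ordinary cup products and boundary maps so that this identification is rigorous; once that dictionary is in place the injectivity is formal.
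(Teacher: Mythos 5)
Your treatment of well-definedness, independence of $k_i$ and of the representative $\alpha_3(E,S)$, and functoriality is workable (the paper in fact gets all of these for free from the identification described below), but the injectivity step — which you correctly identify as the substantive part — contains a genuine gap. You propose to factor $\Theta_{E,S}$ as ``cup product with $\alpha_3(E,S)$ followed by the boundary map of the Kummer sequence'' and to quote \eqref{eq:tnisotorus} for the first step. Neither half is available. First, \eqref{eq:tnisotorus} is Tate's isomorphism for the coefficients $Y[S_E]_0$ with $Y=X_*(T)$ \emph{torsion-free}; applying it to the torsion module $A^\vee[S_E]_0$ is not licensed (there is a Tor obstruction in the Nakayama--Tate twisting theorem), and indeed determining what cup product with (a Bockstein of) $\alpha_3(E,S)$ does on torsion coefficients is essentially the content of the proposition, so this reduction is circular. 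Second, the factorization itself cannot exist: if you cup $\alpha_3(E,S)$ with $g$ using the natural tensor pairing, the intermediate coefficients are of the form $A^\vee[S_E]_0 \otimes \tx{Maps}(S_E,O_S^\times)/O_S^\times$, which vanish because $O_S^\times$ is $\N_S$-divisible \cite[Prop. 8.3.4]{NSW08} and $\exp(A^\vee)\in\N_S$ by Conditions \ref{cnds:tnz}; so any map factoring through that cohomology is zero, which $\Theta_{E,S}$ is not. If instead you try to stay at finite level with $O_{E,S}^\times$ so that the coefficients do not die, then the $n_i$-power map is no longer surjective and your Kummer sequence (hence its boundary map) is not defined. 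This is exactly why the construction uses the root functions $k_i$ and the unbalanced cup product: $\Theta_{E,S}$ is a Bockstein-type operation that does \emph{not} factor through the naive intermediate group.

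The missing idea is a dimension shift in the lattice variable rather than a Kummer sequence in the coefficient module of $\alpha_3$. Choose an embedding $Z \rw T$ into a torus with $\bar Y = X_*(T/Z)$ a \emph{free} $\Z[\Gamma_{E/F}]$-module and invoke Lemma \ref{lem:tnzdiag}. Freeness gives $\hat H^{-1}(\Gamma_{E/F},\bar Y[S_E]_0)=0$ and $H^1(\Gamma_S,\bar T(O_S))=0$, so in diagram \eqref{eq:tnzdiag} the connecting map $\hat H^{-1}(\Gamma_{E/F},A^\vee[S_E]_0) \rw \hat H^0(\Gamma_{E/F},Y[S_E]_0)$ is injective and so is $H^2(\Gamma_S,Z(O_S)) \rw H^2(\Gamma_S,T(O_S))$; the commutativity of the bottom square then exhibits $\Theta_{E,S}$, after this injection, as the restriction of the classical isomorphism ``$-\tx{TN}$'' for the torsion-free lattice $Y$ (composed with the inflation of Lemma \ref{lem:inf1}). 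Injectivity follows at once, and since ``$-\tx{TN}$'' depends only on the cohomology class of $\alpha_3(E,S)$ and not on $k_i$, the independence and functoriality statements drop out of the same identification, with no cochain bookkeeping required.
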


The proof is based on the following lemma, which will also have other uses later.

\begin{lem} \label{lem:tnzdiag}
Let $T$ be a torus defined over $F$ and split over $E$ and let $Z \rw T$ be an injection with cokernel $\bar T$. We write $Y=X_*(T)$ and $\bar Y=X_*(\bar T)$.
Then the following diagram commutes and its columns are exact.

\begin{equation} \label{eq:tnzdiag}
\xymatrix{
\hat H^{-1}(\Gamma_{E/F},Y[S_E]_0)\ar[r]^\cong_{\tx{TN}}\ar[d]&H^1(\Gamma_{E/F},T(O_{E,S}))\ar[r]^\cong\ar[d]&H^1(\Gamma_S,T(O_S))\ar[d]\\
\hat H^{-1}(\Gamma_{E/F},\bar Y[S_E]_0)\ar[r]^\cong_{\tx{TN}}\ar[d]&H^1(\Gamma_{E/F},\bar T(O_{E,S}))\ar[r]^\cong&H^1(\Gamma_S,\bar T(O_S))\ar[d]\\
\hat H^{-1}(\Gamma_{E/F},A^\vee[S_E]_0)\ar[rr]^{\Theta_{E,S}}\ar[d]&&H^2(\Gamma_S,Z(O_S))\ar[d]\\
\hat H^{0}(\Gamma_{E/F},Y[S_E]_0)\ar[r]^\cong_{-\tx{TN}}\ar[d]&H^2(\Gamma_{E/F},T(O_{E,S}))\ar@{^(->}[r]\ar[d]&H^2(\Gamma_S,T(O_S))\ar[d]\\
\hat H^{0}(\Gamma_{E/F},\bar Y[S_E]_0)\ar[r]^\cong_{-\tx{TN}}&H^2(\Gamma_{E/F},\bar T(O_{E,S}))\ar@{^(->}[r]&H^2(\Gamma_S,\bar T(O_S))\\^{}
}
\end{equation}
\end{lem}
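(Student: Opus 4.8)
The plan is to build \eqref{eq:tnzdiag} out of two short exact sequences, one for the ``$[S_E]_0$'' column and one for the ``$(O_S)$'' column, and to identify the two halves of the diagram with the associated long exact cohomology sequences, glued along the $\Theta_{E,S}$-row. The inclusion $Z \hrw T$ with cokernel $\bar T$ gives $0 \rw X^*(\bar T) \rw X^*(T) \rw A \rw 0$; dualizing and using that $A$ is finite yields $0 \rw Y \rw \bar Y \rw A^\vee \rw 0$, and tensoring with the free abelian group $\Z[S_E]_0$ gives a short exact sequence of $\Gamma_{E/F}$-modules $0 \rw Y[S_E]_0 \rw \bar Y[S_E]_0 \rw A^\vee[S_E]_0 \rw 0$, whose Tate cohomology long exact sequence is the left column. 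For the right column, I first note $A^\vee\otimes O_S^\times = 0$: the group $O_S^\times$ is $\N_S$-divisible by \cite[Prop. 8.3.4]{NSW08} and $\exp(A^\vee)=\exp(Z)\in\N_S$ by Conditions \ref{cnds:tnz}. Tensoring $0 \rw Y \rw \bar Y \rw A^\vee \rw 0$ with $O_S^\times$ then shows that $T(O_S)=Y\otimes O_S^\times \rw \bar Y\otimes O_S^\times = \bar T(O_S)$ is surjective, with kernel $\tx{Tor}_1(A^\vee,O_S^\times)$; since $A^\vee$ is killed by $n:=\exp(Z)$ this kernel equals $\tx{Tor}_1(A^\vee,\mu_n)=\tx{Hom}(A,\mu_n)$, which is $Z(O_S)$ via $\Phi_{A,n}$ of Fact \ref{fct:pair1} (and $Z(O_S)=Z(F_S)$ as $n$ is an $S$-unit). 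So $1 \rw Z(O_S) \rw T(O_S) \rw \bar T(O_S) \rw 1$ is exact as a sequence of $\Gamma_S$-modules and its long exact cohomology sequence is the right column; both columns are thus exact.

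Next, the squares not meeting the $\Theta_{E,S}$-row commute by functoriality. The isomorphism \eqref{eq:tnisotorus} is cup product with $\alpha_3(E,S)$, which is natural in the coefficient torus, and inflation $H^i(\Gamma_{E/F},-)\rw H^i(\Gamma_S,-)$ is natural; by Lemma \ref{lem:inf1} the maps $H^1(\Gamma_{E/F},T(O_{E,S}))\rw H^1(\Gamma_S,T(O_S))$ are bijective and $H^2(\Gamma_{E/F},T(O_{E,S}))\rw H^2(\Gamma_S,T(O_S))$ is injective, and likewise for $\bar T$, since Conditions \ref{cnds:tnz} imply Conditions \ref{cnds:tate} for both tori.

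The real content is the commutativity of the two squares flanking the $\Theta_{E,S}$-row, and for this I would unwind the definition of $\Theta_{E,S}$. The point is that $\Theta_{E,S}$ is exactly cup product with Tate's class $\alpha_3(E,S)$ transported across the homological shift in $1 \rw Z(O_S) \rw T(O_S) \rw \bar T(O_S) \rw 1$: regarding $[Y[S_E]_0 \rw \bar Y[S_E]_0]$ and $[T(O_S) \rw \bar T(O_S)]$ as two-term complexes, quasi-isomorphic to $A^\vee[S_E]_0[-1]$ and to $Z(O_S)$ respectively, the Tate--Nakayama cup products for $T$ and for $\bar T$ assemble into a single pairing of these complexes with $\alpha_3(E,S)$ (inducing on the reduced complexes the pairing $\Phi_{E,S,n_i}$ of Fact \ref{fct:pair2}), and $\Theta_{E,S}$ is the induced map $\hat H^{-1}(\Gamma_{E/F},A^\vee[S_E]_0) \rw H^2(\Gamma_S,Z(O_S))$ on hypercohomology; the passage $\alpha_3(E,S) \mapsto dk_i\alpha_3(E,S)$, living in $Z^{3,2}(\Gamma_S,\Gamma_{E/F},\tx{Maps}(S_E,\mu_{n_i})/\mu_{n_i})$, is the cochain-level implementation of this shift, namely the Bockstein of the inflation of $\alpha_3(E,S)$ for $1 \rw \tx{Maps}(S_E,\mu_{n_i})/\mu_{n_i} \rw \tx{Maps}(S_E,O_S^\times)/O_S^\times \xrightarrow{n_i} \tx{Maps}(S_E,O_S^\times)/O_S^\times \rw 1$, recorded in the two-level cochain calculus of \cite[\S4.3]{KalRI} so as to keep the $\Gamma_{E/F}$-structure needed by \eqref{eq:tnisotorus}. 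Granting this, the two squares are instances of the naturality of cup product: naturality with respect to the structure maps $[Y[S_E]_0 \rw \bar Y[S_E]_0] \llw \bar Y[S_E]_0[-1]$ and $[T(O_S) \rw \bar T(O_S)] \llw \bar T(O_S)[-1]$, which on cohomology induce the projection $\bar Y[S_E]_0 \rw A^\vee[S_E]_0$ and the connecting map $H^1(\Gamma_S,\bar T(O_S))\rw H^2(\Gamma_S,Z(O_S))$ up to sign, gives the upper square; naturality with respect to $[Y[S_E]_0 \rw \bar Y[S_E]_0] \rw Y[S_E]_0[0]$ and $[T(O_S) \rw \bar T(O_S)] \rw T(O_S)[0]$, inducing the connecting map $\hat H^{-1}(\Gamma_{E/F},A^\vee[S_E]_0) \rw \hat H^0(\Gamma_{E/F},Y[S_E]_0)$ and the map $Z(O_S)\hrw T(O_S)$, gives the lower square; the sign discrepancy between the two halves is exactly the minus sign on the two bottom Tate--Nakayama arrows of \eqref{eq:tnzdiag}.

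I expect the last step to be the main obstacle, and it is essentially bookkeeping: because $\alpha_3(E,S)$ is a class at the level of $\Gamma_{E/F}$ while the relevant connecting homomorphisms on the $(O_S)$-side are at the level of $\Gamma_S$, the compatibility of cup product with Bocksteins and connecting homomorphisms cannot be quoted in its textbook form but must be re-derived inside the bidegree cochain formalism of \cite[\S4.3]{KalRI}, with the signs tracked carefully. One must also observe that the argument is insensitive to the choices of cocycle representative for $\alpha_3(E,S)$ and of the roots $k_i$ entering the definition of $\Theta_{E,S}$ --- since Proposition \ref{pro:tnz_es} is proved only afterwards, the commutativity assertion is to be read as holding for $\Theta_{E,S}$ built from any admissible such choices, which is automatic as the cochain identities above hold for arbitrary choices.
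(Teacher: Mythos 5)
Your proposal is correct and is essentially the paper's own argument: the columns are the long exact sequences attached to $0 \to Y[S_E]_0 \to \bar Y[S_E]_0 \to A^\vee[S_E]_0 \to 0$ and $1 \to Z(O_S) \to T(O_S) \to \bar T(O_S) \to 1$, the outer squares commute by naturality of the cup product with $\alpha_3(E,S)$ and of inflation, and the two squares through $\Theta_{E,S}$ reduce to cochain identities for the unbalanced cup product with $dk_i\alpha_3(E,S)$ in the bidegree formalism of \cite[\S4.3]{KalRI}. The ``bookkeeping'' you defer is exactly what the paper's proof carries out: for the top square one lifts $\tx{TN}(\bar\Lambda)=\alpha_3(E,S)\cup\bar\Lambda$ to the $T(O_S)$-valued $1$-cochain $k_i\alpha_3(E,S)\sqcup_{E/F}n_i\bar\Lambda$, whose differential is $dk_i\alpha_3(E,S)\sqcup_{E/F}n_i\bar\Lambda=\Theta_{E,S}([\bar\Lambda])$ by \cite[Fact 4.3]{KalRI}, and for the bottom square the same Leibniz identity gives $dk_i\alpha_3(E,S)\sqcup_{E/F}n_i\bar\Lambda\equiv -\,\alpha_3(E,S)\cup d\bar\Lambda$ modulo coboundaries in $C^2(\Gamma_S,T(O_S))$, which accounts for the sign on the $-\tx{TN}$ rows.
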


\begin{proof}
We first explain the arrows. The short exact sequence
\[ 0 \rw Y \rw \bar Y \rw A^\vee \rw 0 \]
remains short exact after tensoring over $\Z$ with the free $\Z$-module $\Z[S_E]_0$, and the left column of the diagram is the associated long exact sequence in Tate cohomology. Writing $X = X^*(T)$ and $\bar X=X^*(\bar T)$, the short exact sequence
\[ 0 \rw \bar X \rw X \rw A \rw 0 \]
remains short exact after applying $\tx{Hom}(-,O_S^\times)$ because $O_S^\times$ is $\N_S$-divisible \cite[Prop. 8.3.4]{NSW08} and we are assuming $\exp(Z) \in \N_S$. The right column of the diagram is the corresponding long exact sequence for $H^i(\Gamma_S,-)$. The horizontal maps labelled ``TN'' are the isomorphisms constructed by Tate and discussed in Subsection \ref{sub:tatetori}. The bottom two that are labelled ``-TN'' are obtained from these isomorphisms by composing them with multiplication by $-1$. The horizontal maps on the right are the inflation maps discussed in Lemma \ref{lem:inf1}.

The only non-obvious commutativity is that of the two squares involving the map $\Theta_{E,S}$. For the top square, let $\bar\Lambda \in \hat Z^{-1}(\Gamma_{E/F},\bar Y[S_E]_0)$. Then $\tx{TN}(\bar\Lambda)=\alpha_3(E,S) \cup \bar\Lambda \in Z^1(\Gamma_{E/F}(\bar T(O_{E,S})))$ and the image of this in $H^2(\Gamma_S,Z(O_S))$ can be computed as follows. Choose $n_i$ to be a multiple of $\exp(Z)$. Then $n_i\bar\Lambda \in \hat Z^{-1}(\Gamma_{E/F},Y[S_E]_0)$ and $k_i\alpha_3(E,S) \in C^{2,2}(\Gamma_S,\Gamma_{E/F},\tx{Maps}(S_E,O_S^\times)/O_S^\times)$. The unbalanced cup product $k_i\alpha_3(E,S) \sqcup_{E/F}n_i\bar\Lambda$ belongs to $C^{1,1}(\Gamma_S,\Gamma_{E/F},T(O_S))$ and lifts the 1-cocycle $\tx{TN}(\bar\Lambda)$. The differential of this $1$-cochain is the image we want. According to \cite[Fact 4.3]{KalRI} it is equal to
\[ d(k_i\alpha_3(E,S) \sqcup_{E/F}n_i\bar\Lambda) = dk_i\alpha_3(E,S) \sqcup_{E/F} n_i\bar\Lambda = \Theta_{E,S}(n_i\bar\Lambda). \]
But the restriction of $n_i\bar\Lambda : \mb{G}_m \rw T$ to $\mu_{n_i}$ takes image in $Z$ and the resulting map $\mu_{n_i} \rw Z$ corresponds to the element of $A^\vee$ that is the image of $\bar\Lambda$ under $\bar Y \rw A^\vee$. This proves the commutativity of the top square involving $\Theta_{E,S}$.

For the commutativity of the bottom square  let $g \in \hat Z^{-1}(\Gamma_{E/F},A^\vee[S_E]_0)$ and let $\bar\Lambda \in \bar Y = \hat C^{-1}(\Gamma_{E/F},\bar Y[S_E]_0)$ be any preimage of $g$. According to the above discussion we have
\[ \Theta_{E/S}(g) = dk_i\alpha_3(E,S) \sqcup_{E/F} n_i\bar\Lambda. \]
Mapping this under $H^2(\Gamma_S,Z(O_S)) \rw H^2(\Gamma_S,T(O_S))$ we can write this using \cite[Fact 3.4]{KalRI} as
\[ d(k_i\alpha_3(E,S) \sqcup n_i\bar\Lambda) - k_i\alpha_3(E,S) \sqcup dn_i\bar\Lambda. \]
The first term is a coboundary in $C^2(\Gamma_S,T(O_S))$. Since $\bar\Lambda \in \hat C^{-1}(\Gamma_{E/F},\bar Y[S_E]_0)$ was a lift of the $(-1)$-cocycle $g$, its differential belongs to $\hat Z^0(\Gamma_{E/F},Y[S_E]_0)$. It follows that
\[ k_i\alpha_3(E,S) \sqcup dn_i\bar\Lambda = \alpha_3(E,S) \cup d\bar\Lambda \]
and this proves the commutativity of the bottom square involving $\Theta_{E,S}$.
\end{proof}

\begin{proof}[Proof of Proposition \ref{pro:tnz_es}]
In the setting of Lemma \ref{lem:tnzdiag} choose $\bar Y$ to be a free $\Z[\Gamma_{E/F}]$-module. Then $\bar Y[S_E]_0$ is also a free $\Z[\Gamma_{E/F}]$-module and consequently both $\hat H^{-1}(\Gamma_{E/F},\bar Y[S_E]_0)$ and $H^1(\Gamma_S,\bar T(O_S))$ vanish, showing that $\Theta_{E,S}$ is the restriction of ``-TN''. The latter is an injective map and is independent of the choices of $\alpha_3(E,S)$ and $k_i$.
\end{proof}

The map $\Theta_{E,S}$ has a local analog that is implicit in the constructions of \cite{KalRI}. To describe it, let $v \in S_{\ol{F}}$ and let $\alpha_v \in Z^2(\Gamma_{E_v/F_v},E_v^\times)$ represent the canonical class. Let $n \in \N$ be a multiple of $\exp(Z)$ and let $k : \ol{F_v} \rw \ol{F_v}$ be such that $k(x)^n=x$. Then we define
\[ \Theta_{E,v} : \hat H^{-1}(\Gamma_{E_v/F_v},A^\vee) \rw H^2(\Gamma_v,Z(\ol{F_v})),\qquad g \mapsto dk\alpha_v \sqcup_{E_v/F_v} \Phi_{A,n}(g). \]
Similar arguments to those employed for $\Theta_{E,S}$ show that $\Theta_{E,v}$ is independent of the choices of $\alpha_v$, $k$, and $n$, and fits into the local analog of Diagram \eqref{eq:tnzdiag}. The following lemma relates the map $\Theta_{E,S}$ to $\Theta_{E,v}$.

\begin{lem} \label{lem:zloc}
Let $v \in S_{\ol{F}}$. We have the commutative diagram
\[ \xymatrix{
	\hat H^{-1}(\Gamma_{E/F},A^\vee[S_E]_0)\ar[r]^-{\Theta_{E,S}}\ar[d]&H^2(\Gamma_S,Z(O_S))\ar[d]\\
	\hat H^{-1}(\Gamma_{E_v/F_v},A^\vee)\ar[r]^-{\Theta_{E,v}}&H^2(\Gamma_v,Z(\ol{F_v}))
}\]
Here the right vertical map is the localization map given by restriction to $\Gamma_v$ followed by the inclusion $Z(O_S)\rw Z(\ol{F_v})$ and the left vertical map is given by restriction to $\Gamma_{E_v/F_v}$ followed by the projection $A^\vee[S_E]_0 \rw A^\vee$ onto the $v$-factor.
\end{lem}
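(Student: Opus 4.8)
The plan is to reduce the statement to the compatibility of Tate's global Tate--Nakayama isomorphism with its classical local counterpart, via Lemma \ref{lem:tnzdiag}. Recall from the proof of Proposition \ref{pro:tnz_es} that, after fixing a short exact sequence $0 \rw Y \rw \bar Y \rw A^\vee \rw 0$ of $\Gamma_{E/F}$-modules with $\bar Y$ free over $\Z[\Gamma_{E/F}]$, the map $\Theta_{E,S}$ is the composite of the connecting homomorphism $\delta: \hat H^{-1}(\Gamma_{E/F},A^\vee[S_E]_0) \rw \hat H^0(\Gamma_{E/F},Y[S_E]_0)$ with (minus) the isomorphism \eqref{eq:tnisotorus} in degree $0$ and the inflation $H^2(\Gamma_{E/F},T(O_{E,S})) \hrw H^2(\Gamma_S,T(O_S))$, its image lying in $H^2(\Gamma_S,Z(O_S))$. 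A free $\Z[\Gamma_{E/F}]$-module is free over $\Z[\Gamma_{E_v/F_v}]$, so the same sequence restricted to $\Gamma_{E_v/F_v}$ still has free middle term; hence the local analogue of Lemma \ref{lem:tnzdiag} (with the argument of Proposition \ref{pro:tnz_es}) identifies $\Theta_{E,v}$ with the composite of the connecting homomorphism $\delta_v: \hat H^{-1}(\Gamma_{E_v/F_v},A^\vee) \rw \hat H^0(\Gamma_{E_v/F_v},Y)$ with (minus) the classical local Tate--Nakayama isomorphism and inflation into $H^2(\Gamma_v,T(\ol{F_v}))$, its image lying in the $Z$-part.

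Granting this, the assertion reduces to two commutativities, both checked on the nose. Write $w$ for the place of $E$ below $v$. First, the coordinate projection $\Z[S_E]_0 \rw \Z$ at $w$ is $\Gamma_{E_v/F_v}$-equivariant and carries $0 \rw Y[S_E]_0 \rw \bar Y[S_E]_0 \rw A^\vee[S_E]_0 \rw 0$ to $0 \rw Y \rw \bar Y \rw A^\vee \rw 0$, so $\delta$ commutes with restriction-and-projection by naturality of connecting homomorphisms. Second, one needs the square
\[ \xymatrix{ \hat H^0(\Gamma_{E/F},Y[S_E]_0) \ar[r]^-{\tx{TN}} \ar[d] & H^2(\Gamma_S,T(O_S)) \ar[d] \\ \hat H^0(\Gamma_{E_v/F_v},Y) \ar[r]^-{\tx{TN}_v} & H^2(\Gamma_v,T(\ol{F_v})) } \]
to commute, where the horizontal maps are (minus) the respective Tate--Nakayama isomorphisms, the left vertical map is restriction-and-projection, and the right vertical map is restriction to $\Gamma_v$ followed by $T(O_S) \hrw T(\ol{F_v})$. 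Since $\tx{TN}$ is cup product with Tate's class $\alpha_3(E,S)$ and $\tx{TN}_v$ is cup product with the local canonical class $\alpha_v$, this comes down to the statement that the localization of $\alpha_3(E,S)$ at $v$, viewed through the projection of coefficient modules dual to $\Z \rw \Z[S_E]_0$ at $w$, represents $\alpha_v$.

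This last point is built into Tate's construction: $\alpha_3(E,S)$ is the image of $\alpha(E,S)$, which is determined by $\alpha_2(E,S)$ and $\alpha_1(E)$, and $\alpha_2(E,S)$ corresponds under the Shapiro isomorphism exactly to the family $(\alpha_{E_{\dot v}/F_v})_{v \in S}$ of (images of) local fundamental classes; extracting the component at $v$ and unwinding the definitions of $\alpha$ and $\alpha_3$ returns $\alpha_v$, up to the coefficient inclusions $O_{E,S}^\times \hrw O_S^\times \hrw \ol{F_v}^\times$ and $E_v^\times \hrw \ol{F_v}^\times$. I expect this bookkeeping --- matching the localization of $\alpha_3(E,S)$ with $\alpha_v$ while tracking the $\Z[S_E]_0 \rw \Z$ projection, the Shapiro identification, and the various coefficient maps --- to be the only genuine work; the rest is formal naturality, and the already-established independence of $\Theta_{E,S}$ and $\Theta_{E,v}$ from the auxiliary data $n_i,k_i$ and $n,k$ lets us ignore those in the comparison. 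One could instead argue entirely on cochains, using that the unbalanced cup product $\sqcup_{E/F}$ of \cite[\S4.3]{KalRI} is natural for restriction from $(\Gamma_S,\Gamma_{E/F})$ to $(\Gamma_v,\Gamma_{E_v/F_v})$ and that $\Phi_{A,S,n}$ of Fact \ref{fct:pair2}, precomposed with projection to the $v$-factor, equals $\Phi_{A,n}$ of Fact \ref{fct:pair1}; but this reduces to the same assertion about $\alpha_3(E,S)$.
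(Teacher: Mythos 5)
Your proposal is correct and follows essentially the same route as the paper: choose $\bar Y$ free over $\Z[\Gamma_{E/F}]$ (hence over $\Z[\Gamma_{E_v/F_v}]$), use the vanishing to identify $\Theta_{E,S}$ and $\Theta_{E,v}$ via Diagram \eqref{eq:tnzdiag} and its local analog, and reduce by naturality of the connecting map to the degree-$0$ square comparing the global and local Tate--Nakayama isomorphisms. The paper disposes of that last square by citing Tate's construction, which is exactly the $\alpha_2$/Shapiro/fundamental-class bookkeeping you sketch.
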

\begin{proof}
	We choose $\bar Y$ to be a free $\Z[\Gamma_{E/F}]$-module. Then $\bar Y$ is also a free $\Z[\Gamma_{E_v/F_v}]$-module and consequently the four cohomology groups $\hat H^{-1}(\Gamma_{E/F},\bar Y[S_E]_0)$, $\hat H^{-1}(\Gamma_{E_v/F_v},\bar Y)$, $H^1(\Gamma_S,\bar T(O_S))$, and $H^1(\Gamma_v,\bar T(\ol{F_v}))$ all vanish. Looking at Diagram \eqref{eq:tnzdiag} and its local analog we see that it is enough to show the commutativity of the diagram below, which stems directly from the construction of the Tate-Nakayama isomorphism for tori in \cite{Tate66}.
	\[ \xymatrix{
		\hat H^0(\Gamma_{E/F},Y[S_E]_0)\ar[r]^{''-TN''}\ar[d]&H^2(\Gamma_S,T(O_S))\ar[d]\\
		\hat H^0(\Gamma_{E_v/F_v},Y)\ar[r]^{''-TN''}&H^2(\Gamma_v,T(\ol{F_v}))
	}\]
\end{proof}

We will now study how the map $\Theta_{E,S}$ behaves when we change $E$ and $S$.

\begin{lem} \label{lem:inf3} The inflation map $H^2(\Gamma_S,Z(O_S)) \rw H^2(\Gamma,Z(\ol{F}))$ is injective.\end{lem}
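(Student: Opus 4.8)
The statement to prove is that the inflation map $H^2(\Gamma_S, Z(O_S)) \rw H^2(\Gamma, Z(\ol F))$ is injective, where $Z$ is a finite multiplicative group over $F$. My plan is to reduce this to the torus-level injectivity already established in Lemma \ref{lem:inf2}, using the fact that $\Theta_{E,S}$ (via Proposition \ref{pro:tnz_es} and Lemma \ref{lem:tnzdiag}) sandwiches $H^2(\Gamma_S, Z(O_S))$ between $H^2$-groups of tori. Concretely, I would fix $E$ splitting $Z$ and a finite full set $S$ satisfying Conditions \ref{cnds:tnz}, and use that every class in $H^2(\Gamma, Z(\ol F))$ is inflated from $H^2(\Gamma_S, Z(O_S))$ for $E,S$ large enough (this is essentially the content of \eqref{eq:tnziso}, the surjectivity of $\Theta$, together with Lemma \ref{lem:inf1}); but for injectivity the cleaner route is the diagram chase below.

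\textbf{Step 1: Pass to a fixed $E, S$.} Since $H^2(\Gamma_S, Z(O_S)) = \varinjlim_E H^2(\Gamma_{E/F}, Z(O_{E,S}))$ compatibly with the analogous colimit computing $H^2(\Gamma, Z(\ol F))$, and since the inflation maps $H^2(\Gamma_{E/F}, Z(O_{E,S})) \rw H^2(\Gamma_S, Z(O_S))$ are injective (this needs a separate small argument, or one simply works with a class $c \in H^2(\Gamma_S, Z(O_S))$, picks $E$ large so that $c$ comes from $H^2(\Gamma_{E/F}, Z(O_{E,S}))$, and enlarges $S$ to satisfy Conditions \ref{cnds:tnz}), it suffices to show: for $E, S$ satisfying Conditions \ref{cnds:tnz}, the composite $H^2(\Gamma_{E/F}, Z(O_{E,S})) \rw H^2(\Gamma, Z(\ol F))$ is injective.

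\textbf{Step 2: Embed $Z$ into an induced torus.} Apply Lemma \ref{lem:tnzdiag}: choose an injection $Z \hrw T$ with $T$ a torus split over $E$ whose cocharacter module is the \emph{dual} choice, i.e. arrange things so that in the notation there $\bar Y = X_*(\bar T)$ is a free $\Z[\Gamma_{E/F}]$-module — equivalently, choose the surjection of character modules $\bar X \thrw \cdots$ appropriately so that $\bar T$ is induced. Then in Diagram \eqref{eq:tnzdiag} the group $H^1(\Gamma_S, \bar T(O_S))$ vanishes, so the map $H^1(\Gamma_S, \bar T(O_S)) \rw H^2(\Gamma_S, Z(O_S))$ is the zero map, which forces $H^2(\Gamma_S, Z(O_S)) \hrw H^2(\Gamma_S, T(O_S))$ — no wait, I need $\bar T$ induced so that $H^1(\Gamma_{E/F}, \bar T(O_{E,S})) = 0$ \emph{and} $H^2(\Gamma_{E/F}, \bar T(O_{E,S}))$ controls things; the point is that the connecting map $H^1(\bar T) \rw H^2(Z)$ having zero source makes $H^2(Z) \rw H^2(T)$ injective. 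This is exactly the mechanism used in the proof of Lemma \ref{lem:inf2} for the case $i=2$, where injectivity of $H^2(\Gamma_S, T(O_S)) \rw H^2(\Gamma, T(\ol F))$ is proven.

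\textbf{Step 3: Diagram chase.} Now I have a commutative square (functoriality of inflation)
\[ \xymatrix{
H^2(\Gamma_S, Z(O_S)) \ar@{^(->}[r] \ar[d]^{\tx{Inf}} & H^2(\Gamma_S, T(O_S)) \ar[d]^{\tx{Inf}} \\
H^2(\Gamma, Z(\ol F)) \ar[r] & H^2(\Gamma, T(\ol F))
} \]
The top map is injective by Step 2, the right vertical map is injective by Lemma \ref{lem:inf2} (whose hypothesis — Condition \ref{cnds:tnz}(4), which is exactly the hypothesis of Lemma \ref{lem:inf2} — holds by our choice of $S$). Hence the composite $H^2(\Gamma_S, Z(O_S)) \rw H^2(\Gamma, T(\ol F))$ is injective, and therefore so is the left vertical map $H^2(\Gamma_S, Z(O_S)) \rw H^2(\Gamma, Z(\ol F))$.

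\textbf{Main obstacle.} The only subtle point is making the reduction in Step 1 rigorous, i.e. checking that inflation $H^2(\Gamma_{E/F}, Z(O_{E,S})) \rw H^2(\Gamma_S, Z(O_S))$ is injective (or organizing the argument so one never needs this). I expect this follows from the inflation–restriction sequence once one knows $H^1(\Gamma_{E,S}, Z(O_S)) = 0$ — but $Z$ is finite, so this cohomology need not vanish, and one must instead argue directly: given a class in $H^2(\Gamma_{E/F}, Z(O_{E,S}))$ that dies in $H^2(\Gamma, Z(\ol F))$, enlarge $E, S$ and apply Step 2–3 at that larger level, using compatibility of the $\Theta$-construction with the change of $E, S$ (Lemma \ref{lem:tnstori}, Lemma \ref{lem:tninftori}, and the inflation compatibility of Tate's classes proven in Subsection \ref{sub:tatetori}). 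This bookkeeping — threading the diagrams \eqref{eq:tnzdiag} at two levels and invoking the already-proven inflation injectivities for tori — is the real work; everything else is formal.
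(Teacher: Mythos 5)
Your Steps 2–3 are exactly the paper's argument: choose the embedding $Z \hrw T$ so that $\bar Y=X_*(\bar T)$ is a free $\Z[\Gamma_{E/F}]$-module, deduce from the vanishing of $H^1(\Gamma_S,\bar T(O_S))$ that $H^2(\Gamma_S,Z(O_S)) \rw H^2(\Gamma_S,T(O_S))$ is injective, and conclude by composing with the injection $H^2(\Gamma_S,T(O_S)) \rw H^2(\Gamma,T(\ol{F}))$ of Lemma \ref{lem:inf2}, whose hypothesis is guaranteed by Conditions \ref{cnds:tnz}. The one correction: your Step 1 is unnecessary, since the commutative square in Step 3 already lives at the level of $\Gamma_S$ and $\Gamma$ rather than the finite quotients, so the "main obstacle" you flag simply does not arise; moreover you should not enlarge $S$ there, as that would change the group $\Gamma_S$ and hence the statement being proved.
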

\begin{proof}
Choose again $\bar Y$ to be a free $\Z[\Gamma_{E/F}]$-module. Then $H^1(\Gamma_S,\bar T(O_S))$ and $H^1(\Gamma,\bar T(\ol{F}))$ vanish. The inflation map in question, composed with the injection $H^2(\Gamma,Z(\ol{F})) \to H^2(\Gamma,T(\ol{F}))$ is equal to the composition of the injection $H^2(\Gamma_S,Z(O_S)) \to H^2(\Gamma_S,T(O_S))$ with the map $H^2(\Gamma_S,T(O_S)) \to H^2(\Gamma,T(\ol{F}))$, which by Lemma \ref{lem:inf2} is also injective.
\end{proof}

\begin{lem} \label{lem:tnz_var}
	Let $K/F$ be a finite Galois extension containing $E$. Let $S'$ be a finite set of places satisfying Conditions \ref{cnds:tnz} with respect to $K$ and containing $S$.
	\begin{enumerate}
		\item The map $\hat H^{-1}(\Gamma_{E/F},A^\vee[S_E]_0) \rw \hat H^{-1}(\Gamma_{E/F},A^\vee[S'_E]_0)$ given by the inclusion $S \rw S'$ is injective and fits in the commutative diagram
		\[ \xymatrix{
		\hat H^{-1}(\Gamma_{E/F},A^\vee[S_E]_0)\ar[r]^{\Theta_{E,S}}\ar[d]&H^2(\Gamma_S,Z(O_S))\ar^{\tx{Inf}}[d]\\
		\hat H^{-1}(\Gamma_{E/F},A^\vee[S'_E]_0)\ar[r]^{\Theta_{E,S'}}&H^2(\Gamma_{S'},Z(O_{S'}))
		}\]
		\item The map $! : \hat H^{-1}(\Gamma_{E/F},A^\vee[S'_E]_0) \rw \hat H^{-1}(\Gamma_{K/F},A^\vee[S'_K]_0)$ of Lemma \ref{lem:tnshriek} is injective and fits in the commutative diagram
		\[ \xymatrix{
		\hat H^{-1}(\Gamma_{E/F},A^\vee[S'_E]_0)\ar[r]^{\Theta_{E,S'}}\ar[d]&H^2(\Gamma_{S'},Z(O_{S'}))\ar@{=}[d]\\
		\hat H^{-1}(\Gamma_{K/F},A^\vee[S'_K]_0)\ar[r]^{\Theta_{K,S'}}&H^2(\Gamma_{S'},Z(O_{S'}))
		}\]
	\end{enumerate}
\end{lem}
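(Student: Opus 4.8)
The plan is to reduce both statements to the corresponding facts about tori already established in Subsection \ref{sub:tatetori}, using the commutative diagram \eqref{eq:tnzdiag} of Lemma \ref{lem:tnzdiag} as the bridge. In both parts, choose an injection $Z \hookrightarrow T$ into a torus $T$ split over $E$ (resp.\ over $K$ in part (2)) with cokernel $\bar T$, and arrange that the cocharacter module $\bar Y$ of $\bar T$ is a free $\Z[\Gamma_{E/F}]$-module (resp.\ free over $\Z[\Gamma_{K/F}]$), so that in diagram \eqref{eq:tnzdiag} the group $\hat H^{-1}(\Gamma_{E/F},\bar Y[S_E]_0)$ vanishes and $\Theta_{E,S}$ is identified with the connecting map $\hat H^{-1}(\Gamma_{E/F},A^\vee[S_E]_0) \to \hat H^0(\Gamma_{E/F},Y[S_E]_0)$ followed by the ``$-\tx{TN}$'' isomorphism into $H^2(\Gamma_{E/F},T(O_{E,S})) \hookrightarrow H^2(\Gamma_S,Z(O_S))$ — or, better, directly with the composite $\hat H^{-1}(\Gamma_{E/F},A^\vee[S_E]_0)\xrightarrow{\tx{TN}} H^1(\Gamma_{E/F},\bar T(O_{E,S})) \to H^2(\Gamma_S,Z(O_S))$ coming from the left column of \eqref{eq:tnzdiag}. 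The point is that once $\Theta_{E,S}$ is expressed purely through Tate's torus isomorphisms and functorial maps, the desired commutativities follow from the variance results for tori.

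For part (1): first note that the inclusion $S_E \hookrightarrow S'_E$ admits a $\Gamma_{E/F}$-equivariant retraction on the level of the degree-zero subgroups $\Z[S'_E]_0 \to \Z[S_E]_0$ (extend a set-theoretic section of $S'_E \to S_E$ sending each place to itself if it already lies in $S_E$, exactly as in the proof of Lemma \ref{lem:inf2}); tensoring with $A^\vee$ and passing to $\hat H^{-1}$ gives a left inverse to $\hat H^{-1}(\Gamma_{E/F},A^\vee[S_E]_0)\to\hat H^{-1}(\Gamma_{E/F},A^\vee[S'_E]_0)$, proving injectivity. For the commutative square, apply Lemma \ref{lem:tnstori}, which gives the compatibility of Tate's isomorphism \eqref{eq:tnisotorus} for the torus $\bar T$ with the map induced by $S \subset S'$, and combine it with the compatibility of inflation $H^2(\Gamma_S,Z(O_S))\to H^2(\Gamma_{S'},Z(O_{S'}))$ with the maps to $H^2(-,\bar T(-))$; since $\Theta_{E,S}$ is, under our choice of $\bar Y$, just the restriction to $\hat H^{-1}(\Gamma_{E/F},A^\vee[S_E]_0)$ of the map induced on $H^1(-,\bar T(-))$ followed by the connecting homomorphism, the square commutes. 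One must also check that the map $\hat H^{-1}(\Gamma_{E/F},A^\vee[S_E]_0)\to\hat H^{-1}(\Gamma_{E/F},A^\vee[S'_E]_0)$ is compatible with the inclusions $\bar Y[S_E]_0 \hookrightarrow \bar Y[S'_E]_0$ and $Y[S_E]_0 \hookrightarrow Y[S'_E]_0$ — this is immediate from functoriality of the long exact sequence.

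For part (2): injectivity of $!$ on $\hat H^{-1}$ is exactly the content of the fact that $!$ is a right inverse to $j_{K/E}$ (Lemma \ref{lem:tnshriek}) once one knows $j_{K/E}$ exists on $\hat H^{-1}$ for the torsion-free module $Y$ — but here the module is $A^\vee$, which is \emph{not} torsion-free, so one instead argues injectivity directly: $!$ is induced by $s_!:\Z[S'_E]_0\to\Z[S'_K]_0$ tensored with $A^\vee$, and $j_{K/E}\circ s_! = \tx{id}$ already on the level of the $\Z$-modules $\Z[S'_E]_0$ (since $j_{K/E}([s(w)])=[w]$), hence $j_{K/E}\circ\, !=\tx{id}$ on $\hat H^{-1}(\Gamma_{E/F},A^\vee[S'_E]_0)$, giving injectivity of $!$. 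For the commutative square, choose $\bar Y$ free over $\Z[\Gamma_{K/F}]$ (then automatically free over $\Z[\Gamma_{E/F}]$), so that both $\Theta_{E,S'}$ and $\Theta_{K,S'}$ are identified with connecting maps composed with $\pm\tx{TN}$ as above, and invoke Lemma \ref{lem:tninftori} (resp.\ its reformulation Corollary \ref{cor:tnshriek}): the diagram there relating $\hat H^{-1}(\Gamma_{E/F},Y[S'_E]_0)$ and $\hat H^{-1}(\Gamma_{K/F},Y[S'_K]_0)$ via $!$ to $H^1$ of the torus and inflation, together with the degree-zero diagram of Lemma \ref{lem:tninftori} via $p_{K/E}$, shows that $s_!$ on $A^\vee$-coefficients is compatible with $\Theta$ — here one uses the second diagram of Lemma \ref{lem:tninftori} for the pair $(Y,\bar T)$ in the guise of the connecting homomorphism, and the identification (from the proof of Lemma \ref{lem:tninftori}) that $!$ in degree $-1$ corresponds under the connecting map to $p_{K/E}$ in degree $0$. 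The verification that the relevant Hasse–Weil fundamental classes (encoded in $\alpha_3$) match up is precisely Lemma \ref{lem:tninftori}, so nothing new is needed there; the one subtlety is that $\Theta$ involves the \emph{local} fundamental classes $\alpha_v$ implicitly through $\alpha_3$, and one must confirm that the inflation on $H^2(\Gamma_S,Z(O_S))$ appearing on the right of the square is compatible with the identification — but this is the content of Lemma \ref{lem:inf3} and the naturality of \eqref{eq:tnzdiag}.

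The main obstacle, and the step requiring the most care, is part (2): because $A^\vee$ is torsion (not torsion-free), one cannot directly quote Corollary \ref{cor:tnshriek} or the degree-$(-1)$ existence of $j_{K/E}$; instead one must reduce to the torsion-free module $Y$ via the exact sequence $0\to Y\to\bar Y\to A^\vee\to 0$ and carefully track how $!$ (defined on $A^\vee$-coefficients via $s_!$, which \emph{does} make sense for any $M$ by Lemma \ref{lem:tnshriek}) interacts with the connecting homomorphism and with $p_{K/E}$ on $Y$-coefficients in degree $0$. Concretely, the identity $j_{K/E}\circ s_! = \tx{id}$ on $\Z[S'_E]_0$ and the relation $p_{K/E}\circ j_{K/E}=N_{K/E}$ must be used to show the relevant square of connecting maps commutes; this is a diagram chase through \eqref{eq:tnzdiag} and its $K$-analogue, glued along the connecting homomorphisms, and the compatibility of those with the isomorphisms $\tx{TN}$ under $!$ and $p_{K/E}$ is supplied by Lemma \ref{lem:tninftori}. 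Everything else is routine functoriality.
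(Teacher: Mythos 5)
Your overall strategy -- identify $\Theta$ with torus-theoretic maps through diagram \eqref{eq:tnzdiag} after a suitable freeness choice and then quote the variance results for tori (Lemma \ref{lem:tnstori}, Lemma \ref{lem:tninftori}, Corollary \ref{cor:tnshriek}) -- is the paper's strategy, and your part (1) is essentially sound: the paper proves the first square by a cube in which $X^*(T)$ is chosen free over $\Z[\Gamma_{E/F}]$, so that $\hat H^{-1}(\Gamma_{E/F},\bar Y[S_E]_0)\rw\hat H^{-1}(\Gamma_{E/F},A^\vee[S_E]_0)$ is bijective, and it deduces the injectivity claims from the injectivity of $\Theta_{E,S},\Theta_{E,S'}$ (Proposition \ref{pro:tnz_es}) and of inflation (Lemma \ref{lem:inf3}), whereas you prove injectivity directly (the equivariant retraction supplied by Conditions \ref{cnds:tnz}, and $j_{K/E}\circ s_!=\tx{id}$ together with the fact that $j_{K/E}$ carries $I_{K/F}A^\vee[S'_K]_0$ into $I_{E/F}A^\vee[S'_E]_0$); both routes are fine. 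One caution: writing ``$j_{K/E}\circ\,!=\tx{id}$ on $\hat H^{-1}$'' is an abuse, since $j_{K/E}$ does not induce a map on $\hat H^{-1}$ for the torsion module $A^\vee$ (this is exactly why the paper introduces $!$); what you actually use, and what is correct, is the statement on representatives.

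The genuine gap is in part (2), in the choice of auxiliary torus: you take $T$ split over $K$ and $\bar Y$ free over $\Z[\Gamma_{K/F}]$, ``hence free over $\Z[\Gamma_{E/F}]$''. The parenthetical implication is false, and the two requirements you need are in fact incompatible: every level-$E$ ingredient of the argument -- Tate's class $\alpha_3(E,S')$, the isomorphism \eqref{eq:tnisotorus}, Lemma \ref{lem:tnzdiag}, hence any identification of $\Theta_{E,S'}$ with torus-theoretic maps -- requires the torus to be split over $E$, i.e.\ the action on $\bar Y$ to factor through $\Gamma_{E/F}$; but a nonzero module on which $\Gamma_{K/E}$ acts trivially is never free over $\Z[\Gamma_{K/F}]$. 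If you only take $T$ split over $K$, you cannot say anything about $\Theta_{E,S'}$ and the comparison collapses. The repair is to choose the torus split over $E$ with $X$ (equivalently $Y$), or $\bar Y$, free over $\Z[\Gamma_{E/F}]$, exactly as in part (1); the vanishing you want at level $K$ is then not a freeness statement but follows from $H^1(\Gamma_{S'},\bar T(O_{S'}))=0$ via the Tate--Nakayama isomorphism and Lemma \ref{lem:inf1} -- or one avoids it entirely, as the paper does, since its cube only needs the level-$E$ bijectivity of $\hat H^{-1}(\Gamma_{E/F},\bar Y[S'_E]_0)\rw\hat H^{-1}(\Gamma_{E/F},A^\vee[S'_E]_0)$, with the left face supplied by Corollary \ref{cor:tnshriek} applied to the torsion-free $\bar Y$ (its hypothesis holding by part 4 of Conditions \ref{cnds:tnz}, as the paper notes at the outset). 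With that correction, your gluing of the connecting homomorphisms via $p_{K/E}N_{E/F}(\sum_w n_w[w])=N_{K/F}(\sum_w n_w[s(w)])$ and the second diagram of Lemma \ref{lem:tninftori} is a valid alternative to the paper's degree $(-1)$ cube.
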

\begin{proof}
Notice first that since for every $\sigma \in \Gamma_{K/F}$ there exists $u \in V_K$ with $\sigma u = u$, the assumption of Lemma \ref{lem:tnshriek} follows from part 4 of Conditions \ref{cnds:tnz}.

According to Proposition \ref{pro:tnz_es}, the maps $\Theta_{E,S}$ and $\Theta_{E,S'}$ are injective. According to Lemma \ref{lem:inf3}, the inflation map in the first diagram is injective. This proves the injectivity claims.

To prove the commutativity of the first diagram, choose $X$ to be a free $\Z[\Gamma_{E/F}]$-module. Then so is $Y$ as well as $Y[S_E]_0$ and we get $\hat H^i(\Gamma_{E/F},Y[S_E]_0)=0$ for all $i \in \Z$. Looking at Diagram \eqref{eq:tnzdiag}, this implies that $\hat H^{-1}(\Gamma_{E/F},\bar Y[S_E]_0) \rw \hat H^{-1}(\Gamma_{E/F},A^\vee[S_E]_0)$ is bijective. We consider the following cube.

\[ \xymatrix@R=4pc@C=0pc{
	\hat H^{-1}(\Gamma_{E/F},\bar Y[S_E]_0)\ar[dd]\ar[rr]\ar[rd]^{\tx{TN}_{E,S}}&&\hat H^{-1}(\Gamma_{E/F},A^\vee[S_E]_0)\ar[dd]\ar[rd]^{\Theta_{E,S}}\\
	&H^1(\Gamma_S,\bar T(O_S))\ar[dd]^(.7){\tx{Inf}}\ar[rr]&&H^2(\Gamma_S,Z(O_S))\ar[dd]^(.7){\tx{Inf}}\\
	\hat H^{-1}(\Gamma_{E/F},\bar Y[S'_E]_0)\ar[rr]\ar[rd]^{\tx{TN}_{E,S'}}&&\hat H^{-1}(\Gamma_{E/F},A^\vee[S'_E]_0)\ar[rd]^{\Theta_{E,S'}}\\
	&H^1(\Gamma_{S'},\bar T(O_{S'}))\ar[rr]&&H^2(\Gamma_{S'},Z(O_{S'}))
}\]
We want to prove that the right face of this cube commutes. By the bijectivity of the back top map, it is enough to show that all the other faces commute. The back face commutes by functoriality of the map induced by the inclusion $S_E \rw S'_E$. The front face commutes by $\delta$-functoriality of $\tx{Inf}$. The top and bottom faces commute by Lemma \ref{lem:tnzdiag}. The left face commutes by Corollary \ref{cor:tnshriek}.
This proves the commutativity of the first of the two diagrams in the statement. The proof of the second is analogous.
\end{proof}

\begin{cor} \label{cor:tnz}
	The maps $\Theta_{E,S}$ splice to a functorial isomorphism
	\[ \Theta : \varinjlim_{E,S}\hat H^{-1}(\Gamma_{E/F},A^\vee[S_E]_0) \rw H^2(\Gamma,Z(\ol{F})). \]
\end{cor}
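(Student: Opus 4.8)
The plan is to organize the colimit, read off $\Theta$ from the compatibilities already proved, and then dispatch injectivity (formal) and surjectivity (the real content) separately. The index set of the colimit consists of pairs $(E,S)$ with $E/F$ finite Galois splitting $Z$ and $S$ a finite full set satisfying Conditions~\ref{cnds:tnz} relative to $E$, ordered by $(E,S)\le(E',S')$ iff $E\subseteq E'$ and $S\subseteq S'$; to such an inequality I attach the transition morphism obtained by first applying the map induced by the inclusion $S_E\hookrightarrow S'_E$ and then the map $!$ of Lemma~\ref{lem:tnshriek} for $E'/E$, whose hypothesis holds by part~4 of Conditions~\ref{cnds:tnz}. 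First I would check directedness (given two pairs, take the compositum of the two fields and, by the remark following Conditions~\ref{cnds:tnz}, a large enough common $S$) and that these transition morphisms compose, where the independence of $!$ of the chosen section is what is used. Then Lemma~\ref{lem:tnz_var} says precisely that the $\Theta_{E,S}$ intertwine the transition morphisms with the inflation maps $H^2(\Gamma_S,Z(O_S))\to H^2(\Gamma_{S'},Z(O_{S'}))$; post-composing with inflation to $H^2(\Gamma,Z(\ol F))$ and using transitivity of inflation produces a well-defined $\Theta$ on the colimit, functorial in $Z$ by Proposition~\ref{pro:tnz_es} and functoriality of inflation.

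Injectivity is immediate: each $\Theta_{E,S}$ is injective (Proposition~\ref{pro:tnz_es}), each inflation $H^2(\Gamma_S,Z(O_S))\to H^2(\Gamma,Z(\ol F))$ is injective (Lemma~\ref{lem:inf3}), and the transition morphisms are injective (Lemma~\ref{lem:tnz_var}), so a colimit class killed by $\Theta$ already vanishes at a finite stage.

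For surjectivity, take $c\in H^2(\Gamma,Z(\ol F))$. Since $Z(\ol F)$ is a finite discrete $\Gamma$-module, $c$ is inflated from $H^2(\Gamma_{E/F},Z(\ol F)^{\Gamma_E})$ for some finite Galois $E/F$, which I enlarge so that $E$ splits $Z$ and $\mu_{\exp(Z)}\subseteq E$; then $Z(\ol F)^{\Gamma_E}=Z(E)$. Choosing a finite $S$ satisfying Conditions~\ref{cnds:tnz} relative to $E$, one has $E\subseteq F_S$ and, using $\exp(Z)\in\N_S$, the identifications $Z(E)=Z(O_{E,S})=Z(O_S)$ with $\Gamma_{E,S}$ acting trivially and $Z(O_S)=Z(\ol F)$ as $\Gamma_S$-modules; hence $c=\mathrm{Inf}(c_S)$ for a class $c_S\in H^2(\Gamma_S,Z(O_S))$ which is itself inflated from some $c_E\in H^2(\Gamma_{E/F},Z(O_{E,S}))$. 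Now I would read off the shape of $\Theta_{E,S}$ from Diagram~\eqref{eq:tnzdiag}: picking the auxiliary torus with $Z\hookrightarrow T$, split over $E$, so that $\bar Y=X_*(T/Z)$ is free over $\Z[\Gamma_{E/F}]$ (choose a free $\Z[\Gamma_{E/F}]$-module surjecting onto $A^\vee$ and realize it as $X_*(T/Z)$), the vanishing of $\hat H^i(\Gamma_{E/F},\bar Y[S_E]_0)$ for all $i$ and of $H^1(\Gamma_S,\bar T(O_S))$ (via \eqref{eq:tnisotorus} and Lemma~\ref{lem:inf1}) shows that $H^2(\Gamma_S,Z(O_S))\hookrightarrow H^2(\Gamma_S,T(O_S))$ and that, under this injection, $\mathrm{im}(\Theta_{E,S})$ corresponds to the image of the inflation $H^2(\Gamma_{E/F},T(O_{E,S}))\to H^2(\Gamma_S,T(O_S))$. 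Since $Z\to T$ is $\Gamma_{E/F}$-equivariant on $O_{E,S}$-points, the image of $c_S=\mathrm{Inf}(c_E)$ in $H^2(\Gamma_S,T(O_S))$ is the inflation of the image of $c_E$ in $H^2(\Gamma_{E/F},T(O_{E,S}))$; therefore $c_S\in\mathrm{im}(\Theta_{E,S})$ and $c\in\mathrm{im}(\Theta)$.

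I expect surjectivity to be the main obstacle, and within it two points will need care: extracting the precise description of $\mathrm{im}(\Theta_{E,S})$ from Diagram~\eqref{eq:tnzdiag} with the correct (free $\bar Y$) choice of auxiliary torus, and the chain of identifications $Z(E)=Z(O_{E,S})=Z(O_S)=Z(\ol F)$ that lets one descend an arbitrary $c$ all the way to $H^2(\Gamma_{E/F},Z(O_{E,S}))$, which rests on the finiteness of $Z$ together with $\exp(Z)\in\N_S$ forcing $\mu_{\exp(Z)}\subseteq O_S^\times$. Everything else — directedness of the index set, compatibility via Lemma~\ref{lem:tnz_var}, injectivity, functoriality — is routine once Proposition~\ref{pro:tnz_es} and Lemmas~\ref{lem:tnz_var} and \ref{lem:inf3} are in hand.
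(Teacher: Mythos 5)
Your proposal is correct and follows essentially the same route as the paper: after the (implicit in the paper) colimit bookkeeping via Lemma \ref{lem:tnz_var}, injectivity comes from Proposition \ref{pro:tnz_es} and Lemma \ref{lem:inf3}, and surjectivity is proved by descending a class to $H^2(\Gamma_{E/F},Z(O_{E,S}))$, choosing $\bar Y$ free over $\Z[\Gamma_{E/F}]$, and chasing Diagram \eqref{eq:tnzdiag}. The only (immaterial) difference is that you conclude by injecting $H^2(\Gamma_S,Z(O_S))$ into $H^2(\Gamma_S,T(O_S))$ and identifying $\mathrm{im}(\Theta_{E,S})$ with the image of inflation there, whereas the paper makes the final comparison after inflating all the way to $H^2(\Gamma,T(\ol F))$; both rest on the same vanishing of $H^1$ of the induced torus $\bar T$.
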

\begin{proof}
	According to Proposition \ref{pro:tnz_es} and Lemmas \ref{lem:tnz_var} and \ref{lem:inf3} we obtain a functorial injective homomorphism $\Theta$ as displayed. We will now argue that it is also surjective. Let $h \in H^2(\Gamma,Z(\ol{F}))$. Choose a finite Galois extension $E/F$ so that $h$ is inflated from $H^2(\Gamma_{E/F},Z(E))$. Choose $S$ large enough so that it satisfies Conditions \ref{cnds:tnz} with respect to $E$ and so that $Z(E)=Z(O_{E,S})$. Thus $h$ is in the image of the inflation $H^2(\Gamma_{E/F},Z(O_{E,S})) \rw H^2(\Gamma,Z(\ol{F}))$ and we can pick a preimage $h_{E,S}$.

	Choose $\bar Y$ to be a free $\Z[\Gamma_{E/F}]$-module and consider Diagram \eqref{eq:tnzdiag}. Let $h_{T,E,S} \in H^2(\Gamma_{E/F},T(O_{E,S}))$ be the image of $h_{E,S}$. The image of $h_{T,E,S}$ in the group $H^2(\Gamma_{E/F},\bar T(O_{E,S}))$ is zero, so the preimage of $h_{T,E,S}$ in $\hat H^0(\Gamma_{E/F},Y[S_E]_0)$ under ``-TN'' lifts to an element $g \in \hat H^{-1}(\Gamma_{E/F},A^\vee[S_E]_0)$. Let $h' \in H^2(\Gamma,Z(\ol{F}))$ be the inflation of $\Theta_{E,S}(g) \in H^2(\Gamma_S,Z(O_S))$. Then $h'$ and $h$ have the same image in $H^2(\Gamma,T(\ol{F}))$. But since we chose $\bar Y$ to be free, the map $H^2(\Gamma,Z) \rw H^2(\Gamma,T)$ is injective, so $h'=h$.
\end{proof}

\subsection{The finite multiplicative groups $P_{E,\dot S_E,N}$} \label{sub:pes}
Let $E/F$ be a finite Galois extension. Let $S \subset V_F$ be a finite full set of places and $\dot S_E \subset S_E$ a set of lifts for the places in $S$ (that is, over each $v \in S$ there is a unique $w \in \dot S_E$). We assume that the pair $(S,\dot S_E)$ satisfies the following.

\begin{cnds} \label{cnds:pes}
\begin{enumerate}
\item $S$ contains all archimedean places and all places that ramify in $E$.
\item Every ideal class of $E$ contains an ideal with support in $S_E$.
\item For every $w \in V_E$ there exists $w' \in S_E$ with $\tx{Stab}(w,\Gamma_{E/F})=\tx{Stab}(w',\Gamma_{E/F})$.
\item For every $\sigma \in \Gamma_{E/F}$ there exists $\dot v \in \dot S_E$ such that $\sigma\dot v = \dot v$.
\end{enumerate}
\end{cnds}
Pairs $(S,\dot S_E)$ that satisfy these conditions exist. Moreover, if $(S',\dot S'_E)$ is any pair containing $(S,\dot S_E)$ in the sense that $S \subset S'$ and $\dot S_E \subset \dot S_E'$, and if $(S,\dot S_E)$ satisfies these conditions, then so does $(S',\dot S_E')$. If $Z$ is a finite multiplicative group defined over $F$ and split over $E$ and if $\exp(Z) \in \N_S$, then we may apply all results of Subsection \ref{sub:h2z} to $Z$.

Fix $N \in \N_S$ and consider the finite abelian group
\[ \tx{Maps}(\Gamma_{E/F} \times S_E, \frac{1}{N}{\Z/\Z}) \]
as well as the following three conditions on an element $f$ of it:
\begin{enumerate}
\item For every $\sigma \in \Gamma_{E/F}$ we have $\sum_{w \in S_E} f(\sigma,w) = 0$.
\item For every $w \in S_E$ we have $\sum_{\sigma \in \Gamma_{E/F}} f(\sigma,w) = 0$.
\item Given $\sigma \in \Gamma_{E/F}$ and $w \in S_E$, if $f(\sigma,w) \neq 0$ then $\sigma^{-1}w \in \dot S_E$.
\end{enumerate}
Define $M_{E,S,N}$ to be the subgroup consisting of elements that satisfy the first two conditions, and $M_{E,\dot S_E,N}$ to be the subgroup of $M_{E,S,N}$ consisting of elements satisfying in addition the third condition. Note that both $M_{E,S,N}$ and $M_{E,\dot S_E,N}$ are $\Gamma_{E/F}$-stable.

\begin{lem} \label{lem:mesn_h2}
	Let $A$ be a finite $\Z[\Gamma_{E/F}]$-module.
	\begin{enumerate}
	\item If $\exp(A)|N$, the map
	\[ \Psi_{E,S,N} : \tx{Hom}(A,M_{E,S,N})^\Gamma \rw \hat Z^{-1}(\Gamma_{E/F},\tx{Maps}(S_E,A^\vee)_0), \quad H \mapsto h, \]
	defined by $h(w,a)=H(a,1,w)$ is an isomorphism of finite abelian groups, functorial in $A$. It restricts to an isomorphism
	\[ \tx{Hom}(A,M_{E,\dot S_E,N})^\Gamma \rw \tx{Maps}(\dot S_E,A^\vee)_0 \cap \hat Z^{-1}(\Gamma_{E/F},\tx{Maps}(S_E,A^\vee)_0). \]
	\item For $N|M$ the isomorphism $\Psi_{E,S,N}$ and $\Psi_{E,S,M}$ are compatible with the natural inclusion $M_{E,\dot S_E,N} \rw M_{E,\dot S_E,M}$. Setting $M_{E,S} = \varinjlim_N M_{E,S,N}$ we thus obtain an isomorphism
	\[ \Psi_{E,S} : \tx{Hom}(A,M_{E,S})^\Gamma \rw \hat Z^{-1}(\Gamma_{E/F},\tx{Maps}(S_E,A^\vee)_0). \]
	\item The map
	\[ \tx{Maps}(\dot S_E,A^\vee)_0 \cap \hat Z^{-1}(\Gamma_{E/F},\tx{Maps}(S_E,A^\vee)_0) \rw \hat H^{-1}(\Gamma_{E/F},\tx{Maps}(S_E,A^\vee)_0) \]
	is surjective.
	\end{enumerate}
\end{lem}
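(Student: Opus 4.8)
Here is how I would approach the lemma.

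\medskip

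For part (1) the plan is to unwind the definitions. Since $\exp(A)\mid N$, evaluation gives a functorial identification $\tx{Hom}(A,\tfrac1N\Z/\Z)=A^\vee$, and since $\tx{Hom}(A,-)$ commutes with finite products this identifies $\tx{Hom}(A,M_{E,S,N})$ with the group of maps $\Gamma_{E/F}\times S_E\to A^\vee$ obeying the two vanishing-sum conditions of $M_{E,S,N}$; one checks this identification is $\Gamma_{E/F}$-equivariant for the action used to define $M_{E,S,N}$. A $\Gamma_{E/F}$-equivariant such $H$ is determined by $h(w,a):=H(a,1,w)$ through $H(a,\sigma,w)=h(\sigma^{-1}w,\sigma^{-1}a)$, so $\Psi_{E,S,N}$ is injective, and conversely this formula turns any $h\in\tx{Maps}(S_E,A^\vee)$ into a $\Gamma_{E/F}$-equivariant homomorphism $A\rw\tx{Maps}(\Gamma_{E/F}\times S_E,A^\vee)$. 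A short computation then shows that the first defining condition of $M_{E,S,N}$ (sum over $w$) corresponds to $h\in\tx{Maps}(S_E,A^\vee)_0$, the second (sum over $\sigma$) to $N_{E/F}h=0$, i.e. $h\in\hat Z^{-1}(\Gamma_{E/F},\tx{Maps}(S_E,A^\vee)_0)$, and the third (the support condition defining $M_{E,\dot S_E,N}$) to $h$ being supported on $\dot S_E$; this gives the isomorphism and its claimed restriction, functorial in $A$. Part (2) is then formal: the inclusions $\tfrac1N\Z/\Z\hrw\tfrac1M\Z/\Z$ induce $\Gamma_{E/F}$-equivariant inclusions $M_{E,\dot S_E,N}\hrw M_{E,\dot S_E,M}$ compatible with the above identifications, hence with the $\Psi$'s, whose common target does not depend on $N$; since $A$ is finite, $\tx{Hom}(A,-)$ and then $(-)^{\Gamma_{E/F}}$ commute with the filtered colimit over $N\in\N_S$, which is eventually constant once $\exp(A)\mid N$ (and such $N$ are cofinal in $\N_S$ as $\exp(A)\in\N_S$), so $\Psi_{E,S}$ is well-defined and an isomorphism.

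\medskip

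Part (3) is the substantive statement. Write $M=\tx{Maps}(S_E,A^\vee)_0=A^\vee[S_E]_0$; one must show every class in $\hat H^{-1}(\Gamma_{E/F},M)$ is represented by a cocycle \emph{supported on $\dot S_E$}. The plan is to replace $A^\vee$ by an induced module. Pick a free $\Z[\Gamma_{E/F}]$-module $\bar Y$ with a surjection onto $A^\vee$ and let $Y$ be the kernel, giving $0\rw Y\rw\bar Y\rw A^\vee\rw 0$ of $\Gamma_{E/F}$-modules. Tensoring over $\Z$ with the free $\Z$-module $\Z[S_E]_0$, the module $\bar Y[S_E]_0$ is again free over $\Z[\Gamma_{E/F}]$, so $\hat H^i(\Gamma_{E/F},\bar Y[S_E]_0)=0$ for all $i$ and the connecting homomorphism
\[ \delta:\hat H^{-1}(\Gamma_{E/F},A^\vee[S_E]_0)\ \rw\ \hat H^{0}(\Gamma_{E/F},Y[S_E]_0)=(Y[S_E]_0)^{\Gamma_{E/F}}/N_{E/F}(Y[S_E]_0) \]
is an isomorphism, and it is given by the norm: for a cocycle $g$ one lifts it to some $\bar\Lambda\in\bar Y[S_E]_0$ and then $\delta(g)=[N_{E/F}\bar\Lambda]$. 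It therefore suffices to realize every class in $(Y[S_E]_0)^{\Gamma_{E/F}}/N_{E/F}(Y[S_E]_0)$ as $[N_{E/F}\bar\Lambda]$ for some $\bar\Lambda\in\bar Y[S_E]_0$ supported on $\dot S_E$ whose image $g$ in $A^\vee[S_E]_0$ satisfies $N_{E/F}g=0$: such a $g$ is then a $\dot S_E$-supported cocycle with $\delta(g)$ the prescribed class, and injectivity of $\delta$ finishes the proof.

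\medskip

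To carry this out, note that $\dot S_E$ meets each $\Gamma_{E/F}$-orbit of $S_E$ exactly once, say with stabilizer $\Gamma_{\dot v}$. Identifying $(Y[S_E]_0)^{\Gamma_{E/F}}$ with tuples $(z_{\dot v})$, $z_{\dot v}\in Y^{\Gamma_{\dot v}}$, subject to $\sum_{\dot v}\tx{tr}_{\dot v}(z_{\dot v})=0$ in $Y$ (with $\tx{tr}_{\dot v}$ the sum over $\Gamma_{E/F}/\Gamma_{\dot v}$), and a $\dot S_E$-supported $\bar\Lambda$ with a tuple $(\lambda_{\dot v})$ in $\bar Y$ satisfying $\sum_{\dot v}\lambda_{\dot v}=0$, a direct computation gives that $N_{E/F}\bar\Lambda$ corresponds to $(N_{\Gamma_{\dot v}}\lambda_{\dot v})$ and that $N_{E/F}g=0$ is equivalent to $N_{\Gamma_{\dot v}}\lambda_{\dot v}\in Y$ for all $\dot v$. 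Given $(z_{\dot v})$, since $\bar Y$ is free over $\Z[\Gamma_{\dot v}]$ the norm $N_{\Gamma_{\dot v}}:\bar Y\rw\bar Y^{\Gamma_{\dot v}}$ is surjective, so choose $\lambda_{\dot v}^0\in\bar Y$ with $N_{\Gamma_{\dot v}}\lambda_{\dot v}^0=z_{\dot v}$; then $\mu:=\sum_{\dot v}\lambda_{\dot v}^0$ satisfies $N_{E/F}\mu=\sum_{\dot v}\tx{tr}_{\dot v}(z_{\dot v})=0$, hence $\mu\in I_{E/F}\bar Y$ as $\bar Y$ is free. Here Condition \ref{cnds:pes}(4) enters decisively: since every $\sigma\in\Gamma_{E/F}$ fixes some $\dot v\in\dot S_E$, the $\Gamma_{\dot v}$ cover $\Gamma_{E/F}$, whence $I_{E/F}\bar Y=\sum_{\dot v}I_{\Gamma_{\dot v}}\bar Y$; writing $\mu=\sum_{\dot v}\nu_{\dot v}$ with $\nu_{\dot v}\in I_{\Gamma_{\dot v}}\bar Y$ and replacing $\lambda_{\dot v}^0$ by $\lambda_{\dot v}=\lambda_{\dot v}^0-\nu_{\dot v}$ leaves $N_{\Gamma_{\dot v}}\lambda_{\dot v}=z_{\dot v}$ while achieving $\sum_{\dot v}\lambda_{\dot v}=0$. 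This $\bar\Lambda$ and its image $g$ are as required.

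\medskip

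The routine parts -- the equivariance checks in (1), the orbit-wise formulas for $N_{E/F}\bar\Lambda$, the tuple description of $(Y[S_E]_0)^{\Gamma_{E/F}}$, and the vanishing of $\hat H^{\pm1}$ of a free $\Z[\Gamma_{E/F}]$-module -- I would relegate to short direct computations. The main obstacle, and the only place where the hypotheses on $(S,\dot S_E)$ are really used, is the surjectivity in (3): producing a representative that is simultaneously supported on $\dot S_E$ and a cocycle of total sum zero. Passing to the induced module $\bar Y$ reduces this to the group-theoretic identity $I_{E/F}\bar Y=\sum_{\dot v\in\dot S_E}I_{\Gamma_{\dot v}}\bar Y$, which is exactly what Condition \ref{cnds:pes}(4) delivers; without it the $\dot S_E$-supported norm preimages cannot in general be corrected to have vanishing sum.
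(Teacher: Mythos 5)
Your proof is correct, and parts (1) and (2) are essentially the paper's own argument (the paper just records the inverse $H(a,\sigma,w)=(\sigma h)(w,a)$, which is your formula $h(\sigma^{-1}w,\sigma^{-1}a)$, and notes that (2) reduces to $\tx{Hom}(A,\frac{1}{N}\Z/\Z)=A^\vee=\tx{Hom}(A,\frac{1}{M}\Z/\Z)$). For part (3) you take a genuinely different route. The paper argues directly on the given cocycle: if $h$ has a support point $w\notin\dot S_E$, it chooses $\sigma$ with $\sigma w\in\dot S_E$ and, by Condition \ref{cnds:pes}(4), a place $\dot v_0\in\dot S_E$ with $\sigma\dot v_0=\dot v_0$, and replaces $h$ by $h+(\sigma-1)\bigl(h(w)\otimes(\delta_w-\delta_{\dot v_0})\bigr)$; this stays in the same class of $\hat H^{-1}$, remains a norm-zero element of $\tx{Maps}(S_E,A^\vee)_0$, and strictly shrinks the support outside $\dot S_E$, so finitely many steps produce a representative supported on $\dot S_E$. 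You instead resolve $A^\vee$ by an induced module $\bar Y$, use the vanishing of $\hat H^{\pm1}(\Gamma_{E/F},\bar Y[S_E]_0)$ to identify $\hat H^{-1}(\Gamma_{E/F},A^\vee[S_E]_0)$ with $\hat H^{0}(\Gamma_{E/F},Y[S_E]_0)$ via the norm connecting map, and then solve the resulting norm-lifting problem orbitwise, correcting the lifts $\lambda^0_{\dot v}$ by elements of $I_{\Gamma_{\dot v}}\bar Y$; here Condition \ref{cnds:pes}(4) enters through the identity $I_{E/F}\bar Y=\sum_{\dot v}I_{\Gamma_{\dot v}}\bar Y$, which is a clean module-theoretic repackaging of the same hypothesis the paper uses pointwise. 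Your dimension-shifting argument is heavier in setup but isolates precisely where the hypothesis on $\dot S_E$ is needed and makes the surjectivity statement transparent at the level of $\hat H^0$; the paper's support-moving argument is shorter, stays at the level of the explicit cocycle (in the spirit of the surrounding constructions), and the identical trick is reused later, e.g. in Lemma \ref{lem:sup}, which is presumably why the paper prefers it. Both proofs are complete; your checks (the orbitwise description of $(Y[S_E]_0)^{\Gamma_{E/F}}$, the formula $N_{E/F}\bar\Lambda\leftrightarrow(N_{\Gamma_{\dot v}}\lambda_{\dot v})$, and the surjectivity of $N_{\Gamma_{\dot v}}$ on the induced module) are all accurate.
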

\begin{proof}
For the first, the inverse of the claimed bijection is given by $H(a,\sigma,w)=(\sigma h)(w,a)$. The second point follows from the trivial equality $\tx{Hom}(A,\frac{1}{N}\Z/\Z)=A^\vee=\tx{Hom}(A,\frac{1}{M}\Z/\Z)$.

For the third point we claim that every class in $\hat H^{-1}(\Gamma_{E/F},\tx{Maps}(S_E,A^\vee)_0)$ contains a representative supported on $\dot S_E$. Let $h \in \hat Z^{-1}(\Gamma_{E/F},\tx{Maps}(S_E,A^\vee)_0)$ and write $\tx{supp}(h) \subset S_E$ for its support. Suppose the set $\tx{supp}(h) \sm \dot S_E$ is non-empty and choose a place $w$ in it. Choose $\sigma \in \Gamma_{E/F}$ such that $\sigma w \in \dot S_E$ as well as $\dot v_0 \in \dot S_E$ with $\sigma\dot v_0=\dot v_0$, the latter being possible by Conditions \ref{cnds:pes}. Consider the element $h(w) \otimes (\delta_w - \delta_{\dot v_0}) \in A^\vee \otimes \tx{Maps}(S_E,\Z)_0 = \tx{Maps}(S_E,A^\vee)_0$, where $\delta_w$ is the map with value $1$ on $w \in S_E$ and value zero on $S_E \sm \{w\}$. Then $h' = h+\sigma(h(w)\otimes(\delta_w-\delta_{\dot v_0}))-h(w)\otimes(\delta_w - \delta_{\dot v_0})$ has the same image in $\hat H^{-1}(\Gamma_{E/F},\tx{Maps}(S_E,A^\vee)_0)$ as $h$, but $\tx{supp}(h') \sm \dot S_E = \tx{supp}(h) \sm (\dot S_E \cup \{w\})$. Applying this procedure finitely many steps we obtain a representative $h''$ of the cohomology class of $h$ with $\tx{supp}(h'') \subset \dot S_E$.
\end{proof}

Let $P_{E,\dot S_E,N}$ be the finite multiplicative group over $O_{F,S}$ with $X^*(P_{E,\dot S_E,N})=M_{E,\dot S_E,N}$.
Let $A$ be a finite $\Z[\Gamma_{E/F}]$-module with $\exp(A)|N$ and let $Z$ be the finite multiplicative group over $O_{F,S}$ with $X^*(Z)=A$. Composing the map
\[ \Psi_{E,S,N}: \tx{Hom}(A,M_{E,\dot S_E,N})^\Gamma \rw \hat H^{-1}(\Gamma_{E/F},\tx{Maps}(S_E,A^\vee)_0) \]
of the above lemma with the map $\Theta_{E,S}$ introduced in the previous Subsection we obtain a map
\begin{equation} \label{eq:mesn_h2}
	\Theta_{E,\dot S_E,N}^P : \tx{Hom}(P_{E,\dot S_E,N},Z)^\Gamma =  \tx{Hom}(A,M_{E,\dot S_E,N})^\Gamma \rw H^2(\Gamma_S,Z(O_S)).
\end{equation}
which is functorial in $Z$.

We may apply this map to the special case $A=M_{E,\dot S_E,N}$. In that case we have the canonical element $\tx{id}$ of the source of \eqref{eq:mesn_h2} and we let $\xi_{E,\dot S_E,N} \in H^2(\Gamma_S,P_{E,\dot S_E,N}(O_S))$ be its image. We will now study how the classes $\xi_{E,\dot S_E,N}$ vary with $N$, $S$, and $E$. First, let $N|M$. The obvious inclusion $M_{E,\dot S_E,N} \rw M_{E,\dot S_E,M}$ gives rise to a surjection $P_{E,\dot S_E,M} \rw P_{E,\dot S_E,N}$. Let
\[ P_{E,\dot S_E} = \varprojlim_N P_{E,\dot S_E,N}. \]

\begin{lem} \label{lem:mesn_n} We have the equality
\[ H^2(\Gamma_S,P_{E,\dot S_E}(O_S)) = \varprojlim_N H^2(\Gamma_S,P_{E,\dot S_E,N}(O_S)). \]
The elements $\xi_{S,\dot S_E,N}$ form a compatible system and thus lead to an element $\xi_{S,\dot S_E} \in H^2(\Gamma_S,P_{E,\dot S_E}(O_S))$.
\end{lem}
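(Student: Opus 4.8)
The plan is to prove the two assertions of the lemma in turn. For the displayed identity, the structural facts I would establish first are: (i) each $P_{E,\dot S_E,N}(O_S)$ is finite, and (ii) the transition maps in the system $\{P_{E,\dot S_E,N}(O_S)\}_N$ are surjective. For (i), $M_{E,\dot S_E,N}$ is a finite abelian group killed by $N$, and since $N\in\N_S$ the field $F_S$ contains $\mu_N$, so $P_{E,\dot S_E,N}(O_S)=\tx{Hom}(M_{E,\dot S_E,N},O_S^\times)=\tx{Hom}(M_{E,\dot S_E,N},\mu_N)$ is finite; passing to the limit, $P_{E,\dot S_E}(O_S)=\tx{Hom}(\varinjlim_N M_{E,\dot S_E,N},O_S^\times)=\varprojlim_N P_{E,\dot S_E,N}(O_S)$. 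For (ii), for $N\mid M$ the transition map $P_{E,\dot S_E,M}(O_S)\rw P_{E,\dot S_E,N}(O_S)$ is $\tx{Hom}(-,O_S^\times)$ applied to the inclusion $M_{E,\dot S_E,N}\hrw M_{E,\dot S_E,M}$; its cokernel is finite and $\N_S$-torsion, so since $O_S^\times$ is $\N_S$-divisible (\cite[Prop.~8.3.4]{NSW08}) the obstructing $\tx{Ext}^1$ vanishes and the map is surjective.

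With (i) and (ii) in hand, I would invoke the standard short exact sequence for the continuous cohomology of the profinite group $\Gamma_S$ with coefficients in the inverse limit of finite modules $P_{E,\dot S_E}(O_S)$ (after passing to a cofinal sequence in $\N_S$):
\[ 0 \rw \varprojlim^1_N H^{i-1}(\Gamma_S,P_{E,\dot S_E,N}(O_S)) \rw H^i(\Gamma_S,P_{E,\dot S_E}(O_S)) \rw \varprojlim_N H^i(\Gamma_S,P_{E,\dot S_E,N}(O_S)) \rw 0. \]
For $i=2$ the asserted identity is exactly the vanishing of the first term $\varprojlim^1_N H^1(\Gamma_S,P_{E,\dot S_E,N}(O_S))$. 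It suffices that each $H^1(\Gamma_S,P_{E,\dot S_E,N}(O_S))$ be finite, for then the Mittag--Leffler condition holds trivially; and this finiteness follows, via inflation-restriction, from the Hermite--Minkowski theorem that there are only finitely many extensions of $F$ of bounded degree unramified outside $S$ (cf.~\cite[\S VIII.3]{NSW08}). This $\varprojlim^1$-vanishing is the only point requiring input beyond the present section, and I expect it to be the main, if routine, obstacle.

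For the second assertion I would argue entirely by functoriality, with essentially no computation. Fix $N\mid M$ and let $\pi:P_{E,\dot S_E,M}\thrw P_{E,\dot S_E,N}$ be the surjection dual to the inclusion $\iota:M_{E,\dot S_E,N}\hrw M_{E,\dot S_E,M}$; the induced map $\pi_*$ on $H^2(\Gamma_S,-)$ is the transition map of the inverse system. Applying the functoriality of $\Theta^P_{E,\dot S_E,M}$ in the coefficient group to the morphism $\pi$ and the element $\tx{id}\in\tx{Hom}(P_{E,\dot S_E,M},P_{E,\dot S_E,M})^\Gamma$ yields $\pi_*(\xi_{E,\dot S_E,M})=\Theta^P_{E,\dot S_E,M}(\pi)$, where $\pi$ is now viewed as an element of $\tx{Hom}(P_{E,\dot S_E,M},P_{E,\dot S_E,N})^\Gamma=\tx{Hom}(M_{E,\dot S_E,N},M_{E,\dot S_E,M})^\Gamma$, i.e.~as $\iota$. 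Both $\Theta^P_{E,\dot S_E,M}(\iota)$ and $\xi_{E,\dot S_E,N}=\Theta^P_{E,\dot S_E,N}(\tx{id})$ lie in $H^2(\Gamma_S,P_{E,\dot S_E,N}(O_S))$ and are obtained by applying the \emph{same} map $\Theta_{E,S}$ (the one attached to the coefficient group $P_{E,\dot S_E,N}$, whose character module is $M_{E,\dot S_E,N}$) to the images of $\iota$, resp.~$\tx{id}$, under $\Psi_{E,S,M}$, resp.~$\Psi_{E,S,N}$; by the explicit formula of Lemma \ref{lem:mesn_h2}(1), both of these cocycles in $\hat Z^{-1}(\Gamma_{E/F},\tx{Maps}(S_E,M_{E,\dot S_E,N}^\vee)_0)$ are given by $(w,a)\mapsto a(1,w)$ — here one uses Lemma \ref{lem:mesn_h2}(2) to identify $\tx{Hom}(M_{E,\dot S_E,N},\tfrac1N\Z/\Z)$ with $\tx{Hom}(M_{E,\dot S_E,N},\tfrac1M\Z/\Z)$ — so they literally coincide. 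Hence $\pi_*(\xi_{E,\dot S_E,M})=\xi_{E,\dot S_E,N}$, the classes form a compatible system, and by the first assertion they define the desired class $\xi_{E,\dot S_E}\in H^2(\Gamma_S,P_{E,\dot S_E}(O_S))$.
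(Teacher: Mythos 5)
Your proof is correct and follows essentially the same route as the paper: the displayed equality is exactly the content of the cited results of \cite{NSW08} (the $\varprojlim$--$\varprojlim^1$ sequence together with finiteness of $H^1(\Gamma_S,P_{E,\dot S_E,N}(O_S))$, which you merely unpack via Hermite--Minkowski), and your compatibility argument — functoriality of $\Theta^P$ in the coefficient group plus the observation that $\Psi_{E,S,M}(\iota)$ and $\Psi_{E,S,N}(\tx{id})$ give the same $(-1)$-cocycle by parts 1 and 2 of Lemma \ref{lem:mesn_h2} — is exactly the paper's three-row diagram argument written out directly.
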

\begin{proof}
The claimed equality follows from \cite[Cor. 2.7.6]{NSW08} and \cite[Thm. 8.3.20]{NSW08}. To prove that the classes $\xi_{E,\dot S_E,N}$ form an inverse system, consider the diagram
\[ \xymatrix@C=3.5pc{
	\tx{Hom}(M_{E,\dot S_E,N},M_{E,\dot S_E,N})^\Gamma\ar[r]^-{\Psi_{E,S,N}}\ar[d]&\hat H^{-1}(\Gamma_{E,F},\tx{Maps}(S_E,M_{E,\dot S_E,N}^\vee)_0)\ar@{=}[d]\\
	\tx{Hom}(M_{E,\dot S_E,N},M_{E,\dot S_E,M})^\Gamma\ar[r]^-{\Psi_{E,S,M}}&\hat H^{-1}(\Gamma_{E,F},\tx{Maps}(S_E,M_{E,\dot S_E,N}^\vee)_0)\\
	\tx{Hom}(M_{E,\dot S_E,M},M_{E,\dot S_E,M})^\Gamma\ar[r]^-{\Psi_{E,S,M}}\ar[u]&\hat H^{-1}(\Gamma_{E,F},\tx{Maps}(S_E,M_{E,\dot S_E,M}^\vee)_0)\ar[u]\\
}\]
All vertical maps are induced by the inclusion $M_{E,\dot S_E,N} \rw M_{E,\dot S_E,M}$. The diagram commutes by Lemma \ref{lem:mesn_h2}, part 2 being responsible for the top square and the functoriality statement in part 1 for the bottom square. The elements $\tx{id} \in \tx{Hom}(M_{E,\dot S_E,N},M_{E,\dot S_E,N})^\Gamma$ and $\tx{id} \in \tx{Hom}(M_{E,\dot S_E,M},M_{E,\dot S_E,M})^\Gamma$ both map to the natural inclusion in the middle left term, hence the corresponding classes in $\hat H^{-1}$ also meet in the middle right term. The functoriality of the map $\Theta_{E,S}$ now implies the claim that the classes $\xi_{E,\dot S_E,M}$ form an inverse system.
\end{proof}

Next we consider a finite Galois extension $K/F$ containing $E$ and a pair $(S',\dot S'_K)$ satisfying Conditions \ref{cnds:pes} with respect to $K$ and such that $S \subset S'$ and $\dot S_E \subset (\dot S'_K)_E$. We shall abbreviate this by saying
\begin{equation} \label{eq:mesn_order}
	(E,\dot S_E,N) < (K,\dot S'_K,M).
\end{equation}
Note that, given $E$, $(S,\dot S_E)$ and $K$, it is always possible to find such a pair $(S',\dot S'_K)$.
Let
\begin{equation} \label{eq:mesn_es} M_{E,\dot S_E,N} \rw M_{K,\dot S'_K,N},\qquad f \mapsto f^K \end{equation}
be defined by
\[ f^K(\sigma,u) = \begin{cases} f(\sigma, p(u)),&\sigma^{-1}u \in \dot S'_K \cap S_K \\ 0,&else \end{cases}. \]
This map is immediately verified to be $\Gamma_{K/F}$-equivariant, where $\Gamma_{K/F}$ acts on $M_{E,\dot S_E,N}$ via its quotient $\Gamma_{E/F}$.

\begin{lem} \label{lem:mesn_es1}
For any finite $\Gamma_{E/F}$-module $A$ with $\exp(A)|N$, the following diagram commutes
\[ \xymatrix@C=4pc{
	\tx{Hom}(A,M_{E,\dot S_E,N})^\Gamma\ar[r]^{\Theta_{E,\dot S_E,N}^P}\ar[d]^{\eqref{eq:mesn_es}}&H^2(\Gamma_S,Z(O_S))\ar[d]^{\tx{Inf}}\\
	\tx{Hom}(A,M_{K,\dot S'_K,N})^\Gamma\ar[r]^{\Theta_{K,\dot S'_K,N}^P}&H^2(\Gamma_{S'},Z(O_{S'}))
}\]
\end{lem}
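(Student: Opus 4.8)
The plan is to factor each of $\Theta^P_{E,\dot S_E,N}$ and $\Theta^P_{K,\dot S'_K,N}$, as in \eqref{eq:mesn_h2}, into a $\Psi$-map followed by a $\Theta$-map, to dispatch the $\Theta$-part using Lemma~\ref{lem:tnz_var}, and then to be left with an elementary identity between the two $\Psi$-maps that can be checked on representing cocycles. Concretely, write $\Theta^P_{E,\dot S_E,N}=\Theta_{E,S}\circ\Psi_{E,S,N}$ and $\Theta^P_{K,\dot S'_K,N}=\Theta_{K,S'}\circ\Psi_{K,S',N}$, where (via Lemma~\ref{lem:mesn_h2}) $\Psi_{E,S,N}$ is regarded as a map into $\hat H^{-1}(\Gamma_{E/F},A^\vee[S_E]_0)$, and likewise for $K$. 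First I would record that Conditions~\ref{cnds:pes} for the pair $(S',\dot S'_K)$ with respect to $K$ furnish all the hypotheses needed below: they imply the relevant Conditions~\ref{cnds:tnz} for $S'$ (note $N\in\N_S\subseteq\N_{S'}$, hence $\exp(A)\in\N_{S'}$), and since every $\sigma\in\Gamma_{K/E}$ fixes some place of $K$, part~3 of Conditions~\ref{cnds:pes} produces a place of $S'_K$ fixed by $\sigma$, so that the hypothesis of Lemma~\ref{lem:tnshriek} holds for $K/E$ and $S'$.

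With that in place, part~1 of Lemma~\ref{lem:tnz_var} gives $\tx{Inf}\circ\Theta_{E,S}=\Theta_{E,S'}\circ\iota$, where $\iota$ is the map induced by the inclusion $S_E\hrw S'_E$, and part~2 gives $\Theta_{E,S'}=\Theta_{K,S'}\circ\,!$ with $!$ the map of Lemma~\ref{lem:tnshriek} for $K/E$ and $S'$. Composing these, $\tx{Inf}\circ\Theta_{E,S}=\Theta_{K,S'}\circ\,!\circ\iota$. Hence the commutativity asserted in the lemma is equivalent to the equality of maps $\tx{Hom}(A,M_{E,\dot S_E,N})^\Gamma\to\hat H^{-1}(\Gamma_{K/F},A^\vee[S'_K]_0)$
\[ !\circ\iota\circ\Psi_{E,S,N}=\Psi_{K,S',N}\circ(\cdot)^K, \]
where $(\cdot)^K$ is the map \eqref{eq:mesn_es}.

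To establish this identity I would compute directly at the level of cocycles. For $H\in\tx{Hom}(A,M_{E,\dot S_E,N})^\Gamma$, let $h$ be the cocycle $\Psi_{E,S,N}(H)\in\hat Z^{-1}(\Gamma_{E/F},\tx{Maps}(S_E,A^\vee)_0)$ provided by Lemma~\ref{lem:mesn_h2}(1), so $h(w,a)=H(a)(1,w)$; by the support condition defining $M_{E,\dot S_E,N}$ it is supported on $\dot S_E$. The hypothesis $\dot S_E\subseteq(\dot S'_K)_E$ contained in \eqref{eq:mesn_order} says that for each $w\in\dot S_E$ the fibre $p^{-1}(w)\subseteq S'_K$ meets $\dot S'_K$ in exactly one point, namely the lift in $\dot S'_K$ of the place of $F$ below $w$; so one may choose a section $s:S'_E\to S'_K$ of $p$ with $s(\dot S_E)\subseteq\dot S'_K$. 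Using this $s$ to represent $!$, the cochain $s_!(\iota h)\in\tx{Maps}(S'_K,A^\vee)_0$ is supported on $s(\dot S_E)$, with $s_!(\iota h)(s(w),a)=h(w,a)$. On the other hand, unwinding \eqref{eq:mesn_es} together with Lemma~\ref{lem:mesn_h2}(1) shows that $\Psi_{K,S',N}(H^K)(u,a)$ equals $H(a)(1,p(u))$ when $u\in\dot S'_K\cap S_K$ and is $0$ otherwise; by the support of $h$ this is nonzero only when $p(u)\in\dot S_E$, i.e.\ only when $u=s(p(u))\in s(\dot S_E)$, in which case it equals $h(p(u),a)$. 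Thus $\Psi_{K,S',N}(H^K)=s_!(\iota h)$ as cocycles, and since the map $!$ on $\hat H^{-1}$ is independent of the section used to represent it (Lemma~\ref{lem:tnshriek}), the corresponding cohomology classes coincide. This proves the displayed identity, and hence the lemma.

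The mathematical content here is slight; the one step needing care is the cocycle matching in the last paragraph, and in particular the verification that the support of $\Psi_{K,S',N}(H^K)$ is exactly $s(\dot S_E)$ and that the values there agree with those of $s_!(\iota h)$. This is precisely the point at which the hypothesis $\dot S_E\subseteq(\dot S'_K)_E$ enters, and one should double-check that it is the only input needed beyond the formal properties already recorded in Lemmas~\ref{lem:tnshriek}, \ref{lem:mesn_h2}, and \ref{lem:tnz_var}; the remainder is routine bookkeeping with the nested sets of places $\dot S_E\subseteq S_E\subseteq S'_E$ and $\dot S'_K\subseteq S'_K$ and the $\Gamma$-equivariance of $p$.
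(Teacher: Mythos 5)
Your proof is correct and takes essentially the same route as the paper's: the paper's own proof consists of exactly this factorization through Lemma \ref{lem:tnz_var}, asserting without further computation that composing \eqref{eq:mesn_es} with $\Psi_{K,S',N}$ agrees with composing $\Psi_{E,S,N}$ with the composite of the two left-hand maps ($\iota$ followed by $!$) of that lemma. Your final paragraphs simply spell out, at the level of cocycles and with a section $s$ satisfying $s(\dot S_E)\subset \dot S'_K$, the compatibility that the paper treats as immediate, and that verification is accurate.
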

\begin{proof}
This follows immediately from Lemma \ref{lem:tnz_var}: Composing the left map in this diagram with $\Psi_{K,S',N}$ gives the same result as composing $\Psi_{E,S,N}$ with the map $\hat H^{-1}(\Gamma_{E/F},\tx{Maps}(S_E,A^\vee)_0) \rw \hat H^{-1}(\Gamma_{K/F},\tx{Maps}(S'_K,A^\vee)_0)$ that is the composition of the two left maps in the two diagrams of Lemma \ref{lem:tnz_var}.
\end{proof}

\begin{lem} \label{lem:mesn_es2}
The image of $\xi_{K,\dot S'_K,N}$ under the map $H^2(\Gamma_{S'},P_{K,\dot S'_K,N}(O_{S'})) \rw H^2(\Gamma_{S'},P_{E,\dot S_E,N}(O_{S'}))$ induced by \eqref{eq:mesn_es} is equal to the image of $\xi_{E,\dot S_E,N}$ under the inflation map $H^2(\Gamma_S,P_{E,\dot S_E,N}(O_S)) \rw H^2(\Gamma_{S'},P_{E,\dot S_E,N}(O_{S'}))$.
\end{lem}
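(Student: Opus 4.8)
The plan is to deduce the claim from Lemma \ref{lem:mesn_es1} together with the functoriality of the maps $\Theta^P$ in the coefficient group; no new cohomological input beyond what is already in place should be needed.

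First I would apply Lemma \ref{lem:mesn_es1} with the finite $\Gamma_{E/F}$-module $A=M_{E,\dot S_E,N}$, so that the associated finite multiplicative group $Z$ is $P_{E,\dot S_E,N}$ itself; the hypothesis $\exp(A)\mid N$ holds since $M_{E,\dot S_E,N}$ is a submodule of $\tx{Maps}(\Gamma_{E/F}\times S_E,\tfrac1N\Z/\Z)$. I then evaluate the commutative square of that lemma at $\tx{id}\in\tx{Hom}(M_{E,\dot S_E,N},M_{E,\dot S_E,N})^\Gamma$. Along the top-then-right edge one gets $\tx{Inf}(\Theta^P_{E,\dot S_E,N}(\tx{id}))=\tx{Inf}(\xi_{E,\dot S_E,N})$, while along the left-then-bottom edge one first postcomposes $\tx{id}$ with \eqref{eq:mesn_es}, which returns \eqref{eq:mesn_es} itself, now viewed as an element $f^K\in\tx{Hom}(M_{E,\dot S_E,N},M_{K,\dot S'_K,N})^\Gamma$, and then applies $\Theta^P_{K,\dot S'_K,N}$. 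So Lemma \ref{lem:mesn_es1} gives
\[ \tx{Inf}(\xi_{E,\dot S_E,N})=\Theta^P_{K,\dot S'_K,N}(f^K)\quad\text{in}\quad H^2(\Gamma_{S'},P_{E,\dot S_E,N}(O_{S'})). \]

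It then remains to identify the right-hand side with the image of $\xi_{K,\dot S'_K,N}$. For this I would use that \eqref{eq:mesn_es}, being a $\Gamma_{K/F}$-equivariant homomorphism of character modules $X^*(P_{E,\dot S_E,N})\rw X^*(P_{K,\dot S'_K,N})$, is dual to a morphism of finite multiplicative groups $\pi\colon P_{K,\dot S'_K,N}\rw P_{E,\dot S_E,N}$, and that the map it induces on $H^2(\Gamma_{S'},-)$ is precisely the one occurring in the statement. Invoking the functoriality in the coefficient group of $\Theta^P_{K,\dot S'_K,N}$ with respect to $\pi$ produces the commutative square
\[ \xymatrix@C=3pc{
\tx{Hom}(P_{K,\dot S'_K,N},P_{K,\dot S'_K,N})^\Gamma\ar[r]^-{\Theta^P_{K,\dot S'_K,N}}\ar[d]_{\pi\circ-}&H^2(\Gamma_{S'},P_{K,\dot S'_K,N}(O_{S'}))\ar[d]^{\pi_*}\\
\tx{Hom}(P_{K,\dot S'_K,N},P_{E,\dot S_E,N})^\Gamma\ar[r]^-{\Theta^P_{K,\dot S'_K,N}}&H^2(\Gamma_{S'},P_{E,\dot S_E,N}(O_{S'}))
}\]
Evaluating at $\tx{id}\in\tx{Hom}(P_{K,\dot S'_K,N},P_{K,\dot S'_K,N})^\Gamma$, the top-then-right route yields $\pi_*(\xi_{K,\dot S'_K,N})$ --- exactly the quantity in the statement --- while the left-then-bottom route yields $\Theta^P_{K,\dot S'_K,N}(\pi)$; and under the duality identification $\tx{Hom}(P_{K,\dot S'_K,N},P_{E,\dot S_E,N})^\Gamma=\tx{Hom}(M_{E,\dot S_E,N},M_{K,\dot S'_K,N})^\Gamma$ the morphism $\pi$ is $f^K$. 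Hence $\pi_*(\xi_{K,\dot S'_K,N})=\Theta^P_{K,\dot S'_K,N}(f^K)$, and comparison with the displayed equality above finishes the argument.

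The only real obstacle is bookkeeping: one has to keep the contravariant dictionary between morphisms of the groups $P_{\bullet}$ and morphisms of their character modules $M_{\bullet}$ straight, and make sure each ``identity composed with a map equals that map'' reduction is carried out on the correct side of the relevant $\tx{Hom}$-set. Beyond this, the proof is an immediate application of Lemma \ref{lem:mesn_es1} and of the functoriality of $\Theta^P$ recorded right after \eqref{eq:mesn_h2}.
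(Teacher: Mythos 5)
Your proof is correct and is essentially the paper's own argument: the paper stacks the same two commutative squares (the top one being Lemma \ref{lem:mesn_es1} applied with $A=M_{E,\dot S_E,N}$, the bottom one being functoriality of $\Theta^P_{K,\dot S'_K,N}$ in the coefficient group via the dual map $\pi$) into a single three-row diagram and chases the two identity elements to the common middle term, exactly as you do. No gaps.
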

\begin{proof}
The proof is similar to that of Lemma \ref{lem:mesn_n}. We consider the diagram
\[ \xymatrix@C=4pc{
	\tx{Hom}(M_{E,\dot S_E,N},M_{E,\dot S_E,N})^\Gamma\ar[r]^-{\Theta^P_{E,\dot S_E,N}}\ar[d]&H^2(\Gamma_S,P_{E,\dot S_E,N}(O_S))\ar[d]^{\tx{Inf}}\\
	\tx{Hom}(M_{E,\dot S_E,N},M_{K,\dot S'_K,N})^\Gamma\ar[r]^-{\Theta^P_{K,\dot S'_K,N}}&H^2(\Gamma_{S'},P_{E,\dot S_E,N}(O_{S'}))\\
	\tx{Hom}(M_{K,\dot S'_K,N},M_{K,\dot S'_K,N})^\Gamma\ar[r]^-{\Theta^P_{K,\dot S'_K,N}}\ar[u]&H^2(\Gamma_{S'},P_{K,\dot S'_K,N}(O_{S'}))\ar[u]\\
}\]
All vertical maps except for the inflation map are induced by \eqref{eq:mesn_es}. The top square commutes due to Lemma \ref{lem:mesn_es1}, while the bottom square commutes by functoriality of $\Theta^P_{K,\dot S'_K,N}$. Since the two elements $\tx{id}$ of the top and bottom left term meet in the middle left term, the corresponding elements $\xi_{E,\dot S_E,N}$ and $\xi_{K,\dot S'_K,N}$ of the top and bottom right term meet in the middle right term.
\end{proof}

It is easy to see that the maps \eqref{eq:mesn_es} are compatible with respect to $N$ and thus splice to a map
\[ P_{K,\dot S'_K} \rw P_{E,\dot S_E} \]
which maps the class $\xi_{K,\dot S'_K}$ to the class $\xi_{E,\dot S_E}$. Let $E_i$ be an exhaustive tower of finite Galois extensions of $F$, let $S_i$ be an exhaustive tower of finite subsets of $V_F$, and let $\dot S_i \subset S_{i,E_i}$ be a set of lifts for $S_i$. We assume that $\dot S_i \subset \dot S_{i+1,E_i}$ and that $(S_i,\dot S_i)$ satisfies  Conditions \ref{cnds:pes} with respect to $E_i/F$. Then
\begin{equation} \label{eq:slim} \dot V = \varprojlim_i \dot S_i \end{equation}
is a subset of $V_{\ol{F}}$ of lifts of $V_F$
and
\begin{equation} \label{eq:plim} P_{\dot V} = \varprojlim_i P_{E_i,\dot S_i} \end{equation}
is a pro-finite algebraic group defined over $F$. For each finite multiplicative group $Z$ defined over $F$ (now without any condition on its exponent) we obtain from $\Theta^P_{E_i,\dot S_i,N}$ a homomorphism
\begin{equation} \label{eq:m_h2} \Theta^P_{\dot V} : \tx{Hom}(P_{\dot V},Z)^\Gamma \rw H^2(\Gamma,Z(\ol{F})), \end{equation}
which is surjective according to Corollary \ref{cor:tnz}.

We can reinterpret $\Theta^P_{\dot V}$ in the following way that will be useful later. Let $\xi_i = \xi_{E_i,\dot S_i}$ denote both the distinguished element of $H^2(\Gamma_S,P_{E_i,\dot S_i}(O_S))$ and its image in $H^2(\Gamma,P_{E_i,\dot S_i}(\ol{F}))$. According to Lemma \ref{lem:mesn_es2} the sequence $(\xi_i)$ provides a distinguished element of $\varprojlim H^2(\Gamma,P_{E_i,\dot S_i}(\ol{F}))$. According to \cite[Theorem 2.7.5]{NSW08} the natural map $H^2(\Gamma,P_{\dot V}(\ol{F})) \rw \varprojlim H^2(\Gamma,P_{E_i,\dot S_i}(\ol{F}))$ is surjective. Then $\Theta^P_{\dot V}(\varphi)=\varphi(\tilde\xi)$, where $\tilde\xi \in H^2(\Gamma,P_{\dot V}(\ol{F}))$ is any preimage of $(\xi_i)$. Since any $\varphi$ factors through the projection $P_{\dot V} \rw P_{E_i,\dot S_i}$ for some $i$, the choice of $\tilde\xi$ is irrelevant.

\begin{lem} \label{lem:hompdotvz} Let $Z$ be a finite multiplicative group defined over $F$ and let $A=X^*(Z)$. Let $A^\vee[\dot V]_0$ denote the group of finitely-supported maps $f : \dot V \rw A^\vee$ satisfying $\sum_{\dot v \in \dot V}f(\dot v)=0$. Let $A^\vee[\dot V]_{0,\infty}$ be the subgroup of those $f$ that in addition satisfy $f(\dot v)=0$ if $v \in V_F$ is complex, and $f(\dot v)+\sigma f(\dot v)=0$ if $v \in V_F$ is real and $\sigma \in \Gamma_{\dot v}$ is the non-trivial element.

There is a natural isomorphism
\[ \tx{Hom}_F(P_{\dot V},Z) \rw A^\vee[\dot V]_{0,\infty}.\]
\end{lem}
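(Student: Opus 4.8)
The plan is to compute $\tx{Hom}_F(P_{\dot V},Z)$ directly from the definition $P_{\dot V} = \varprojlim_i P_{E_i,\dot S_i}$ with $X^*(P_{E_i,\dot S_i}) = M_{E_i,\dot S_i} = \varinjlim_N M_{E_i,\dot S_i,N}$. Since $Z$ is a finite multiplicative group defined over $F$ with $X^*(Z) = A$ a finite $\Gamma_F$-module of some exponent $e$, a homomorphism $P_{\dot V} \rw Z$ over $F$ corresponds to a $\Gamma_F$-equivariant map $A \rw X^*(P_{\dot V}) = \varinjlim_i M_{E_i,\dot S_i}$, and because $A$ is finite and $e$-torsion this is the same as a $\Gamma_{E_i/F}$-equivariant map $A \rw M_{E_i,\dot S_i,N}$ for some large enough index $i$ (so that $E_i$ splits $Z$) and some $N$ divisible by $e$. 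By part 1 of Lemma \ref{lem:mesn_h2}, $\tx{Hom}(A,M_{E_i,\dot S_i,N})^\Gamma$ is canonically isomorphic to the subgroup $\tx{Maps}(\dot S_i,A^\vee)_0 \cap \hat Z^{-1}(\Gamma_{E_i/F},\tx{Maps}((S_i)_{E_i},A^\vee)_0)$, that is, to finitely supported maps $\dot S_i \rw A^\vee$ with total sum zero whose associated "spread-out" cochain (extended by the formula $H(a,\sigma,w) = (\sigma h)(w,a)$, i.e. by $\Gamma_{E_i/F}$-translation) lies in $\hat Z^{-1}$. So the first key step is to unwind exactly what the condition "the associated cochain is a $(-1)$-cocycle" means for a map supported on the set of lifts $\dot S_i$.

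The second key step is the identification of that condition with the archimedean constraint defining $A^\vee[\dot V]_{0,\infty}$. Recall $\hat Z^{-1}(\Gamma_{E_i/F}, M)$ consists of elements of $M$ killed by the norm $N_{E_i/F}$; here $M = \tx{Maps}((S_i)_{E_i},A^\vee)_0$ and the element in question is $H(a,1,w) = h(w,a)$ viewed inside $\tx{Maps}((S_i)_{E_i},A^\vee)_0$ via $w \mapsto h(w,\cdot)$. For $h$ supported on the lift set $\dot S_i$, the norm $N_{E_i/F}$ applied to it sends the basis vector $\dot v \otimes h(\dot v)$ to $\sum_{\sigma \in \Gamma_{E_i/F}} \sigma\dot v \otimes \sigma h(\dot v)$; grouping by the orbit of $\dot v$ and using that the stabilizer of $\dot v$ is the decomposition group $\Gamma_{\dot v}$, the norm vanishes iff for each $\dot v \in \dot S_i$ one has $\sum_{\tau \in \Gamma_{\dot v}} \tau h(\dot v) = 0$, i.e. $N_{\dot v}(h(\dot v)) = 0$ where $N_{\dot v}$ is the local norm. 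For a finite place $\dot v$ the decomposition group is cyclic and $A^\vee = \tx{Hom}(A,\Q/\Z)$ is divisible, so $\hat H^{-1}(\Gamma_{\dot v},A^\vee) = \hat H^1(\Gamma_{\dot v}, A) = 0$ (Hilbert 90 / the standard cyclic cohomology argument), which means the local norm condition at a finite place imposes nothing new on the cohomology class, only on the representative; conversely, for a real place the decomposition group is $\Z/2$ generated by $\sigma$, and $N_{\dot v}(h(\dot v)) = h(\dot v) + \sigma h(\dot v)$, while for a complex place $\Gamma_{\dot v}$ is trivial and $N_{\dot v}(h(\dot v)) = h(\dot v)$. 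This is precisely the pair of conditions "$f(\dot v) = 0$ if $v$ complex, $f(\dot v) + \sigma f(\dot v) = 0$ if $v$ real" appearing in the statement — plus the overall sum-zero condition, which is built into $\tx{Maps}(\dot S_i,A^\vee)_0$. So one obtains, for each large $i$, an isomorphism $\tx{Hom}(A,M_{E_i,\dot S_i,N})^\Gamma \cong \{f : \dot S_i \rw A^\vee \text{ supported on finitely many places, } \sum f = 0, \text{ archimedean conditions}\}$.

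The third and final step is to pass to the limit over $i$. The transition maps $M_{E_i,\dot S_i,N} \rw M_{E_{i+1},\dot S_{i+1},N}$ of \eqref{eq:mesn_es} restrict, on the level of maps supported on lifts, simply to the inclusion $\dot S_i \hrw \dot S_{i+1}$ (one checks from the formula $f^K(\sigma,u) = f(\sigma,p(u))$ for $\sigma^{-1}u \in \dot S'_K \cap S_K$ and $0$ otherwise that the support is carried along the chosen lifts), so under the Lemma \ref{lem:mesn_h2} isomorphisms they become the obvious inclusions of the corresponding groups of $A^\vee$-valued maps on the growing lift sets. Since $\dot V = \varprojlim_i \dot S_i$ is a set of lifts of $V_F$ and every finitely supported $A^\vee$-valued map on $\dot V$ is supported on $\dot S_i$ for $i$ large, the colimit of the right-hand sides is exactly $A^\vee[\dot V]_{0,\infty}$, and the colimit of the left-hand sides is $\tx{Hom}_F(P_{\dot V},Z)$. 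Naturality in $Z$ is immediate from the functoriality in $A$ asserted in Lemma \ref{lem:mesn_h2}. I expect the only genuinely delicate point to be the computation in the second step identifying the vanishing of the global norm with the collection of local norm conditions indexed by $\dot S_i$ — in particular being careful that the $\hat H^{-1}$-condition is about the representative $h$ rather than its class, and that the finite-place contributions genuinely impose no constraint because $h$ is, by construction of the isomorphism $\Psi_{E,S,N}$, already taken supported on the lift set; everything else is bookkeeping with the explicit formulas already recorded in the excerpt.
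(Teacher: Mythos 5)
Your overall route (identify $\tx{Hom}_F(P_{\dot V},Z)$ with $\varinjlim_i \tx{Hom}(A,M_{E_i,\dot S_i})^\Gamma$, use Lemma \ref{lem:mesn_h2} to rewrite each term as maps supported on $\dot S_i$ with sum zero and killed by the norm, and note that the support condition turns the global norm condition into local norm conditions $N_{E_{i,\dot v}/F_v}f(\dot v)=0$ at each $\dot v\in\dot S_i$) is exactly the paper's argument up to that point. The gap is in how you dispose of the conditions at the finite places. Your claim that they ``impose nothing new'' at level $i$ is false, and the reasons you give are incorrect: $A^\vee=\tx{Hom}(A,\Q/\Z)$ is \emph{finite}, not divisible, since $A$ is finite; the decomposition group $\Gamma_{E_i/F,\dot v}$ at a finite place need not be cyclic; and, most importantly, the vanishing of $\hat H^{-1}(\Gamma_{E_i/F,\dot v},A^\vee)$ would be irrelevant even if it held, because the group computed by Lemma \ref{lem:mesn_h2} consists of actual norm-killed elements (representatives), not cohomology classes. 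For instance, with $A^\vee=\frac{1}{4}\Z/\Z$, trivial action, and local degree $2$ at $\dot v$, the condition $N_{E_{i,\dot v}/F_v}f(\dot v)=0$ means $2f(\dot v)=0$, a genuine restriction. Consequently your asserted level-$i$ isomorphism with the group of maps subject only to the archimedean conditions is wrong (and similarly, at a real place where $E_i$ is still real the level-$i$ condition is $f(\dot v)=0$, stronger than $f(\dot v)+\sigma f(\dot v)=0$).

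What actually makes the finite-place conditions disappear, and relaxes the real-place condition to $f(\dot v)+\sigma f(\dot v)=0$, is the passage to the colimit over the exhaustive tower $E_i$: given a fixed finitely supported $f$, choose $j$ large enough that $\Gamma_{E_j}$ acts trivially on $A^\vee$, that $E_j$ is complex at the real places in the support of $f$, and that the local degrees $[E_{j,\dot v}:E_{i,\dot v}]$ at the finitely many non-archimedean $\dot v$ in the support are divisible by $\exp(A)$; then $N_{E_{j,\dot v}/F_v}f(\dot v)$ is a multiple of $\exp(A)\cdot f(\dot v)=0$ at those places, while the archimedean conditions stabilize to exactly those defining $A^\vee[\dot V]_{0,\infty}$. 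Without this colimit argument the surjectivity of your map onto $A^\vee[\dot V]_{0,\infty}$ is unproved. Your first and third steps (finiteness of $A$ giving the colimit description, and the transition maps becoming the inclusions $\dot S_i \hrw \dot S_{i+1}$) are fine and agree with the paper.
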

\begin{proof}
By the finiteness of $A$ we have $\tx{Hom}_F(P_{\dot V},Z)=\tx{Hom}(A,\varinjlim M_{E_i,\dot S_i})^\Gamma=\varinjlim \tx{Hom}(A,M_{E_i,\dot S_i})^\Gamma$. By Lemma \ref{lem:mesn_h2} the latter is equal to $\varinjlim A^\vee[\dot S_i]_0^{N_{E_i/F}}$, where the transition maps are given by the inclusion $\dot S_i \rw \dot S_{i+1}$ which sends $w \in \dot S_i$ to the unique place of $\dot S_{i+1}$ lying above $w$. Here $A^\vee[\dot S_i]_0^{N_{E_i/F}}$ signifies the subgroup of those elements of $A^\vee[S_{i,E_i}]_0$ which are supported on $\dot S_i$ and killed by the norm $N_{E_i/F}$. In view of the support condition, the second condition is equivalent to $N_{E_{\dot v}/F_v} f(\dot v)=0$ for all $\dot v \in \dot S_i$.
\end{proof}

We will now show that \eqref{eq:m_h2} behaves well with respect to localization. To describe this, we need to recall the local counterpart of $P_{\dot V}$ from \cite[\S3.1]{KalRI}. Let $v \in \dot V$. Associated to the local field $F_v$ there is the pro-finite multiplicative group $u_v$ introduced in \cite[\S3.1]{KalRI}. It is defined as $\varprojlim_{E_v/F_v,N} u_{E_v/F_v,N}$, the limit being taken over all finite Galois extensions $E_v/F_v$ and all natural numbers $N$. Here $X^*(u_{E_v/F_v,N}) = I_{E_v/F_v,N}$ is the set of those maps $f : \Gamma_{E_v/F_v} \rw \frac{1}{N}\Z/\Z$ that satisfy $\sum_{\tau \in \Gamma_{E_v/F_v}}f(\tau)=0$ and for a tower of finite Galois extensions $K_v/E_v/F_v$ and $N|M$, the transition map $I_{E_v/F_v,N} \rw I_{K_v/F_v,M}$ is given simply by composition with the natural projection $\Gamma_{K_v/F_v} \rw \Gamma_{E_v/F_v}$ and the natural inclusion $\frac{1}{N}\Z/\Z \to \frac{1}{M}\Z/\Z$.

If $Z$ is a finite multiplicative group defined over $F_v$ with $\exp(Z)|N$, and we set $A=X^*(Z)$, then we have the isomorphism
\[ \Psi_{E_v,N} : \tx{Hom}(A,I_{E_v/F_v,N})^\Gamma \rw \hat Z^{-1}(\Gamma_{E_v/F_v},A^\vee), \qquad H \mapsto h \]
defined by $h(a)=H(a,1)$. The composition of this isomorphism with the map $\Theta_{E,v}$ discussed in Subsection \ref{sub:h2z} provides a homomorphism
\[ \Theta_{E_v,N}^u : \tx{Hom}(u_{E_v/F_v,N},Z)^{\Gamma_v} \rw H^2(\Gamma_v,Z). \]
Applying this homomorphism to the case $Z=u_{E_v/F_v,N}$ we obtain a distinguished element $\xi_{E_v/F_v,N} \in H^2(\Gamma_v,u_v)$ as the image of the identity map. We can then reinterpret $\Theta_{E_v,N}^u$ as the homomorphism that maps $\varphi$ to $\varphi(\xi_{E_v/F_v,N})$. The transition map $u_{K_v/F_v,M} \rw u_{E_v/F_v,N}$ sends $\xi_{K_v/F_v,M}$ to $\xi_{E_v/F_v,N}$ and the system $\xi_{E_v/F_v,N}$ thus leads to a distinguished element $\xi_v \in H^2(\Gamma_v,u_v)$. We refer to \cite[Lemma 4.5, Fact 4.6]{KalRI} for more details. We obtain a map
\[ \Theta^u_v : \tx{Hom}(u_v,Z)^{\Gamma_v} \rw H^2(\Gamma_v,Z) \]
that sends $\varphi$ to $\varphi(\xi_v)$. It is surjective \cite[Proposition 3.2]{KalRI}.

We now define a localization map
\begin{equation} \label{eq:locvp} loc_v^P : u_v \rw P_{\dot V} \end{equation}
for $v \in \dot V$ as follows. Fix a finite Galois extension $E/F$, a pair $(S,\dot S_E)$ satisfying conditions \ref{cnds:pes} with $v \in S$, and a natural number $N \in \N_S$, and consider the map
\[ loc_v^{M_{E,\dot S_E,N}} : M_{E,\dot S_E,N} \rw I_{E_v/F_v,N},\qquad H \mapsto H_v, \]
given by $H_v(\tau)=H(\tau,v)$ for $\tau \in \Gamma_{E_v/F_v}$. We have
\[ 0 = \sum_{\sigma \in \Gamma_{E/F}}H(\sigma,v) = \sum_{\tau \in \Gamma_{E_v/F_v}}H(\tau,v) = \sum_{\tau \in \Gamma_{E_v/F_v}}H_v(\tau), \]
and this shows that $loc_v^{M_{E,\dot S_E,N}}$ is well-defined. It is evidently $\Gamma_{E_v/F_v}$\-equivariant. We will write $loc_v^{P_{E,\dot S_E,N}} : u_{E_v/F_v,N} \rw P_{E,\dot S_E,N}$ for the dual of this map. For varying $N$, these maps splice to a map $loc_v^{P_{E,\dot S_E}} : u_v \rw P_{E,\dot S_E}$. For varying $i$, the maps $loc_v^{P_{E_i,\dot S_i}}$ in turn splice together to form the map \eqref{eq:locvp}.

\begin{lem} \label{lem:locvp}
Assume that $E/F$ splits $Z$, $(S,\dot S_E)$ satisfies Conditions \ref{cnds:pes}, and $N \in \N_S$ is a multiple of $\exp(Z)$. We have the commutative diagram
\[ \xymatrix{
	\tx{Hom}(P_{E,\dot S_E,N},Z)^\Gamma\ar[r]^-{\Theta^P_{E,\dot S_E,N}}\ar[d]^{\tx{loc}_v^{P_{E,\dot S_E,N}}}&H^2(\Gamma,Z)\ar[d]\\
	\tx{Hom}(u_{E_v/F_v,N},Z)^{\Gamma_v}\ar[r]^-{\Theta^u_{E_v,N}}&H^2(\Gamma_v,Z)
}\]
where the right vertical map is the localization map given by restriction to $\Gamma_v$ followed by the inclusion $Z(\ol{F}) \rw Z(\ol{F_v})$.
\end{lem}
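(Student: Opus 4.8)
The plan is to factor the square through the auxiliary groups $\hat H^{-1}(\Gamma_{E/F},A^\vee[S_E]_0)$ and $\hat H^{-1}(\Gamma_{E_v/F_v},A^\vee)$, where $A=X^*(Z)$, thereby reducing the claim to Lemma \ref{lem:zloc} together with an elementary computation with $loc_v$ on character modules. Recall that by construction $\Theta^P_{E,\dot S_E,N}$ is the composite
\[ \tx{Hom}(A,M_{E,\dot S_E,N})^\Gamma \xrightarrow{\Psi_{E,S,N}} \hat H^{-1}(\Gamma_{E/F},A^\vee[S_E]_0) \xrightarrow{\Theta_{E,S}} H^2(\Gamma_S,Z(O_S)), \]
followed by the inflation $H^2(\Gamma_S,Z(O_S))\to H^2(\Gamma,Z)$ (injective by Lemma \ref{lem:inf3}), and likewise $\Theta^u_{E_v,N}=\Theta_{E,v}\circ\Psi_{E_v,N}$ after the analogous identifications.

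First I would insert the vertical arrow $\hat H^{-1}(\Gamma_{E/F},A^\vee[S_E]_0)\to\hat H^{-1}(\Gamma_{E_v/F_v},A^\vee)$ given by restriction to the decomposition group $\Gamma_{E_v/F_v}$ followed by projection onto the $v$-summand $A^\vee\subset A^\vee[S_E]_0$ (a $\Gamma_{E_v/F_v}$-submodule, since the decomposition group fixes $v$). With this arrow in place the square of the lemma splits into two. The lower square, with horizontal maps $\Theta_{E,S}$ and $\Theta_{E,v}$, is precisely Lemma \ref{lem:zloc}; applicability is guaranteed because Conditions \ref{cnds:pes} for $(S,\dot S_E)$ together with $\exp(Z)\mid N\in\N_S$ imply Conditions \ref{cnds:tnz} for $Z$. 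The only point to check for this step is that the inflation $H^2(\Gamma_S,Z(O_S))\to H^2(\Gamma,Z)$ followed by restriction to $\Gamma_v$ and the inclusion $Z(\ol F)\hookrightarrow Z(\ol{F_v})$ agrees with the localization map of Lemma \ref{lem:zloc}; this is immediate from the naturality of inflation and restriction in the group and in the coefficients, using $Z(O_S)\subset Z(\ol F)\subset Z(\ol{F_v})$.

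Next I would verify the upper square, which is the only place an actual computation enters. For $\varphi\in\tx{Hom}(A,M_{E,\dot S_E,N})^\Gamma=\tx{Hom}(P_{E,\dot S_E,N},Z)^\Gamma$, put $h=\Psi_{E,S,N}(\varphi)$, so that $h(w)(a)=\varphi(a)(1,w)$, and put $H_v=loc_v^{M_{E,\dot S_E,N}}\circ\varphi$, the character-module map dual to $\varphi\circ loc_v^{P_{E,\dot S_E,N}}$, so that $H_v(a)(\tau)=\varphi(a)(\tau,v)$. Then $\Psi_{E_v,N}(H_v)(a)=H_v(a)(1)=\varphi(a)(1,v)=h(v)(a)$; that is, $\Psi_{E_v,N}(H_v)$ is exactly the $v$-component of $\Psi_{E,S,N}(\varphi)$, which is what the upper square asserts. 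One should also record in passing that $H_v$ is $\Gamma_{E_v/F_v}$-equivariant, inherited from the $\Gamma$-equivariance of $\varphi$, so that it genuinely lies in $\tx{Hom}(u_{E_v/F_v,N},Z)^{\Gamma_v}$.

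I do not expect a serious obstacle: the entire cohomological content sits in Lemma \ref{lem:zloc}, and what remains is bookkeeping --- keeping straight the several decomposition-group restrictions and coefficient inclusions, and translating the definition \eqref{eq:locvp} of $loc_v$ on the $M$-side into the ``restrict, then project onto the $v$-factor'' map of Lemma \ref{lem:zloc} via the dictionary $\Psi$. The mildly delicate point, if any, is precisely this last translation, which is the computation carried out in the previous paragraph.
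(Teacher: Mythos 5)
Your overall route coincides with the paper's: factor the square through $\hat H^{-1}(\Gamma_{E/F},A^\vee[S_E]_0)$ and $\hat H^{-1}(\Gamma_{E_v/F_v},A^\vee)$, quote Lemma \ref{lem:zloc} for the lower square, and check the upper square by hand; the lower-square part, including the compatibility of inflation and restriction on the $H^2$-side and the observation that Conditions \ref{cnds:pes} together with $\exp(Z)\mid N\in\N_S$ give Conditions \ref{cnds:tnz}, is fine. The gap is in your verification of the upper square. Its right vertical arrow is the Tate-cohomology restriction in degree $-1$ followed by projection onto the $v$-factor, and restriction on $\hat H^{-1}$ is \emph{not} induced by viewing a representative as a representative for the subgroup: on $\hat H^{-1}(\Gamma_{E/F},-)=\ker(N_{E/F})/I_{E/F}(-)$ it is the transfer, given on a representative $h$ by $a\mapsto \sum_{\tau\in\Gamma_{E_v/F_v}\lmod\Gamma_{E/F}}\dot\tau h(\dot\tau^{-1}v,\dot\tau^{-1}a)$. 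In fact "evaluate the representative at $v$" is not even well defined on classes: $N_{E/F}h=0$ does not imply $N_{E_v/F_v}(h(v))=0$, and $I_{E/F}$-boundaries need not map to $I_v$-boundaries. So your identity $\Psi_{E_v,N}(H_v)=h(v)$, correct as an equality of elements of $\hat Z^{-1}(\Gamma_{E_v/F_v},A^\vee)$, does not by itself say that its class is the image of $[\Psi_{E,S,N}(\varphi)]$ under the left vertical map of Lemma \ref{lem:zloc}; "which is what the upper square asserts" is precisely the unproved point.

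The missing step, which is the actual content of the paper's proof, is to evaluate the coset sum on the special representatives produced by $\Psi_{E,S,N}$: each summand equals $H(a,\dot\tau,v)$, and the support condition in the definition of $M_{E,\dot S_E,N}$ forces $H(a,\dot\tau,v)=0$ unless $\dot\tau^{-1}v\in\dot S_E$, i.e.\ unless $\dot\tau^{-1}v=v$ (the lift of $v_F$ in $\dot S_E$ being unique), so that $\dot\tau\in\Gamma_{E_v/F_v}$ and only the trivial coset contributes; the sum collapses to $H(a,1,v)=h(v)(a)$. With this collapse your computation finishes the argument; without it the upper square is unjustified. Note that this is exactly where the passage from $M_{E,S,N}$ to the submodule $M_{E,\dot S_E,N}$ (condition 3 in its definition) is used, so the step cannot be dismissed as bookkeeping.
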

\begin{proof}
Set $A=X^*(Z)$. According to Lemma \ref{lem:zloc}, it is enough to show that the following diagram commutes
\[ \xymatrix{
\tx{Hom}(A, M_{E,\dot S_E,N})^\Gamma\ar[d]^{loc_v^{M_{E,\dot S_E,N}}}\ar[r]^{\Psi_{E,S,N}}&\hat H^{-1}(\Gamma_{E/F},A^\vee[S_E]_0)\ar[d]\\
\tx{Hom}(A,I_{E_v/F_v,N})^{\Gamma_v}\ar[r]^{\Psi_{E_v,N}}&\hat H^{-1}(\Gamma_{E_v/F_v},A^\vee)
}\]
where the right vertical map is restriction to $\Gamma_{E_v/F_v}$ followed by projection onto the $v$-th component. Explicitly, this map sends $h \in \tx{Maps}(S_E,A^\vee)_0$ to the element of $A^\vee$ given by
\[ a \mapsto \sum_{\tau \in \Gamma_{E_v/F_v} \lmod \Gamma_{E/F}} \dot\tau h(\dot\tau^{-1}v,\dot\tau^{-1}a) \]
where $\dot\tau \in \Gamma_{E/F}$ is any representative of the coset $\tau$. The resulting class in $\hat H^{-1}(\Gamma_{E_v/F_v},A^\vee)$ is independent of the choices of representatives.

The composition of this map with $\Psi_{E,S,N}$ thus sends $H \in \tx{Hom}(A,M_{E,\dot S_E,N})^\Gamma$ to
\[ a \mapsto \sum_{\tau \in \Gamma_{E_v/F_v} \lmod \Gamma_{E/F}} \dot\tau H(\dot\tau^{-1}a,1,\dot\tau^{-1}v). \]
Each summand is equal to $H(a,\dot\tau,v)$ and since $v \in \dot S_E$ the definition of $M_{E,\dot S_E,N}$ implies that $H(a,\dot\tau,v)=0$ unless $\dot\tau^{-1} \in \Gamma_{E_v/F_v}$. Thus all summands are zero except for the summand indexed by the trivial coset $\tau$, for which we may take the representative $\dot\tau=1 \in \Gamma_{E/F}$. We conclude that sending $H \in \tx{Hom}(A,M_{E,\dot S_E,N})^\Gamma$ first horizontally and then vertically provides the element of $\hat H^{-1}(\Gamma_{E_v/F_v}A^\vee)$ represented by $a \mapsto H(a,1,v)$. Recalling the definitions of $\tx{loc}_v^{M_{E,\dot S_E,N}}$ and $\Psi_{E_v,N}$ this element also represents the image of $H$ under the composition of these two maps.
\end{proof}

\begin{cor} \label{cor:xiloc}
For $v \in \dot V$ consider the maps
\[ H^2(\Gamma,P_{\dot V}(\ol{F})) \rw H^2(\Gamma_v,P_{\dot V}(\ol{F_v})) \lw H^2(\Gamma_v,u_v), \]
the left one being given by restriction to $\Gamma_v$ followed by inclusion $\ol{F}^\times \rw \ol{F_v}^\times$, and the right one being given by $\tx{loc}_v^P$. If $\tilde \xi \in H^2(\Gamma,P_{\dot V}(\ol{F}))$ is any preimage of $(\xi_i)$, then the images of $\tilde\xi$ and $\xi_v$ in the middle term are equal.
\end{cor}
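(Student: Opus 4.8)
The plan is to reduce the statement, by an inverse-limit argument in which injectivity is available, to the finite-level compatibility recorded in Lemma \ref{lem:locvp}, and then to apply that lemma to the identity endomorphism of $P_{E_i,\dot S_i,N}$. As a first step I would note that $P_{\dot V}(\ol{F_v}) = \varprojlim_{i,N} P_{E_i,\dot S_i,N}(\ol{F_v})$ is pro-finite, and that by \cite[Cor. 2.7.6]{NSW08}, using the finiteness of $H^j(\Gamma_v,M)$ for finite $\Gamma_v$-modules $M$, the natural map
\[ H^2(\Gamma_v,P_{\dot V}(\ol{F_v})) \lrw \varprojlim_{i,N} H^2(\Gamma_v,P_{E_i,\dot S_i,N}(\ol{F_v})) \]
is an isomorphism, in particular injective. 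It then suffices to show that for every $i$ and every $N \in \N_{S_i}$ the images of $r_v(\tilde\xi)$ and of $(\tx{loc}_v^P)_*(\xi_v)$ under the map induced by the projection $\pi_{i,N}\colon P_{\dot V} \rw P_{E_i,\dot S_i,N}$ agree in $H^2(\Gamma_v,P_{E_i,\dot S_i,N}(\ol{F_v}))$; here $r_v$ denotes the cohomological localization map for $P_{\dot V}$. This is precisely the step with no global counterpart: over $\Gamma$ the analogous map fails to be injective, which is why the corollary must be stated for \emph{any} preimage $\tilde\xi$.

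Next I would compute these two images. Since $(\pi_i)_*(\tilde\xi)=\xi_{E_i,\dot S_i}$ by the defining property of $\tilde\xi$, and $\xi_{E_i,\dot S_i}$ maps to $\xi_{E_i,\dot S_i,N}$ under the projection $P_{E_i,\dot S_i}\rw P_{E_i,\dot S_i,N}$ by Lemma \ref{lem:mesn_n}, naturality of inflation and of localization show that the image of $r_v(\tilde\xi)$ is the localization to $H^2(\Gamma_v,P_{E_i,\dot S_i,N}(\ol{F_v}))$ of the distinguished class $\xi_{E_i,\dot S_i,N}\in H^2(\Gamma_{S_i},P_{E_i,\dot S_i,N}(O_{S_i}))$; in particular it is independent of the choice of $\tilde\xi$. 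For the other class, the maps $\tx{loc}_v^{P_{E_i,\dot S_i,N}}$ splice to $\tx{loc}_v^P$, so $\pi_{i,N}\circ\tx{loc}_v^P=\tx{loc}_v^{P_{E_i,\dot S_i,N}}\circ q$, where $q\colon u_v\rw u_{E_{i,v}/F_v,N}$ is the transition map; as $q_*(\xi_v)=\xi_{E_{i,v}/F_v,N}$, the image of $(\tx{loc}_v^P)_*(\xi_v)$ is $(\tx{loc}_v^{P_{E_i,\dot S_i,N}})_*(\xi_{E_{i,v}/F_v,N})$. So it remains to prove that the localization of $\xi_{E_i,\dot S_i,N}$ equals $(\tx{loc}_v^{P_{E_i,\dot S_i,N}})_*(\xi_{E_{i,v}/F_v,N})$.

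That last identity is exactly Lemma \ref{lem:locvp} applied with $Z=P_{E_i,\dot S_i,N}$, which is a finite multiplicative group over $F$, split over $E_i$, of exponent dividing $N\in\N_{S_i}$, so the hypotheses of that lemma are met. Evaluating the commutative square at $\tx{id}\in\tx{Hom}(P_{E_i,\dot S_i,N},P_{E_i,\dot S_i,N})^\Gamma$: along the top edge, $\tx{id}\mapsto\Theta^P_{E_i,\dot S_i,N}(\tx{id})=\xi_{E_i,\dot S_i,N}$, which is then localized; along the left and bottom edges, $\tx{id}\mapsto\tx{loc}_v^{P_{E_i,\dot S_i,N}}(\tx{id})=\tx{loc}_v^{P_{E_i,\dot S_i,N}}$, which is then sent by $\Theta^u_{E_{i,v},N}$ to $(\tx{loc}_v^{P_{E_i,\dot S_i,N}})_*(\xi_{E_{i,v}/F_v,N})$ by the description of $\Theta^u$ as pairing with the distinguished local class. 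Commutativity of the square gives the identity, and with the injectivity of the first step the corollary follows.

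The only substantive input is Lemma \ref{lem:locvp}; everything else is formal, and that is where I expect the main difficulty to lie: one must keep straight in which cohomology group each distinguished class lives (over $\Gamma$, $\Gamma_{S_i}$ or $\Gamma_v$; with $\ol F$-, $O_{S_i}$- or $\ol{F_v}$-coefficients), and check that these classes together with the maps $\tx{loc}_v^P$, the $\tx{loc}_v^{P_{E_i,\dot S_i,N}}$, the projections $\pi_{i,N}$, and the transition maps of the $u_{E_v/F_v,N}$ are compatible exactly as used. The one point outside Lemma \ref{lem:locvp} that deserves to be isolated is the injectivity of $H^2(\Gamma_v,P_{\dot V}(\ol{F_v}))\rw\varprojlim H^2(\Gamma_v,P_{E_i,\dot S_i,N}(\ol{F_v}))$, which rests on the finiteness of local Galois cohomology with finite coefficients and is exactly what distinguishes the local case from the global one.
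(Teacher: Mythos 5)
Your argument is correct and follows essentially the same route as the paper: reduce to finite level via \cite[Corollary 2.7.6]{NSW08}, then obtain the key identity by evaluating the commutative square of Lemma \ref{lem:locvp} (with $Z=P_{E_i,\dot S_i,N}$) at the identity endomorphism; your appeal to the description of $\Theta^u_{E_{i,v},N}$ as evaluation at $\xi_{E_{i,v}/F_v,N}$ plays exactly the role of the paper's ``right square commutes by functoriality of $\Theta^u$''. The extra bookkeeping you carry out (compatibility of $\tilde\xi$ with the $\xi_{E_i,\dot S_i,N}$ and of $\tx{loc}_v^P$ with its finite-level constituents) is precisely what the paper leaves implicit.
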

\begin{proof}
We have $H^2(\Gamma_v,P_{\dot V}(\ol{F_v}))=\varprojlim H^2(\Gamma_v,P_{E_i,\dot S_i,N_i}(\ol{F_v}))$ and $H^2(\Gamma_v,u_v)=\varprojlim H^2(\Gamma_v,u_{E_{i,v}/F_v,N_i})$ according to \cite[Corollary 2.7.6]{NSW08}, where $N_i \in \N_{S_i}$ is a co-final sequence in $\N$. It will be enough to show that the image of $\xi_{E_{i,v}/F_v,N_i}$ in $H^2(\Gamma_v,P_{E_i,S_i,N_i}(\ol{F_v}))$ under $loc_v^{P_{E_i,\dot S_i,N_i}}$ coincides with the image of $\xi_{E_i,S_i,N_i}$ under the usual localization map $H^2(\Gamma,P_{E_i,S_i,N_i}(\ol{F})) \rw H^2(\Gamma_v,P_{E_i,S_i,N_i}(\ol{F_v}))$.

We have the commutative diagram
\[ \xymatrix@C=1.1pc{
	\tx{End}(P_{E_i,\dot S_i,N_i})^\Gamma\ar[d]^{\Theta^P_{E_i,\dot S_i,N_i}}\ar[r]&\tx{Hom}(u_{E_{i,v}/F_v,N_i},P_{E_i,\dot S_i,N_i})^{\Gamma_v}\ar[d]^{\Theta^u_{E_{i,v},N_i}}&\tx{End}(u_{E_{i,v}/F_v,N_i})^{\Gamma_v}\ar[d]^{\Theta^u_{E_{i,v},N_i}}\ar[l]\\
	H^2(\Gamma,P_{E_i,\dot S_i,N_i}(\ol{F}))\ar[r]&H^2(\Gamma_v,P_{E_i,\dot S_i,N_i}(\ol{F_v}))&H^2(\Gamma_v,u_{E_{i,v}/F_v,N_i})\ar[l]
}\]
where the left square commutes according to Lemma \ref{lem:locvp} applied to $Z=P_{E,\dot S_E,N}$, while the right square commutes by functoriality of $\Theta^u_{E_v,N}$ in $Z$.

Now $\xi_{E_i,\dot S_i,N_i} \in H^2(\Gamma,P_{E_i,\dot S_i,N_i})$ is the image of $\tx{id} \in \tx{End}(P_{E_i,\dot S_i,N_i})^\Gamma$, while $\xi_{E_{i,v}/F_v,N_i} \in H^2(\Gamma_v,u_{E_{i,v}/F_v,N_i})$ is the image of $\tx{id} \in \tx{End}(u_{E_{i,v}/F_v,N_i})^{\Gamma_v}$. The two elements $\tx{id}$ both map to $loc_v^{P_{E_i,\dot S_i,N_i}} \in \tx{Hom}(u_{E_{i,v}/F_v,N_i},P_{E_i,\dot S_i,N_i})^{\Gamma_v}$.
\end{proof}

\subsection{The vanishing of $H^1(\Gamma,P_{\dot V})$ and $H^1(\Gamma_v,P_{\dot V})$} \label{sub:h1van}

The purpose of this subsection is to prove the vanishing of the cohomology group $H^1(\Gamma,P_{\dot V}(\ol{F}))$ and, for each $v \in \dot V$, of the local cohomology group $H^1(\Gamma_v,P_{\dot V}(\ol{F_v}))$. Via local and global Poitou-Tate duality this will be reduced to the study of the cohomology of the finite modules $M_{E,\dot S_E,N}$. We begin by giving a description of these modules that is slightly different from their definition.

\begin{lem} \label{lem:mesn_phi}
The $\Gamma_{E/F}$-module $M_{E,\dot S_E,N}$ can be described as the set of functions
\[ \phi : S \times \Gamma_{E/F} \rw \frac{1}{N}\Z/\Z \]
satisfying the two conditions
\begin{enumerate}
\item $\sum_{\sigma \in \Gamma_{E/F,\dot v}}\phi(v,\theta\sigma)=0$ for all $\theta \in \Gamma_{E/F}$ and $v \in S$;
\item $\sum_{v \in S}\phi(v,\theta)=0$ for all $\theta \in \Gamma_{E/F}$.
\end{enumerate}
and with $\Gamma_{E/F}$-action given by $[\tau\phi](v,\theta)=\phi(v,\tau^{-1}\theta)$.
\end{lem}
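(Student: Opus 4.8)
The plan is to exhibit an explicit $\Gamma_{E/F}$-equivariant bijection between $M_{E,\dot S_E,N}$ as originally defined (a submodule of $\tx{Maps}(\Gamma_{E/F} \times S_E, \frac{1}{N}\Z/\Z)$) and the set of functions $\phi : S \times \Gamma_{E/F} \rw \frac{1}{N}\Z/\Z$ satisfying conditions (1) and (2) of the statement. The natural candidate is to send $f \in M_{E,\dot S_E,N}$ to $\phi$ defined by $\phi(v,\theta) = f(\theta, \theta \dot v)$, where $\dot v \in \dot S_E$ is the distinguished lift of $v \in S$. First I would check that this is well-defined as a map into the claimed target: the third defining condition on $f$ (namely $f(\sigma,w) \neq 0 \Rightarrow \sigma^{-1}w \in \dot S_E$) is exactly what makes the substitution $w = \theta\dot v$ capture all the nonzero values of $f$, since $\theta^{-1}(\theta\dot v) = \dot v \in \dot S_E$ automatically, and conversely any nonzero $f(\sigma,w)$ has $\sigma^{-1}w = \dot v$ for some $v \in S$, hence $w = \sigma\dot v$ and the value is recorded as $\phi(v,\sigma)$.

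Next I would translate the two summation conditions. The second condition on $f$, $\sum_{w \in S_E} f(\sigma,w) = 0$ for all $\sigma$, becomes $\sum_{v \in S} f(\sigma, \sigma\dot v) = \sum_{v\in S} \phi(v,\sigma) = 0$, which is condition (2) (after renaming $\sigma = \theta$); here one uses that for fixed $\sigma$, as $w$ ranges over $S_E$ the only contributions come from $w$ of the form $\sigma\dot v$. The first condition on $f$, $\sum_{\sigma \in \Gamma_{E/F}} f(\sigma, w) = 0$ for all $w \in S_E$, is the one that produces condition (1): fixing $w = \theta\dot v$ for some choice of $\theta$, the sum $\sum_{\sigma} f(\sigma, \theta\dot v)$ only picks up $\sigma$ with $\sigma^{-1}\theta\dot v \in \dot S_E$, i.e. $\sigma^{-1}\theta\dot v = \dot v$, i.e. $\sigma^{-1}\theta \in \Gamma_{E/F,\dot v} = \tx{Stab}(\dot v, \Gamma_{E/F})$, so $\sigma = \theta\rho^{-1}$ with $\rho \in \Gamma_{E/F,\dot v}$; rewriting the sum over such $\sigma$ gives $\sum_{\rho \in \Gamma_{E/F,\dot v}} f(\theta\rho^{-1}, \theta\dot v) = \sum_{\rho \in \Gamma_{E/F,\dot v}} \phi(v, \theta\rho^{-1}) = \sum_{\sigma \in \Gamma_{E/F,\dot v}} \phi(v,\theta\sigma)$ (reindexing $\sigma = \rho^{-1}$, which permutes the stabilizer subgroup). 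This is condition (1). I would also verify the reverse direction: given $\phi$ satisfying (1) and (2), define $f(\sigma, w) = \phi(v, \sigma)$ if $w = \sigma\dot v$ for some (necessarily unique) $v \in S$, and $f(\sigma,w) = 0$ otherwise; then check $f$ lands in $M_{E,\dot S_E,N}$, and that the two assignments are mutually inverse.

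Finally I would check $\Gamma_{E/F}$-equivariance, matching the Maps-action $\sigma(f)(x) = \sigma(f(\sigma^{-1}x))$ — which on $\tx{Maps}(\Gamma_{E/F}\times S_E, \frac1N\Z/\Z)$, with trivial action on the target and diagonal action on the index set, reads $[\tau f](\sigma, w) = f(\tau^{-1}\sigma, \tau^{-1}w)$ — against the claimed action $[\tau\phi](v,\theta) = \phi(v,\tau^{-1}\theta)$. Under the bijection, $[\tau f]$ corresponds to $\phi'(v,\theta) = [\tau f](\theta, \theta\dot v) = f(\tau^{-1}\theta, \tau^{-1}\theta\dot v) = \phi(v, \tau^{-1}\theta)$, as needed; here one should note that $\dot v$ need not be $\Gamma$-fixed, but the formula $w = \theta\dot v$ only ever uses $\dot v$ through the combination $\theta\dot v$, so no compatibility of the section $v \mapsto \dot v$ with the action is required. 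I expect the main obstacle to be purely bookkeeping: being careful that the substitution $w = \theta\dot v$ genuinely enumerates, without repetition or omission, exactly the places $w \in S_E$ that can carry a nonzero value of $f$ for the relevant $\sigma$, and that the reindexing of the inner sum by the stabilizer $\Gamma_{E/F,\dot v}$ is done correctly — there is no serious difficulty, but it is easy to drop a condition or misplace an inverse.
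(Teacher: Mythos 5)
Your proposal is correct and is exactly the paper's argument: the paper proves the lemma by the same change of variables $\phi(v,\sigma)=f(\sigma,\sigma\dot v)$, leaving the verifications you spell out (support analysis via the third condition, translation of the two sum conditions, inverse map, and equivariance) to the reader. The only cosmetic point is that you refer to the paper's conditions on $f$ as ``first'' and ``second'' in the opposite order from their numbering in \S\ref{sub:pes}, but since you quote them explicitly this has no effect on the argument.
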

\begin{proof}
This follows from the change of variables $f \mapsto \phi$ given by $\phi(v,\sigma)=f(\sigma,\sigma \dot v)$.
\end{proof}

The fist condition implies that for any complex archimedean place $v \in S$ and any $\theta \in \Gamma_{E/F}$ the value $\phi(v,\theta)$ is zero. We may thus replace the set $S$ by the complement $S^\C$ of the complex archimedean places. We will write $S^\C_E$ for the set of all places of $E$ lying above $S^\C$.

\begin{lem} \label{lem:mesn_phiseq} The sequence
\[ 0 \rw M_{E,\dot S_E,N} \rw \tx{Maps}(S^\C,\tx{Maps}(\Gamma_{E/F},\frac{1}{N}\Z/\Z))_0 \rw \tx{Maps}(S^\C_E,\frac{1}{N}\Z/\Z)_0 \rw 0 \]
is exact. Here, the second term consists of the set of maps $S^\C \times \Gamma_{E/F} \rw \frac{1}{N}\Z/\Z$ that satisfy the second of the two conditions in Lemma \ref{lem:mesn_phi}, and the map from this term to the next is given by $\phi \mapsto \xi$ with $\xi(\theta\dot v)=\sum_{\sigma \in \Gamma_{E/F,\dot v}}\phi(v,\theta\sigma)$ for $\theta \in \Gamma_{E/F}$ and $v \in S$.
\end{lem}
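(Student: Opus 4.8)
The plan is to verify exactness of the three-term sequence
\[ 0 \rw M_{E,\dot S_E,N} \rw \tx{Maps}(S^\C,\tx{Maps}(\Gamma_{E/F},\tfrac{1}{N}\Z/\Z))_0 \rw \tx{Maps}(S^\C_E,\tfrac{1}{N}\Z/\Z)_0 \rw 0 \]
at each of the three spots, working throughout with the presentation of $M_{E,\dot S_E,N}$ as functions $\phi : S^\C \times \Gamma_{E/F} \rw \tfrac1N\Z/\Z$ given by Lemma \ref{lem:mesn_phi}, after discarding the complex places as noted in the paragraph following that lemma. First I would record that all three terms are naturally $\Gamma_{E/F}$-modules (the middle term with the action $[\tau\phi](v,\theta)=\phi(v,\tau^{-1}\theta)$ inherited from Lemma \ref{lem:mesn_phi}, and the right term with the standard action on $\tx{Maps}(S^\C_E,-)$ twisted by the $\Gamma_{E/F}$-set structure of $S^\C_E$), and that the displayed map $\phi\mapsto\xi$, $\xi(\theta\dot v)=\sum_{\sigma\in\Gamma_{E/F,\dot v}}\phi(v,\theta\sigma)$, is $\Gamma_{E/F}$-equivariant; this last point is the small computation $[\tau\xi](\theta\dot v)=\xi(\tau^{-1}\theta\dot v)=\sum_\sigma\phi(v,\tau^{-1}\theta\sigma)=\sum_\sigma[\tau\phi](v,\theta\sigma)$. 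I would also check the map is well-defined, i.e. that the formula $\xi(\theta\dot v)=\sum_{\sigma\in\Gamma_{E/F,\dot v}}\phi(v,\theta\sigma)$ does not depend on the choice of $\theta$ representing the place $\theta\dot v\in S^\C_E$: replacing $\theta$ by $\theta\rho$ with $\rho\in\Gamma_{E/F,\dot v}$ merely reindexes the sum over the coset $\theta\Gamma_{E/F,\dot v}$. Finally one checks the image lands in the subscript-$0$ subspace: $\sum_{w\in S^\C_E}\xi(w)=\sum_{v\in S^\C}\sum_{\theta\in\Gamma_{E/F}/\Gamma_{E/F,\dot v}}\sum_{\sigma\in\Gamma_{E/F,\dot v}}\phi(v,\theta\sigma)=\sum_{v\in S^\C}\sum_{\theta\in\Gamma_{E/F}}\phi(v,\theta)$, and the inner sum over $\theta$ is $\sum_\theta\sum_v\phi(v,\theta)$ reorganized — actually one should sum the first condition of Lemma \ref{lem:mesn_phi} over $\theta$, which gives $\sum_\theta\phi(v,\theta)$ as a sum of the quantities appearing in condition (1), each of which vanishes; hence $\sum_w\xi(w)=0$.

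Exactness at the left is the statement that $\phi\mapsto\xi$ has kernel exactly $M_{E,\dot S_E,N}$, which is immediate: $\phi$ lies in the middle term iff it satisfies condition (2) of Lemma \ref{lem:mesn_phi}, and $\xi\equiv 0$ says precisely that $\sum_{\sigma\in\Gamma_{E/F,\dot v}}\phi(v,\theta\sigma)=0$ for all $\theta$ and all $v\in S^\C$, which is condition (1). Together these two conditions characterize $M_{E,\dot S_E,N}$ by Lemma \ref{lem:mesn_phi}, so $\ker=M_{E,\dot S_E,N}$, and injectivity of the inclusion is clear.

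For exactness on the right (surjectivity of $\phi\mapsto\xi$) I would argue as follows. Fix the set of lifts $\dot S_E$; every place of $S^\C_E$ is uniquely of the form $\theta\dot v$ for some $v\in S^\C$ and some coset $\theta\Gamma_{E/F,\dot v}$. Given $\xi\in\tx{Maps}(S^\C_E,\tfrac1N\Z/\Z)_0$, define $\phi(v,\theta)$ to be $\xi(\theta\dot v)$ when $\theta\in\Gamma_{E/F,\dot v}$ (equivalently $\theta\dot v=\dot v$, so this only prescribes the value at the lift) and $0$ otherwise — more precisely, concentrate the mass on a single coset representative: set $\phi(v,\theta)=\xi(\theta\dot v)$ if $\theta$ is the chosen representative of its coset in $\Gamma_{E/F}/\Gamma_{E/F,\dot v}$ and $0$ on the rest of that coset. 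Then $\sum_{\sigma\in\Gamma_{E/F,\dot v}}\phi(v,\theta\sigma)=\xi(\theta\dot v)$ as desired, so $\phi\mapsto\xi$. It remains to see $\phi$ satisfies condition (2), $\sum_{v\in S^\C}\phi(v,\theta)=0$ for each $\theta$; but this is exactly where the hypothesis $\xi\in(\cdot)_0$ is used — however the naive construction above need not satisfy (2) for each individual $\theta$ separately. The honest fix, and the step I expect to be the main obstacle, is to distribute the correction: one wants a preimage respecting condition (2), and the cleanest route is to observe that the middle and right terms, modulo the pointwise identification, are both of the form (functions on a $\Gamma_{E/F}$-set)$_0$, that $S^\C\times\Gamma_{E/F}\to S^\C_E$ (sending $(v,\theta)$ to $\theta\dot v$, with fibre the coset space $\Gamma_{E/F}/\Gamma_{E/F,\dot v}$) is a surjection of finite $\Gamma_{E/F}$-sets, and that for any surjection of finite sets $X\thrw Y$ the induced map $\tx{Maps}(X,\tfrac1N\Z/\Z)\thrw\tx{Maps}(Y,\tfrac1N\Z/\Z)$ given by summing over fibres is surjective and carries the augmentation-zero subspace onto the augmentation-zero subspace (split off a section and use that a $\tfrac1N\Z/\Z$-valued function with prescribed fibrewise sums and prescribed total sum can always be built, since one degree of freedom in each nonempty fibre absorbs any discrepancy and the total-sum constraint is automatically consistent). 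Applying this to the fibrewise-summation map identifies $\phi\mapsto\xi$ with exactly this surjection after the change of variables, giving surjectivity onto the $(\cdot)_0$ subspace and completing exactness at the right. I would phrase the whole proof through this single observation about fibrewise summation for surjections of $\Gamma_{E/F}$-sets, which makes exactness at all three spots transparent and $\Gamma_{E/F}$-equivariance automatic, rather than juggling the explicit formulas.
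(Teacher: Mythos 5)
The core of your argument for exactness on the right — the step you yourself flagged as the main obstacle — does not work, because it rests on a misreading of the middle term. The subscript $0$ on $\tx{Maps}(S^\C,\tx{Maps}(\Gamma_{E/F},\frac{1}{N}\Z/\Z))_0$ is not the total-sum-zero (augmentation) condition on functions on $S^\C\times\Gamma_{E/F}$: by the statement of the lemma it is condition (2) of Lemma \ref{lem:mesn_phi}, i.e.\ $\sum_{v\in S^\C}\phi(v,\theta)=0$ separately for \emph{every} $\theta\in\Gamma_{E/F}$. So your identification of both the middle and right terms as total-sum-zero function spaces on finite $\Gamma_{E/F}$-sets fails for the middle term, and the general fibrewise-summation observation you invoke solves the wrong constraint problem: it produces a $\phi$ with prescribed sums over the fibres of $(v,\theta)\mapsto\theta\dot v$ and a prescribed total sum, but says nothing about the $|\Gamma_{E/F}|$ separate row constraints $\sum_v\phi(v,\theta)=0$. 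Your ``naive'' lift already had the correct fibre sums and total sum zero; condition (2) is exactly what it violated, and the proposed fix never engages with it.

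This gap cannot be closed by an argument that makes no use of $\dot S_E$: surjectivity genuinely depends on part 4 of Conditions \ref{cnds:pes}, which you never invoke. For instance, take $\Gamma_{E/F}$ of order $2$ and $S^\C$ consisting of two places both split in $E$ (so condition 4 fails): the middle term is $\{\phi:\phi(v_2,\cdot)=-\phi(v_1,\cdot)\}$, of order $N^2$, while $\tx{Maps}(S^\C_E,\frac{1}{N}\Z/\Z)_0$ has order $N^3$, so the map cannot be surjective — yet your fibrewise-summation principle applies verbatim to the surjection $S^\C\times\Gamma_{E/F}\to S^\C_E$ there. Any correct proof must therefore use the fixed-place hypothesis, and the paper does so at precisely this point: it first lifts the special elements $a(\delta_{w_1}-\delta_{w_2})$ with $w_1\neq w_2$ lying over the same $v_1$, placing the values $\pm a$ in the columns $\theta_1,\theta_2$ at $v_1$ and the compensating values $\mp a$ at an auxiliary place $\dot v_0\in\dot S_E$ fixed by $\theta_2^{-1}\theta_1$ (Conditions \ref{cnds:pes}, part 4), so that the correction forced by condition (2) cancels inside the fibre sum over places above $v_0$ and does not disturb $\xi$; a general $\xi$ is then modified by such elements until it is supported on $\dot S_E$, where the lift $\phi(v,1)=\xi(\dot v)$, $\phi(v,\theta)=0$ for $\theta\neq 1$, works. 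A smaller slip: to see $\sum_w\xi(w)=0$ you appeal to condition (1) of Lemma \ref{lem:mesn_phi}, which elements of the middle term do not satisfy in general (those that do form exactly the kernel); the vanishing follows instead by summing condition (2) over $\theta$.
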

\begin{proof}
First we note that $\xi$ is well-defined and we have
\[ \sum_{w \in S^\C_E}\xi(w)=\sum_{v \in S^\C}\sum_{\theta \in \Gamma_{E/F}/\Gamma_{E/F,\dot v}}\xi(\theta\dot v)=\sum_{v \in S^\C}\sum_{\theta\in\Gamma_{E/F}}\phi(v,\theta)=0. \]
We need to show that for any $\xi$ there exists a $\phi$ mapping to it. We begin with the special case $\xi = a(\delta_{w_1} - \delta_{w_2})$, where $a \in \frac{1}{N}\Z/\Z$ and $w_1 \neq w_2 \in S_E$ lie over the same element $v_1\in S^\C$. Choose $\theta_1,\theta_2 \in \Gamma_{E/F}$ such that $\theta_1\dot v_1 = w_1$ and $\theta_2\dot v_1 = w_2$ for the representative $\dot v_1 \in \dot S_E$ of $v_1$. By part 4 of Conditions \ref{cnds:pes} there exists $\dot v_0 \in \dot S_E$ with $\theta_2^{-1}\theta_1\dot v_0=\dot v_0$. Note that automatically $v_0 \neq v_1$. We now define $\phi \in \tx{Maps}(S^\C,\tx{Maps}(\Gamma_{E/F},\frac{1}{N}\Z/\Z))_0$ by making all of its values zero except
\[ \phi(v_1,\theta_1)=a,\quad \phi(v_0,\theta_1)=-a,\quad\phi(v_1,\theta_2)=-a,\quad\phi(v_0,\theta_2)=a.\]
Then $\phi$ is a preimage of $\xi$.

Now consider a general $\xi \in \tx{Maps}(S^\C_E,\frac{1}{N}\Z/\Z)_0$. Using the special elements just discussed, we can modify $\xi$ to achieve that it is supported on $\dot S^\C_E$, in which case we can define a lift $\phi$ of it simply by $\phi(v,1)=\xi(\dot v)$ and $\phi(v,\theta)=0$ for $\theta \neq 1$.
\end{proof}

In terms of the variables $\phi$, the transition map \eqref{eq:mesn_es} takes the form
\begin{equation} \label{eq:mesn_esphi} \phi \mapsto \phi^K,\quad \phi^K(v,\theta)=\begin{cases} \phi(v,\theta),&v \in S\\ 0,&\tx{else}\end{cases}\end{equation}
for any $v \in S'$ and $\theta \in \Gamma_{K/F}$. This transition map fits into the commutative diagram
\begin{equation} \label{eq:mesn_trdiag} \xymatrix{
0\ar[r]&M_{E,\dot S_E,N}\ar[r]\ar[d]&\tx{Maps}(S^\C,M_{E,N})_0\ar[r]\ar[d]&\tx{Maps}(S^\C_E,\frac{1}{N}\Z/\Z)_0\ar[r]\ar[d]&0\\
0\ar[r]&M_{K,\dot S'_K,M}\ar[r]&\tx{Maps}(S'^\C,M_{K,M}))_0\ar[r]&\tx{Maps}(S'^\C_K,\frac{1}{M}\Z/\Z)_0\ar[r]&0
} \end{equation}
where the horizontal exact sequences come from Lemma \ref{lem:mesn_phiseq} and we have abbreviated
\[ M_{E,N} = \tx{Maps}(\Gamma_{E/F},\frac{1}{N}\Z/\Z). \]
The left and middle vertical arrows are both given by $\phi \mapsto \phi^K$. The right vertical arrow is given by $\xi \mapsto \xi^K$, where
\begin{equation} \label{eq:xies} \xi^K(u) = |\Gamma_{K/E,u}|\xi(p(u)) \end{equation}
for any $u \in S^\C_K$ with image $p(u) \in S^\C_E$, and $\xi^K(u)=0$ for $u \in S'^\C_K \sm S^\C_K$.

The following two lemmas will allow us to control the colimits of the terms in the exact sequence of Lemma \ref{lem:mesn_phiseq}.

\begin{lem} \label{lem:mesn_t1} Given $(E,\dot S_E,N)$ there exists $(K,\dot S'_K,M)>(E,\dot S_E,N)$ as in \eqref{eq:mesn_order} such that for all subgroups $\Delta \subset \Gamma_{K/F}$ the transition map
\[ \frac{\tx{Maps}(S_E^\C,\frac{1}{N}\Z/\Z)_0^{\Delta}}{\tx{Maps}(S^\C,M_{E,N})_0^{\Delta}}  \to \frac{\tx{Maps}(S'^\C_K,\frac{1}{M}\Z/\Z)_0^{\Delta}}{\tx{Maps}(S'^\C,M_{K,M})_0^{\Delta}} \]
is zero, where we have used the quotient notation as a short-hand for the corresponding cokernel.
\end{lem}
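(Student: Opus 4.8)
Abbreviate $B_{E,N}=\tx{Maps}(S^\C,M_{E,N})_0$ and $C_{E,N}=\tx{Maps}(S_E^\C,\frac1N\Z/\Z)_0$, and similarly $B_{K,M},C_{K,M}$, so that Lemma \ref{lem:mesn_phiseq} gives a short exact sequence $0\rw M_{E,\dot S_E,N}\rw B_{E,N}\rw C_{E,N}\rw 0$ of $\Gamma_{E/F}$-modules and Diagram \eqref{eq:mesn_trdiag} is a morphism of such sequences for any $(K,\dot S'_K,M)>(E,\dot S_E,N)$. The middle term $B_{E,N}$ is a direct summand of $\tx{Maps}(S^\C,M_{E,N})=\bigoplus_{S^\C}\tx{Ind}_1^{\Gamma_{E/F}}(\frac1N\Z/\Z)$ (split off by evaluation at one place), hence a cohomologically trivial $\Gamma_{E/F}$-module. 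Consequently, for any subgroup $\Delta\subseteq\Gamma_{K/F}$ --- acting on $M_{E,\dot S_E,N}$ and $B_{E,N}$ through its image in $\Gamma_{E/F}$ --- the long exact $\Delta$-cohomology sequence identifies the quotient in the statement with $\tx{coker}\bigl(B_{E,N}^\Delta\rw C_{E,N}^\Delta\bigr)$, equivalently with $\ker\bigl(H^1(\Delta,M_{E,\dot S_E,N})\rw H^1(\Delta,B_{E,N})\bigr)$. Chasing \eqref{eq:mesn_trdiag}, the transition map of the statement is the one induced on these cokernels by the right-hand vertical map of \eqref{eq:mesn_trdiag}, namely $\xi\mapsto\xi^K$ as in \eqref{eq:xies}. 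Hence it suffices to choose $(K,\dot S'_K,M)$ so that $\xi^K$ lands in $\tx{im}(B_{K,M}^\Delta\rw C_{K,M}^\Delta)$ for all $\Delta$; and for this it more than suffices to make $\xi\mapsto\xi^K$ identically zero.

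So the task becomes: find $(K,\dot S'_K,M)>(E,\dot S_E,N)$ with $\xi^K\equiv0$, where by \eqref{eq:xies} one has $\xi^K(u)=|\Gamma_{K/E,u}|\,\xi(p(u))$ for $u$ over a place of $S$ and $\xi^K(u)=0$ otherwise. At the \emph{finite} places of $S$ this is handled by choosing $K$ well: by standard existence results for prescribed local behaviour of global fields (Grunwald--Wang), one may pick a finite Galois $K/F$ containing $E$ such that at every place $u$ of $K$ above a finite place of $S$ the local degree $[K_u:E_{p(u)}]$ is divisible by $N$; then $|\Gamma_{K/E,u}|$ kills the $N$-torsion group $\frac1N\Z/\Z$, so $\xi^K(u)=0$. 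One then enlarges $S$ to a finite full $S'\supseteq S$ containing the places ramifying in $K$, chooses $\dot S'_K\supseteq\dot S_E$, and picks $M\in\N_{S'}$ a multiple of $N$; by the remark after Conditions \ref{cnds:pes} this can be done so that $(S',\dot S'_K)$ satisfies those conditions for $K$. Among the archimedean places $v\in S^\C$, those at which $E$ is split contribute nothing: there $\Gamma_{E/F,\dot v}$ is trivial, so the $v$-component of $B_{E,N}\rw C_{E,N}$ is an isomorphism and the $v$-component of $C_{E,N}$ already lies in the $\Delta$-invariant image of $B_{E,N}$. This leaves only the real places $v$ at which $E$ is complex, where $|\Gamma_{K/E,u}|=1$ and $\xi^K(u)=\xi(p(u))$ cannot be forced to vanish on the nose.

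For this remaining case one passes to cohomology. Using the short exact sequences $0\rw C_{E,N}\rw\tx{Maps}(S_E^\C,\frac1N\Z/\Z)\rw\frac1N\Z/\Z\rw0$, dimension-shifting through the cohomologically trivial $B_{E,N}$, and Shapiro's lemma, the part of $\tx{coker}(B_{E,N}^\Delta\rw C_{E,N}^\Delta)$ supported at these real places is built out of $\hat H^0$ and $\hat H^{-1}$ of subgroups of the (order $\le 2$) decomposition groups acting trivially on $\frac1N\Z/\Z$. On the $\hat H^0$-pieces the coefficient transition $\frac1N\Z/\Z\hrw\frac1M\Z/\Z$ acts as multiplication by $M/N$, so choosing $M$ with $M/N$ even annihilates them. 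The $\hat H^{-1}$-pieces are absorbed once $S'$ and $\dot S'_K$ are taken large enough that, via part 3 of Conditions \ref{cnds:pes} and Chebotarev, every subgroup of $\Gamma_{K/F}$ relevant as a $\Delta$ is conjugate into the decomposition group of some place of $S'$, so that the corestriction maps feeding $\tx{Maps}(S_K'^\C,\frac1M\Z/\Z)\rw\frac1M\Z/\Z$ become surjective in degree $-1$. Combining these choices makes the transition map of the statement vanish for every $\Delta\subseteq\Gamma_{K/F}$.

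The main obstacle is precisely the archimedean bookkeeping of the last paragraph: one must pin down, for a \emph{general} subgroup $\Delta\subseteq\Gamma_{K/F}$ --- not assumed to surject onto $\Gamma_{E/F}$, so that genuine inflation-restriction phenomena intervene rather than a clean dimension shift --- exactly which subquotients of decomposition-group cohomology survive into $\ker(H^1(\Delta,M_{E,\dot S_E,N})\rw H^1(\Delta,B_{E,N}))$, and then verify that the single combined enlargement ($K$ with large finite local degrees, $M$ with $M/N$ even, $S'$ large enough to realize all pertinent decomposition groups) annihilates each of them \emph{uniformly in $\Delta$}. The individual ingredients --- cohomological triviality of $B_{E,N}$, the reduction to $\xi\mapsto\xi^K$ via \eqref{eq:mesn_trdiag}, and Grunwald--Wang/Chebotarev for the existence of $K$ and $S'$ --- are standard; the work lies in fitting them together.
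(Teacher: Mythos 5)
Your reduction is sound up to a point: it does suffice to show that for every $\Delta$-invariant $\xi$ the image $\xi^K$ lies in the image of $\tx{Maps}(S'^\C,M_{K,M})_0^{\Delta}$, and your treatment of the finite places of $S$ (choose $K$ with $N \mid [K_u:E_{p(u)}]$ at all $p$-adic places of $S$, so that $\xi^K$ dies there) is exactly what the paper does. But the decisive case — the archimedean places, for an arbitrary subgroup $\Delta \subset \Gamma_{K/F}$ — is not actually proved in your proposal; you yourself flag it as "the main obstacle," and the cohomological route you sketch for it rests on two claims that do not hold. First, $\tx{Maps}(S^\C,M_{E,N})_0$ is cohomologically trivial for subgroups of $\Gamma_{E/F}$, but \emph{not} as a $\Delta$-module when $\Delta \subset \Gamma_{K/F}$ meets $\Gamma_{K/E}$ nontrivially (the action is through the quotient, and e.g.\ a $\Delta$ of order $2$ acting trivially on $\frac1N\Z/\Z$ has nonzero $H^1$); the relevant failure is lack of injectivity of $\Delta \to \Gamma_{E/F}$, not lack of surjectivity, so the "dimension shift" you invoke is unavailable precisely in the cases that matter. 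Second, the claim that after enlarging $S'$ every relevant $\Delta$ is conjugate into a decomposition group of a place of $S'$ is false for non-cyclic $\Delta$ (Chebotarev and Condition 3 of Conditions \ref{cnds:pes} only control stabilizers that actually occur as stabilizers of places, and decomposition groups at the places you can produce are cyclic or of order $\le 2$). There is also a hidden problem with treating places componentwise: the source and target carry global sum-zero conditions ($\sum_{v\in S}\phi(v,\theta)=0$, resp.\ $\sum_w\xi(w)=0$), so the "$v$-component of $C_{E,N}$ lies in the image of $B_{E,N}^\Delta$" argument for split real places does not stand on its own.

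The idea you are missing is that one can write down an explicit $\Delta$-equivariant lift rather than argue cohomologically. The paper first passes to $K_1 \supset E$ in which \emph{all} real places of $F$ become complex and all $p$-adic places of $S$ have local degree over $E$ divisible by $N$, and takes $M = 2N$. Then $\xi^{K_1}$ is supported on the places over $V_{F,\R}$ and, since it comes from $\frac1N\Z/\Z \subset \frac1{2N}\Z/\Z$, all its values are divisible by $2$; one chooses a $\Delta$-invariant $\tilde\xi$ with $2\tilde\xi = \xi^{K_1}$ there. Because each such place now has decomposition group of order exactly $2$ in $\Gamma_{K_1/F}$, setting $\phi(v,\theta)=\tilde\xi(\theta\dot v)$ at real $v$ reproduces $\xi^{K_1}$ at those places (the sum over the order-$2$ decomposition group doubles $\tilde\xi$), and the global sum-zero constraint is repaired by dumping $-\sum_{u\in S_\R}\tilde\xi(\theta\dot u)$ at one auxiliary $p$-adic place $v_0$. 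The resulting discrepancy of $\xi_\phi$ with $\xi^{K_1}$ is supported over $v_0$, and is killed by a further extension $K/K_1$ whose local degree at $v_0$ is divisible by $2N$; then $\xi^K=\xi_\phi^K$ lifts to $\phi^K$, uniformly in $\Delta$. This explicit construction is what replaces the archimedean bookkeeping your plan leaves open.
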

\begin{proof} Let $K_1/E$ be such that all real places of $F$ are complex in $K_1$ and that $|\Gamma_{K_1/E,u}|$ is a multiple of $N$ for all $p$-adic places $u \in S_{K_1}$. Let $N_1=2N$. For any $\xi \in \tx{Maps}(S_E^\C,\frac{1}{N}\Z/\Z)$ its image $\xi^{K_1} \in \tx{Maps}(S_{K_1}^\C,\frac{1}{N_1}\Z/\Z)$ is supported on $S_{\R,K_1}$ by \eqref{eq:xies} and its values are divisible by $2$.

Given $\xi \in \tx{Maps}(S_E^\C,\frac{1}{N}\Z/\Z)_0^\Delta$ we choose a $\Delta$-invariant function $\tilde\xi : S_{\R,K_1} \rw \frac{1}{N_1}\Z/\Z$ with $2\tilde\xi(w)=\xi^{K_1}(w)$. We may not be able to arrange that the sum of the values of $\tilde\xi$ is zero, but this will not matter. Fix an auxiliary $p$-adic place $v_0 \in S$. Define a function $\phi : S \times \Gamma_{K_1/F} \rw \frac{1}{N_1}\Z/\Z$ by $\phi(u,\theta)=\tilde\xi(\theta\dot u)$ for all $\theta \in \Gamma_{K_1/F}$ and all $u \in S_\R$. The function is to be zero at all $w \in S \sm S_\R$ except for $w=v_0$, in which case we set $\phi(v_0,\theta)=-\sum_{u \in S_\R} \tilde\xi(\theta\dot u)$. Then $\phi \in \tx{Maps}(S^\C,M_{K_1,N_1})_0^{\Delta}$. If $\xi_\phi$ is the image of $\phi$ in $\tx{Maps}(S_{K_1}^\C,\frac{1}{N_1}\Z/\Z)_0^{\Delta}$ then we have $\xi_\phi(u)=\xi^{K_1}(u)$ for all $u \in S_{\R,K_1}$. The function $\xi_\phi$ is supported on $S_{\R,K_1} \cup \{w|v_0\}$ and we may have $\xi_\phi(w)\neq \xi^{K_1}(w)$ for $w|v_0$. Using that $v_0$ is a $p$-adic place we now choose a Galois extension $K/F$ containing $K_1$ so that $|\Gamma_{K/K_1,v_0}|$ is a multiple of $N_1$. Then $\xi_\phi^K$ is supported on $S_{\R,K}$ by formula \eqref{eq:xies}.  We still have $\xi_\phi^K(u)=\xi^K(u)$ for all $u \in S_{\R,K}$ and thus $\xi_\phi^K=\xi^K$. But $\xi_\phi^K$ lifts to $\phi^K$.
\end{proof}

\begin{lem} \label{lem:mesn_t2} The following colimits over $(E,\dot S_E,N)$ vanish.
\begin{enumerate}
	\item $\varinjlim H^1(\Gamma,\tx{Maps}(S^\C,M_{E,N})_0)=0$;
	\item $\varinjlim H^2(\Gamma,\tx{Maps}(S^\C,M_{E,N})_0)=0$;
	\item $\varinjlim H^1(\Gamma_v,\tx{Maps}(S^\C,M_{E,N})_0)=0$ for all $v \in V(F)$.
\end{enumerate}
\begin{proof}
The inclusion of $\tx{Maps}(S^\C,M_{E,N})_0$ into $\tx{Maps}(S^\C,M_{E,N})$ has a $\Gamma_{E/F}$-equivariant splitting, given by choosing an arbitrary place of $S^\C$. Thus this inclusion induces an inclusion on the level of cohomology. We may thus replace $\tx{Maps}(S^\C,M_{E,N})_0$ by $\tx{Maps}(S^\C,M_{E,N})$ in the vanishing statements above. The latter $\Gamma$-module is the $S^\C$-power of $M_{E,N}$. In the $S$-variable the transition map is just the inclusion of $S$ into the larger set $S'$, so what we want to show is the vanishing of the cohomology of $M_{E,N}$, where the colimit is taken over $E$ and $N$.

We begin with $H^1(\Gamma,M_{E,N})$. Since $M_{E,N}=\tx{Ind}_{\Gamma_E}^\Gamma \frac{1}{N}\Z/\Z$ the transition map $H^1(\Gamma,M_{E,N}) \to H^1(\Gamma,M_{K,M})$ is translated via the Shapiro isomorphism to the restriction map $\tx{Hom}(\Gamma_E,\frac{1}{N}\Z/\Z) \to \tx{Hom}(\Gamma_K,\frac{1}{M}\Z/\Z)$. Any homomorphism $\Gamma_E \to \frac{1}{N}\Z/\Z$ has kernel of finite index in $\Gamma_E$ and there is thus a Galois extension $K/E$ s.t. $\Gamma_K$ is contained in that kernel.

Turning to $H^1(\Gamma_v,M_{E,N})$, according to the Mackey formula and Shapiro lemma we have
\[ H^1(\Gamma_v,M_{E,N}) = \bigoplus_{w|v_F} H^1(\Gamma_{E_w},\frac{1}{N}\Z/\Z ), \]
where the sum runs over all places $w \in S_E$ lying above $v_F$. The transition map $M_{E,N} \rw M_{K,M}$ induces on the right hand side of this equation the restriction map $H^1(\Gamma_{E_w},\frac{1}{N}\Z/\Z) \rw H^1(\Gamma_{K_u},\frac{1}{M}\Z/\Z)$ given for any $u|w$ by the inclusion $\Gamma_{K_u} \rw \Gamma_{E_w}$. Now the set $H^1(\Gamma_{E_w},\frac{1}{N}\Z/\Z)=\tx{Hom}(\Gamma_{E_w},\frac{1}{N}\Z/\Z)$ is finite and hence the intersection of the kernels of its members is an open subgroup of $\Gamma_{E_w}$. Thus for $K$ large enough the image of the inclusion $\Gamma_{K_u} \rw \Gamma_{E_w}$ is contained in this open subgroup.

For $H^2(\Gamma,M_{E,N})$ we use \cite[Theorem 8.4.4, Proposition 8.4.2]{NSW08} which tell us that we are studying $\varprojlim \hat H^0(\Gamma_E,C[N])$, the limit being taken over all finite Galois extensions $E/F$ and natural numbers $N$. For $K/E$ and $N|M$ the transition map $\hat H^0(\Gamma_K,C[M]) \rw \hat H^0(\Gamma_E,C[N])$ is given by the $M/N$-power map $C[M] \rw C[N]$ and the norm for $K/E$. Recall from \cite[I.\S9]{NSW08} that each individual $\hat H^0(\Gamma_E,C[N])$ is the limit of quotients $C_E[N]/N_{K/E}C_K[N]$ taken over all finite Galois extensions $K/E$. One now observes directly that this double limit is equal to zero: Let $(x_{E,N})$ be an inverse system of elements $x_{E,N} \in \hat H^0(\Gamma_E,C[N])$. Write each $x_{E,N}$ itself as an inverse system $x_{E,N,K} \in C_E[N]/N_{K/E}(C_K[N])$. Now fix $E,N,K$. Since $x_{K,N}$ maps to $x_{E,N}$, each $x_{E,N,K}$ is the image of some $x_{K,N,L}$ under $N_{K/E}$, but $N_{K/E}(x_{K,N,L})$ is zero in the quotient $C_E[N]/N_{K/E}(C_K[N])$.
\end{proof}

\end{lem}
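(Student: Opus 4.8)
The plan is to first peel off the auxiliary data $S^\C$ and the subscript $0$, and then to attack the three cohomology groups via Shapiro's lemma, disposing of $H^1$ by a soft argument about homomorphisms into finite groups and of $H^2$ by global arithmetic duality. First I would note that the inclusion $\tx{Maps}(S^\C,M_{E,N})_0 \hrw \tx{Maps}(S^\C,M_{E,N})$ is split both $\Gamma$- and $\Gamma_v$-equivariantly: the surjection $\phi \mapsto \sum_{v\in S^\C}\phi(v)$ onto $M_{E,N}$ retracts $m \mapsto \delta_{v_0}m$ for a fixed $v_0 \in S^\C$, and it is equivariant because $\Gamma$ and $\Gamma_v$ act trivially on $V_F$; moreover these splittings are compatible with the transition maps \eqref{eq:mesn_esphi}. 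So $H^i$ of the submodule is, compatibly in $(E,S,N)$, a direct summand of $H^i$ of the product module $\tx{Maps}(S^\C,M_{E,N})=\prod_{v\in S^\C}M_{E,N}$, which, $S^\C$ being finite, equals $\prod_{v\in S^\C}H^i(\Gamma,M_{E,N})$, and similarly over $\Gamma_v$. As the transition in the $S$-variable is just induced by $S\hrw S'$, a class in any of the three colimits is represented at a finite level by a tuple indexed by a finite set of places with entries in $H^i(\Gamma,M_{E,N})$ (resp. $H^i(\Gamma_v,M_{E,N})$); if $\varinjlim_{E,N}H^i(\Gamma,M_{E,N})=0$ (resp. the local analogue), each entry dies at some later $(E,N)$, and passing to a common upper bound kills the whole tuple. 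This reduces the lemma to the vanishing of $\varinjlim_{E,N}H^i(\Gamma,M_{E,N})$ for $i=1,2$ and of $\varinjlim_{E,N}H^1(\Gamma_v,M_{E,N})$.

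Since $M_{E,N}=\tx{Maps}(\Gamma_{E/F},\tfrac1N\Z/\Z)=\tx{Ind}_{\Gamma_E}^\Gamma(\tfrac1N\Z/\Z)$ with trivial $\Gamma_E$-action, Shapiro's lemma gives $H^i(\Gamma,M_{E,N})=H^i(\Gamma_E,\tfrac1N\Z/\Z)$, while the Mackey formula gives $H^i(\Gamma_v,M_{E,N})=\bigoplus_{w\mid v}H^i(\Gamma_{E_w},\tfrac1N\Z/\Z)$, the sum over the places $w$ of $E$ above $v$. Under these identifications the transition map for $K\supseteq E$, $N\mid M$, becomes restriction to the subgroup $\Gamma_K\subset\Gamma_E$ (resp. $\Gamma_{K_u}\subset\Gamma_{E_w}$ for $u\mid w$) followed by the inclusion $\tfrac1N\Z/\Z\hrw\tfrac1M\Z/\Z$. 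For $i=1$ this settles everything at once: $H^1(\Gamma_E,\tfrac1N\Z/\Z)=\tx{Hom}(\Gamma_E,\tfrac1N\Z/\Z)$ is a group of continuous homomorphisms into a finite group, each with open kernel, so choosing $K/E$ with $\Gamma_K$ inside that kernel makes the restriction---hence the transition map applied to the given class---vanish; the same argument with $\Gamma_{E_w}$ in place of $\Gamma_E$ handles the local $H^1$.

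The case $i=2$ is the main obstacle: restriction on $H^2$ need not be eventually zero, so the soft argument fails. Here the idea is to invoke global arithmetic duality, in the form of \cite[Proposition 8.4.2, Theorem 8.4.4]{NSW08}, to identify the colimit $\varinjlim_{E,N}H^2(\Gamma_E,\tfrac1N\Z/\Z)$ with $\varprojlim_{E,N}\hat H^0(\Gamma_E,C[N])$, where $C=I/\ol{F}^\times$ is the idele class group of $\ol{F}$ and $C[N]$ its subgroup of $N$-torsion, the transition maps in the inverse system being the $\tfrac{M}{N}$-power map $C[M]\rw C[N]$ together with the norm $N_{K/E}$. What remains is purely formal and is where I expect the only real bookkeeping to lie: writing each $\hat H^0(\Gamma_E,C[N])$ as $\varprojlim_{K/E}C_E[N]/N_{K/E}C_K[N]$ (as in \cite[I.\S9]{NSW08}), one checks that for a compatible system $(x_{E,N})$ every component of $x_{E,N}$ at level $K$ is the $N_{K/E}$-image of a component of $x_{K,N}$, hence is already $0$ in the quotient $C_E[N]/N_{K/E}C_K[N]$; thus $(x_{E,N})=0$, the double limit vanishes, and the proof is complete.
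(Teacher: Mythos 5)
Your proposal is correct and follows essentially the same route as the paper: split off $\tx{Maps}(S^\C,M_{E,N})_0$ as a direct summand of the full Maps-module to reduce to $M_{E,N}$ itself, handle both $H^1$ statements via Shapiro/Mackey and the open-kernel argument for homomorphisms into finite groups, and treat $H^2$ via the duality of \cite[Prop. 8.4.2, Thm. 8.4.4]{NSW08} together with the same double-limit computation in $\varprojlim \hat H^0(\Gamma_E,C[N])$ using the description $\hat H^0(\Gamma_E,C[N])=\varprojlim_K C_E[N]/N_{K/E}C_K[N]$. No substantive differences from the paper's proof.
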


We now come to the proof of the vanishing statements concerning $P_{\dot V}$.

\begin{pro} \label{pro:locrig} For any $v \in \dot V$ we have $H^1(\Gamma_v,P_{\dot V}(\ol{F_v}))=0$.
\end{pro}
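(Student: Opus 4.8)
The plan is to dualize: by local Tate duality the vanishing of $H^1(\Gamma_v,P_{\dot V}(\ol{F_v}))$ is equivalent to the vanishing of a colimit of cohomology groups of the finite Galois modules $M_{E,\dot S_E,N}$, and the latter is extracted from the exact sequence of Lemma~\ref{lem:mesn_phiseq} together with Lemmas~\ref{lem:mesn_t1} and~\ref{lem:mesn_t2}.

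First I would rewrite $H^1(\Gamma_v,P_{\dot V}(\ol{F_v}))$ as an inverse limit. By \eqref{eq:slim} and \eqref{eq:plim} the profinite group $P_{\dot V}(\ol{F_v})$ is the filtered inverse limit of the \emph{finite} $\Gamma_v$-modules $P_{E_i,\dot S_i,N}(\ol{F_v})$; the associated $H^0$-systems are therefore Mittag--Leffler, $\varprojlim^1$ vanishes, and $H^1(\Gamma_v,P_{\dot V}(\ol{F_v}))=\varprojlim_{i,N}H^1(\Gamma_v,P_{E_i,\dot S_i,N}(\ol{F_v}))$ by \cite[Cor.~2.7.6]{NSW08}. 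Each $P_{E,\dot S_E,N}$ is a finite multiplicative group with $X^*(P_{E,\dot S_E,N})=M_{E,\dot S_E,N}$, so $P_{E,\dot S_E,N}(\ol{F_v})=\tx{Hom}(M_{E,\dot S_E,N},\ol{F_v}^\times)$ is exactly the module dual to $M_{E,\dot S_E,N}$ in the sense of local Tate duality \cite{NSW08}. Hence there is a perfect pairing between $H^1(\Gamma_v,P_{E,\dot S_E,N}(\ol{F_v}))$ and $H^1(\Gamma_v,M_{E,\dot S_E,N})$, functorial in the triple $(E,\dot S_E,N)$. Since Pontryagin duality interchanges inverse limits of finite abelian groups with colimits, it suffices to prove $\varinjlim_{i,N}H^1(\Gamma_v,M_{E_i,\dot S_i,N})=0$.

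For this I would apply $\Gamma_v$-cohomology to the short exact sequence of Lemma~\ref{lem:mesn_phiseq},
\[ 0 \rw M_{E,\dot S_E,N} \rw \tx{Maps}(S^\C,M_{E,N})_0 \rw \tx{Maps}(S^\C_E,\tfrac1N\Z/\Z)_0 \rw 0, \]
which by Diagram~\eqref{eq:mesn_trdiag} is compatible with the transition maps, and pass to the filtered (hence exact) colimit over $(E_i,\dot S_i,N)$. By part~3 of Lemma~\ref{lem:mesn_t2} the colimit $\varinjlim H^1(\Gamma_v,\tx{Maps}(S^\C,M_{E,N})_0)$ vanishes, so the connecting homomorphism identifies $\varinjlim H^1(\Gamma_v,M_{E_i,\dot S_i,N})$ with $\varinjlim_{i,N}\tx{coker}\big(\tx{Maps}(S^\C,M_{E,N})_0^{\Gamma_v}\rw\tx{Maps}(S^\C_E,\tfrac1N\Z/\Z)_0^{\Gamma_v}\big)$. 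Taking $\Gamma_v$-invariants of these $\Gamma_{E/F}$-modules amounts to taking invariants under the decomposition subgroup $\Delta$ of $v$ (the choice $v\in\dot V$ furnishing a coherent place above $v$ at every finite level), so this cokernel is precisely the quotient occurring in Lemma~\ref{lem:mesn_t1}. That lemma supplies, for each $(E_i,\dot S_i,N)$, a larger admissible triple for which the transition map on these cokernels is zero; since every admissible triple is dominated by some $(E_j,\dot S_j,N')$ of our tower, the transition to $(E_j,\dot S_j,N')$ factors through the vanishing one and is therefore itself zero. A filtered colimit in which every element is annihilated by some transition map is trivial, whence $\varinjlim_{i,N}H^1(\Gamma_v,M_{E_i,\dot S_i,N})=0$ and $H^1(\Gamma_v,P_{\dot V}(\ol{F_v}))=0$.

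The genuine work has already been absorbed into Lemmas~\ref{lem:mesn_t1} and~\ref{lem:mesn_t2}, so the main points that require care in this argument are formal: that local Tate duality survives passage to the limit — this is where the finiteness of each $M_{E,\dot S_E,N}$ enters, so that $\varprojlim^1$ vanishes and Pontryagin duality is exact on the systems at hand — and that the cokernel appearing in the $\Gamma_v$-cohomology long exact sequence is literally the quotient to which Lemma~\ref{lem:mesn_t1} applies, with $\Delta$ taken to be the decomposition group at $v$.
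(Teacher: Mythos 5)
Your proposal is correct and follows essentially the same route as the paper: the identification $H^1(\Gamma_v,P_{\dot V}(\ol{F_v}))=(\varinjlim H^1(\Gamma_v,M_{E,\dot S_E,N}))^\vee$ (the paper cites \cite[Theorems 2.7.5, 7.2.6, 7.2.17]{NSW08} for the inverse-limit and local duality steps you spell out), followed by the long exact sequence coming from Lemma \ref{lem:mesn_phiseq}, with the two outer colimits killed by Lemmas \ref{lem:mesn_t1} and \ref{lem:mesn_t2}. Your extra remark on dominating the auxiliary triple of Lemma \ref{lem:mesn_t1} by a member of the fixed tower is a correct and harmless elaboration of what the paper leaves implicit.
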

\begin{proof}
According to \cite[Theorems 2.7.5, 7.2.6, 7.2.17]{NSW08} we have
\[ H^1(\Gamma_v,P_{\dot V}(\ol{F_v})) = \varprojlim H^1(\Gamma_v,P_{E,\dot S_E,N}(\ol{F_v})) = (\varinjlim H^1(\Gamma_v,M_{E,\dot S_E,N}))^\vee. \]
We obtain from Diagram \eqref{eq:mesn_trdiag} the short exact sequence
\[ 0 \rw \frac{\tx{Maps}(S_E^\C,\frac{1}{N}\Z/\Z)_0^{\Gamma_v}}{\tx{Maps}(S^\C,M_{E,N})_0^{\Gamma_v}} \rw H^1(\Gamma_v,M_{E,\dot S_E,N}) \rw H^1(\Gamma_v,\tx{Maps}(S^\C,M_{E,N})_0). \]
According to Lemmas \ref{lem:mesn_t1} and \ref{lem:mesn_t2} the colimits of the two outer terms are zero.
\end{proof}

\begin{pro} \label{pro:globrig} $H^1(\Gamma,P_{\dot V}(\ol{F}))=0$.

\end{pro}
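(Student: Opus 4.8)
The plan is to deduce this from the local vanishing of Proposition \ref{pro:locrig} by means of global Poitou-Tate duality, reducing everything to the vanishing of a Tate-Shafarevich group; throughout I write $\tx{Sha}^j(A) = \ker(H^j(\Gamma,A) \to \prod_v H^j(\Gamma_v,A))$ for a torsion $\Gamma$-module $A$. First I would set $M_{\dot V} = \varinjlim_i M_{E_i,\dot S_i,N_i}$, the colimit along the transition maps \eqref{eq:mesn_es}; then $P_{\dot V}(\ol F) = \tx{Hom}(M_{\dot V},\ol F^\times)$ is the Tate dual of $M_{\dot V}$, and since the maps \eqref{eq:mesn_es} are injective (hence dual to surjections on $\ol F$-points), continuity of cohomology gives $H^1(\Gamma,P_{\dot V}(\ol F)) = \varprojlim_i H^1(\Gamma,P_{E_i,\dot S_i,N_i}(\ol F))$, just as in the proof of Proposition \ref{pro:locrig}. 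By Proposition \ref{pro:locrig} every $H^1(\Gamma_v,P_{\dot V}(\ol{F_v}))$ vanishes, so the localization map out of $H^1(\Gamma,P_{\dot V}(\ol F))$ is zero, i.e. $H^1(\Gamma,P_{\dot V}(\ol F)) = \tx{Sha}^1(P_{\dot V}(\ol F))$. Global Poitou-Tate duality, applied at each finite level $i$ and passed to the limit, identifies this with $\tx{Sha}^2(M_{\dot V})^\vee$. Hence it suffices to prove $\tx{Sha}^2(M_{\dot V}) = 0$.

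For this I would take the colimit over $i$ of the exact sequences of Lemma \ref{lem:mesn_phiseq}, which are compatible by the ladder \eqref{eq:mesn_trdiag}, obtaining a short exact sequence of torsion $\Gamma$-modules $0 \to M_{\dot V} \to Q_{\dot V} \to R_{\dot V} \to 0$ with $Q_{\dot V} = \varinjlim \tx{Maps}(S^\C,M_{E,N})_0$ and $R_{\dot V} = \varinjlim \tx{Maps}(S^\C_E,\tfrac{1}{N}\Z/\Z)_0$. Since cohomology commutes with filtered colimits, Lemma \ref{lem:mesn_t2} yields $H^1(\Gamma,Q_{\dot V}) = H^2(\Gamma,Q_{\dot V}) = 0$ and $H^1(\Gamma_v,Q_{\dot V}) = 0$ for every $v$. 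The long exact sequences, global and local, then show that the connecting maps $\delta : H^1(\Gamma,R_{\dot V}) \to H^2(\Gamma,M_{\dot V})$ and $\delta_v : H^1(\Gamma_v,R_{\dot V}) \to H^2(\Gamma_v,M_{\dot V})$ are injective and that $\delta$ is an isomorphism; chasing the resulting commutative square against localization gives a canonical isomorphism $\tx{Sha}^2(M_{\dot V}) \cong \tx{Sha}^1(R_{\dot V})$. So it remains to prove $\tx{Sha}^1(R_{\dot V}) = 0$.

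This last step is carried out by unwinding the colimit of the elementary presentations $0 \to \tx{Maps}(S^\C_E,\tfrac1N\Z/\Z)_0 \to \tx{Maps}(S^\C_E,\tfrac1N\Z/\Z) \to \tfrac1N\Z/\Z \to 0$ (the ``sum of values zero'' submodule followed by the total-sum map): this realizes $R_{\dot V}$ as the kernel of an augmentation $\bigoplus_v B_v \to B$, the sum running over the non-complex places $v$ of $F$, where each $B_v$ is a $\Gamma$-module induced from the decomposition group $\Gamma_{F_v}$ and $B$ carries the trivial $\Gamma$-action. One checks from the explicit formula \eqref{eq:xies} that this augmentation is surjective on $\Gamma$-invariants, so the connecting map $H^0(\Gamma,B) \to H^1(\Gamma,R_{\dot V})$ vanishes and $H^1(\Gamma,R_{\dot V})$ embeds into $\bigoplus_v H^1(\Gamma_{F_v},-)$ via Shapiro's lemma, in such a way that the $v$-th component of the embedding is the localization map at $v$. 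A class in $\tx{Sha}^1(R_{\dot V})$ localizes to zero at every place, hence has all components zero, hence is itself zero. This gives $\tx{Sha}^1(R_{\dot V}) = 0$, so $\tx{Sha}^2(M_{\dot V}) = 0$, and therefore $H^1(\Gamma,P_{\dot V}(\ol F)) = 0$. I expect the main obstacle to be the bookkeeping of this last paragraph — tracking the twisted transition maps in the colimit defining $R_{\dot V}$, and the local conditions throughout — together with the fact that $\tx{Sha}^j$ is not a $\delta$-functor, which forces the middle step to be run through the full local-global compatibility rather than a bare long exact sequence.
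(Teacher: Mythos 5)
Your argument is correct in substance and has the same skeleton as the paper's proof: local vanishing (Proposition \ref{pro:locrig}) plus the localization sequence reduces everything to a locally-trivial-classes statement, Poitou--Tate converts that to degree $2$ for the character modules, and the exact sequence of Lemma \ref{lem:mesn_phiseq} together with Lemma \ref{lem:mesn_t2} moves the problem to the modules $\tx{Maps}(S^\C_E,\frac{1}{N}\Z/\Z)_0$, where only the real places survive and a Shapiro-type argument finishes. The real difference is organizational: you work throughout with the colimit modules $M_{\dot V},Q_{\dot V},R_{\dot V}$, which buys you $H^1(\Gamma_v,Q_{\dot V})=0$ at every place and hence a clean identification $\tx{Sha}^2(M_{\dot V})\cong\tx{Sha}^1(R_{\dot V})$, whereas the paper argues at finite level and must therefore lift the local classes to $H^1(\Gamma_{\dot v},\tx{Maps}(S,M_{E,N})_0)$ and enlarge $E$ to kill them, using only the real places via the injection \eqref{eq:q1}. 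Two caveats, neither fatal. First, Poitou--Tate at finite level dualizes $\varprojlim\ker^1(\Gamma,P_i)$ against $\varinjlim\ker^2(\Gamma,M_i)$, and the latter injects into, but is not a priori equal to, your $\tx{Sha}^2(M_{\dot V})$: local triviality of a class of $H^2(\Gamma,M_{\dot V})$ need not be realized at a single finite level uniformly in $v$. Since you prove the (possibly larger) group $\tx{Sha}^2(M_{\dot V})$ vanishes, the implication still runs in the right direction, but the asserted isomorphism $\tx{Sha}^1(P_{\dot V})\cong\tx{Sha}^2(M_{\dot V})^\vee$ should be weakened to an embedding. Second, in your last paragraph the degree factors $|\Gamma_{K/E,u}|$ in \eqref{eq:xies} are precisely what make the finite-place summands and the quotient term die in the colimit, so that $R_{\dot V}$ is simply the direct sum over the real places of smooth inductions $\tx{Ind}_{\Gamma_{\dot v}}^{\Gamma}\Q/\Z$; your ``surjective on $\Gamma$-invariants'' step is then vacuous, and the $v$-th Shapiro component is localization at $\dot v$ followed by evaluation rather than the localization map itself --- which is exactly what the Sha argument needs, so the conclusion stands once this bookkeeping is done carefully.
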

\begin{proof}
Using the localization sequence
\[ 0 \rw \tx{ker}^1(\Gamma,P_{\dot V}) \rw H^1(\Gamma,P_{\dot V}) \rw \prod_{\dot v \in \dot V} H^1(\Gamma_{\dot v},P_{\dot V}) \]
and Proposition \ref{pro:locrig}
it is enough to show $\tx{ker}^1(\Gamma,P_{\dot V})=0$. According to \cite[Corollary 2.7.6]{NSW08} we have $H^1(\Gamma,P_{\dot V})=\varprojlim H^1(\Gamma,P_{E,\dot S_E,N})$ and thus $\tx{ker}^1(\Gamma,P_{\dot V})=\varprojlim\tx{ker}^1(\Gamma,P_{E,\dot S_E,N})$. According to Poitou-Tate duality \cite[Theorem 8.6.7]{NSW08}, we are trying to show $\varinjlim\tx{ker}^2(\Gamma,M_{E,\dot S_E,N})=0$.

We have the following piece of the long exact cohomology sequence
\[ \begin{aligned}
\rw H^1(\Gamma,\tx{Maps}(S^\C,M_{E,N})_0)&\rw H^1(\Gamma,\tx {Maps}(S^\C_E,\frac{1}{N}\Z/\Z)_0) \rw H^2(\Gamma,M_{E,\dot S_E,N}) \\
\rw H^2(\Gamma,\tx{Maps}(S^\C,M_{E,N})_0)&\rw
\end{aligned} \]
and after applying the exact functor $\varinjlim$ and Lemma \ref{lem:mesn_t2} we obtain the isomorphism
\begin{equation} \label{eq:isop1} \varinjlim H^1(\Gamma,\tx{Maps}(S^\C_E,\frac{1}{N}\Z/\Z)_0) \rw \varinjlim H^2(\Gamma,M_{E,\dot S_E,N}). \end{equation}
We will now study its source. We have the exact sequence
\[ 0 \rw \tx{Maps}(S^\C_E,\frac{1}{N}\Z/\Z)_0 \rw \tx{Maps}(S^\C_E,\frac{1}{N}\Z/\Z) \rw \frac{1}{N}\Z/\Z \rw 0. \]
The transition map for $K/E$ induces the multiplication by $[K:E]$-map on the $\frac{1}{N}\Z/\Z$-factor. It follows that the natural map
\[ \varinjlim H^1(\Gamma,\tx{Maps}(S^\C_E,\frac{1}{N}\Z/\Z)_0) \rw \varinjlim H^1(\Gamma,\tx{Maps}(S^\C_E,\frac{1}{N}\Z/\Z))\]
is an isomorphism. For a fixed $E$ we have the isomorphism
\[ H^1(\Gamma,\tx{Maps}(S^\C_E,\frac{1}{N}\Z/\Z)) \rw \bigoplus_{v \in S^\C} H^1(\Gamma_E \cdot \Gamma_{\dot v},\frac{1}{N}\Z/\Z). \]
On the target of this isomorphism the transition map is given, at each place $v \in S^\C$, by the restriction map $H^1(\Gamma_E\cdot \Gamma_{\dot v},\frac{1}{N}\Z/\Z) \rw H^1(\Gamma_K\cdot \Gamma_{\dot v},\frac{1}{N}\Z/\Z)$ followed by multiplication by $|\Gamma_{K/E,\dot v}|$. For any given element of the right hand side at level $E$ we may thus choose $K$ large enough so that the $v$-component at all $p$-adic places $v \in S$ becomes zero and moreover the restriction to $\Gamma_K$ of the $v$-component at all real places $v \in S$ is zero. We may also assume that $\dot v$ is complex in $E$ for any real $v \in S$, so that $\Gamma_{\dot v} \cap \Gamma_E = \{1\} = \Gamma_{\dot v} \cap \Gamma_K$. We obtain
\[ \varinjlim H^1(\Gamma,\tx{Maps}(S_E,\frac{1}{N}\Z/\Z)) \cong \bigoplus_{v \in V_{F,\R}}\tx{Hom}(\Gamma_{\dot v},\Q/\Z). \]
Consider now the injective map
\begin{equation} \label{eq:q1} \bigoplus_{v \in S_\R} H^1(\Gamma_{\dot v},\frac{1}{N}\Z/\Z) \rw H^1(\Gamma,\tx{Maps}(S_E,\frac{1}{N}\Z/\Z)), \end{equation}
which, according to the argument just given, becomes bijective in the limit. The composition of this map with the localization map
\[ H^1(\Gamma,\tx{Maps}(S_E,\frac{1}{N}\Z/\Z)) \rw \bigoplus_{v \in S_\R} H^1(\Gamma_{\dot v},\tx{Maps}(S_E,\frac{1}{N}\Z/\Z)) \]
stays injective, because it factors the identity on $\bigoplus_{v \in S_\R} H^1(\Gamma_{\dot v},\frac{1}{N}\Z/\Z)$.

Take an element of $\varinjlim\tx{ker}^2(\Gamma,M_{E,\dot S_E,N})$. Via \eqref{eq:isop1}, this element lifts to an element of $H^1(\Gamma,\tx{Maps}(S_E,\frac{1}{N}\Z/\Z)_0)$ for a suitable choice of $E$, $S$, and $N$. Enlarging this choice if necessary this element can be brought to lie in the image of the injection \eqref{eq:q1}. The element of $H^1(\Gamma,\tx{Maps}(S_E,\frac{1}{N}\Z/\Z)_0)$ has the property that its localization at each place $v \in \dot V$ is killed by the connecting homomorphism to $H^2(\Gamma_{\dot v},M_{E,\dot S_E,N})$ and thus lifts to an element of $H^1(\Gamma_{\dot v},\tx{Maps}(S,M_{E,N})_0)$. Since the colimit of these groups vanishes, we may enlarging $E$ sufficiently to arrange that for each $\dot v \in S_\R$ this lift becomes zero. We thus achieve that the localization at each $\dot v \in S_\R$ of the element of $H^1(\Gamma,\tx{Maps}(S_E,\frac{1}{N}\Z/\Z))$ that we are looking at is trivial. But this element came from $\bigoplus_{v \in S_\R} H^1(\Gamma_{\dot v},\frac{1}{N}\Z/\Z)$ and thus must have been trivial itself.
\end{proof}

\subsection{The canonical class} \label{sub:canclass}
In Subsection \ref{sub:pes} we constructed a canonical element $(\xi_i)$ of the inverse limit $\varprojlim H^2(\Gamma,P_i(\ol{F}))$. According to \cite[Corollary 2.7.6]{NSW08} we have the exact sequence
\[ 0 \to \varprojlim\nolimits^1 H^1(\Gamma,P_i(\ol{F})) \to H^2(\Gamma,P_{\dot V}(\ol{F})) \to \varprojlim H^2(\Gamma,P_i(\ol{F})) \to 0 \]
We do not know if the $\varprojlim^1$-term vanishes. The purpose of this section is to show that nonetheless there is a canonical element $\xi \in H^2(\Gamma,P_{\dot V}(\ol{F}))$ that lifts the system $(\xi_i)$. For most of our applications we can make do with an arbitrary lift of $(\xi_i)$, so the reader is encouraged to skip this subsection on a first reading.

Let us abbreviate $P_i=P_{E_i,\dot S_i,N_i}$ and $M_i=M_{E_i,\dot S_i,N_i}$. Put $M=\varinjlim M_i$ and $P(R)=\varprojlim P_i(R) = \tx{Hom}_\Z(M,R^\times)$ for any $F$-algebra $R$. Let $p_i : P(R) \to P_i(R)$ be the natural projection. We will in particular be interested in $R=\bar \A=\ol{F}\otimes_F \A = \varinjlim_E \A_E$, where $E$ runs over the set of finite Galois extensions of $F$. We have $P_i(\ol{\A})=\tx{Hom}_\Z(M_i,\ol{\A}^\times) = \varinjlim_E P_i(\A_E)$. Note here that $P(\bar \A)$ is not the same as $\varinjlim_E P(\A_E)$, the latter being in fact the trivial abelian group.

For each $v \in \dot V$ let $\bar \A_v = \ol{F} \otimes_F F_v$. The group of units $\bar \A_v^\times$ in this ring is a smooth $\Gamma$-module, and is in fact isomorphic to the smooth induction $\tx{Ind}_{\Gamma_v}^\Gamma \ol{F_v}^\times$. We have the Shapiro isomorphisms $S^k_v : H^k(\Gamma_v,P_i(\ol{F_v})) \to H^k(\Gamma,P_i(\bar\A_v))$. They are functorial and hence provide an inverse system with respect to $i$. Furthermore we have the isomorphisms $H^k(\Gamma,P(\bar \A_v)) = \varprojlim H^k(\Gamma,P_i(\bar\A_v))$ and $H^k(\Gamma_v,P(\ol{F_v})) = \varprojlim H^k(\Gamma,P_i(\ol{F_v}))$ of \cite[Theorem 3.5.8]{Weibel94} (note that $\bar F_v^\times$ and $\bar \A_v^\times$ are divisible groups, so for $j>i$ the maps $P_j(\bar F_v) \to P_i(\bar F_v)$ and $P_j(\bar \A_v) \to P_i(\bar \A_v)$ are surjective). The inverse system of Shapiro isomorphisms thus gives the Shapiro isomorphism $S^k_v : H^k(\Gamma_v,P(\ol{F_v})) \to H^k(\Gamma,P(\bar \A_v))$. The local canonical class $\xi_v \in H^2(\Gamma_v,u_v)$ maps via the map $S^2_v \circ \tx{loc}_v$ to a class in $H^2(\Gamma,P(\bar\A_v))$.

We will now construct a canonical class $x \in H^2(\Gamma,P(\ol{\A}))$ that, for each $v \in \dot V$, maps to $S^2_v(\tx{loc}_v(\xi_v)) \in H^2(\Gamma,P(\bar \A_v))$ under the map induced by the projection $\bar \A \to \bar \A_v$. This is not entirely obvious, because there is no a-priori reason for the natural map $H^2(\Gamma,P(\ol{\A})) \to \prod_v H^2(\Gamma,P(\ol{\A_v}))$ to be either injective or surjective.

For the construction of the class $x$ we fix for each $v$ a 2-cocycle $\dot\xi_v$ representing the class $\xi_v \in H^2(\Gamma_v,u_v)$. We also fix for each $v$ a section $s : \Gamma_v \lmod \Gamma \to \Gamma$ as in Appendix \ref{app:shap}, thereby obtaining an explicit realization of the Shapiro map $\dot S^2_v : C^2(\Gamma_v,X) \to C^2(\Gamma,\tx{Ind}_{\Gamma_v}^\Gamma X)$ on the level of cochains. It is functorial in $X$, commutes with the differentials on both sides, and is a section of the map in the opposite direction given by restriction to $\Gamma_v$ followed by evaluation of elements of $\tx{Ind}_{\Gamma_v}^\Gamma X$ at $1$. The functoriality of $\dot S^2_v$ gives us an inverse system of maps $\dot S^2_v : C^2(\Gamma_v,P_i(\ol{F_v})) \to C^2(\Gamma,P_i(\bar \A_v))$ which splices together to a map $\dot S^2_v : C^2(\Gamma_v,P(\ol{F_v})) \to C^2(\Gamma,P(\bar \A_v))$. Let $\dot x_v = \dot S^2_v(\tx{loc}_v(\dot\xi_v)) \in Z^2(\Gamma,P(\bar \A_v))$. It is important to observe at this point that for a fixed $i$, $p_i(\dot x_v)$ is the constant 2-cocycle $1$ for all $v \notin S_i$, because for such $v$ we have $p_i\circ \dot S^2_v \circ\tx{loc}_v=\dot S^2_v \circ p_i \circ\tx{loc}_v$ but $p_i\circ\tx{loc}_v=1$. It follows that $\prod_v p_i(\dot x_v) \in Z^2(\Gamma,P_i(\bar \A))$. Moreover, these elements form an inverse system in $i$ and hence lead to $\dot x \in Z^2(\Gamma,P(\bar \A))$.

We claim that the class of $\dot x$ in $H^2(\Gamma,P(\ol{\A}))$ is independent of the choices involved in its construction -- the choice of 2-cocycle $\dot\xi_v \in Z^2(\Gamma_v,u_v)$ representing the class $\xi_v$ for each $v$, and the choice of a section $\Gamma_{\dot v} \lmod \Gamma \to \Gamma$ for each $v$. First, if for each $v \in \dot V$ we replace $\dot\xi_v$ by $\dot\xi_v \cdot \partial c_v$ for some $c_v \in C^1(\Gamma,u_v)$, then $\dot x_v$ is replaced by $\dot x_v' := \dot x_v \cdot \partial \dot S^1_v(\tx{loc}_v(c_v))$. For a fixed $i$, we have that $p_i(\dot x_v')=p_i(\dot x_v) \cdot \partial C_{v,i}$, where $C_{v,i}=\dot S^1_v(p_i(\tx{loc}_v(c_v)))$. For all $v \notin S_i$ we have $C_{v,i}=1$ and so $C_i := \prod_v C_{v,i}$ belongs to $C^1(\Gamma,P_i(\bar \A))$ and the differential $\partial C_i$ measures the difference between $\prod_v p_i(\dot x_v)$ and $\prod_v p_i(\dot x_v')$. Letting $C=\varprojlim C_i \in C^1(\Gamma,P(\bar \A))$ we obtain a 1-cochain whose differential measures the difference between $\dot x$ and $\dot x'$.

Next, to see that the choices of sections $\Gamma_{\dot v} \lmod \Gamma \to \Gamma$ don't matter, for each $v \in \dot V$ let $s_v$ and $s'_v$ be two choices of sections and let $\dot S^2_v$ and $\dot S'^2_v$ denote the functorial Shapiro maps $C^2(\Gamma_v,X) \to C^2(\Gamma,\tx{Ind}_{\Gamma_v}^\Gamma X)$ respectively. Let $\dot x$ and $\dot x'$ be the two elements of $Z^2(\Gamma,P(\bar \A))$ obtained this way. Lemma \ref{lem:sh4} gives us an explicit formula for a 1-cochain $c_v \in C^1(\Gamma,P_i(\bar\A_v))$ such that $\partial c_v=\dot S_v(\tx{loc}_v(\dot\xi_v))\cdot \dot S'_v(\tx{loc}_v(\dot\xi_v))^{-1}$. From the explicit formula we see that for a fixed $i$ and $v \notin S_i$ we have $p_i(c_v)=1$ in $C^1(\Gamma,P_i(\bar \A_v))$. It follows that $c=\prod_v c_v$ belongs to $C^1(\Gamma,P(\bar \A))$ and satisfies $\dot x \cdot \dot x'^{-1}=\partial c$.

We have thus obtained the canonical class $x \in H^2(\Gamma,P(\ol{\A}))$. We now want to argue that there is a unique element of $H^2(\Gamma,P(\ol{F}))$ whose image in $H^2(\Gamma,P(\bar\A))$ is $x$ and whose image in $\varprojlim H^2(\Gamma,P_i(\ol{F}))$ is $(\xi_i)$. However, it turns out that working with $H^2(\Gamma,P(\bar \A))$ is too difficult for this task. We shall instead replace the groups $H^k(\Gamma,P_i(\bar \A))$ with $H^k(\A,T_i \to U_i)$, where $T_i \to U_i$ is an isogeny of tori with kernel $P_i$, and the latter cohomology groups are the ones defined in \cite[Appendix C]{KS99}. Those groups have the advantage of carrying a good topology, which we will use. We begin with the following.

\begin{lem} For each $i$ there exists an isogeny of tori $f_i : T_i \rw U_i$ defined over $F$ and with kernel $P_i$. We have the commutative diagram
\[ \xymatrix{
	&&1\ar[d]&1\ar[d]\\
	&&K_i\ar[d]\ar[r]&K'_i\ar[d]&\\
	1\ar[r]&P_{i+1}\ar[r]\ar[d]&T_{i+1}\ar[r]\ar[d]&U_{i+1}\ar[r]\ar[d]&1\\
	1\ar[r]&P_i\ar[r]\ar[d]&T_i\ar[r]\ar[d]&U_i\ar[r]\ar[d]&1\\
	&1&1&1
}
\]
in which $K_i$ and $K'_i$ are tori.
\end{lem}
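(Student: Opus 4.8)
The plan is to obtain every torus in the diagram by applying Cartier duality to a compatible system of surjections of character lattices onto the modules $M_i := X^*(P_i)$. Recall that $X^*$ is an anti-equivalence between diagonalizable groups over $F$ and finitely generated abelian groups with continuous $\Gamma$-action, carrying tori to lattices and surjections to closed immersions. Hence, to realize a single $P_i$ as the kernel of an isogeny of tori over $F$ it suffices to pick a $\Gamma$-lattice $X_i$ with a $\Gamma$-equivariant surjection $\pi_i : X_i \thrw M_i$: one sets $X^*(T_i) = X_i$, $Y_i := \tx{ker}(\pi_i)$, $X^*(U_i) = Y_i$, and lets $f_i : T_i \rw U_i$ be the morphism dual to $Y_i \hrw X_i$. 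As $M_i$ is finite, $Y_i$ has finite index in $X_i$, so $f_i$ is an isogeny with kernel $P_i$. Taking $X_i$ free over $\Z[\Gamma_{E_i/F}]$, with a basis mapping onto a generating set of $M_i$, I would moreover arrange that each $T_i$ is an induced torus.

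For the commutative diagram I would build the $X_i$ by induction on $i$, compatibly with the transition maps $P_{i+1} \rw P_i$. The latter are dual to injections $\iota_i : M_i \hrw M_{i+1}$, being composites of the two injections on character modules induced by \eqref{eq:mesn_es} and by the inclusion $\frac{1}{N_i}\Z/\Z \hrw \frac{1}{N_{i+1}}\Z/\Z$. Starting from $X_1$ free over $\Z[\Gamma_{E_1/F}]$ with a surjection $\pi_1$ onto $M_1$, and given $\pi_i : X_i \thrw M_i$, I would choose a free $\Z[\Gamma_{E_{i+1}/F}]$-module $Q_{i+1}$ with a surjection $\phi : Q_{i+1} \thrw M_{i+1}$ and set $X_{i+1} := X_i \oplus Q_{i+1}$, with $X_i \hrw X_{i+1}$ the inclusion of the first summand and $\pi_{i+1}(x,q) := \iota_i(\pi_i(x)) + \phi(q)$. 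Then $\pi_{i+1}$ is a $\Gamma$-equivariant surjection, the square formed by $\pi_i,\pi_{i+1},\iota_i$ and $X_i \hrw X_{i+1}$ commutes, and the cokernel of $X_i \hrw X_{i+1}$ is the free module $Q_{i+1}$. Dualizing $X_i \hrw X_{i+1}$ gives a surjection $T_{i+1} \rw T_i$ whose kernel $K_i$ has $X^*(K_i) = Q_{i+1}$ and is therefore a torus.

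It then remains to handle the $U$-side and assemble the $3\times3$ diagram. Applying the snake lemma to the morphism of short exact sequences $(0 \rw Y_i \rw X_i \rw M_i \rw 0) \rw (0 \rw Y_{i+1} \rw X_{i+1} \rw M_{i+1} \rw 0)$, all of whose vertical maps are injective, I obtain the exact sequence of cokernels $0 \rw Y_{i+1}/Y_i \rw Q_{i+1} \rw M_{i+1}/M_i \rw 0$; in particular $Y_{i+1}/Y_i$ embeds into the torsion-free module $Q_{i+1}$, hence is torsion-free. Thus the surjection $U_{i+1} \rw U_i$ dual to $Y_i \hrw Y_{i+1}$ has kernel $K_i'$ with $X^*(K_i') = Y_{i+1}/Y_i$, again a torus. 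The two rows $1 \rw P_j \rw T_j \rw U_j \rw 1$ are exact by the construction of $f_j$; the middle and right columns are exact by the two previous paragraphs; the left column is the transition surjection $P_{i+1} \thrw P_i$; and the induced map $K_i \rw K_i'$ (dual to $Q_{i+1} \thrw Y_{i+1}/Y_i$), together with the commutativity of every square, is forced by the $3\times3$ lemma, all arrows having been produced by dualizing an already-commuting diagram of character modules.

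The only genuinely delicate point — the part I expect to be the main obstacle — is forcing the kernels $K_i$ and $K_i'$ of the transition morphisms to be \emph{tori} rather than merely groups of multiplicative type, i.e. forcing the cokernels $X_{i+1}/X_i$ and $Y_{i+1}/Y_i$ to be torsion-free. The direct-sum construction is precisely what makes $X_{i+1}/X_i$ literally free, and the torsion-freeness of $Y_{i+1}/Y_i$ then falls out of the snake lemma since it sits inside $Q_{i+1}$. Beyond that the argument is pure module theory plus Cartier duality; no arithmetic input, in particular nothing about the canonical class of Subsection \ref{sub:canclass}, is needed.
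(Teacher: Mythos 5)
Your construction is essentially the paper's own proof: the paper likewise builds the character lattices inductively, taking the new lattice to be the old one direct-sum a free $\Z[\Gamma_{E_{i+1}/F}]$-module surjecting onto the new $M$, so that the cokernel on the $T$-side is free by construction, and then deduces torsion-freeness of the cokernel on the $U$-side by the kernel–cokernel (your snake) lemma before dualizing to tori. The only slip is your parenthetical identifying the dual of $K_i \rw K'_i$: that map is dual to the injection $Y_{i+1}/Y_i \hookrightarrow Q_{i+1}$ coming from your cokernel sequence, not to a surjection $Q_{i+1} \twoheadrightarrow Y_{i+1}/Y_i$ (no such natural map exists), but this does not affect the argument.
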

\begin{proof}
We will inductively construct exact sequences of $\Gamma$-modules
\[ 0 \rw X_i \rw Y_i \rw M_i \rw 0, \]
where $M_i=M_{E_i,\dot S_i,N_i}$ and $X_i$ and $Y_i$ are finite-rank free $\Z$-modules. For $i=1$ we take $Y_1$ to be the free $\Z[\Gamma_{E_1/F}]$-module generated by the elements of $M_1$, on which we let $\Gamma$ operate through its quotient $\Gamma_{E_1/F}$, and take for $Y_1 \rw M_1$ the obvious map. Assuming the $i$-th exact sequence is constructed, we take $Y_{i+1}$ to be the direct sum of $Y_i \oplus Y_i'$, where $Y'_i$ is the free $\Z[\Gamma_{E_{i+1}/F}]$-module generated by the set $M_{i+1} \sm M_i$. The map $Y_{i+1} \rw M_{i+1}$ is given on $Y_i$ by the composition $Y_i \rw M_i \rw M_{i+1}$ and on $Y_i'$ by the obvious map to $M_{i+1}$. The result is a surjective map $Y_{i+1} \rw M_{i+1}$ of $\Gamma$-modules. We then obtain the diagram
\[ \xymatrix{
	0\ar[r]&X_{i+1}\ar[r]&Y_{i+1}\ar[r]&M_{i+1}\ar[r]&0\\
	0\ar[r]&X_i\ar[r]\ar[u]&Y_i\ar[r]\ar[u]&M_i\ar[r]\ar[u]&0
}\]
with exact rows, where $Y_i \rw Y_{i+1} = Y_i \oplus Y_i'$ is given by the obvious inclusion. By construction the cokernel of this map is just $Y_i'$ and hence $\Z$-free. The kernel-cokernel lemma implies that the cokernel of $X_i \rw X_{i+1}$ is a submodule of the cokernel of $Y_i \rw Y_{i+1}$ and is thus also $\Z$-free.
\end{proof}

For every $F$-algebra $R$ we have the exact sequence
\begin{equation} \label{eq:urex1} 1 \rw P_i(R) \rw T_i(R) \rw U_i(R). \end{equation}
The last map need not be surjective, even when $R$ is an $\ol{F}$-algebra, as for example in the case $R=\bar\A$. This exact sequence leads to an injective map
\[ H^1(\Gamma,P_i(\ol{\A})) \rw H^1(\A,T_i \rw U_i). \]
Let us recall that $H^j(\A,T_i \rw U_i)$ are the cohomology groups of the complex
\[ C^j(\A,T_i \rw U_i) := C^j(\Gamma,T_i(\ol{\A})) \oplus C^{j-1}(\Gamma,U_i(\ol{\A})) \]
with differential sending $(c_1,c_2)$ to $(\partial c_1,f_i(c_1)-\partial c_2)$. On the right we are taking continuous cochains of the profinite group $\Gamma$ valued in the discrete $\Gamma$-modules $T_i(\ol{\A})$ and $U_i(\ol{\A})$. Due to the fact the kernels of $T_{i+1} \rw T_i$ and $U_{i+1} \rw U_i$ are tori, the maps $T_{i+1}(\ol{\A}) \rw T_i(\ol{\A})$ and $U_{i+1}(\ol{\A}) \rw U_i(\ol{\A})$ are surjective, which implies that the induced map
\[ C^j(\A,T_{i+1} \rw U_{i+1}) \rw C^j(\A,T_i \rw U_i) \]
is also surjective. It follows that for fixed $j$ the inverse system $C^j(\A,T_i \rw U_i)$ satisfies the Mittag-Lefler condition and then from \cite[Theorem 3.5.8]{Weibel94} we obtain the exact sequence
\[ 1 \rw \varprojlim\nolimits^1 H^1(\A,T_i \rw U_i) \rw H^2(\A,T \rw U) \rw \varprojlim H^2(\A,T_i \rw U_i) \rw 1, \]
where in the middle we are taking the cohomology of the complex
\[ C^j(\A,T \rw U) := \varprojlim C^j(\A,T_i \rw U_i) = C^j(\Gamma,T(\ol{\A})) \oplus C^{j-1}(\Gamma,U(\ol{\A})), \]
with
\[ T(\ol{\A}) = \varprojlim T_i(\ol{\A})\qquad \tx{and}\qquad U(\ol{\A}) = \varprojlim U_i(\ol{\A}), \]
being continuous $\Gamma$-modules endowed with the topology of the inverse limit of discrete $\Gamma$-modules.

Taking the inverse limit of \eqref{eq:urex1} for $R=\ol{\A}$ we obtain the exact sequence
\[ 1 \rw P(\ol{\A}) \rw T(\ol{\A}) \rw U(\ol{\A}) \]
and hence a map (which may fail to be injective)
\[ H^2(\Gamma,P(\ol{\A})) \rw H^2(\A,T \rw U). \]
We also have the map $H^2(\Gamma,P(\ol{F})) \to H^2(\Gamma,P(\bar \A))$ induced by the natural embedding $\ol{F} \to \bar\A$.

\begin{pro} \label{pro:canclass} There exists a unique element of $H^2(\Gamma,P(\ol{F}))$ whose image in $\varprojlim H^2(\Gamma,P_i(\ol{F}))$ equals the canonical system $(\xi_i)$ and whose image in $H^2(\A,T \to U)$ coincides with the image of the class $x \in H^2(\Gamma,P(\bar \A))$ there.
\end{pro}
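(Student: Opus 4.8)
The plan is to start from an \emph{arbitrary} lift of the system $(\xi_i)$ and then to correct it, uniquely, so that it matches the class $x$ inside $H^2(\A,T\rw U)$. The correction exists and is unique because the natural map between the $\varprojlim^1$-terms of the two towers $\bigl(H^1(\Gamma,P_i(\ol F))\bigr)_i$ and $\bigl(H^1(\A,T_i\rw U_i)\bigr)_i$ turns out to be an isomorphism, and this last point is the only place where anything beyond formal homological algebra is needed.

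First I would use the exact sequence $0\to\varprojlim^1 H^1(\Gamma,P_i(\ol F))\to H^2(\Gamma,P(\ol F))\to\varprojlim H^2(\Gamma,P_i(\ol F))\to 0$ recalled above to fix some lift $\tilde\xi_0\in H^2(\Gamma,P(\ol F))$ of $(\xi_i)$; any two lifts differ by an element of $V:=\varprojlim^1 H^1(\Gamma,P_i(\ol F))$, which we view as $\ker\bigl(H^2(\Gamma,P(\ol F))\rw\varprojlim H^2(\Gamma,P_i(\ol F))\bigr)$. Next I would check that $\tilde\xi_0$ and $x$ have the same image in $\varprojlim_i H^2(\A,T_i\rw U_i)$. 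Because $\ol{F_v}$ is algebraically closed, the localization map at $v\in\dot V$ lands in $H^2(F_v,T_i\rw U_i)=H^2(\Gamma_v,P_i(\ol{F_v}))$, and adelic hypercohomology of a complex of tori embeds into the product of its localizations, so it suffices to compare at each $v$. The localization of the image of $\xi_i$ is $\tx{loc}_v(\xi_i)$, while by the very construction of $x$ and the Shapiro isomorphisms the localization of $x$ at $v$ is the image of the local canonical class $\xi_v\in H^2(\Gamma_v,u_v)$ under $\tx{loc}_v^P$; these coincide by Corollary \ref{cor:xiloc} (applied to $\tilde\xi_0$). Hence, by the exact sequence $1\rw\varprojlim^1 H^1(\A,T_i\rw U_i)\rw H^2(\A,T\rw U)\rw\varprojlim H^2(\A,T_i\rw U_i)\rw 1$, the images $y_0$ of $\tilde\xi_0$ and $x'$ of $x$ in $H^2(\A,T\rw U)$ differ by an element of $V':=\varprojlim^1 H^1(\A,T_i\rw U_i)$.

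The heart of the proof is then to show that the homomorphism $c\colon V\rw V'$ coming from the tower maps $H^1(\Gamma,P_i(\ol F))\rw H^1(\Gamma,P_i(\ol{\A}))\rw H^1(\A,T_i\rw U_i)$ — which one identifies with the restriction to $V$ of $H^2(\Gamma,P(\ol F))\rw H^2(\A,T\rw U)$ — is bijective. Granting this, $\xi:=\tilde\xi_0+c^{-1}(x'-y_0)$ is a lift of $(\xi_i)$ mapping to $x'$ in $H^2(\A,T\rw U)$, and any other such lift differs from $\xi$ by an element of $V$ in $\ker c=0$, giving uniqueness. To analyze $c$, break each tower map into $0\rw\ker^1(\Gamma,P_i(\ol F))\rw H^1(\Gamma,P_i(\ol F))\rw I_i\rw 0$ and $0\rw I_i\rw H^1(\A,T_i\rw U_i)\rw C_i\rw 0$. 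The groups $\ker^1(\Gamma,P_i(\ol F))$ are finite, so their $\varprojlim^1$ vanishes; and $\varprojlim_i H^1(\Gamma,P_i(\ol F))$ equals $H^1(\Gamma,P(\ol F))$ — the $\varprojlim^1$-correction is zero since $H^0(\Gamma,P_i(\ol F))=P_i(F)$ is finite — which vanishes by Proposition \ref{pro:globrig}. Thus $\varprojlim_i I_i=0$ and $V\cong\varprojlim^1_i I_i$, and the $\varprojlim/\varprojlim^1$-sequence of the second family reduces bijectivity of $c$ to $\varprojlim_i C_i=\varprojlim^1_i C_i=0$. This is where the topology on $H^1(\A,T_i\rw U_i)$ from \cite[Appendix C]{KS99} is used: via the Poitou--Tate-type duality for the complexes $T_i\rw U_i$ one identifies each $C_i$ with a subgroup of the Pontryagin dual of a finite group built from $X^*(P_i)$, whence $(C_i)$ is Mittag--Leffler, while the local vanishing $H^1(\Gamma_v,P_{\dot V}(\ol{F_v}))=0$ of Proposition \ref{pro:locrig}, fed through the same duality adelically, forces $\varprojlim_i C_i=0$.

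The main obstacle is exactly this last step, the bijectivity of $c$. Unlike in the local case, the individual groups $H^1(\Gamma,P_i(\ol F))$ and $H^1(\A,T_i\rw U_i)$ bear no simple relation to one another, and one does not even know whether $V$ vanishes; the only leverage is the global and local vanishing of $H^1(-,P_{\dot V})$ from Subsection \ref{sub:h1van}, which must be converted, through Poitou--Tate duality and the completeness properties of the adelic cohomology groups, into control of the towers $(I_i)$ and $(C_i)$. Everything else — producing $\tilde\xi_0$, the compatibility check via Corollary \ref{cor:xiloc}, and the final existence-and-uniqueness deduction — is a formal diagram chase with the two $\varprojlim^1$-sequences, and it is precisely to make the last step tractable that one replaces the intractable $H^\bullet(\Gamma,P_i(\ol{\A}))$ by $H^\bullet(\A,T_i\rw U_i)$.
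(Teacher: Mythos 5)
Your overall architecture is the same as the paper's: take an arbitrary lift $\tilde\xi_0$ of $(\xi_i)$, check via the decomposition $H^2(\A,T_i\rw U_i)=\prod'_v H^2(F_v,P_i)$ and Corollary \ref{cor:xiloc} that $\tilde\xi_0$ and $x$ agree in $\varprojlim_i H^2(\A,T_i\rw U_i)$, and then reduce existence and uniqueness to the bijectivity of $\varprojlim^1 H^1(\Gamma,P_i(\ol F))\rw\varprojlim^1 H^1(\A,T_i\rw U_i)$, which you further reduce, via the two short exact sequences with $I_i=H^1(F,P_i)/\tx{ker}^1(F,P_i)$ and $C_i=\tx{cok}^1(F,T_i\rw U_i)$, to $\varprojlim_i C_i=\varprojlim^1_i C_i=0$. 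This is precisely Lemma \ref{lem:l1iso} of the paper. The gap is in your justification of these two vanishing statements, i.e. exactly at the step you identify as the heart of the argument.

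First, $C_i$ is Pontryagin dual (by \cite[Lemma C.3.B]{KS99}) to $H^1(F,M_i)/\tx{ker}^1(F,M_i)$ with $M_i=X^*(P_i)$, and this group is \emph{not} finite: there is no ramification restriction on the full Galois group, so these $H^1$'s are typically infinite discrete torsion groups (already $H^1(F,\mu_n)=F^\times/(F^\times)^n$ is infinite). Hence your step ``dual of a finite group built from $X^*(P_i)$, whence $(C_i)$ is Mittag--Leffler'' does not stand. What is true, and what the paper uses, is that $C_i$ is compact --- it is open and closed in the compact group $H^1(\A/F,T_i\rw U_i)$ by \cite[Lemmas C.2.D, C.3.A]{KS99}, using that $T_i\rw U_i$ is an isogeny --- and $\varprojlim^1$ vanishes for towers of compact groups; so $\varprojlim^1_i C_i=0$ can be rescued, but not by your argument. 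Second, and more seriously, $\varprojlim_i C_i=0$ does not follow from the local vanishing $H^1(\Gamma_v,P_{\dot V}(\ol{F_v}))=0$ ``fed through the duality adelically.'' Dually, $\varprojlim_i C_i=0$ is the statement $\varinjlim_i H^1(F,M_i)/\tx{ker}^1(F,M_i)=0$ about the cohomology of the full global Galois group, while Proposition \ref{pro:locrig} only gives $\varinjlim_i H^1(F_v,M_i)=0$ place by place; a global class injects into a restricted product over infinitely many places, the colimit does not commute with that product, and the level at which a class dies locally may grow with $v$, so no formal local-to-global deduction is available. The paper instead proves the stronger statement $\varinjlim_i H^1(F,M_i)=0$ directly, from the exact sequence of Lemma \ref{lem:mesn_phiseq} together with Lemmas \ref{lem:mesn_t1} and \ref{lem:mesn_t2} --- the same global transition-map analysis that underlies Propositions \ref{pro:locrig} and \ref{pro:globrig}, but applied to the full group rather than to decomposition groups. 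Without that input your bijectivity of $c$, and with it both existence and uniqueness of the canonical class, is not established; the remaining formal steps of your proposal are fine.
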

\begin{proof}
Let $\tilde\xi \in H^2(\Gamma,P(\ol{F}))$ be any preimage of $(\xi_i) \in \varprojlim H^2(\Gamma,P_i(\ol{F}))$ and let $\tilde\xi_\A$ be its image in $H^2(\Gamma,P(\bar \A))$. We map $\tilde\xi_\A$ and $x$ via the maps
\[ H^2(\Gamma,P(\ol{\A})) \rw H^2(\A,T \rw U) \rw \varprojlim H^2(\A,T_i \rw U_i). \]
According to \cite[Theorem C.1.B]{KS99} and the surjectivity of $T_i(\ol{F_v}) \rw U_i(\ol{F_v})$ we have
\[ H^2(\A,T_i \rw U_i) = \prod\nolimits_v' H^2(F_v,T_i \rw U_i) = \prod\nolimits_v' H^2(F_v,P_i). \]

By construction, for any $\dot v \in \dot V$, the image of $p_i(\tilde\xi_\A)$ in $H^2(\Gamma_{\dot v},P_i(\ol{F_v}))$ under restriction to $\Gamma_{\dot v}$ followed by projection to the $\dot v$-component is the same as the image of $p_i(\tilde\xi)=\xi_i$ under the usual localization map $H^2(\Gamma,P_i(\ol{F})) \rw H^2(\Gamma_{\dot v},P_i(\ol{F_v}))$ given by restriction to $\Gamma_{\dot v}$ followed by inclusion $\ol{F} \rw \ol{F_{\dot v}}$. At the same time, the image of $p_i(x)$ in $H^2(\Gamma_{\dot v},P_i(\ol{F_v}))$ is $p_i(\tx{loc}_v(\xi_v))$. According to Corollary \ref{cor:xiloc}, these two elements are cohomologous in $H^2(\Gamma_v,P_i(\ol{F_v}))$ for all $i$. It follows that the images of $\tilde\xi_\A$ and $x$ in $\varprojlim H^2(\A,T_i \rw U_i)$ are equal. The next lemma will allow us to improve this equality.

\begin{lem} \label{lem:l1iso} The natural map
\[ \varprojlim\nolimits^1 H^1(\Gamma,P_i(\ol{F})) \rw \varprojlim\nolimits^1 H^1(\A,T_i \rw U_i) \]
is an isomorphism.
\end{lem}

\begin{proof}
Recall the cohomology group $H^1(\A/F,T_i \rw U_i)$, also defined in \cite[Appendix C]{KS99}. The authors define (see (C.2.6)) a closed subgroup $H^1(\A/F,T_i \rw U_i)_1$ which they show \cite[Lemma C.2.D]{KS99} is compact. Since $T_i \rw U_i$ is an isogeny, this closed subgroup is in fact all of $H^1(\A/F,T_i \rw U_i)$, which is thus itself compact. From \cite[(C.1.1)]{KS99} we obtain the short exact sequence
\[ 1 \rw H^1(F,T_i \rw U_i)/\tx{ker}^1(F,T_i \rw U_i)\rw
H^1(\A,T_i \rw U_i) \rw \tx{cok}^1(F,T_i \rw U_i) \rw 1. \]
According to \cite[Lemma C.3.A]{KS99} the last term in this sequence is an open subgroup of $H^1(\A/F,T_i \rw U_i)$, hence also closed, and thus compact. It follows that $\varprojlim^1\tx{cok}^1(F,T_i \rw U_i)=0$.

We claim that also $\varprojlim\tx{cok}^1(F,T_i \rw U_i)=0$. To see this, we apply \cite[Lemma C.3.B]{KS99} and see that the compact group $\tx{cok}^1(F,T_i \rw U_i)$ is Pontryagin dual to the discrete group $H^1(W_F,\hat U_i \rw \hat T_i)_\tx{red}/\tx{ker}^1(W_F,\hat U_i \rw \hat T_i)_\tx{red}$. Since the map $\hat U_i \to \hat T_i$ is an isogeny, the injective map $H^1(F,\hat U_i \to \hat T_i) \to H^1(W_F,\hat U_i \to \hat T_i)$ is in fact an isomorphism, and the same is true for the local Weil groups $W_{F_v}$ in place of $W_F$ as well, so the above quotient is equal to $H^1(F,\hat U_i \rw \hat T_i)_\tx{red}/\tx{ker}^1(F,\hat U_i \rw \hat T_i)_\tx{red}$. The argument in the proof of \cite[Lemma C.3.C]{KS99} shows that $H^1(F,\hat U_i \rw \hat T_i)_\tx{red} = H^2(F,X^*(U_i) \rw X^*(T_i))=H^1(F,M_i)$ and in the same way $\tx{ker}^1(F,\hat U_i \rw \hat T_i)_\tx{red} = \tx{ker}^1(F,M_i)$. The claim will follow once we show that $\varinjlim H^1(F,M_i)/\tx{ker}^1(F,M_i)=0$. The exactness of $\varinjlim$ reduces this to showing $\varinjlim H^1(F,M_i)=0$. This follows from the exact sequence of Lemma \ref{lem:mesn_phiseq} and Lemmas \ref{lem:mesn_t1} and \ref{lem:mesn_t2}.

The (not too) long exact sequence for $\varprojlim$ now tells us that the first map in the above displayed short exact sequence becomes an isomorphism on the level of $\varprojlim^1$. But the finiteness of $\tx{ker}^1(F,T_i \rw U_i)$ by \cite[Lemma C.3.B]{KS99} implies that the map $H^1(F,T_i \rw U_i) \rw H^1(F,T_i \rw U_i)/\tx{ker}^1(F,T_i \rw U_i)$ also becomes an isomorphism on the level of $\varprojlim^1$. The lemma now follows from $H^1(F,P_i)=H^1(F,T_i \rw U_i)$.
\end{proof}

We may thus modify $\tilde\xi$ by an element of $\varprojlim^1 H^1(\Gamma,P_i(\ol{F}))$ to achieve that the images of $\tilde\xi_\A$ and $x$ in $H^2(\A,T \rw U)$ are equal. This proves the existence claim of the proposition. The uniqueness follows immediately from the injectivity of $\varprojlim^1H^1(\Gamma,P_i(\ol{F})) \to \varprojlim^1 H^1(\A,T_i \to U_i) \to H^2(\A,T \to U)$.
\end{proof}

\begin{dfn} \label{dfn:canclass} The canonical class $\xi \in H^2(\Gamma,P(\ol{F}))$ is the element whose existence and uniqueness is asserted in Proposition \ref{pro:canclass}.
\end{dfn}

Note that $\xi$ is independent of the choice of tower of isogenies $f_i : T_i \to U_i$. Indeed, if $f_i^{(1,2)} : T_i^{(1,2)} \to U_i^{(1,2)}$ are two choices of towers of isogenies we can define a third tower $f_i^{(3)} : T_i^{(3)} \to U_i^{(3)}$ by taking $T_i^{(3)}=(T_i^{(1)} \times T_i^{(2)}) / P_i$, with $P_i$ embedded via the anti-diagonal map $p \mapsto (p,p^{-1})$, and taking $U_i^{(3)} = U_i^{(1)} \times U_i^{(2)}$ and $f_i^{(3)} = f_i^{(1)} \times f_i^{(2)}$. Let $\xi^{(1,2,3)}$ be the three versions of $\xi$ obtained from applying Proposition \ref{pro:canclass} with $H^2(\A,T^{(1,2,3)} \to U^{(1,2,3)})$. Since the exact sequence $1 \to P_i \to T_i^{(3)} \to U_i^{(3)} \to 1$ maps to the two exact sequences $1 \to P_i \to T_i^{(1,2)} \to U_i^{(1,2)} \to 1$, we see that $\xi^{(1)}=\xi^{(3)}=\xi^{(2)}$.

\subsection{The cohomology set $H^1(P_{\dot V} \rw \mc{E}_{\dot V},Z \rw G)$} \label{sub:coh}

Let $\xi$ stand for the canonical class in $H^2(\Gamma,P_{\dot V})$ of Definition \ref{dfn:canclass}. In fact, for most of our purposes we can take $\xi$ to be any preimage in $H^2(\Gamma,P_{\dot V})$ of the canonical system $(\xi_i) \in \varprojlim H^2(\Gamma,P_{E_i,\dot S_i,N_i})$. The reader may freely assume that $\xi$ is an arbitrary such lift until Subsection \ref{sub:ram}.

Let $1\rw P_{\dot V}(\ol{F}) \rw \mc{E}_{\dot V} \rw \Gamma  \rw 1$ be any extension in the isomorphism class given by $\xi$. Let $G$ be an affine algebraic group and $Z \subset G$ a finite central subgroup, both defined over $F$. The pro-finite group $\mc{E}_{\dot V}$ acts continuously on the discrete group $G(\ol{F})$ through its quotient $\Gamma$ and the set $H^1(\mc{E}_{\dot V},G(\ol{F}))$ of cohomology classes of continuous 1-cocycles of $\mc{E}_{\dot V}$ valued in $G(\ol{F})$ is defined. The restriction to $P_{\dot V}(\ol{F})$ of such a 1-cocycle is a continuous group homomorphism $P_{\dot V}(\ol{F}) \rw G(\ol{F})$. We define $H^1(P_{\dot V} \rw \mc{E}_{\dot V},Z \rw G)$ to be the subset of $H^1(\mc{E}_{\dot V},G(\ol{F}))$ consisting of the classes of those 1-cocycles whose restriction to $P_{\dot V}(\ol{F})$ takes image in $Z(\ol{F})$. This restriction is then a $\Gamma$-equivariant continuous group homomorphism $P_{\dot V}(\ol{F}) \rw Z(\ol{F})$. By continuity it factors through the projection $P_{\dot V} \rw P_{E_i,\dot S_i,N_i}$ for a suitable index $i$ and is then given by a $\Gamma$-equivariant group homomorphism $P_{E_i,\dot S_i,N_i}(\ol{F}) \rw Z(\ol{F})$. The source and target of this homomorphism both being finite, this homomorphism is automatically algebraic.

According to Proposition \ref{pro:globrig} the automorphisms of the extension $\mc{E}_{\dot V}$ are all inner automorphisms coming from $P_{\dot V}$. Each such inner automorphism induces the identity automorphism on $H^1(P_{\dot V} \rw \mc{E}_{\dot V},Z \rw G)$. It follows that this cohomology set does not depend on the choice of extension $\mc{E}_{\dot V}$ that realizes the isomorphism class $\xi$.

Let $\mc{A}$ be the category whose objects are pairs $(Z,G)$ as above and where a morphism $(Z_1,G_1) \rw (Z_2,G_2)$ is a morphism $G_1 \rw G_2$ of algebraic groups defined over $F$ mapping $Z_1$ to $Z_2$. We will usually denote an object of $\mc{A}$ by $[Z \rw G]$. It is then clear that $[Z \rw G] \mapsto H^1(P_{\dot V} \rw \mc{E}_{\dot V},Z \rw G)$ is a functor $\mc{A} \rw \tx{Sets}$.

By construction we have the inflation-restriction exact sequence
\begin{equation} \label{eq:infres1}
1\rw H^1(\Gamma,G)\rw H^1(P_{\dot V}\rw \mc{E}_{\dot V},Z \rw G)\rw \tx{Hom}_F(P_{\dot V},Z)\rw H^2(\Gamma,G)
\end{equation}
where the $H^2$-term is to be ignored unless $G$ is abelian. Just as in the local case treated in \cite[\S3.3]{KalRI}, the map $\tx{Hom}_F(P_{\dot V},Z) \rw H^2(\Gamma,G)$ is given as the composition of the map $\Theta^P_{\dot V}$ defined in \eqref{eq:m_h2} with the natural map $H^2(\Gamma,Z) \rw H^2(\Gamma,G)$. In fact, the inflation-restriction sequence fits into the commutative diagram with exact rows
\[ \xymatrix{
H^1(\Gamma,G)\ar[r]\ar@{=}[d]&H^1(P_{\dot V}\rw \mc{E}_{\dot V},Z \rw G)\ar[r]\ar[d]&\tx{Hom}_F(P_{\dot V},Z)\ar[r]\ar[d]^{\Theta^P_{\dot V}} &H^2(\Gamma,G)\ar@{=}[d]\\
H^1(\Gamma,G)\ar[r]&H^1(\Gamma,G/Z)\ar[r]&H^2(\Gamma,Z)\ar[r]&H^2(\Gamma,G)
}
\]

\begin{lem} \label{lem:cohsurj} If $G$ is either abelian or connected and reductive, then the map
\[ H^1(P_{\dot V}\rw \mc{E}_{\dot V},Z \rw G) \rw H^1(\Gamma,G/Z) \]
is surjective.
\end{lem}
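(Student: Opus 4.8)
Fix a continuous $1$-cocycle $\bar c \in Z^1(\Gamma,(G/Z)(\ol F))$; the goal is to exhibit a class in $H^1(P_{\dot V}\rw\mc{E}_{\dot V},Z\rw G)$ mapping to the class of $\bar c$. Since $\ol F$ is algebraically closed, the morphism $G\rw G/Z$ is surjective on $\ol F$-points, so $\bar c$ lifts to a continuous set-theoretic $1$-cochain $c\in C^1(\Gamma,G(\ol F))$, which we may arrange to factor through a finite quotient of $\Gamma$ and to satisfy $c(1)=1$. As $\bar c$ is a cocycle and $Z$ is central, the coboundary $dc(\sigma,\tau)=c(\sigma)\cdot{}^\sigma c(\tau)\cdot c(\sigma\tau)^{-1}$ takes values in $Z(\ol F)$ and is a $2$-cocycle there; let $[dc]\in H^2(\Gamma,Z)$ be its class, the usual obstruction to lifting $[\bar c]$ along $G\rw G/Z$.

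The key point is that $\Theta^P_{\dot V}:\tx{Hom}_F(P_{\dot V},Z)\rw H^2(\Gamma,Z)$ is surjective; this is Corollary \ref{cor:tnz} combined with \eqref{eq:m_h2}. Pick therefore $\varphi\in\tx{Hom}_F(P_{\dot V},Z)$ with $\Theta^P_{\dot V}(\varphi)=[dc]$. Realize the class $\xi$ by the extension $\mc{E}_{\dot V}$ attached to a continuous normalized $2$-cocycle $z\in Z^2(\Gamma,P_{\dot V}(\ol F))$ representing it; this is legitimate because, by Proposition \ref{pro:globrig}, the set $H^1(P_{\dot V}\rw\mc{E}_{\dot V},Z\rw G)$ does not depend on the chosen representative of $\xi$. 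By the reinterpretation of $\Theta^P_{\dot V}$ recorded after \eqref{eq:m_h2} one has $\Theta^P_{\dot V}(\varphi)=[\varphi\circ z]$, so $\varphi\circ z$ and $dc$ are cohomologous in $Z^2(\Gamma,Z)$; after replacing $c$ by $c\cdot\beta$ with a suitable $\beta\in C^1(\Gamma,Z(\ol F))$ — which leaves $\bar c$ unchanged — we may assume $dc=\varphi\circ z$ on the nose.

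Choose a continuous section $s:\Gamma\rw\mc{E}_{\dot V}$ with $s(1)=1$ and $s(\sigma)s(\tau)=z(\sigma,\tau)s(\sigma\tau)$, so that every element of $\mc{E}_{\dot V}$ is uniquely $p\cdot s(\sigma)$ with $p\in P_{\dot V}(\ol F)$ and $\sigma\in\Gamma$. Define $\tilde c:\mc{E}_{\dot V}\rw G(\ol F)$ by $\tilde c(p\cdot s(\sigma))=\varphi(p)\cdot c(\sigma)$. A direct computation — using that $\varphi$ is a $\Gamma$-equivariant homomorphism, that $Z(\ol F)$ is central in $G(\ol F)$, and the identity $dc=\varphi\circ z$ — shows that $\tilde c$ is a continuous $1$-cocycle of $\mc{E}_{\dot V}$ valued in $G(\ol F)$ whose restriction to $P_{\dot V}(\ol F)$ equals $\varphi$, hence takes values in $Z(\ol F)$. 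Thus $\tilde c$ represents a class in $H^1(P_{\dot V}\rw\mc{E}_{\dot V},Z\rw G)$. Composing $\tilde c$ with $G(\ol F)\rw(G/Z)(\ol F)$ annihilates the factor $\varphi(p)\in Z(\ol F)$ and produces the inflation to $\mc{E}_{\dot V}$ of $\bar c$, so the class of $\tilde c$ maps to that of $\bar c$ under $H^1(P_{\dot V}\rw\mc{E}_{\dot V},Z\rw G)\rw H^1(\Gamma,G/Z)$. This proves surjectivity.

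The only substantive ingredient is the surjectivity of $\Theta^P_{\dot V}$ onto $H^2(\Gamma,Z)$, i.e. Corollary \ref{cor:tnz}; everything else is the standard manipulation of $2$-cocycles of a central extension, together with routine checks that all cochains stay at finite level (legitimate since $G(\ol F)$ is discrete and $\varphi$ factors through some $P_{E_i,\dot S_i,N_i}$). We note in passing that the direct construction above does not visibly use that $G$ is abelian or connected reductive; that hypothesis appears only to give meaning to the $H^2(\Gamma,G)$-term and the lower row of the diagram preceding the statement, which this argument bypasses.
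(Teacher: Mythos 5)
Your proof is correct, but it takes a genuinely different route from the paper's. The paper proves the abelian case by a four/five-lemma chase in the displayed commutative diagram (with the surjectivity of $\Theta^P_{\dot V}$ from Corollary \ref{cor:tnz} as the key input), and then handles connected reductive $G$ by reducing to a maximal torus via Lemma \ref{lem:cohapprox}: one chooses $T \subset G$ so that the given class lies in the image of $H^1(\Gamma,T/Z) \rw H^1(\Gamma,G/Z)$ and invokes functoriality together with the already-settled torus case. You instead work directly with cocycles: lift $\bar c$ set-theoretically, read off the obstruction $[dc] \in H^2(\Gamma,Z)$, choose $\varphi$ with $\Theta^P_{\dot V}(\varphi)=[dc]$ (the same key input), normalize so that $dc=\varphi\circ z$, and splice $\varphi$ and $c$ into a $1$-cocycle of the crossed-product realization of $\mc{E}_{\dot V}$; the computation you indicate does verify the cocycle condition (centrality of $Z$ and $\Gamma$-equivariance of $\varphi$ are exactly what make it work), and your appeals to Proposition \ref{pro:globrig} (independence of the realization of $\xi$) and to the description $\Theta^P_{\dot V}(\varphi)=\varphi_*(\xi)$ recorded after \eqref{eq:m_h2} are precisely the right justifications. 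What your approach buys: it is uniform, avoids the nontrivial Lemma \ref{lem:cohapprox} (fundamental tori, Hasse principle, Kneser--Harder--Chernousov), and, as you observe, it proves surjectivity for an arbitrary affine $G$ with finite central $Z$ in characteristic zero --- the hypothesis in the statement is only needed in the paper because the five-lemma requires abelian coefficients and the torus-approximation step requires $G$ connected reductive. What the paper's route buys: it stays entirely at the level of cohomology classes, and it reuses Lemma \ref{lem:cohapprox}, which is needed anyway for Lemma \ref{lem:cohapprox1}. In a final write-up you should keep explicit the continuity bookkeeping you flagged (all cochains factor through finite levels; the crossed product $P_{\dot V}(\ol{F})\boxtimes_z\Gamma$ carries the evident continuous section), but these are routine.
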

\begin{proof} If $G$ is abelian this follows immediately from the above diagram and the five-lemma. Assume now that $G$ is connected and reductive. Let $h \in H^1(\Gamma,G/Z)$. By Lemma \ref{lem:cohapprox} there exists a maximal torus $T \subset G$ such that $h$ belongs to the image of $H^1(\Gamma,T/Z) \rw H^1(\Gamma,G/Z)$. The claim now follows from the functoriality of $H^1(P_{\dot V} \rw \mc{E}_{\dot V},-)$ and the already established surjectivity for $[Z \rw T]$.
\end{proof}

\begin{lem} \label{lem:cohapprox1}
If $G$ is connected and reductive, then
for each $x \in H^1(P_{\dot V} \rw \mc{E}_{\dot V},Z \rw G)$ there exists a maximal torus $T \subset G$ such that $x$ is in the image of $H^1(P_{\dot V} \rw \mc{E}_{\dot V},Z \rw T)$.
\end{lem}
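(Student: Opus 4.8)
The plan is to reduce, via the quotient map $G \to G/Z$, to the statement about ordinary Galois cohomology contained in Lemma \ref{lem:cohapprox}, and then to lift the torus produced there back up to $H^1(P_{\dot V} \rw \mc{E}_{\dot V}, Z \rw -)$ using Lemma \ref{lem:cohsurj}. The argument parallels the local one in \cite[\S3]{KalRI}, and its linchpin is the observation that $P_{\dot V}(\ol{F})$ is the kernel of $\mc{E}_{\dot V} \to \Gamma$ and hence acts trivially on $G(\ol{F})$; consequently, conjugating a cocycle $z \in Z^1(\mc{E}_{\dot V},G(\ol{F}))$ by any element of $G(\ol{F})$ changes neither its cohomology class nor its restriction to $P_{\dot V}$.

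First I would let $\bar x \in H^1(\Gamma,G/Z)$ be the image of $x$ under the map $H^1(P_{\dot V}\rw\mc{E}_{\dot V},Z\rw G) \to H^1(\Gamma,G/Z)$ of the commutative diagram preceding Lemma \ref{lem:cohsurj}. By Lemma \ref{lem:cohapprox} there is a maximal torus $T \subset G$ --- necessarily containing the finite central subgroup $Z$, so that $T/Z$ is a maximal torus of $G/Z$ --- together with a class $\bar y \in H^1(\Gamma,T/Z)$ whose image in $H^1(\Gamma,G/Z)$ is $\bar x$; I fix a cocycle $\bar w_0 \in Z^1(\Gamma,T/Z)$ representing $\bar y$. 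I would then choose a cocycle $z \in Z^1(\mc{E}_{\dot V},G(\ol{F}))$ representing $x$ and, after replacing $z$ by its conjugate under a lift to $G(\ol{F})$ of a suitable element of $(G/Z)(\ol{F})$, arrange that the image of $z$ in $Z^1(\mc{E}_{\dot V},(G/Z)(\ol{F}))$ equals the inflation of $\bar w_0$. By the linchpin observation this adjustment preserves both the class of $z$ and the fact that $z|_{P_{\dot V}}$ is valued in $Z$.

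Next, Lemma \ref{lem:cohsurj} applied to the abelian object $[Z\rw T]$ shows that $H^1(P_{\dot V}\rw\mc{E}_{\dot V},Z\rw T) \to H^1(\Gamma,T/Z)$ is surjective, so I may pick $y \in H^1(P_{\dot V}\rw\mc{E}_{\dot V},Z\rw T)$ lying over $\bar y$, represented by a cocycle $z_1 \in Z^1(\mc{E}_{\dot V},T(\ol{F}))$ with $z_1|_{P_{\dot V}}$ valued in $Z$. Conjugating $z_1$ by a lift to $T(\ol{F})$ of an element of $(T/Z)(\ol{F})$ --- which keeps $z_1$ valued in $T$, keeps its restriction to $P_{\dot V}$ in $Z$ since $P_{\dot V}$ acts trivially, and does not change its class --- I can arrange that the image of $z_1$ in $Z^1(\Gamma,T/Z)$ is again exactly $\bar w_0$. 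Now $z_w$ and $z_{1,w}$ have the same image in $(G/Z)(\ol{F})$ for every $w \in \mc{E}_{\dot V}$, so $c_w := z_{1,w}^{-1}z_w$ lies in $Z(\ol{F}) \subset T(\ol{F})$; hence $z_w = z_{1,w}c_w \in T(\ol{F})$ for all $w$. Thus $z$ is in fact a $T$-valued cocycle with $z|_{P_{\dot V}}$ valued in $Z$, and its class --- which is $x$ --- lies in the image of $H^1(P_{\dot V}\rw\mc{E}_{\dot V},Z\rw T)$, as required.

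I expect the step needing the most care to be the bookkeeping in the two conjugation steps: one must verify that adjusting representative cocycles to make their images in $Z^1(\mc{E}_{\dot V},(G/Z)(\ol{F}))$, respectively $Z^1(\Gamma,T/Z)$, coincide with the fixed cocycle $\bar w_0$ does not disturb any of the standing constraints (being $T$-valued, having $P_{\dot V}$-restriction in $Z$, and representing the correct class). Each of these facts follows from the triviality of the $P_{\dot V}$-action on $G(\ol{F})$ and $T(\ol{F})$ together with the commutativity of $T$, but this is the point at which the argument would break down if handled carelessly.
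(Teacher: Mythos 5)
Your argument is correct and is essentially the paper's own proof: the paper obtains $T$ by applying Lemma \ref{lem:cohapprox} to the image of $x$ in $H^1(\Gamma,G/Z)$ and then chases the commutative diagram with exact columns relating $H^1(\mc{E}_{\dot V},Z)$, $H^1(P_{\dot V}\rw\mc{E}_{\dot V},Z\rw T)$ resp.\ $H^1(P_{\dot V}\rw\mc{E}_{\dot V},Z\rw G)$, and $H^1(\Gamma,T/Z)$ resp.\ $H^1(\Gamma,G/Z)$, and your explicit cocycle normalization (arranging that both representatives map to the inflation of $\bar w_0$ and concluding that $z$ is $T$-valued with $P_{\dot V}$-restriction in $Z$) is precisely that diagram chase unwound at the cocycle level. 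One cosmetic caveat: your blanket claim that conjugating a cocycle never changes its restriction to $P_{\dot V}$ is only valid because that restriction is $Z$-valued and $Z$ is central (in general it gets conjugated), which is exactly how you use it, so no actual gap results.
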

\begin{proof}
Let $T \subset G$ be a maximal torus such that the image of $x$ in $H^1(\Gamma,G/Z)$ belongs to the image of $H^1(\Gamma,T/Z) \rw H^1(\Gamma,G/Z)$ by Lemma \ref{lem:cohapprox}. Chasing around the commutative diagram with exact columns
\[ \xymatrix{
H^1(\mc{E}_{\dot V},Z)\ar@{=}[r]\ar[d]&H^1(\mc{E}_{\dot V},Z)\ar[d]\\
H^1(P_{\dot V} \rw \mc{E}_{\dot V},Z \rw T)\ar[r]\ar[d]&H^1(P_{\dot V} \rw \mc{E}_{\dot V},Z \rw G)\ar[d]\\
H^1(\Gamma,T/Z)\ar[r]&H^1(\Gamma,G/Z)
}\]
shows that $x$ belongs to the image of $H^1(P_{\dot V} \rw \mc{E}_{\dot V},Z \rw T)$.
\end{proof}

Let $v \in \dot V$. We will construct a localization map
\begin{equation} \label{eq:locmap2}
loc_v : H^1(P_{\dot V}\rw \mc{E}_{\dot V},Z \rw G) \rw H^1(u_v \rw W_v,Z \rw G)
\end{equation}
that fits into the commutative diagram
\[ \xymatrix{
	H^1(\Gamma,G)\ar[r]\ar[d]&H^1(P_{\dot V}\rw \mc{E}_{\dot V},Z \rw G)\ar[r]\ar[d]&\tx{Hom}_F(P_{\dot V},Z)\ar[d]\\
	H^1(\Gamma_v,G)\ar[r]&H^1(u_v \rw W_v,Z \rw G)\ar[r]&\tx{Hom}_{F_v}(u_v,Z)
}\]
where the left vertical map is the usual localization map in Galois cohomology, and the right vertical map is given by the map $\tx{loc}_v^P$ defined in \eqref{eq:locvp}.

The construction of $loc_v$ involves the following diagram
\begin{equation} \label{eq:diagloc2} \xymatrix{
1\ar[r]&u_v(\ol{F_v})\ar[r]\ar[d]^{loc_v^{P}}&W_v\ar[r]\ar@{.>}[d]&\Gamma_v\ar[r]\ar@{=}[d]&1\\
1\ar[r]&P_{\dot V}(\ol{F_v})\ar[r]&\Box_2\ar[r]&\Gamma_v\ar[r]&1\\
1\ar[r]&P_{\dot V}(\ol{F})\ar[u]\ar[r]&\Box_1\ar[r]\ar[d]\ar[u]&\Gamma_v\ar[r]\ar[d]\ar@{=}[u]&1\\
1\ar[r]&P_{\dot V}(\ol{F})\ar@{=}[u]\ar[r]&\mc{E}_{\dot V}\ar[r]&\Gamma\ar[r]&1
} \end{equation}
Here $\Box_1$ is formed as the pull-back and $\Box_2$ is formed as the push-out. The dotted arrow is chosen so that the diagram commutes. Its existence is guaranteed by Corollary \ref{cor:xiloc}. While not completely unique, the dotted arrow is unique up to conjugation by elements of $P_{\dot V}(\ol{F_v})$ due to Proposition \ref{pro:locrig}.

Let $z \in Z^1(P_{\dot V} \rw \mc{E}_{\dot V},Z \rw G)$. Let $z_1$ be the composition of $z$ with $\Box_1 \rw \mc{E}_{\dot V}$ and with the inclusion $G(\ol{F}) \rw G(\ol{F_v})$. Recall that $z|_{P_{\dot V}(\ol{F})}$ specifies an element of $\tx{Hom}_F(P_{\dot V},Z)$. Mapping this element to $\tx{Hom}_{F_v}(P_{\dot V},Z)$ and combining it with $z_1$ we obtain a map $z_2 : \Box_2 \rw G(\ol{F_v})$. Composing $z_2$ with the dotted arrow  we obtain finally $z_3 : W_v \rw G(\ol{F_v})$. From the construction it is clear that each of $z_1$, $z_2$, and $z_3$ is a continuous 1-cocycle. The 1-cocycle $z_3$ depends on the choice of dotted arrow, but only up to the inflation to $W_v$ of an element of $B^1(\Gamma,Z)$. In particular, the cohomology class of $z_3$ is independent of the choice of dotted arrow. In this way we obtain the map \eqref{eq:locmap2}. We emphasize that the map \eqref{eq:locmap2} is well-defined not just on the level of cohomology, but already as a map
\[ Z^1(P_{\dot V}\rw \mc{E}_{\dot V},Z \rw G) \rw Z^1(u_v \rw W_v,Z \rw G)/B^1(\Gamma_v,Z). \]

\subsection{Duality for tori} \label{sub:tn+}

Let $\mc{T} \subset \mc{A}$ be the full subcategory consisting of those objects $[Z \rw G]$ for which $G$ is a torus. For $v \in \dot V$, let $\mc{T}_v$ be the category of objects $[Z \rw T]$ with $T$ a torus and $Z \subset T$ a finite subgroup, both defined over $F_v$. We identify $\Gamma_v=\tx{Stab}(v,\Gamma)$ with the absolute Galois group of $F_v$. Base change from $F$ to $F_v$ gives a functor $\mc{T} \rw \mc{T}_v$.

In \cite[\S3]{KalRI} we defined a functor
\[ H^1(u_v \rw W_v,-) : \mc{T}_v \rw \tx{AbGrp}, \]
which we can now view as a functor $\mc{T} \rw \tx{AbGrp}$ by composing it with the base change functor $\mc{T} \rw \mc{T}_v$. In Subsection \ref{sub:coh} we defined a functor
\[ H^1(P_{\dot V} \rw \mc{E}_{\dot V},-) : \mc{T} \rw \tx{AbGrp} \]
as well as a morphism of functors
\[ loc_v : H^1(P_{\dot V} \rw \mc{E}_{\dot V},-) \rw H^1(u_v \rw W_v,-). \]

The purpose of this subsection is to interpret these two functors and this morphism between them in terms of linear algebra. The linear algebraic version of $H^1(u_v \rw W_v,Z \rw T)$ was defined in \cite[\S4.1]{KalRI}. It was denoted by $\bar Y_{+,\tx{tor}}$ there, but in order to emphasize the place $v$ we will now denote it by $\bar Y_{+_v,\tx{tor}}$. Let us recall its construction. Set $Y=X_*(T)$ and $\bar Y=X_*(\bar T)=X_*(T/Z)$. Then $\bar Y_{+_v,\tx{tor}}$ is the torsion subgroup of the quotient $\bar Y/I_vY$, where $I_vY \subset Y$ is the subgroup generated by the elements $\sigma\lambda-\lambda$ for all $\sigma\in\Gamma_v$ and $\lambda \in Y$.

We will now describe the linear algebraic version of $H^1(P_{\dot V} \rw \mc{E}_{\dot V},Z \rw T)$. We have the exact sequence
\[ 0 \rw Y \rw \bar Y \rw A^\vee \rw 0 \]
where $A=X^*(Z)$ and $A^\vee=\tx{Hom}(A,\Q/\Z)$. This is a sequence of $\Gamma_{E_i/F}$-modules for $i>>0$. The $\Gamma_{E_i/F}$-module $\Z[S_{i,E_i}]_0$ is $\Z$-free and tensoring with it leads to the exact sequence
\[ 0 \rw Y[S_{i,E_i}]_0 \rw \bar Y[S_{i,E_i}]_0 \rw A^\vee[S_{i,E_i}]_0 \rw 0. \]
Inside of $A^\vee[S_{i,E_i}]_0$ we have the $\Z$-submodule $A^\vee[\dot S_i]_0$ consisting of those elements that are supported on the subset $\dot S_i$ of $S_{i,E_i}$. We let $\bar Y[S_{i,E_i},\dot S_i]_0$ denote the preimage of $A^\vee[\dot S_i]_0$ in $\bar Y[S_{i,E_i}]_0$. This is a $\Z$-submodule that is not $\Gamma_{E_i/F}$-stable. It contains $Y[S_{i,E_i}]_0$ and we obtain the exact sequence
\[ 0 \rw \frac{Y[S_{i,E_i}]_0}{I_{E_i/F}Y[S_{i,E_i}]_0} \rw \frac{\bar Y[S_{i,E_i},\dot S_i]_0}{I_{E_{i}/F}Y[S_{i,E_i}]_0} \rw A^\vee[\dot S_i]_0  \rw 0 \]

Choose an arbitrary section $s : S_{i,E_i} \rw S_{i+1,E_{i+1}}$ with the property $s(\dot S_i) \subset \dot S_{i+1}$. Define a map
\[ s_! : \bar Y[S_{i,E_i},\dot S_i]_0 \rw \bar Y[S_{i+1,E_{i+1}},\dot S_{i+1}]_0,\quad [s_!f](u) = \begin{cases} f(p(u)),&sp(u)=u\\ 0,&\tx{else} \end{cases}. \]
Note that this is the same definition as the one appearing before Lemma \ref{lem:tnshriek}.

\begin{lem} \label{lem:inftn} The assignment $f \mapsto s_!f$ induces a well-defined homomorphism
\[ !:\frac{\bar Y[S_{i,E_i},\dot S_i]_0}{I_{E_i/F}Y[S_{i,E_i}]_0} \rw \frac{\bar Y[S_{i+1,E_{i+1}},\dot S_{i+1}]_0}{I_{E_{i+1}/F}Y[S_{i+1,E_{i+1}}]_0}
 \]
that is independent of the choice of section $s$.
\end{lem}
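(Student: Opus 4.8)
Since $s_!$ is manifestly additive, the statement reduces to three assertions: first, that $s_!$ carries $\bar Y[S_{i,E_i},\dot S_i]_0$ into $\bar Y[S_{i+1,E_{i+1}},\dot S_{i+1}]_0$; second, that $s_!$ carries the subgroup $I_{E_i/F}Y[S_{i,E_i}]_0$ into $I_{E_{i+1}/F}Y[S_{i+1,E_{i+1}}]_0$, so that it descends to the displayed quotients; and third, that the descended map does not depend on $s$. The first is a direct computation: $s_!f$ has the same (vanishing) coefficient sum as $f$, and if $\bar f \in A^\vee[S_{i,E_i}]_0$ denotes the image of $f$, then the image of $s_!f$ in $A^\vee[S_{i+1,E_{i+1}}]_0$ is the extension of $\bar f$ by zero along $s$, which is supported on the image under $s$ of the support of $\bar f$, hence inside $s(\dot S_i) \subset \dot S_{i+1}$; thus $s_!f \in \bar Y[S_{i+1,E_{i+1}},\dot S_{i+1}]_0$.

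For the second assertion I would reduce to Lemma \ref{lem:tnshriek}. Composing with the natural inclusion $Y[S_{i,E_i}]_0 \hrw Y[S_{i+1,E_i}]_0$ induced by $S_i \subset S_{i+1}$, and extending $s$ arbitrarily to a section of $p : S_{i+1,E_{i+1}} \rw S_{i+1,E_i}$, the restriction of $s_!$ to $Y[S_{i,E_i}]_0$ becomes precisely the map $s_!$ of Lemma \ref{lem:tnshriek} applied with the set $S_{i+1}$ and the tower $E_{i+1}/E_i/F$. The hypothesis of that lemma -- that every $\sigma \in \Gamma_{E_{i+1}/E_i}$ fixes some place of $S_{i+1,E_{i+1}}$ -- holds by part 4 of Conditions \ref{cnds:pes} for $(S_{i+1},\dot S_{i+1})$ with respect to $E_{i+1}/F$. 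That lemma therefore gives $s_!\big(I_{E_i/F}Y[S_{i,E_i}]_0\big) \subset I_{E_{i+1}/F}Y[S_{i+1,E_{i+1}}]_0$, and $s_!$ descends to a homomorphism of the indicated quotients.

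The main obstacle is the third assertion, the independence of $s$. The difficulty is that for two sections $s,s'$ and $f = \sum_w n_w[w] \in \bar Y[S_{i,E_i},\dot S_i]_0$ with $n_w \in \bar Y$, the difference $s_!f - s'_!f$ is only obviously an element of $I_{E_{i+1}/F}\bar Y[S_{i+1,E_{i+1}}]_0$, whereas we need it in the smaller group $I_{E_{i+1}/F}Y[S_{i+1,E_{i+1}}]_0$. Two observations resolve this. First, $s$ and $s'$ are forced to agree on $\dot S_i$: for $w \in \dot S_i$ both $s(w)$ and $s'(w)$ lie in $\dot S_{i+1}$ above the single place of $S_{i+1}$ below $w$, and there is exactly one such place in $\dot S_{i+1}$; therefore $s_!f - s'_!f = \sum_{w \notin \dot S_i} n_w([s(w)] - [s'(w)])$. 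Second, for $w \notin \dot S_i$ the image $\bar f(w) \in A^\vee$ vanishes, so $n_w$ lies in the image of $Y \rw \bar Y$. With these in hand, each summand is handled exactly as in the proof of Lemma \ref{lem:tnshriek}: choose $\rho \in \Gamma_{E_{i+1}/E_i}$ with $\rho(s(w)) = s'(w)$ (possible since $s(w)$ and $s'(w)$ lie over the same place of $E_i$) and, via part 4 of Conditions \ref{cnds:pes}, a place $u_0 \in \dot S_{i+1}$ with $\rho u_0 = u_0$; then, using that $\Gamma_{E_{i+1}/E_i}$ acts trivially on $Y$ (as $Y$ is a $\Gamma_{E_i/F}$-module for $i$ large), one has
\[ n_w([s(w)] - [s'(w)]) = (1-\rho)\big(n_w([s(w)] - [u_0])\big), \]
and the right-hand side lies in $I_{E_{i+1}/F}Y[S_{i+1,E_{i+1}}]_0$ because $n_w([s(w)] - [u_0])$ has vanishing coefficient sum. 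Summing over $w$ shows $s_!f$ and $s'_!f$ have equal class in the target. Apart from this point, everything is bookkeeping parallel to Lemma \ref{lem:tnshriek} and Corollary \ref{cor:tnshriek}.
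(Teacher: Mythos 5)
Your proof is correct and follows essentially the same route as the paper: the paper also reduces everything to the argument of Lemma \ref{lem:tnshriek}, noting that two admissible sections can only differ at $w \in S_{i,E_i}\smallsetminus\dot S_i$, where $f(w)\in Y$, so the difference $s_!f-s'_!f$ lands in $I_{E_{i+1}/F}Y[S_{i+1,E_{i+1}}]_0$ by the same $(1-\rho)$-trick with a $\rho$-fixed place in $\dot S_{i+1}$ supplied by Conditions \ref{cnds:pes}. Your write-up merely makes explicit some details the paper leaves implicit (why $s=s'$ on $\dot S_i$, and the extension of $s$ to invoke Lemma \ref{lem:tnshriek} for the tower $E_{i+1}/E_i/F$), which is fine.
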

\begin{proof}
The argument is essentially the same as in the proof of Lemma \ref{lem:tnshriek}. Indeed, as argued there we see that $s_!(I_{E_i/F}Y[S_{i,E_i}]_0) \subset I_{E_{i+1}/F}Y[S_{i+1,E_{i+1}}]_0$. Since $f(w) \in Y$ for $w \in S_{i,E_i} \sm \dot S_i$ we have $(s_!f)(u) \in Y$ for $u \in S_{i+1,E_{i+1}} \sm \dot S_{i+1}$. Thus we obtain a well-defined homomorphism as claimed. The argument that it does not depend on the choice of section is also the same as the one given in the proof of Lemma \ref{lem:tnshriek}. One just has to note the following: Since we are only considering sections that map $\dot S_i$ into $\dot S_{i+1}$, if $s,s'$ are two such sections and $s(w) \neq s'(w)$, then $w \in S_{i,E_i} \sm \dot S_i$. But for any $f \in \bar Y[S_{i,E_i},\dot S_i]_0$ we then have $f(w) \in Y \subset \bar Y$ and the same argument as before implies $s_!f-s'_!f \in I_{E_{i+1}/F}Y[S_{i+1,E_{i+1}}]_0$.
\end{proof}

We now define
\begin{equation} \label{eq:y+torglob} \bar Y[V_{\ol{F}},\dot V]_{0,+,\tx{tor}} =\varinjlim_i \frac{\bar Y[S_{i,E_i},\dot S_i]_0}{I_{E_i/F}Y[S_{i,E_i}]_0}[\tx{tor}]
\end{equation}
where the transition maps are given by $!$. This is the linear algebraic version of $H^1(P_{\dot V} \rw \mc{E}_{\dot V},Z \rw T)$. We also define
\begin{equation} \label{eq:ytorglob} Y[V_{\ol{F}}]_{0,\Gamma,\tx{tor}} =\varinjlim_i \frac{Y[S_{i,E_i}]_0}{I_{E_i/F}Y[S_{i,E_i}]_0}[\tx{tor}]
\end{equation}
with the same transition maps. This is the linear algebraic version of $H^1(\Gamma,T)$. These two abelian groups then fit into the exact sequence
\[ 0 \rw Y[V_{\ol{F}}]_{0,\Gamma,\tx{tor}} \rw \bar Y[V_{\ol{F}},\dot V]_{0,+,\tx{tor}} \rw A^\vee[\dot V]_{0,\infty}, \]
where the $\Z$-submodule $A^\vee[\dot V]_{0,\infty} \subset A^\vee[\dot V]_0$ was defined in Lemma \ref{lem:hompdotvz}. The fact that the second map takes image in this submodule will follow from Lemma \ref{lem:normtor} below.

The linear algebraic version of the localization map $loc_v$ is a morphism of functors
\[ l_v : \bar Y[V_{\ol{F}},\dot V]_{0,+,\tx{tor}} \rw \bar Y_{+_v,\tx{tor}} \]
defined as follows. Fix an index $i$. Choose a representative $\dot\tau \in \Gamma_{E_i/F}$ for each coset $\tau \in \Gamma_{E_i/F,v} \lmod \Gamma_{E_i/F}$ with the condition that $\dot\tau=1$ for the trivial coset $\tau = \Gamma_{E_i/F,v}$. Then for $f \in \bar Y[S_{i,E_i},\dot S_i]_0$ we define $l_v^if \in \bar Y$ by
\[ l^i_vf = \sum_\tau \dot\tau f(\tau^{-1}v). \]

\begin{lem} \label{lem:loctn}
The assignment $f \mapsto l_v^if$ descends to a group homomorphism
\[ l^i_v : \frac{\bar Y[S_{i,E_i},\dot S_i]_0}{I_{E_i/F}Y[S_{i,E_i}]_0} \rw \frac{\bar Y}{I_vY} \]
that is independent of the choices of $\dot\tau$ and fits in the commutative diagram
\[ \xymatrix{
	\frac{\bar Y[S_{i,E_i},\dot S_i]_0}{I_{E_i/F}Y[S_{i,E_i}]_0}\ar[r]^-{l^i_v}\ar[d]^{!}&\frac{\bar Y}{I_vY}\ar@{=}[d]\\
	\frac{\bar Y[S_{i+1,E_{i+1}},\dot S_{i+1}]_0}{I_{E_{i+1}/F}Y[S_{i+1,E_{i+1}}]_0}\ar[r]^-{l^{i+1}_v}&\frac{\bar Y}{I_vY}
}\]

\end{lem}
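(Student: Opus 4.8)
The plan is to verify, in order, that the formula $f\mapsto\sum_\tau\dot\tau f(\tau^{-1}v)$ descends to a homomorphism on the quotient $\bar Y[S_{i,E_i},\dot S_i]_0/I_{E_i/F}Y[S_{i,E_i}]_0$, that it does not depend on the chosen representatives $\dot\tau$, and that it commutes with the transition map $!$. All three rest on one elementary observation. Since $v\in\dot V$ maps into $\dot S_i$, and $\dot S_i$ contains exactly one place of $E_i$ above the place $v_F\in S_i$ lying below $v$, the only coset $\tau\in\Gamma_{E_i/F,v}\lmod\Gamma_{E_i/F}$ with $\tau^{-1}v\in\dot S_i$ is the trivial one. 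Hence for $f\in\bar Y[S_{i,E_i},\dot S_i]_0$ every summand $\dot\tau f(\tau^{-1}v)$ attached to a nontrivial coset has $f(\tau^{-1}v)\in Y$ (the image of $f$ in $A^\vee[S_{i,E_i}]_0$ is supported on $\dot S_i$, so $f$ takes values in $Y=\ker(\bar Y\to A^\vee)$ off $\dot S_i$), while for the trivial coset the representative $\dot\tau$ is pinned to be $1$. Thus every choice-dependent part of the sum lives in $Y$, which makes it harmless modulo $I_vY$.

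For the descent, fix $g\in Y[S_{i,E_i}]_0$ and $\sigma\in\Gamma_{E_i/F}$. Using the convention $(\sigma g)(w)=\sigma\bigl(g(\sigma^{-1}w)\bigr)$ and reindexing the coset sum by $\tau\mapsto\tau\sigma$, one obtains $l_v^i(\sigma g)=\sum_\tau\gamma_{\tau,\sigma}\,\dot\tau\,g(\tau^{-1}v)$ for suitable $\gamma_{\tau,\sigma}\in\Gamma_{E_i/F,v}$, so that $l_v^i\bigl((\sigma-1)g\bigr)=\sum_\tau(\gamma_{\tau,\sigma}-1)\bigl(\dot\tau\,g(\tau^{-1}v)\bigr)$ lies in $I_vY$, each term $\dot\tau g(\tau^{-1}v)$ being in $Y$. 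As $I_{E_i/F}Y[S_{i,E_i}]_0$ is $\Z$-spanned by such elements $(\sigma-1)g$, the map $l_v^i$ kills it modulo $I_vY$ and therefore factors through the quotient. For independence of the $\dot\tau$: changing $\dot\tau$ to $\gamma\dot\tau$ with $\gamma\in\Gamma_{E_i/F,v}$ at a nontrivial coset alters $l_v^i(f)$ by $(\gamma-1)\bigl(\dot\tau f(\tau^{-1}v)\bigr)\in I_vY$, and at the trivial coset no change is permitted.

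For the square with $!$, I would use that the projection $\Gamma_{E_{i+1}/F}\to\Gamma_{E_i/F}$ induces on coset spaces the restriction-of-places map, and that $s_!f$ is supported on $s(S_{i,E_i})$ with $(s_!f)\bigl(s(w)\bigr)=f(w)$. For a fixed coset $\bar\tau$ of $\Gamma_{E_i/F}$, exactly one coset $\tau$ lying over it has $\tau^{-1}v=s(\bar\tau^{-1}v)$, and only that $\tau$ contributes to $l_v^{i+1}(s_!f)$, with summand $\dot\tau f(\bar\tau^{-1}v)$; its representative $\dot\tau$ projects to a representative of $\bar\tau$ differing from $\dot{\bar\tau}$ by an element of $\Gamma_{E_i/F,v}$, so — once more because $f(\bar\tau^{-1}v)\in Y$ whenever $\bar\tau$ is nontrivial, and all representatives equal $1$ when it is trivial — the sums $l_v^{i+1}(s_!f)$ and $l_v^i(f)$ agree modulo $I_vY$. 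Here the normalization $\dot\tau=1$ on trivial cosets, together with $s(\dot S_i)\subset\dot S_{i+1}$, is exactly what forces the trivial-coset terms to match identically rather than merely modulo $I_vY$.

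I expect the only real difficulty to be organizational: keeping straight the dictionary between cosets of decomposition groups and places above $v_F$ through the tower $E_i\subset E_{i+1}$, and isolating at each step which summands lie in $Y$ and which only in $\bar Y$ — since that distinction, together with the pinned representative of the trivial coset, is the single mechanism behind all three verifications.
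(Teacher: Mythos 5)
Your proposal is correct and matches the paper's proof in essence: both arguments hinge on the facts that $f$ takes values in $Y$ away from $\dot S_i$ and that the trivial coset's representative is pinned to $1$, which make all representative-dependent discrepancies land in $I_vY$, and the compatibility with $!$ is obtained by the same matching of cosets through the section $s$. Your explicit reindexing $\tau\mapsto\tau\sigma$ for the descent just spells out what the paper leaves implicit in the sentence ``it furthermore shows that if $f \in I_{E_i/F}Y[S_{i,E_i}]_0$, then $l^i_vf \in I_vY$.''
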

\begin{proof}
It is clear that $l^i_v : \bar Y[S_{i,E_i},\dot S_i]_0 \rw \bar Y$ is a group homomorphism. To show that its composition with the projection $\bar Y \rw \bar Y/I_vY$ is independent of the choices of representatives $\dot\tau$, fix a non-trivial coset $\tau_1 \in \Gamma_{E_i/F,v} \lmod \Gamma_{E_i/F}$ and replace its representative $\dot\tau_1$ by $\sigma\dot\tau_1$ with $\sigma \in \Gamma_{E_i/F,v}$. Then $\sum_\tau \dot\tau f(\tau^{-1}v)$ is replaced by $\sum_\tau \dot\tau f(\tau^{-1}v)+\sigma\dot\tau_1 f(\tau_1^{-1}v)-\dot\tau_1 f(\tau_1^{-1}v)$. Since $\tau_1$ is non-trivial and $v \in \dot S_i$, we have $\tau_1^{-1}v \notin \dot S_i$ and consequently $f(\tau_1^{-1}v) \in Y$. This shows that $l^i_v$ is independent of the choice of $\dot\tau_1$. It furthermore shows that if $f \in I_{E_i/F}Y[S_{i,E_i}]_0$, then $l^i_vf \in I_vY$.

We come to the commutativity of the diagram. Recall that the map $!$ is defined in terms of a section $s : S_{i,E_i} \rw S_{i+1,E_{i+1}}$ having the property $s(\dot S_i) \subset \dot S_{i+1}$, which in particular implies that it maps $v_{E_i} \in \dot S_E$ to $v_{E_{i+1}} \in \dot S_{i+1}$. Via the bijections $\Gamma_{E_i/F,v}\lmod \Gamma_{E_i/F} \rw \{w \in S_{i,E_i}:w|v_F\}$ and $\Gamma_{E_{i+1}/F,v}\lmod \Gamma_{E_{i+1}/F} \rw \{u \in S_{i+1,E_{i+1}}:u|v_F\}$ the section $s$ can be thought of as a section $\Gamma_{E_i/F,v} \lmod \Gamma_{E_i/F} \rw \Gamma_{E_{i+1}/F,v}\lmod \Gamma_{E_{i+1}/F}$. We choose a representative $\dot\tau \in \Gamma_{E_{i+1}/F}$ for each coset $\tau \in \Gamma_{E_{i+1}/F,v} \lmod \Gamma_{E_{i+1}/F}$. Then we have
\[
\quad\qquad\ssum{\tau \in \Gamma_{E_{i+1}/F,v}\lmod \Gamma_{E_{i+1}/F}}\dot\tau[(s_!f)(\tau^{-1}v_{E_{i+1}})]\quad =\quad \ssum{\tau \in s(\Gamma_{E_i/F,v}\lmod \Gamma_{E_i/F})}\dot\tau f(p(\tau^{-1}v_{E_{i+1}}))\quad =\quad \ssum{\tau \in \Gamma_{E_i/F,v}\lmod \Gamma_{E_i/F}}\dot\tau f(\tau^{-1}v_{E_i}),
\]
where in the last term we have taken as a representative in $\Gamma_{E_i/F}$ of the coset $\tau \in \Gamma_{E_i/F,v}\lmod \Gamma_{E_i/F}$ the image in $\Gamma_{E_i/F}$ of the representative $\dot\tau \in \Gamma_{E_{i+1}/F}$ of $s(\tau)$.
\end{proof}

We have now completed the linear algebraic descriptions of the cohomology set $H^1(P_{\dot V} \rw \mc{E}_{\dot V},Z \rw T)$ and the localization map $loc_v$. The main result of this subsection is the following.

\begin{thm} \label{thm:tn+} There exists a unique isomorphism
\[ \iota_{\dot V}: \bar Y[V_{\ol{F}},\dot V]_{0,+,\tx{tor}} \rw H^1(P_{\dot V} \rw \mc{E}_{\dot V},Z \rw T) \]
of functors $\mc{T} \rw \tx{AbGrp}$ that fits into the commutative diagram
\[ \xymatrix{
	Y[V_{\ol{F}}]_{0,\Gamma,\tx{tor}}\ar[r]\ar[d]^{TN}&\bar Y[V_{\ol{F}},\dot V]_{0,+,\tx{tor}}\ar[r]\ar[d]^{\iota_{\dot V}}&A^\vee[\dot V]_{0,\infty}\ar[d]\\
	H^1(\Gamma,T)\ar[r]&H^1(P_{\dot V} \rw \mc{E}_{\dot V},Z \rw T)\ar[r]&\tx{Hom}_F(P_{\dot V},Z)
}\]
where $TN$ is obtained from the classical Tate-Nakayama isomorphism by taking the colimit and using Lemma \ref{lem:tnstori} and Corollary \ref{cor:tnshriek}, and the right vertical arrow is the isomorphism of Lemma \ref{lem:hompdotvz}. For each $v \in \dot V$ the following diagram commutes,
\[ \xymatrix{
	\bar Y[V_{\ol{F}},\dot V]_{0,+,\tx{tor}}\ar[r]^{l_v}\ar[d]^{\iota_{\dot V}}&\bar Y_{+_v,\tx{tor}}\ar[d]^{\iota_{v}}\\
	H^1(P_{\dot V} \rw \mc{E}_{\dot V},Z \rw T)\ar[r]^{loc_v}&H^1(u_v \rw W_v,Z \rw T)
}\]
where the right vertical isomorphism is from \cite[\S4]{KalRI}.
\end{thm}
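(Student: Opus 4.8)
The plan is to reduce the theorem to a finite-level duality statement and then pass to a colimit. Both $\bar Y[V_{\ol{F}},\dot V]_{0,+,\tx{tor}}$ and $H^1(P_{\dot V} \rw \mc{E}_{\dot V},Z \rw T)$ are colimits over the directed system of triples indexed as in \eqref{eq:mesn_order}: the first by definition \eqref{eq:y+torglob}, the second because every cocycle in $Z^1(P_{\dot V} \rw \mc{E}_{\dot V},Z \rw T)$ factors through some $P_{E_i,\dot S_i,N_i}$ and is therefore inflated from a cocycle for a gerbe $\mc{\dot E}_{E_i,\dot S_i}$ bound by $P_{E_i,\dot S_i}(O_S)$ over $\Gamma_S$ in the isomorphism class of the distinguished element $\xi_{E_i,\dot S_i}$ of Lemma \ref{lem:mesn_n}. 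First I would fix a specific realization $\mc{\dot E}_{E,\dot S_E}$ of that gerbe — which, unlike $\mc{E}_{\dot V}$, has non-trivial automorphisms since $H^1(\Gamma_S,P_{E,\dot S_E}(O_S))$ need not vanish — define $H^1(P_{E,\dot S_E} \rw \mc{\dot E}_{E,\dot S_E},Z \rw T)$ relative to it, and construct inflation maps between these groups for varying $(E,\dot S_E,N)$ exactly as the transition maps relating the finite Tate classes were built in Lemmas \ref{lem:mesn_es1} and \ref{lem:mesn_es2}. Using Proposition \ref{pro:globrig} together with the inflation isomorphisms for tori of Lemmas \ref{lem:inf1}, \ref{lem:inf2} and \ref{lem:inf3}, one then identifies the colimit of the $H^1(P_{E_i,\dot S_i} \rw \mc{\dot E}_{E_i,\dot S_i},Z \rw T)$ with $H^1(P_{\dot V} \rw \mc{E}_{\dot V},Z \rw T)$.

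The core is the finite-level duality. Fix $(E,\dot S_E,N)$ with $E$ splitting $Z$, $\exp(Z)\mid N$, and $(S,\dot S_E)$ satisfying Conditions \ref{cnds:pes}. The inflation-restriction sequence for $\mc{\dot E}_{E,\dot S_E}$,
\[ 1\rw H^1(\Gamma_S,T(O_S))\rw H^1(P_{E,\dot S_E}\rw\mc{\dot E}_{E,\dot S_E},Z \rw T)\rw \tx{Hom}_F(P_{E,\dot S_E},Z)\rw H^2(\Gamma_S,T(O_S)), \]
with last arrow $\Theta^P_{E,\dot S_E,N}$ of \eqref{eq:mesn_h2} composed with $H^2(\Gamma_S,Z(O_S)) \rw H^2(\Gamma_S,T(O_S))$, is to be compared term by term with the sequence obtained from $0 \rw Y[S_E]_0 \rw \bar Y[S_E,\dot S_E]_0 \rw A^\vee[\dot S_E]_0 \rw 0$ after passing to $I_{E/F}Y[S_E]_0$-coinvariants and to the norm-killed subgroups (which in the colimit become the torsion subgroups appearing in \eqref{eq:y+torglob}). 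All three identifications of the outer terms and the commutativity of the connecting squares are already packaged in Lemma \ref{lem:tnzdiag} — Tate-Nakayama for $T$ and $\bar T$, $\Theta_{E,S}$ for $Z$, and in particular the square expressing that a norm-killed class in $A^\vee[S_E]_0$ dies in $H^2(\Gamma_S,T(O_S))$ — while Lemma \ref{lem:mesn_h2} identifies $\tx{Hom}_F(P_{E,\dot S_E},Z)=\tx{Hom}(A,M_{E,\dot S_E,N})^\Gamma$ with the classes admitting representatives supported on $\dot S_E$, and its part (3) provides the surjectivity that lets every relevant linear-algebra class be lifted. A diagram chase then yields an isomorphism $\iota_{E,\dot S_E}$, which the commuting left square with $TN$ pins down uniquely; functoriality in $[Z\rw T]$ is inherited from that of the maps in Lemmas \ref{lem:tnzdiag} and \ref{lem:mesn_h2}, and — since the source of $\iota_{E,\dot S_E}$ visibly involves no choice of realization — one deduces, after the colimit over $N$, that the automorphisms of $\mc{\dot E}_{E,\dot S_E}$ act trivially on its cohomology, so that $H^1(P_{E,\dot S_E} \rw \mc{E}_{E,\dot S_E},Z \rw T)$ depends only on the isomorphism class of the gerbe. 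Taking the colimit over $(E,\dot S_E,N)$, using Lemmas \ref{lem:tnz_var}, \ref{lem:mesn_es1} and \ref{lem:mesn_es2} to match the two inflation systems, Lemma \ref{lem:inftn} and Corollary \ref{cor:tnshriek} to identify the transition maps on the linear-algebra side, a norm computation to see the image of the second map lands in $A^\vee[\dot V]_{0,\infty}$, and Lemma \ref{lem:hompdotvz} to identify $A^\vee[\dot V]_{0,\infty}$ with $\tx{Hom}_F(P_{\dot V},Z)$, produces $\iota_{\dot V}$ and the first commutative diagram, with uniqueness descending from the finite level.

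For the localization compatibility I would again argue at finite level. Unwinding the pull-back/push-out diagram \eqref{eq:diagloc2}, the effect of $loc_v$ on the $\tx{Hom}$-part is the map $\tx{loc}_v^P$ of \eqref{eq:locvp}, which by Lemma \ref{lem:locvp} is, in linear-algebra terms, $H\mapsto H(a,1,v)$ — precisely the $\tau=1$ summand of $l_v^if=\sum_\tau\dot\tau f(\tau^{-1}v)$, all other summands vanishing because $v\in\dot S_E$ and $M_{E,\dot S_E,N}$ satisfies the support condition; the existence of the dotted arrow in \eqref{eq:diagloc2}, needed for $loc_v$ to be defined, is exactly Corollary \ref{cor:xiloc}. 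Combining this with the commutativity of the local analogue of Diagram \eqref{eq:tnzdiag} used in Lemma \ref{lem:zloc} (so that $loc_v$ is compatible with the $\Theta$'s), and with the fact that on the $H^1(\Gamma,T)$-part $loc_v$ is ordinary localization in Galois cohomology, which under Tate-Nakayama is the classical localization formula already recorded in Lemma \ref{lem:loctn}, gives the second diagram.

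The step I expect to be the main obstacle is the finite-level duality together with the realization-independence: one must run the diagram chase for the gerbe $\mc{\dot E}_{E,\dot S_E}$, which has non-trivial automorphisms, and extract from the resulting description — after the colimit over $N$ — enough group-theoretic structure to conclude that those automorphisms act trivially on cohomology, so that the finite-level group, and hence its colimit, depends only on the isomorphism class of the gerbe. The secondary delicate point is the bookkeeping of the discrepancy between ``torsion'' and ``norm-kernel'' that is invisible at infinite level but present at every finite stage.
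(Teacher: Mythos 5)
Your overall architecture --- fixing an explicit realization $\mc{\dot E}_{E,\dot S_E}$, proving a finite-level duality, building inflation maps, identifying the colimit with $H^1(P_{\dot V}\rw\mc{E}_{\dot V},Z\rw T)$, and deducing realization-independence from group-theoretic properties of the colimit over $N$ --- is indeed the paper's strategy in outline. But the step you delegate to ``a diagram chase'' is exactly where the content lies, and as stated it does not go through. Knowing that the outer terms of the inflation-restriction sequence and of the linear-algebra sequence are identified (Lemma \ref{lem:tnzdiag}, Lemma \ref{lem:mesn_h2}) does not determine the middle term, let alone produce a map into it: two four-term exact sequences with isomorphic outer terms and commuting end squares need not have isomorphic middle terms, and even when they do there is no canonical choice --- lifting an element of $\tx{Hom}(P_{E,\dot S_E},Z)^\Gamma$ that dies in $H^2(\Gamma_S,T(O_S))$ leaves an ambiguity by $H^1(\Gamma_S,T(O_S))$ that must be resolved coherently and functorially. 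The paper resolves this by an explicit cocycle-level construction: for $\bar\Lambda\in\bar Y[S_E,\dot S_E]_0^{N_{E/F}}$ it defines $z_{\bar\Lambda}(x\boxtimes\sigma)=\Psi_{E,S,N_i}^{-1}([\bar\Lambda])(x)\cdot\bigl(k_i\alpha_3(E,S)\sqcup_{E/F}N_i\bar\Lambda\bigr)(\sigma)$ on the specific gerbe $\mc{\dot E}_{E,\dot S_E}$ built from the very same data $\alpha_3(E,S)$ and $k_i$ (Proposition \ref{pro:iota}), and only then uses the commutative diagram and the five-lemma to see this is an isomorphism. Some such construction, exploiting that the gerbe's $2$-cocycle is $dk_i\alpha_3(E,S)\sqcup_{E/F}\alpha_i$, is indispensable and is absent from your proposal.

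The same middle-term problem undermines your uniqueness and localization steps. Your claim that the left square with $TN$ ``pins down'' the map, and your plan to verify the localization square by computing only on the $\tx{Hom}$-part (Lemma \ref{lem:locvp}) and on the $H^1(\Gamma,T)$-part, overlook that two functorial maps agreeing on the sub and on the quotient can still differ by a functorial homomorphism from the $A^\vee$-part into $H^1(\Gamma_S,T(O_S))$. The paper's mechanism for excluding such discrepancies --- and simultaneously for realization-independence of $H^1(P_{E,\dot S_E}\rw\mc{E}_{E,\dot S_E},Z\rw T)$, independence of the inflation maps from the chosen arrows in \eqref{eq:diaginf} and \eqref{eq:diaginf1}, uniqueness of $\iota_{\dot V}$, and the second diagram of the theorem --- is the pair Lemma \ref{lem:nsdiv} (the colimit over $Z$ of the finite-level cohomology is $\N_S$-divisible) and Lemma \ref{lem:unigen} (a functorial map out of an $\N_S$-divisible, $\N_S$-torsion image into groups that embed, up to finite kernel, into direct sums of finite groups must vanish). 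You correctly flag realization-independence as the main obstacle, but this divisibility argument is the missing idea that resolves it; without it, the uniqueness assertion, the choice-independence of your inflation system, and the compatibility with $\tx{loc}_v$ are all unsupported.
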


As an immediate consequence of this theorem, we obtain the following description of the collections of local cohomology classes that are the localization of a given global cohomology class.

\begin{cor} \label{cor:tnexact}
We have the following commutative diagram with exact bottom row
\[ \xymatrix{
H^1(P_{\dot V} \rw \mc{E}_{\dot V},Z \rw T)\ar[r]^-{(loc_v)_v}&\bigoplus\limits_{v \in \dot V}H^1(u_v \rw W_v,Z \rw T)\\
\bar Y[V_{\ol{F}},\dot V]_{0,+,\tx{tor}}\ar[u]^{\iota_{\dot V}}\ar[r]^{(l_v)_v}&\bigoplus\limits_{v \in \dot V} \bar Y_{+_v,\tx{tor}}\ar[r]^{\sum}\ar[u]^{(\iota_v)_v}&\frac{\bar Y}{IY}[\tx{tor}]
}\]
\end{cor}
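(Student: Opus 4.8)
The plan is to deduce the statement from Theorem~\ref{thm:tn+}. The commutativity of the square is immediate: it is the second commutative diagram of that theorem, i.e.\ the compatibility $loc_v\circ\iota_{\dot V}=\iota_v\circ l_v$, taken simultaneously over all $v\in\dot V$, together with the fact that $\iota_{\dot V}$ and each $\iota_v$ is an isomorphism. So everything reduces to showing that the bottom row
\[ \bar Y[V_{\ol{F}},\dot V]_{0,+,\tx{tor}} \xrightarrow{(l_v)_v} \bigoplus_{v\in\dot V}\bar Y_{+_v,\tx{tor}} \xrightarrow{\ \Sigma\ } \frac{\bar Y}{IY}[\tx{tor}] \]
is exact at the middle term, where $\Sigma$ sends $(y_v)_v$ to $\sum_v y_v$ under the maps $\bar Y/I_vY\to\bar Y/IY$ coming from $I_vY\subset IY$. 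That the composite $\Sigma\circ(l_v)_v$ vanishes is a direct computation at finite level: for $f\in\bar Y[S_{i,E_i},\dot S_i]_0$, the sum $\sum_v l^i_v(f)$, regarded in $\bar Y$, equals $\sum_{w\in S_{i,E_i}}\dot\tau_w\cdot f(w)$ where $\dot\tau_w\in\Gamma_{E_i/F}$ carries $w$ into $\dot S_i$ (and $\dot\tau_w=1$ for $w\in\dot S_i$); since $f(w)\in Y$ for $w\notin\dot S_i$, both the ambiguity in $\dot\tau_w$ and the replacement of $\dot\tau_w f(w)$ by $f(w)$ change the sum only by elements of $I_{E_i/F}Y$, so $\sum_v l^i_v(f)\equiv\sum_w f(w)=0\pmod{I_{E_i/F}Y}$, and passing to the colimit gives the claim.

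The content lies in the reverse inclusion. Let $(y_v)_v\in\ker\Sigma$; only finitely many $y_v$ are nonzero, so fix an index $i$ (which we may also take large enough that $E_i$ splits $T$) with $\{v:y_v\ne 0\}\subset S_i$, and write $E=E_i$ etc. Lift each $y_v$ to $\tilde y_v\in\bar Y$ (with $\tilde y_v=0$ when $y_v=0$). Since $y_v$ is torsion modulo $I_vY\subset IY$, the element $\eta:=\sum_v\tilde y_v$ is a torsion element of $\bar Y/IY$ representing $\Sigma((y_v)_v)=0$, hence $\eta\in IY=I_{E/F}Y$; write $\eta=\sum_j(\sigma_j-1)\lambda_j$ with $\sigma_j\in\Gamma_{E/F}$ and $\lambda_j\in Y$. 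The naive candidate, the function on $\dot S$ with value $\tilde y_v$ at $\dot v$, localizes correctly but has total value $\eta\ne 0$, so it does not lie in $\bar Y[S_E,\dot S]_0$; the task is to cancel the defect $\eta$ without altering any localization. Since in general $I_{E/F}Y\ne\sum_v I_vY$, this cannot be done by a function supported on $\dot S$. Instead one enlarges $E,S,\dot S$ and, for each $j$, uses the Chebotarev density theorem to adjoin a place $v_j$ to $S$ (disjoint from the places carrying $y_v\ne 0$) and a lift $\dot v_j\in\dot S$ whose decomposition group contains a lift of $\sigma_j$, and then subtracts the value $(\sigma_j-1)\lambda_j$ at $\dot v_j$ from the candidate. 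The resulting $f$ lies in $\bar Y[S_E,\dot S]_0$ (total value $\eta-\eta=0$, and $Y$-valued off $\dot S$), its localization at $v_j$ is $-(\sigma_j-1)\lambda_j\in I_{v_j}Y$, hence trivial, and its localization at every remaining place $v$ is $\tilde y_v$, hence $y_v$. That $f$ represents a \emph{torsion} class — equivalently, that $nf$ lies in $I_{E/F}Y[S_E]_0$ for a common annihilator $n$ of the $\tilde y_v$ — is arranged by the same reshuffling as in the proofs of Lemma~\ref{lem:tnshriek} and part~3 of Lemma~\ref{lem:mesn_h2}, the archimedean places requiring only the extra care encoded in the subscript $\infty$ of $A^\vee[\dot V]_{0,\infty}$ (cf.\ Lemma~\ref{lem:hompdotvz}).

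The \emph{main obstacle} is precisely this construction, and in particular the fact that $I_{E/F}Y$ need not equal the sum of the local augmentation ideals $I_vY$: it is this that forces the passage to a larger Galois extension and the use of Chebotarev in order to spread the defect $\eta$ over auxiliary places, and it is also the reason the archimedean constraints have to be tracked carefully. An alternative, more cohomological route would be to transport the bottom row through the isomorphisms $\iota_{\dot V}$ and $(\iota_v)_v$ of Theorem~\ref{thm:tn+} and deduce its exactness from the classical global duality sequence $H^1(\Gamma,T)\to\bigoplus_v H^1(\Gamma_v,T)\to\cdots$ together with the exact sequence $0\to Y[V_{\ol{F}}]_{0,\Gamma,\tx{tor}}\to\bar Y[V_{\ol{F}},\dot V]_{0,+,\tx{tor}}\to A^\vee[\dot V]_{0,\infty}$ and its local analogues; this would, however, require separately identifying $\frac{\bar Y}{IY}[\tx{tor}]$ with the appropriate Tate-dual cokernel, which amounts to the same computation in different language.
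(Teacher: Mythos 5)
The commutativity of the square and the vanishing of the composite are fine (your direct computation of $\sum_v l^i_v(f)\equiv\sum_w f(w)=0$ is a valid substitute for the paper's appeal to Lemma \ref{lem:sup}). The gap is in the converse. You assert that the defect $\eta=\sum_j(\sigma_j-1)\lambda_j\in I_{E/F}Y$ cannot be absorbed by a function supported on $\dot S$, and you propose to enlarge $E,S$ and use Chebotarev to adjoin places $v_j$ together with lifts $\dot v_j\in\dot S$ whose decomposition groups contain $\sigma_j$. But the sets $\dot S_i$, and hence $\dot V$, are fixed in advance by the tower defining $P_{\dot V}$ and the groups in the statement: you do not get to choose which lift of a new rational place is the distinguished one. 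Chebotarev only controls the Frobenius conjugacy class of $v_j$ in $\Gamma_{E/F}$, so the stabilizer of the distinguished lift $\dot v_j$ contains some conjugate $\tau_j\sigma_j\tau_j^{-1}$, and cancelling with elements of $I_{\dot v_j}Y$ then removes $\tau_j(\sigma_j-1)\lambda_j$ rather than $(\sigma_j-1)\lambda_j$; the discrepancy $(\tau_j-1)(\sigma_j-1)\lambda_j$ is again a nonzero element of $I_{E/F}Y$ attached to an uncontrolled group element, and the process has no reason to terminate. So as written the key step does not go through.

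Moreover, the detour is unnecessary, and your premise is false in this setup: part 4 of Conditions \ref{cnds:pes} guarantees that for every $\sigma\in\Gamma_{E_i/F}$ there is already a place $\dot v\in\dot S_i$ with $\sigma\dot v=\dot v$, so in fact $I_{E_i/F}Y=\sum_{\dot v\in\dot S_i}I_{\dot v}Y$. This is exactly how the paper argues: writing $\sum_v y_v=\sum_j(\sigma_j\lambda_j-\lambda_j)$, one picks for each $j$ a place $v_j\in\dot S_i$ fixed by $\sigma_j$ and replaces $y_{v_j}$ by $y_{v_j}+\lambda_j-\sigma_j\lambda_j$; since $\lambda_j-\sigma_j\lambda_j\in I_{v_j}Y$ the local classes are unchanged, the new collection has total sum $0$, and the function supported on $\dot S_i$ with these values lies in $\bar Y[S_{i,E_i},\dot S_i]_0^{N_{E_i/F}}$ (the norm condition is preserved because $\sigma_j$ lies in the stabilizer of $\dot v_j$), hence is a torsion class by Lemma \ref{lem:normtor} and maps to $(y_v)_v$ under $(l_v)_v$. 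In particular the preimage is supported on $\dot S_i$ — the same mechanism (Condition 4) that underlies Lemma \ref{lem:tnshriek}, Lemma \ref{lem:mesn_h2}(3) and Lemma \ref{lem:sup}. Replacing your Chebotarev step by this appeal to Conditions \ref{cnds:pes} repairs the argument and recovers the paper's proof.
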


The reader might wonder if there is a cohomology group that fits above the bottom right term in the above diagram. There is indeed such a cohomology group and it is an enlargement of $H^1(\Gamma,T(\A_{\ol{F}})/T(\ol{F}))$. However, since we will not need it for the applications we have in mind, we will not discuss its construction and properties.

\begin{cor} \label{cor:loctriv} Let $[Z \rw G] \in \mc{A}$ with $G$ connected reductive and let $x \in H^1(P_{\dot V} \rw \mc{E}_{\dot V},Z \rw G)$. Then $loc_v(x)$ is the trivial element of $H^1(u_v \rw W_v,Z \rw G)$ for almost all $v \in \dot V$.
\end{cor}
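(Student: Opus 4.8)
The plan is to reduce to the case of a torus and then read off the conclusion from the explicit description of the localization map furnished by Theorem~\ref{thm:tn+}. First, by Lemma~\ref{lem:cohapprox1} there is a maximal torus $T \subset G$ and a class $x_T \in H^1(P_{\dot V} \rw \mc{E}_{\dot V},Z \rw T)$ whose image under the map induced by the morphism $[Z \rw T] \rw [Z \rw G]$ of $\mc{A}$ is $x$. The construction of $loc_v$ in Subsection~\ref{sub:coh} is functorial in $[Z \rw G]$ by construction, so $loc_v(x)$ is the image of $loc_v(x_T)$ under the functorial map $H^1(u_v \rw W_v,Z \rw T) \rw H^1(u_v \rw W_v,Z \rw G)$. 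Since this map sends the trivial class to the trivial class, it suffices to prove that $loc_v(x_T) \in H^1(u_v \rw W_v,Z \rw T)$ is trivial for all but finitely many $v \in \dot V$.

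Next I would pass through the isomorphism $\iota_{\dot V}$ of Theorem~\ref{thm:tn+} and write $x_T = \iota_{\dot V}(\eta)$ for a unique $\eta \in \bar Y[V_{\ol{F}},\dot V]_{0,+,\tx{tor}}$. By the definition \eqref{eq:y+torglob} of this group as a colimit over $i$, the element $\eta$ is represented, for some fixed index $i$, by (the class of) a finitely supported function $f \in \bar Y[S_{i,E_i},\dot S_i]_0 \subset \bar Y[S_{i,E_i}]_0$. The compatibility square in Theorem~\ref{thm:tn+} identifies $loc_v(x_T)$ with $\iota_v(l_v(\bar f))$, where $\bar f$ denotes the image of $f$ in $\bar Y_{+_v,\tx{tor}}$, and by Lemma~\ref{lem:loctn} a representative of $l_v(\bar f)$ in $\bar Y$ is given by the finite sum $l^i_v f = \sum_\tau \dot\tau f(\tau^{-1}v)$ over cosets $\tau \in \Gamma_{E_i/F,v} \lmod \Gamma_{E_i/F}$.

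Finally, let $S_{i,F} \subset V_F$ denote the finite set of places of $F$ lying below $S_i$, and consider any $v \in \dot V$ whose underlying place $v_F \in V_F$ does not lie in $S_{i,F}$; this excludes only finitely many elements of $\dot V$. For such $v$, no place of $E_i$ above $v_F$ lies in $S_{i,E_i}$, so $\tau^{-1}v \notin S_{i,E_i}$ for every $\tau$, and since $f$ is supported on $S_{i,E_i}$ each summand $f(\tau^{-1}v)$ vanishes. Hence $l^i_v f = 0$, so $l_v(\bar f) = 0$ and $loc_v(x_T) = \iota_v(l_v(\bar f)) = 0$. Combined with the first paragraph, this gives $loc_v(x) = 0$ for all $v \in \dot V$ lying above $V_F \sm S_{i,F}$, which is the assertion.

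I expect no serious obstacle here once Theorem~\ref{thm:tn+} and Lemma~\ref{lem:cohapprox1} are in hand; the only points requiring attention are the bookkeeping of the support of $f$ relative to the finite set $\dot S_i$ attached to the chosen index $i$, and the verification that the reduction to a maximal torus is compatible with localization, both of which follow from the functoriality statements already established. One should also note that the index $i$, and hence the finite exceptional set, depends on $x$ through the choice of representative $f$; this is harmless, since the statement only asserts the finiteness of the exceptional set.
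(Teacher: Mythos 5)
Your proof is correct and follows the paper's own route: reduce to a maximal torus via Lemma \ref{lem:cohapprox1}, use functoriality of $loc_v$, and then read off almost-everywhere triviality from the second diagram of Theorem \ref{thm:tn+}. The paper simply cites Corollary \ref{cor:tnexact} for the torus case, whereas you unfold the same support computation for $l^i_v$ by hand (the only care needed being to push the representative $f$ to a level $j$ with $v_F \in S_j$ before applying the formula), so the content is the same.
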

\begin{proof}
This follows immediately from Lemma \ref{lem:cohapprox1}, Corollary \ref{cor:tnexact}, and the functoriality of localization.
\end{proof}

The remainder of this subsection is occupied by the proofs of Theorem \ref{thm:tn+} and Corollary \ref{cor:tnexact}. We will use the notation $\bar Y[S_{i,E_i},\dot S_i]_0^{N_{E_i/F}}$ for the intersection of $\bar Y[S_{i,E_i},\dot S_i]_0$ with $\bar Y[S_{i,E_i}]_0^{N_{E_i/F}}$.

\begin{lem} \label{lem:normtor} We have the equality
\[ \frac{\bar Y[S_{i,E_i},\dot S_i]_0^{N_{E_i/F}}}{I_{E_i/F}Y[S_{i,E_i}]_0}=\frac{\bar Y[S_{i,E_i},\dot S_i]_0}{IY[S_{i,E_i}]_0}[\tx{tor}]. \]
\end{lem}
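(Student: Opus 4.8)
The plan is to reduce the statement to linear algebra over $\Q$. Throughout abbreviate $\Gamma'=\Gamma_{E_i/F}$, $N=N_{E_i/F}$, $I=I_{E_i/F}$, $S=S_{i,E_i}$, $\dot S=\dot S_i$, and set $M=\bar Y[S,\dot S]_0$ and $L=IY[S]_0$. First I would assemble the elementary structural facts. Both $Y[S]_0$ and $M$ are finitely generated free $\Z$-modules: the former is the tensor product of the finite-rank free $\Z$-modules $Y$ and $\Z[S]_0$, and $M$ is a subgroup of $\bar Y[S]_0$, which is free for the same reason. The text already records $Y[S]_0\subseteq M$, so $L\subseteq Y[S]_0\subseteq M$, and the left-hand side of the asserted identity is indeed a subgroup of $M/L$. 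Finally $N$ annihilates $L$, since $N(\sigma y-y)=N\sigma y-Ny=0$; hence $L\subseteq M\cap\ker\big(N:\bar Y[S]_0\to\bar Y[S]_0\big)=\bar Y[S,\dot S]_0^{N}$, so the quotient $\bar Y[S,\dot S]_0^{N}/L$ on the left is meaningful.

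Next I would exploit that $M$ is a lattice: since $M$ is a finitely generated free $\Z$-module and $L$ a subgroup, an element $x+L$ of $M/L$ is torsion if and only if $x$ lies in the saturation of $L$ in $M$, i.e. if and only if $x\in L_\Q\cap M$, where $L_\Q:=L\otimes_\Z\Q$ is viewed inside $M_\Q:=M\otimes_\Z\Q$. Therefore the torsion subgroup $(M/L)[\tx{tor}]$ equals $(L_\Q\cap M)/L$, and the lemma comes down to the single identity $L_\Q\cap M=\bar Y[S,\dot S]_0^{N}$.

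To compute $L_\Q$ I would rationalize the relevant exact sequences. Tensoring $0\to Y\to\bar Y\to A^\vee\to0$ with the free $\Z$-module $\Z[S]_0$ stays exact, and its last term $A^\vee[S]_0$ is finite (as $A$ is finite), so $Y[S]_0\otimes\Q\to\bar Y[S]_0\otimes\Q$ is an isomorphism of $\Q[\Gamma']$-modules; and since $\bar Y[S]_0/M\cong A^\vee[S]_0/A^\vee[\dot S]_0$ is finite, also $M\otimes\Q=\bar Y[S]_0\otimes\Q$. Thus $M_\Q=Y[S]_0\otimes\Q=\bar Y[S]_0\otimes\Q$ as $\Q[\Gamma']$-modules and $L_\Q=I\cdot(Y[S]_0\otimes\Q)$. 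For any $\Q[\Gamma']$-module $V$ the operator $\tfrac1{|\Gamma'|}N$ is the projection of $V$ onto $V^{\Gamma'}$ along $IV$, so $IV=\ker(N|_V)$; applied to $V=M_\Q$ this gives $L_\Q=\ker(N:M_\Q\to M_\Q)$. For $x\in M\subseteq\bar Y[S]_0$ the condition $x\in L_\Q$ is then equivalent to $Nx=0$ in $\bar Y[S]_0$, i.e. to $x\in\bar Y[S,\dot S]_0^{N}$. Hence $(M/L)[\tx{tor}]=(L_\Q\cap M)/L=\bar Y[S,\dot S]_0^{N}/IY[S]_0$, which is precisely the claim.

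I do not expect a genuine obstacle here. The only point that requires care is the rationalization bookkeeping — checking that $Y[S]_0$, $\bar Y[S,\dot S]_0$ and $\bar Y[S]_0$ all have the same $\Q$-span, which is exactly where the finiteness of $A$ (equivalently, the torsionness of $A^\vee$) is used — together with the standard fact that being torsion in a quotient of lattices $M/L$ means lying in the $\Q$-span of $L$ inside $M$, which is where the freeness of the lattices enters.
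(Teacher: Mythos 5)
Your proof is correct, and it takes a genuinely different route from the paper's. The paper proves the inclusion $\subset$ arithmetically: given $x$ killed by $N_{E_i/F}$, it multiplies by the exponent of $\bar Y/Y$ to land in $Y[S_{i,E_i}]_0^{N_{E_i/F}}$, identifies $Y[S_{i,E_i}]_0^{N_{E_i/F}}/I_{E_i/F}Y[S_{i,E_i}]_0$ with $H^1(\Gamma_{E_i/F},T(O_{E_i,S_i}))$ via Tate--Nakayama, and invokes the finiteness of that cohomology group (Lemma \ref{lem:inf1} together with a theorem from \cite{NSW08}) to conclude that a further multiple of $x$ falls into $I_{E_i/F}Y[S_{i,E_i}]_0$; the reverse inclusion is the same torsion-freeness observation you make. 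You instead work rationally: after noting that $Y[S_{i,E_i}]_0$, $\bar Y[S_{i,E_i},\dot S_i]_0$ and $\bar Y[S_{i,E_i}]_0$ have the same $\Q$-span (finiteness of $A^\vee$), the semisimplicity of $\Q[\Gamma_{E_i/F}]$ -- concretely, the idempotent $\tfrac{1}{|\Gamma_{E_i/F}|}N_{E_i/F}$ splitting $V=V^{\Gamma_{E_i/F}}\oplus IV$ -- identifies the $\Q$-span of $I_{E_i/F}Y[S_{i,E_i}]_0$ with the kernel of the norm on $M_\Q$, so the saturation of $I_{E_i/F}Y[S_{i,E_i}]_0$ inside the lattice $\bar Y[S_{i,E_i},\dot S_i]_0$ is exactly the norm kernel, which is the statement. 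The one point needing care, which you handle correctly, is that $\bar Y[S_{i,E_i},\dot S_i]_0$ is not $\Gamma_{E_i/F}$-stable, so the norm must be viewed as an operator on $M_\Q=\bar Y[S_{i,E_i}]_0\otimes\Q$ rather than on the lattice itself. Your argument buys independence from the arithmetic inputs (Tate's class and the finiteness theorem of \cite{NSW08}), replacing them with elementary representation theory of the finite group; the paper's version simply reuses machinery already set up in Subsection \ref{sub:tatetori}.
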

\begin{proof}
Choose an integer $N$ that is a multiple of the exponent of $\bar Y/Y$. Given any $x \in \bar Y[S_{i,E_i},\dot S_i]_0^{N_{E_i/F}}$ we have $Nx \in Y[S_{i,E_i}]_0^{N_{E_i/F}}$. The Tate-Nakayama isomorphism gives the identification
\[ \frac{Y[S_{i,E_i}]_0^{N_{E_i/F}}}{I_{E_i/F}Y[S_{i,E_i}]_0} \rw H^1(\Gamma_{E_i/F},T(O_{E_i})). \]
The group $H^1(\Gamma_{E_i/F},T(O_{E_i}))$ is finite by Lemma \ref{lem:inf1} and \cite[Theorem 8.3.20]{NSW08} and this implies the inclusion $\subset$ of the equality we are proving.

For the converse inclusion, let $x \in \bar Y[S_{i,E_i},\dot S_i]_0$ be such that for some $N \in \Z_{>0}$ we have $Nx \in I_{E_i/F}[S_{i,E_i}]_0$. Then $N_{E_i/F}(Nx)=0$. Since $\bar Y[S_{i,E_i},\dot S_i]_0$ is torsion-free this implies $N_{E_i/F}(x)=0$ and the claimed equality is proved.
\end{proof}

\begin{lem} \label{lem:sup} Every element of $\bar Y[S_{i,E_i},\dot S_i]_0/I_{E_i/F}Y[S_{i,E_i}]_0$ has a representative supported on $\dot S_i$.
\end{lem}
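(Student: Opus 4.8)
The plan is to reduce the support of a given element one place at a time, by subtracting suitable elements of $I_{E_i/F}Y[S_{i,E_i}]_0$, following the same pattern as the proof of part 3 of Lemma \ref{lem:mesn_h2} (and of Lemma \ref{lem:tnshriek}). First I would write a given element as $x = \sum_{w \in S_{i,E_i}} y_w[w] \in \bar Y[S_{i,E_i},\dot S_i]_0$ and recall that membership in this module forces $y_w \in Y$ for every $w \notin \dot S_i$. If $\tx{supp}(x) \subset \dot S_i$ there is nothing to do, so I suppose there is some $w_0 \in \tx{supp}(x) \sm \dot S_i$; let $v = p(w_0) \in S_i$ with distinguished lift $\dot v \in \dot S_i$. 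I would then choose $\tau \in \Gamma_{E_i/F}$ with $\tau\dot v = w_0$ (so $\tau$ does not fix $\dot v$), and invoke part 4 of Conditions \ref{cnds:pes} to pick $\dot v_0 \in \dot S_i$ with $\tau\dot v_0 = \dot v_0$; necessarily $\dot v_0 \neq \dot v$.

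The next step is the cancellation. Setting $\eta = \tau^{-1}y_{w_0} \in Y$ — here using that $Y$ is a $\Gamma_{E_i/F}$-submodule of $\bar Y$ for $i \gg 0$ — and $\xi = \eta[\dot v] - \eta[\dot v_0] \in Y[S_{i,E_i}]_0$, one computes $\tau\xi - \xi = y_{w_0}[w_0] - (y_{w_0}-\eta)[\dot v_0] - \eta[\dot v]$, which lies in $I_{E_i/F}Y[S_{i,E_i}]_0$. Hence $x' := x - (\tau\xi - \xi)$ is congruent to $x$ modulo $I_{E_i/F}Y[S_{i,E_i}]_0$, agrees with $x$ away from $\{w_0,\dot v,\dot v_0\}$, has zero coefficient at $w_0$, and therefore satisfies $\tx{supp}(x') \sm \dot S_i = (\tx{supp}(x) \sm \dot S_i) \sm \{w_0\}$. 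Iterating over the finitely many places in $\tx{supp}(x) \sm \dot S_i$ then produces a representative supported on $\dot S_i$, which is what we want.

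The argument is essentially routine, with no real obstacle; the one point requiring (minor) attention is to verify that $x'$ still lies in $\bar Y[S_{i,E_i},\dot S_i]_0$, i.e.\ that every coefficient at a place outside $\dot S_i$ is in $Y$. This holds because the only coefficient modified outside $\dot S_i$ is the one at $w_0$, which becomes $0 \in Y$, while the modified coefficients at the auxiliary places $\dot v,\dot v_0$ are unconstrained precisely because those places were chosen inside $\dot S_i$ — and it is exactly part 4 of Conditions \ref{cnds:pes} that allows one to choose such a $\dot v_0$ fixed by $\tau$.
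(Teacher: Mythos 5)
Your proof is correct and is essentially the paper's own argument: the paper also cancels the coefficient at a place $w_0 \notin \dot S_i$ by adding an element of the form $(\sigma-1)\bigl(y_{w_0}[w_0]-y_{w_0}[\dot v_0]\bigr)$ with $\sigma w_0 \in \dot S_i$ and $\sigma\dot v_0=\dot v_0$, which (taking $\sigma=\tau^{-1}$) is exactly your modification $(\tau-1)\xi$ rewritten. The only difference is the harmless reparametrization via $\tau\dot v=w_0$ and $\eta=\tau^{-1}y_{w_0}$, so there is nothing to add.
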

\begin{proof}
The argument is the same as in the proof of part 3 of Lemma \ref{lem:mesn_h2}. Indeed, let $y=\sum_{w \in S_{i,E_i}} y_w[w] \in \bar Y[S_{i,E_i},\dot S_i]_0$. For any $w \in S_{i,E_i} \sm \dot S_i$ we have $y_w \in Y$. If $y_w \neq 0$, then choose $\sigma \in \Gamma_{E_i/F}$ with $\sigma w \in \dot S_i$ and choose $\dot v_0 \in \dot S_i$ with $\sigma \dot v_0=\dot v_0$. Then
\[ y' := y - (y_w[w]-y_w[\dot v_0]) +\sigma(y_w[w]-y_w[\dot v_0]) \in \bar Y[S_{i,E_i},\dot S_i]_0 \]
represents the same class as $y$ modulo $I_{E_i/F}Y[S_{i,E_i}]_0$, but now $y'_w=0$. Performing this procedure finitely many times yields $y''$ that is supported on $\dot S_i$.
\end{proof}

\begin{proof}[Proof of Corollary \ref{cor:tnexact}]
According to the second diagram in Theorem \ref{thm:tn+} the localization $loc_v(x)$ for a given $x \in H^1(P_{\dot V} \rw \mc{E}_{\dot V},Z \rw T)$ is trivial for all but finitely many $v \in \dot V$ and thus the image of the total localization map $(loc_v)_v$ is indeed contained in the direct sum of the local cohomology groups. The commutativity of the above square is also immediate from Theorem \ref{thm:tn+}.

We now prove the exactness of the bottom row. The fact that the composition of the two arrows is zero follows from Lemma \ref{lem:sup}, because given an element $y =\sum y_w[w] \in \bar Y[S_{i,E_i},\dot S_i]_0^{N_{E_i/F}}$ that is supported on $\dot S_i$, the image of its class under $(l_v)_v$ is represented by the collection $(y_v)_v$.

Conversely, let $(y_v)_{v \in \dot S_i}$ be a collection with $y_v \in \bar Y^{N_{E_i/F,v}}$ and with $\sum_v y_v \in I_{E_i/F}Y$. We can write this element of $I_{E_i/F}Y$ as $(\sigma_1\lambda_1-\lambda_1)+\dots+(\sigma_k\lambda_k-\lambda_k)$ for some $\lambda_j \in Y$ and $\sigma_j \in \Gamma_{E_i/F}$. For each $j$ let $v_j \in \dot S_i$ be such that $\sigma_j v_j=v_j$ and replace $y_{v_j}$ by $y_{v_j}+\lambda_j-\sigma_j\lambda_j$. The new collection $(y_v)_{v \in \dot S_i}$ represents the same element of $\bigoplus_{v \in \dot S_i} \frac{\bar Y^{N_{E_i/F,v}}}{I_vY}$ as the old collection, but now $\sum_v y_v=0$, which makes it evident that $(y_v)_v$ is in the image of $(l_v)_v$.
\end{proof}

The proof of Theorem \ref{thm:tn+} will proceed in multiple steps. For each index $i$, let $\mc{T}_i$ be the full subcategory of $\mc{T}$ consisting of those objects $[Z \rw T]$ for which $T$ splits over $E_i$ and $\exp(Z) \in \N_{S_i}$. We will define a cohomology functor $H^1(P_{E_i,\dot S_i} \rw \mc{E}_{E_i,\dot S_i},Z \rw T)$ from $\mc{T}_i$ to the category of finite abelian groups and an isomorphism between this functor and the (restriction to $\mc{T}_i$) of the functor $\frac{\bar Y[S_{i,E_i},\dot S_i]_0}{I_{E_i/F}Y[S_{i,E_i}]_0}[\tx{tor}]$. We will then construct an inflation functor $H^1(P_{E_i,\dot S_i} \rw \mc{E}_{E_i,\dot S_i},Z \rw T) \rw H^1(P_{E_{i+1},\dot S_{i+1}} \rw \mc{E}_{E_{i+1},\dot S_{i+1}},Z \rw T)$ and show that it is compatible with the map $!$. We will argue that the direct limit of $H^1(P_{E_i,\dot S_i} \rw \mc{E}_{E_i,\dot S_i},Z \rw T)$ is equal to $H^1(P_{\dot V} \rw \mc{E}_{\dot V},Z \rw T)$. This will establish the existence of the isomorphism $\iota_{\dot V}$. We will then prove its uniqueness and its compatibility with localization.

We now begin with the execution of this plan. In order to lighten the notation, we will for a while forget about the fixed tower $E_i$ of Galois extensions and work with an arbitrary finite Galois extension $E/F$ and a pair $(S,\dot S_E)$ satisfying Conditions \ref{cnds:pes}. Let $\mc{T}_{E,S}$ be the full subcategory consisting of those objects $[Z \rw T]$ for which $T$ splits over $E$ and $\exp(Z) \in \N_S$. We define a functor
\[ \mc{T}_{E,S} \rw \tx{FinAbGrp},\quad [Z\rw T] \mapsto H^1(P_{E,\dot S_E} \rw \mc{E}_{E,\dot S_E},Z \rw T) \]
as follows. We take any extension
\[ 1 \rw P_{E,\dot S_E}(O_S) \rw \mc{E}_{E,\dot S_E} \rw \Gamma_S \rw 1 \]
corresponding to the distinguished class $\xi_{E,\dot S_E} \in H^2(\Gamma_S,P_{E,\dot S_E}(O_S))$ discussed in Subsection \ref{sub:pes}. The group $\mc{E}_{E,\dot S_E}$ is profinite and acts on $T(O_S)$ via its map to $\Gamma_S$. Thus we have the group $H^1(\mc{E}_{E,\dot S_E},T(O_S))$ of cohomology classes of continuous 1-cocycles of $\mc{E}_{E,\dot S_E}$ valued in the discrete group $T(O_S)$. We define $H^1(P_{E,\dot S_E} \rw \mc{E}_{E,\dot S_E},Z \rw T)$ to be the subgroup of $H^1(\mc{E}_{E,\dot S_E},T(O_S))$ comprised of those classes whose restriction to $P_{E,\dot S_E}(O_S)$, which is a well-defined $\Gamma_S$-equivariant continuous homomorphism $P_{E,\dot S_E}(O_S) \rw T(O_S)$, factors through the inclusion $Z(O_S) \rw T(O_S)$ and is an algebraic homomorphism. A bit more precisely, we can describe such a cohomology class as a pair $(v,h)$ consisting of $h \in H^1(\mc{E}_{E,\dot S_E},T(O_S))$ and $v \in \tx{Hom}(X^*(Z),M_{E,\dot S_E})^\Gamma$ such that the homomorphism $h|_{P_{E,\dot S_E}} : P_{E,\dot S_E}(O_S) \rw T(O_S)$ is determined by the composition of $v$ with the projection $X^*(T) \rw X^*(Z)$. The datum $v$ is however determined by $h$ and thus need not be kept track of.

We have the inflation-restriction sequence
\begin{equation} \label{eq:infres}
\begin{aligned}
1&\rw H^1(\Gamma_S,T(O_S))\rw H^1(P_{E,\dot S_E} \rw \mc{E}_{E,\dot S_E},Z \rw T)\rw \tx{Hom}(P_{E,\dot S_E},Z)^\Gamma \rw\\
&\rw H^2(\Gamma_S,T(O_S))
\end{aligned}\end{equation}
in which the last map is the composition of $\Theta^P_{E,\dot S_E,N}$ with the natural map $H^2(\Gamma_S,Z(O_S)) \rw H^2(\Gamma_S,T(O_S))$. The argument for this is the same as for \cite[Lemma 3.3]{KalRI}.

At the moment we do not know that the cohomology group $H^1(P_{E,\dot S_E} \rw \mc{E}_{E,\dot S_E},Z \rw T)$ is independent of the choice of extension $\mc{E}_{E,\dot S_E}$. This is not a-priori clear, because the analog of Proposition \ref{pro:locrig} fails here. We will show that nonetheless the cohomology group is independent of the particular extension, but this will be a consequence of the Tate-Nakayama-type isomorphism and certain group-theoretic properties of $H^1(P_{E,\dot S_E} \rw \mc{E}_{E,\dot S_E},Z \rw T)$ that are implied by it.

In order to construct the Tate-Nakayama-type isomorphism we first fix a specific realization of the extension of $\Gamma_S$ by $P_{E,\dot S_E}$ corresponding to the class $\xi_{E,\dot S_E}$. Let $\alpha_3(E,S) \in Z^2(\Gamma_{E/F},\tx{Hom}(\Z[S_E]_0,O_{E,S}^\times))$ represent the Tate-class discussed in Subsection \ref{sub:tatetori}. As in Subsection \ref{sub:h2z} we fix a co-final sequence $N_i \in \N_S$ and a system of maps $k_i : \tx{Maps}(S_E,O_S^\times)/O_S^\times \rw \tx{Maps}(S_E,O_S^\times)/O_S^\times$ satisfying $k_i(x)^{N_i}=x$ and $k_{i+1}(x)^{N_{i+1}/N_i}=k_i(x)$. Under the map $\Psi_{E,S,N_i} : \tx{End}(M_{E,\dot S_E,N_i})^\Gamma \to \hat Z^{-1}(\Gamma_{E/F},\tx{Maps}(S_E,M_{E,\dot S_E,N_i}^\vee)_0)$ of Lemma \ref{lem:mesn_h2} the endomorphism $\tx{id}$ maps to the $(-1)$-cocycle  $\alpha_i$ that sends $w \in S_E$ and $f \in M_{E,\dot S_E,N_i}$ to $f(1,w) \in \frac{1}{N_i}\Z/\Z$.

The class $\xi_{E,\dot S_E,N_i} \in H^2(\Gamma_S,P_{E,\dot S_E,N_i}(O_S))$ was defined as $\Theta^P_{E,\dot S_E,N_i}(\tx{id})$ and is thus represented by the 2-cocycle
\[ \dot \xi_{E,\dot S_E,N_i} = dk_i\alpha_3(E,S) \sqcup_{E/F} \alpha_i. \]
As in Subsection \ref{sub:h2z}, we are using here the unbalanced cup product of \cite[\S4.3]{KalRI} and the pairing \eqref{eq:pair2} provided by $\Phi_{E,S,N_i}$ of Fact \ref{fct:pair2}. This Fact also implies that the projection map $P_{E,\dot S_E,N_{i+1}} \rw P_{E,\dot S_E,N_i}$ maps the 2-cocycle $\dot\xi_{E,\dot S_E,N_{i+1}}$ to the 2-cocycle $\dot\xi_{E,\dot S_E,N_i}$. Let $\dot\xi_{E,\dot S_E} \in Z^2(\Gamma_S,P_{E,\dot S_E}(O_S))$ be the 2-cocycle determined by the inverse system $(\dot\xi_{E,\dot S_E,N_i})$ and let
\[ \mc{\dot E}_{E,\dot S_E} = P_{E,\dot S_E}(O_S) \boxtimes_{\dot\xi_{E,\dot S_E}}\Gamma_S. \]
This is our explicit realization of the extension $\mc{E}_{E,\dot S_E}$. Pushing out this extension along the projection $P_{E,\dot S_E} \rw P_{E,\dot S_E,N_i}$ produces the explicit extension $\mc{\dot E}_{E,\dot S_E,N_i} = P_{E,\dot S_E,N_i}(O_S)\boxtimes_{\dot\xi_{E,\dot S_E,N_i}} \Gamma_S$, so that conversely the explicit extension $\mc{\dot E}_{E,\dot S_E}$ is the inverse limit of the explicit extensions $\mc{\dot E}_{E,\dot S_E,N_i}$. We then define $H^1(P_{E,\dot S_E} \rw \mc{\dot E}_{E,\dot S_E},Z \rw T)$ as described above and see that it is equal to the direct limit of $H^1(P_{E,\dot S_E,N_i} \rw \mc{\dot E}_{E,\dot S_E,N_i},Z \rw T)$.

We will now formulate the Tate-Nakayama-type isomorphism for these cohomology groups. Fix $[Z \rw T] \in \mc{T}_{E,S}$ and let $\bar T=T/Z$. Write $Y=X_*(T)$ and $\bar Y=X_*(\bar T)$. Then we have the exact sequence
\[ 0 \rw Y \rw \bar Y \rw A^\vee \rw 0, \]
which upon applying $\otimes_\Z \Z[S_E]_0$ becomes
\[ 0 \rw Y[S_E]_0 \rw \bar Y[S_E]_0 \rw A^\vee[S_E]_0 \rw 0. \]
We have $Y[S_E]_0 = \tx{Maps}(S_E,Y)_0$ and we have the pairing
\[ \tx{Maps}(S_E,O_S^\times)/O_S^\times \otimes \tx{Maps}(S_E,Y)_0 \rw T(O_S) \]
defined by
\begin{equation} \label{eq:pair3} f \otimes g \mapsto \prod_{w \in S_E} f(w)^{g(w)}. \end{equation}
For $i>>0$ we have $\exp(Z)|N_i$. If $f \in \tx{Maps}(S_E,\mu_{N_i})/\mu_{N_i}$ and $g \in \bar Y[S_E]_0$, then $N_i\cdot g \in Y[S_E]_0$ and the image of $f \otimes N_ig$ under \eqref{eq:pair3} belongs to $Z(O_S)$ and equals the image of $f \otimes [g]$ under \eqref{eq:pair2}, where $[g] \in A^\vee[S_E]_0$ is the image of $g$.

We have the subgroup $A^\vee[\dot S_E]_0 \subset A^\vee[S_E]_0$. It is not $\Gamma_{E/F}$-equivariant, but we will make use of the slightly abusive notation $A^\vee[\dot S_E]_0^{N_{E/F}} = A^\vee[S_E]_0^{N_{E/F}} \cap A^\vee[\dot S_E]_0$ to save space. This abelian group is in bijection with $\tx{Hom}(P_{E,\dot S_E},Z)^\Gamma$ via the map $\Psi_{E,S}$ of Lemma \ref{lem:mesn_h2}. Let $\bar Y[S_E,\dot S_E]_0$ be the preimage in $\bar Y[S_E]_0$ of $A^\vee[\dot S_E]_0$. We set $\bar Y[S_E,\dot S_E]_0^{N_{E/F}}=\bar Y[S_E]_0^{N_{E/F}} \cap \bar Y[S_E,\dot S_E]_0$.

\begin{pro} \label{pro:iota}
\begin{enumerate}
	\item Given $\bar\Lambda \in \bar Y[S_E,\dot S_E]_0^{N_{E/F}}$ and $i>>0$ so that $\exp(Z)|N_i$, the assignment
	\[ z_{\bar\Lambda,i} : \mc{\dot E}_{E,\dot S_E,N_i} \rw T(O_S),\quad x \boxtimes \sigma \mapsto \Psi_{E,S,N_i}^{-1}([\bar\Lambda])(x) \cdot (k_i\alpha_3(E,S) \sqcup_{E/F} N_i\bar\Lambda) \]
	belongs to $Z^1(P_{E,\dot S_E,N_i} \rw \mc{\dot E}_{E,\dot S_E,N_i},Z \rw T)$. Here we have used the unbalanced cup product with respect to the pairing \eqref{eq:pair3}.
	\item The composition of $z_{\bar\Lambda,i}$ with the projection $\mc{E}_{E,\dot S_E,N_{i+1}} \rw \mc{E}_{E,\dot S_E,N_i}$ is equal to $z_{\bar\Lambda,i+1}$. Thus we obtain a well-defined $z_{\bar\Lambda} \in Z^1(P_{E,\dot S_E} \rw \mc{\dot E}_{E,\dot S_E}, Z \rw T)$.
	\item The assignment $\bar\Lambda \mapsto z_{\bar\Lambda}$ establishes an isomorphism
	\[ \dot\iota_{E,\dot S_E} : \frac{\bar Y[S_E,\dot S_E]_0^{N_{E/F}}}{I_{E/F}Y[S_E]_0} \rw H^1(P_{E,\dot S_E} \rw \mc{\dot E}_{E,\dot S_E},Z \rw T) \]
	that is functorial in $[Z \rw T] \in \mc{T}_{E,S}$ and fits into the diagram
	\[ \xymatrix{
	1\ar[d]&1\ar[d]\\
	\hat H^{-1}(\Gamma_{E/F},Y[S_E]_0)\ar[r]^{''TN''}\ar[d]&H^1(\Gamma_S,T(O_S))\ar[d]\\
	\frac{\bar Y[S_E,\dot S_E]_0^{N_{E/F}}}{I_{E/F}Y[S_E]_0}\ar[r]^-{\dot\iota_{E,\dot S_E}}\ar[d]&H^1(P_{E,\dot S_E} \rw \mc{\dot E}_{E,\dot S_E},Z \rw T)\ar[d]\\
	A^\vee[\dot S_E]^{N_{E/F}}\ar[d]\ar[r]^{\Psi_{E,S}^{-1}}&\tx{Hom}(P_{E,\dot S_E},Z)^\Gamma\ar[d]\\
	\hat H^0(\Gamma_{E/F},Y[S_E]_0)\ar[r]^{''-TN''}&H^2(\Gamma_S,T(O_S))
	}\]
\end{enumerate}
\end{pro}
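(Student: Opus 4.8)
The plan is to verify parts (1) and (2) by direct computation with the explicit cocycle formula, and then to deduce part (3) by the five-lemma applied to the stated diagram, using that the outer four maps are already known isomorphisms or injections. For part (1), the key observation is the one made just before the statement: if $\exp(Z)|N_i$ and $\bar\Lambda \in \bar Y[S_E,\dot S_E]_0^{N_{E/F}}$, then $N_i\bar\Lambda \in Y[S_E]_0$, so the unbalanced cup product $k_i\alpha_3(E,S)\sqcup_{E/F} N_i\bar\Lambda$ is a $1$-cochain valued in $T(O_S)$ (via the pairing \eqref{eq:pair3}), while $\Psi_{E,S,N_i}^{-1}([\bar\Lambda])$ is a homomorphism $P_{E,\dot S_E,N_i}(O_S) \rw Z(O_S)$. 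One checks that the combination $z_{\bar\Lambda,i}$ is a $1$-cocycle on $\mc{\dot E}_{E,\dot S_E,N_i}$ by the same argument as in \cite[\S4.3, Fact 4.3]{KalRI}: the differential of $k_i\alpha_3(E,S)\sqcup_{E/F} N_i\bar\Lambda$ is $dk_i\alpha_3(E,S)\sqcup_{E/F} N_i\bar\Lambda$ (since $N_{E/F}\bar\Lambda=0$, so $dN_i\bar\Lambda$ vanishes in the relevant sense), and this is precisely the $2$-cocycle $\dot\xi_{E,\dot S_E,N_i}$ twisted by the pairing — matching the extension class defining $\mc{\dot E}_{E,\dot S_E,N_i}$. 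The restriction to $P_{E,\dot S_E,N_i}(O_S)$ is $\Psi_{E,S,N_i}^{-1}([\bar\Lambda])$ by construction, which factors through $Z(O_S)$ and is algebraic, so $z_{\bar\Lambda,i}$ indeed lies in $Z^1(P_{E,\dot S_E,N_i}\rw\mc{\dot E}_{E,\dot S_E,N_i},Z\rw T)$.

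For part (2), compatibility with the transition $N_i \to N_{i+1}$ follows from the compatibility clauses in Facts \ref{fct:pair1}, \ref{fct:pair2} together with the normalization $k_{i+1}(x)^{N_{i+1}/N_i}=k_i(x)$: both $\Psi_{E,S,N_{i+1}}^{-1}([\bar\Lambda])$ and the cup-product term are compatible with the projection $\mc{\dot E}_{E,\dot S_E,N_{i+1}}\rw\mc{\dot E}_{E,\dot S_E,N_i}$, essentially because $N_{i+1}\bar\Lambda$ restricts to $\tfrac{N_{i+1}}{N_i}\cdot(N_i\bar\Lambda)$ and the pairing \eqref{eq:pair3} absorbs this exponent. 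This gives a well-defined $z_{\bar\Lambda}\in Z^1(P_{E,\dot S_E}\rw\mc{\dot E}_{E,\dot S_E},Z\rw T)$, and passing to cohomology shows $\bar\Lambda\mapsto[z_{\bar\Lambda}]$ is well-defined on the quotient by $I_{E/F}Y[S_E]_0$, since a coboundary from $I_{E/F}Y[S_E]_0$ produces a coboundary in $B^1$.

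For part (3), the diagram has exact columns: the left column is the long exact Tate-cohomology sequence of $0\rw Y[S_E]_0\rw \bar Y[S_E]_0\rw A^\vee[S_E]_0\rw 0$ restricted to the relevant norm-kernel/$\dot S_E$-supported subgroups (using Lemma \ref{lem:normtor} to identify $\bar Y[S_E,\dot S_E]_0^{N_{E/F}}/I_{E/F}Y[S_E]_0$ with the torsion subgroup and Lemma \ref{lem:mesn_h2} for the bottom identification with $\tx{Hom}(P_{E,\dot S_E},Z)^\Gamma$); the right column is the inflation-restriction sequence \eqref{eq:infres}. The top square commutes because the formula for $z_{\bar\Lambda}$ restricted to the image of $\hat H^{-1}(\Gamma_{E/F},Y[S_E]_0)$ (where $\bar\Lambda$ can be taken in $Y[S_E]_0$) reduces to exactly Tate's cup-product description of the classical TN isomorphism composed with inflation $H^1(\Gamma_{E/F},T(O_{E,S}))\rw H^1(\Gamma_S,T(O_S))$. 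The bottom square commutes by Lemma \ref{lem:tnzdiag} (the square involving $\Theta_{E,S}$), reinterpreted via $\Theta^P_{E,\dot S_E,N}$ as in \eqref{eq:mesn_h2}: the connecting map sends $z_{\bar\Lambda}$ to the image of its restriction-to-$P$ datum under $\Theta_{E,S}$, which is $-$TN applied to $d\bar\Lambda\in\hat H^0(\Gamma_{E/F},Y[S_E]_0)$. The middle square then commutes by the explicit formula. Since the top horizontal map is ``TN'' (an isomorphism by Tate) and the bottom-but-one is $\Psi_{E,S}^{-1}$ (an isomorphism by Lemma \ref{lem:mesn_h2}), and the columns are exact with vanishing tops (the top-left and top-right are $1$), a diagram chase / five-lemma argument forces $\dot\iota_{E,\dot S_E}$ to be an isomorphism. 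Functoriality in $[Z\rw T]\in\mc{T}_{E,S}$ is inherited from the functoriality of $\Psi_{E,S}$, of Tate's class $\alpha_3(E,S)$, and of the pairings of Facts \ref{fct:pair1}, \ref{fct:pair2}.

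The main obstacle I expect is the careful bookkeeping in part (1): verifying that $z_{\bar\Lambda,i}$ is genuinely a cocycle on the \emph{twisted} extension $\mc{\dot E}_{E,\dot S_E,N_i}$ — i.e. that the failure of the cup-product term to be a cocycle on $\Gamma_S$ is \emph{exactly} cancelled by the homomorphism $\Psi_{E,S,N_i}^{-1}([\bar\Lambda])$ on $P_{E,\dot S_E,N_i}(O_S)$ against the $2$-cocycle $\dot\xi_{E,\dot S_E,N_i}=dk_i\alpha_3(E,S)\sqcup_{E/F}\alpha_i$. This requires unwinding the unbalanced cup product conventions of \cite[\S4.3]{KalRI} and matching the pairing \eqref{eq:pair3} against \eqref{eq:pair2}; it is the computational heart of the proposition, but it is a formal manipulation once the bidegree bookkeeping is set up, entirely parallel to the local computation in \cite[Lemma 4.?]{KalRI}.
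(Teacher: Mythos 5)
Your proposal is correct and follows essentially the same route as the paper: the differential of $z_{\bar\Lambda,i}$ is computed and cancelled against $\Psi_{E,S,N_i}^{-1}([\bar\Lambda])(\dot\xi_{E,\dot S_E,N_i})$ via \cite[Fact 4.3]{KalRI} for part (1), compatibility of the $\Psi$-maps and of the root functions $k_i$ gives part (2), and part (3) is the five-lemma applied to the displayed diagram, with the top square reducing to the classical Tate cup product (which also shows $I_{E/F}Y[S_E]_0$ maps to coboundaries), the middle square immediate from the formula, and the bottom square coming from Lemma \ref{lem:tnzdiag}. This matches the paper's argument in all essential points.
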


\begin{proof}
For the first point it is enough to show that $z_{\bar\Lambda,i} \in Z^1(\mc{\dot E}_{E,\dot S_E,N_i},T(O_S))$, because the defining formula makes it obvious that the restriction of $z_{\bar\Lambda,i}$ to $P_{E,\dot S_E,N_i}$ takes values in $Z(O_S)$. A direct computation shows that the differential is given by
\[ dz_{\bar\Lambda,i}(x\boxtimes\sigma,y\boxtimes\tau) = \Psi_{E,S,N_i}^{-1}([\bar\Lambda])(\dot\xi_{E,\dot S_E,N_i}(\sigma,\tau))^{-1} \cdot d(k_i\alpha_3(E,S) \sqcup_{E/F} N_i\bar\Lambda)(\sigma,\tau). \]
By \cite[Fact 4.3]{KalRI} we have $d(k_i\alpha_3(E,S) \sqcup_{E/F} N_i\bar\Lambda) = dk_i\alpha_3(E,S) \sqcup_{E/F} N_i\bar\Lambda$. This 2-cocycle takes values in $Z(O_S)$ and is equal to $\Psi_{E,S,N_i}^{-1}([\bar\Lambda])(\dot\xi_{E,\dot S_E,N_i})$.

The second point follows from the compatibility of the maps $\Psi_{E,S,N_i}$ with the projections in the system $P_{E,\dot S_E,N_i}$, which is part 2 of Lemma \ref{lem:mesn_h2}, as well as from the fact that $k_{i+1}(x)^{N_{i+1}/N_i}=k_i(x)$.

We now come to the third point. It is clear from the definition that $\bar\Lambda \mapsto z_{\bar\Lambda}$ is a functorial homomorphism $\bar Y[S_E,\dot S_E]_0^{N_{E/F}} \rw H^1(P_{E,\dot S_E} \rw \mc{\dot E}_{E,\dot S_E},Z \rw T)$. If $\bar\Lambda \in Y[S_E]_0^{N_{E/F}}$, then $[\bar\Lambda] = 0$ and $k_i\alpha_3(E,S) \otimes N_i\bar\Lambda = \alpha_3(E,S) \otimes \bar\Lambda$. Thus $z_{\bar\Lambda,i} = \alpha_3(E,S) \cup \bar\Lambda$. This implies that $\bar\Lambda \mapsto z_{\Bar\Lambda}$ kills $I_{E/F}Y[S_E]_0$ and moreover that the top square in the above diagram commutes. The commutativity of the second square is immediate from the formula defining $z_{\bar\Lambda,i}$. The commutativity of the third square follows from the commutativity of Diagram \eqref{eq:tnzdiag}. The five-lemma now implies that $\dot\iota_{E,\dot S_E}$ is an isomorphism.
\end{proof}

Now let $\mc{E}_{E,\dot S_E}$ again be any extension corresponding to the class $\xi_{E,\dot S_E}$. We want to argue that the group $H^1(P_{E,\dot S_E} \rw \mc{E}_{E,\dot S_E},Z \rw T)$ is independent of the choice of $\mc{E}_{E,\dot S_E}$. There is an isomorphism of extensions $\mc{E}_{E,\dot S_E} \rw \mc{\dot E}_{E,\dot S_E}$. This isomorphism is not unique -- the set of $P_{E,\dot S_E}(O_S)$-conjugacy classes of such isomorphisms is a torsor under the finite abelian group $H^1(\Gamma_S,P_{E,\dot S_E}(O_S))$. Choosing any such isomorphism of extensions we obtain an isomorphism of cohomology groups
\[ H^1(P_{E,\dot S_E} \rw \mc{\dot E}_{E,\dot S_E},Z \rw T) \rw H^1(P_{E,\dot S_E} \rw \mc{E}_{E,\dot S_E},Z \rw T) \]
that is functorial in $[Z \rw T] \in \mc{T}_{E,S}$ and is moreover compatible with the inclusion of $H^1(\Gamma_S,T(O_S))$ into both groups as well as with the maps from both groups to $\tx{Hom}(P_{E,\dot S_E},Z)^\Gamma$. We will argue that there is at most one isomorphism with these properties. For this, the following consequence of Proposition \ref{pro:iota} will be crucial.

\begin{lem} \label{lem:nsdiv}
	The group $\varinjlim_Z H^1(P_{E,\dot S_E}\rw \mc{E}_{E,\dot S_E},Z \rw T)$, where $Z$ runs over all finite subgroups of $T$ defined over $F$ and satisfying $\exp(Z) \in \N_S$, is $\N_S$-divisible.
\end{lem}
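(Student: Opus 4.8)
The plan is to reduce the assertion to the explicit realization $\mc{\dot E}_{E,\dot S_E}$, then transport it through the Tate--Nakayama-type isomorphism of Proposition \ref{pro:iota}, at which point it becomes a purely linear-algebraic divisibility statement that is handled by the support-moving technique already used in Lemma \ref{lem:sup} and part 3 of Lemma \ref{lem:mesn_h2}.

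\emph{Step 1: reduction to $\mc{\dot E}_{E,\dot S_E}$.} Any extension $\mc{E}_{E,\dot S_E}$ in the isomorphism class $\xi_{E,\dot S_E}$ admits an isomorphism of extensions to $\mc{\dot E}_{E,\dot S_E}$; since such an isomorphism involves only the group extension $P_{E,\dot S_E}(O_S)$-by-$\Gamma_S$ and not $Z$, the induced isomorphisms $H^1(P_{E,\dot S_E}\rw\mc{E}_{E,\dot S_E},Z\rw T)\rw H^1(P_{E,\dot S_E}\rw\mc{\dot E}_{E,\dot S_E},Z\rw T)$ commute with the transition maps in $Z$. As $\N_S$-divisibility is invariant under group isomorphism, it suffices to prove the lemma for $\mc{\dot E}_{E,\dot S_E}$.

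\emph{Step 2: pass to linear algebra.} By Proposition \ref{pro:iota} the maps $\dot\iota_{E,\dot S_E}$ identify $\varinjlim_Z H^1(P_{E,\dot S_E}\rw\mc{\dot E}_{E,\dot S_E},Z\rw T)$ with $\varinjlim_Z \frac{\bar Y[S_E,\dot S_E]_0^{N_{E/F}}}{I_{E/F}Y[S_E]_0}$, where $\bar Y=X_*(T/Z)$ varies with $Z$ and the denominator is fixed. The subsystem $Z=T[n]$, $n\in\N_S$, is cofinal, and $X_*(T/T[n])=\frac1n Y$ inside $Y\otimes\Q$, so $\varinjlim_Z\bar Y$ equals $Y_{(S)}:=\varinjlim_{n\in\N_S}\frac1n Y$; correspondingly the condition cutting out $\bar Y[S_E,\dot S_E]_0$ passes in the colimit to ``$f(w)\in Y$ for all $w\in S_E\sm\dot S_E$''. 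Since filtered colimits are exact and commute with the $N_{E/F}$-kernel, one gets
\[ \varinjlim_Z H^1(P_{E,\dot S_E}\rw\mc{\dot E}_{E,\dot S_E},Z\rw T)\ \cong\ \mathcal N/I_{E/F}Y[S_E]_0, \]
where $\mathcal N$ is the group of finitely supported $f:S_E\rw Y_{(S)}$ with $\sum_w f(w)=0$, $N_{E/F}f=0$, and $f(w)\in Y$ whenever $w\notin\dot S_E$.

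\emph{Step 3: the divisibility.} Given $n\in\N_S$ and $f\in\mathcal N$, I would first use the support-reduction argument of Lemma \ref{lem:sup} to replace $f$, modulo $I_{E/F}Y[S_E]_0$, by a representative $f'$ supported on $\dot S_E$: for each $w\in S_E\sm\dot S_E$ with $f(w)\neq 0$ pick $\sigma\in\Gamma_{E/F}$ with $\sigma^{-1}w\in\dot S_E$ and subtract $(\sigma-1)\bigl(\sigma^{-1}f(w)\otimes[\sigma^{-1}w]\bigr)$, which kills the $w$-coordinate (using $f(w)\in Y$) and changes only coordinates lying in $\dot S_E$. Adding an element of $I_{E/F}Y[S_E]_0\subset Y[S_E]_0$ preserves both $\sum_w(\cdot)=0$ and $N_{E/F}(\cdot)=0$, so $f'\in\mathcal N$. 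Now put $g:=n^{-1}f'$, dividing coordinatewise in the torsion-free, $\N_S$-divisible module $Y_{(S)}$; since $f'$ and hence $g$ is supported on $\dot S_E$, the conditions defining $\mathcal N$ for $g$ are just the divided-by-$n$ versions of those for $f'$, so $g\in\mathcal N$, and $n[g]=[f']=[f]$ in $\mathcal N/I_{E/F}Y[S_E]_0$.

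I expect the only delicate point to be the colimit bookkeeping of Step 2 --- correctly identifying $\varinjlim_Z\bar Y$ with $Y_{(S)}$, checking that the ``$\dot S_E$-support'' constraint limits to the stated condition, and verifying that $\bar Y[S_E,\dot S_E]_0^{N_{E/F}}$ passes to $\mathcal N$ --- rather than the divisibility itself, which merely recycles the support-moving lemmas already in the paper; the reduction of Step 1 is routine once one notes the chosen isomorphism of extensions is independent of $Z$.
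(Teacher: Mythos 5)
Your proposal is essentially the paper's own proof: reduce to the explicit realization $\mc{\dot E}_{E,\dot S_E}$, transport through Proposition \ref{pro:iota}, identify the colimit over $Z$ (cofinally $Z=T[n]$, $n\in\N_S$) with $(S_\Q^{-1}Y)[S_E,\dot S_E]_0^{N_{E/F}}/I_{E/F}Y[S_E]_0$, and conclude via the Lemma \ref{lem:sup} support-moving argument together with the $\N_S$-divisibility and torsion-freeness of the localized lattice. The only slip is your inline paraphrase of that support argument: $(\sigma-1)\bigl(\sigma^{-1}f(w)\otimes[\sigma^{-1}w]\bigr)$ lies neither in $Y[S_E]_0$ nor in $I_{E/F}Y[S_E]_0$ and destroys the sum-zero condition unless the coefficient is $\sigma$-fixed; the correct move, as in Lemma \ref{lem:sup}, subtracts $(1-\sigma)\bigl(f(w)[w]-f(w)[\dot v_0]\bigr)$ with an auxiliary $\dot v_0\in\dot S_E$ fixed by $\sigma$ (Conditions \ref{cnds:pes}, part 4), and since you defer to that lemma anyway this does not affect the validity of your argument.
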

\begin{proof}
Choosing any isomorphism of extensions $\mc{E}_{E,\dot S_E} \rw \mc{\dot E}_{E,\dot S_E}$ and applying Proposition \ref{pro:iota} we see that the claim is equivalent to the $\N_S$-divisibility of
\[ \varinjlim_Z\frac{\bar Y[S_E,\dot S_E]_0^{N_{E/F}}}{I_{E/F}Y[S_E]_0}. \]
The colimit can be taken with respect to any co-final sequence $Z_i$ of finite subgroups of $T$ with $\exp(Z_i) \in \N_S$. Such a sequence is obtained by setting $\bar X_i = N_iX$. Then $\bar Y_i = \tx{Hom}_\Z(X_i,\Z) = \frac{1}{N_i}Y$. The above direct limit is thus
\[ \frac{(S_\Q^{-1}Y)[S_E,\dot S_E]_0^{N_{E/F}}}{I_{E/F}Y[S_E]_0}, \]
where $S_\Q^{-1}Y = Y \otimes_\Z \Z[S_\Q^{-1}]$ is the localization of $Y$ as $S_\Q$. According to Lemma \ref{lem:sup} the map
\[ (S_\Q^{-1}Y)[\dot S_E]_0^{N_{E/F}} \rw \frac{(S_\Q^{-1}Y)[S_E,\dot S_E]_0^{N_{E/F}}}{I_{E/F}Y[S_E]_0} \]
is surjective. Since $S_\Q^{-1}Y$ is $\N_S$-divisible and torsion-free, $(S_\Q^{-1}Y)[\dot S_E]_0^{N_{E/F}}$ is also $\N_S$-divisible and the proof is complete.
\end{proof}

\begin{lem} \label{lem:unigen}
Fix a set of places $S$ of $\Q$, finite or infinite. For $i=1,2$ let $\Delta_1^i$ be a functor assigning to each torus $T$ defined over $F$ and split over $E$ an abelian group $\Delta_1^i(T)$. Let $\Delta_3^i$ be a functor assigning to each finite multiplicative group $Z$ defined over $F$ and split over $E$, with $\exp(Z) \in \N_S$, an abelian group $\Delta_3^i(Z)$. Let $\Delta_2^i$ be a functor assigning to each $[Z \rw T] \in \mc{T}_{E,S}$ an abelian group $\Delta_2^i(Z \rw T)$. Assume that for $Z \subset Z'$ the map $\Delta_2^i(Z \rw T) \rw \Delta_2^i(Z' \rw T)$ is injective. Assume further that we have a functorial in $[Z \rw T]$ exact sequence
\[ 0 \rw \Delta_1^i(T) \rw \Delta_2^i(Z \rw T) \rw \Delta_3^i(Z). \]
Finally, assume that for each $T$ the abelian group $\Delta_1^2(T)$ admits a homomorphism to a direct sum of finite abelian groups with finite kernel, that $\varinjlim_Z \Delta_2^1(Z \rw T)$ is $\N_S$-divisible, and that $\Delta_3^1(Z)$ is $\N_S$-torsion.

If $\Delta_1^1(T) \rw \Delta_1^2(T)$ and $\Delta_3^1(Z) \rw \Delta_3^2(Z)$ are fixed functorial homomorphisms, there exists at most one functorial homomorphism $\Delta_2^1(Z \rw T) \rw \Delta_2^2(Z \rw T)$ fitting into the commutative diagram
\[ \xymatrix@C=1.5pc{
	0\ar[r]&\Delta_1^1(T)\ar[r]\ar[d]&\Delta_2^1(Z \rw T)\ar[r]\ar[d]&\Delta_3^1(Z)\ar[d]\\
	0\ar[r]&\Delta_1^2(T)\ar[r]&\Delta_2^2(Z \rw T)\ar[r]&\Delta_3^2(Z)
}\]
\end{lem}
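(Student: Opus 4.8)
The plan is to show that the difference of any two competing homomorphisms vanishes. So suppose $h_1,h_2 \colon \Delta_2^1(Z \rw T) \rw \Delta_2^2(Z \rw T)$ are two functorial homomorphisms fitting into the displayed diagram, and set $d = h_1 - h_2$. Commutativity of the left square shows that $h_1$ and $h_2$ agree on the subgroup $\Delta_1^1(T)$ of $\Delta_2^1(Z\rw T)$ (both restrict there to the fixed map $\Delta_1^1(T) \rw \Delta_1^2(T)$ followed by the inclusion $\Delta_1^2(T) \hrw \Delta_2^2(Z \rw T)$), so $d|_{\Delta_1^1(T)} = 0$. Commutativity of the right square, applied to $h_1$ and to $h_2$, shows that the composite $\Delta_2^1(Z \rw T) \xrightarrow{\ d\ } \Delta_2^2(Z \rw T) \rw \Delta_3^2(Z)$ is zero, since for each $h_i$ this composite coincides with $\Delta_2^1(Z \rw T) \rw \Delta_3^1(Z) \rw \Delta_3^2(Z)$, which is independent of $i$. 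By exactness of the lower sequence, $d$ therefore has image in $\Delta_1^2(T) = \ker(\Delta_2^2(Z \rw T) \rw \Delta_3^2(Z))$, and since $d$ kills $\Delta_1^1(T)$ it factors as
\[ \Delta_2^1(Z \rw T) \thrw \frac{\Delta_2^1(Z \rw T)}{\Delta_1^1(T)} \xrightarrow{\ e_Z\ } \Delta_1^2(T) \hrw \Delta_2^2(Z \rw T). \]
By functoriality of $d$ in $[Z \rw T]$ the homomorphisms $e_Z$ are compatible with enlarging $Z$, so it suffices to prove that $e := \varinjlim_Z e_Z$ vanishes, the colimit being taken over a cofinal sequence of finite subgroups $Z \subset T$ defined over $F$ with $\exp(Z) \in \N_S$.

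Applying the exact functor $\varinjlim_Z$ to the exact sequence $0 \rw \Delta_1^1(T) \rw \Delta_2^1(Z \rw T) \rw \Delta_3^1(Z)$, and noting that $\Delta_1^1(T)$ is independent of $Z$, we exhibit $Q := \bigl(\varinjlim_Z \Delta_2^1(Z \rw T)\bigr)/\Delta_1^1(T)$ simultaneously as a quotient of $\varinjlim_Z \Delta_2^1(Z \rw T)$ and as a subgroup of $\varinjlim_Z \Delta_3^1(Z)$. The latter is $\N_S$-torsion (a filtered colimit of $\N_S$-torsion groups), so $Q$ is $\N_S$-torsion; the former is $\N_S$-divisible by hypothesis, so $Q$ is $\N_S$-divisible. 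A group that is both $\N_S$-torsion and $\N_S$-divisible is honestly divisible: writing an arbitrary $n$ as $n_1 n_2$ with $n_1 \in \N_S$ and $n_2$ prime to every element of $S$, multiplication by $n_2$ is bijective on any $\N_S$-torsion group while multiplication by $n_1$ is surjective on any $\N_S$-divisible group. Hence $Q$, and therefore its image $e(Q) \subseteq \Delta_1^2(T)$, is divisible.

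It remains to note that $\Delta_1^2(T)$ has no nonzero divisible subgroup. By hypothesis there is a homomorphism $\psi \colon \Delta_1^2(T) \rw \bigoplus_j B_j$ with finite kernel $K$ and all $B_j$ finite. The subgroup $\psi(e(Q))$ of $\bigoplus_j B_j$ is divisible, but $\bigoplus_j B_j$ admits no nonzero divisible subgroup: any element lies in a finite partial sum $\bigoplus_{j \in \Sigma} B_j$, and the projection of a divisible subgroup onto this finite group is a divisible subgroup of a finite group, hence $0$, forcing the element to be $0$. Thus $\psi(e(Q)) = 0$, so $e(Q) \subseteq K$; being a divisible subgroup of the finite group $K$ it is $0$. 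Therefore $e = 0$, hence every $e_Z = 0$, hence $d = 0$ and $h_1 = h_2$.

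The argument is a diagram chase whose one genuine idea is to pass to the colimit over $Z$ so that the $\N_S$-divisibility hypothesis becomes available; the remaining input consists only of two elementary facts about abelian groups — that $\N_S$-torsion together with $\N_S$-divisibility forces divisibility, and that a direct sum of finite abelian groups has no nonzero divisible subgroup. Neither of these is a real obstacle, so the main point to be careful about is the bookkeeping: checking that the colimit of the given exact sequences stays exact and that all maps in sight are functorial in $[Z \rw T]$, so that $e$ is well defined and its vanishing indeed forces the vanishing of each $e_Z$.
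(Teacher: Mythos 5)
Your proof is correct and follows essentially the same route as the paper: pass to the colimit over $Z$, observe that the difference of the two maps factors through the image $Q$ of $\varinjlim_Z\Delta_2^1$ in $\varinjlim_Z\Delta_3^1$, which is $\N_S$-divisible and $\N_S$-torsion, and then use the hypothesis that $\Delta_1^2(T)$ embeds, up to a finite kernel, into a direct sum of finite groups to force $Q$ to map to zero. The only (immaterial) difference is that you upgrade ``$\N_S$-divisible and $\N_S$-torsion'' to honest divisibility before invoking finiteness, whereas the paper applies the $\N_S$-divisibility/torsion argument directly to the finite subquotients.
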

\begin{proof}
Let $f_{[Z \rw T]}^{(j)} : \Delta_2^1(Z \rw T) \rw \Delta_2^2([Z \rw T])$ for $j=1,2$ be two functorial homomorphisms fitting into the above commutative diagram. Fix a torus $T$ defined over $F$ and split over $E$ and let $Z_k$ be an exhaustive sequence of finite subgroups of $T$ defined over $F$ and split over $E$ with $\exp(Z_k) \in \N_S$. For $i=1,2$ we write $\Delta_2^i(T)=\varinjlim_k \Delta_2^i(Z_k \rw T)$ and $\Delta_3^i(T)=\varinjlim_k \Delta_3^i(Z_k)$. We obtain from $f^{(j)}_{[Z \rw T]}$ the homomorphisms $f^{(j)}_T : \Delta^1_2(T) \rw \Delta^2_2(T)$. Due to the injectivity of $\Delta_2^i(Z \rw T) \rw \Delta_2^i(T)$, it is enough to prove $f^{(1)}_T=f^{(2)}_T$.

By assumption $\Delta_2^1(T)$ is $\N_S$-divisible and $\Delta_3^1(T)$ is $\N_S$-torsion. The exactness of $\varinjlim$ gives us the commutative diagram with exact rows
\[ \xymatrix@C=1.5pc{
	1\ar[r]&\Delta_1^1(T)\ar[r]\ar[d]&\Delta_2^1(T)\ar[r]\ar@<+1ex>[d]^{f^{(2)}_{T}}\ar@<-1ex>[d]_{f^{(1)}_{T}}&\Delta_3^1(T)\ar[d]\\
	0\ar[r]&\Delta_1^2(T)\ar[r]&\Delta_2^2(T)\ar[r]&\Delta_3^2(T)
}\]
Let $Q \subset \Delta_3^1(T)$ be the image of $\Delta_2^1(T)$. Then $Q$ is both $\N_S$-divisible and $\N_S$-torsion. The difference $\delta_T=f^{(2)}_T-f^{(1)}_T \in \tx{Hom}(\Delta_2^1(T),\Delta_2^2(T))$ is a homomorphism $\delta_T : Q \rw \Delta_1^2(T)$. By assumption we have the exact sequence
\[ 0 \rw F_0 \rw \Delta_1^2(T) \rw \bigoplus_{l \in L} F_l\]
for some finite abelian groups $F_0$ and $F_l, l \in L$.
The projection of $\delta_T(Q) \subset \Delta_1^2(T)$ to any factor $F_l$ is a finite $\N_S$-divisible and $\N_S$-torsion group, hence trivial. Thus $\delta_T(Q)$ is a subgroup of $F_0$, but then it must be trivial by the same argument.
\end{proof}

As a first application of this general statement, we obtain.

\begin{cor} \label{cor:unique}
	The group $H^1(P_{E,\dot S_E} \rw \mc{E}_{E,\dot S_E},Z \rw T)$ is independent of the choice of extension $\mc{E}_{E,\dot S_E}$ up to a unique isomorphism. It comes equipped with a canonical functorial isomorphism $\iota_{E,\dot S_E}$ to the group
	\[ \frac{\bar Y[S_E,\dot S_E]_0^{N_{E/F}}}{I_{E/F}Y[S_E]_0} \]
	that fits into the commutative diagram of Proposition \ref{pro:iota}.
\end{cor}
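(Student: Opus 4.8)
The plan is to transport the isomorphism $\dot\iota_{E,\dot S_E}$ of Proposition \ref{pro:iota}, which is attached to the particular realization $\mc{\dot E}_{E,\dot S_E}$ of the class $\xi_{E,\dot S_E}$, to an arbitrary extension in that isomorphism class, and then to rigidify it by means of Lemma \ref{lem:unigen}. First I would fix any extension $\mc{E}_{E,\dot S_E}$ realizing $\xi_{E,\dot S_E}$ and choose an isomorphism of extensions $\psi : \mc{\dot E}_{E,\dot S_E} \rw \mc{E}_{E,\dot S_E}$; such a $\psi$ exists since the two extensions have the same class, but it is unique only up to $P_{E,\dot S_E}(O_S)$-conjugacy, the set of conjugacy classes being a torsor under the (possibly nontrivial) finite abelian group $H^1(\Gamma_S,P_{E,\dot S_E}(O_S))$. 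Pre-composition of continuous $1$-cocycles with $\psi$ yields a functorial homomorphism $\psi^* : H^1(P_{E,\dot S_E} \rw \mc{E}_{E,\dot S_E},Z \rw T) \rw H^1(P_{E,\dot S_E} \rw \mc{\dot E}_{E,\dot S_E},Z \rw T)$, which is an isomorphism because $\psi$ is; since $\psi$ restricts to the identity on $P_{E,\dot S_E}(O_S)$ and covers the identity on $\Gamma_S$, the map $\psi^*$ restricts to the identity on the subgroups $H^1(\Gamma_S,T(O_S))$ and induces the identity on the quotients $\tx{Hom}(P_{E,\dot S_E},Z)^\Gamma$ of the inflation-restriction sequence \eqref{eq:infres}. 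The candidate for $\iota_{E,\dot S_E}$ on the $\mc{E}_{E,\dot S_E}$-model is then $(\psi^*)^{-1} \circ \dot\iota_{E,\dot S_E}$, and Proposition \ref{pro:iota} shows it fits into the diagram displayed in the corollary.

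The crux is to show that $\psi^*$ --- and therefore this candidate --- is independent of $\psi$; since the analogue of Proposition \ref{pro:locrig} fails here (which is precisely what makes $H^1(\Gamma_S,P_{E,\dot S_E}(O_S))$ possibly nonzero), this is not formal. I would deduce it from Lemma \ref{lem:unigen}, applied with the set of places of $\Q$ taken to be $S_\Q$ (so that $\N_S$ retains its meaning), with $\Delta_1^1 = \Delta_1^2$ the functor $T \mapsto H^1(\Gamma_S,T(O_S))$, with $\Delta_3^1 = \Delta_3^2$ the functor $Z \mapsto \tx{Hom}(P_{E,\dot S_E},Z)^\Gamma$, with $\Delta_2^1(Z \rw T) = H^1(P_{E,\dot S_E} \rw \mc{E}_{E,\dot S_E},Z \rw T)$ and $\Delta_2^2(Z \rw T) = H^1(P_{E,\dot S_E} \rw \mc{\dot E}_{E,\dot S_E},Z \rw T)$, and all maps at the $\Delta_1$- and $\Delta_3$-levels the identities. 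The functorial exact sequences $0 \rw \Delta_1^i \rw \Delta_2^i \rw \Delta_3^i$ are instances of \eqref{eq:infres}. Injectivity of $\Delta_2^i(Z \rw T) \rw \Delta_2^i(Z' \rw T)$ for $Z \subset Z'$ is a diagram chase in the two copies of \eqref{eq:infres}: they share the term $H^1(\Gamma_S,T(O_S))$, which injects into $\Delta_2^i(Z' \rw T)$, and $\tx{Hom}(P_{E,\dot S_E},Z)^\Gamma \rw \tx{Hom}(P_{E,\dot S_E},Z')^\Gamma$ is injective since $Z \hrw Z'$ is a closed immersion. The remaining hypotheses hold: $\Delta_1^2(T) = H^1(\Gamma_S,T(O_S))$ is finite by Lemma \ref{lem:inf1} and \cite[Theorem 8.3.20]{NSW08}, hence admits a homomorphism with finite kernel into a direct sum of finite abelian groups; $\varinjlim_Z \Delta_2^1(Z \rw T)$ is $\N_S$-divisible by Lemma \ref{lem:nsdiv}, whose proof works for any extension realizing $\xi_{E,\dot S_E}$; and $\Delta_3^1(Z) = \tx{Hom}(P_{E,\dot S_E},Z)^\Gamma$ is annihilated by $\exp(Z) \in \N_S$, hence $\N_S$-torsion. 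Lemma \ref{lem:unigen} then yields at most one functorial homomorphism $\Delta_2^1 \rw \Delta_2^2$ compatible with the chosen identities, so $\psi^*$ is the unique such and in particular is independent of $\psi$.

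To finish, observe that for two extensions $\mc{E}^{(1)}_{E,\dot S_E}$ and $\mc{E}^{(2)}_{E,\dot S_E}$ of class $\xi_{E,\dot S_E}$, composing the now-canonical isomorphisms with $\mc{\dot E}_{E,\dot S_E}$ gives a distinguished isomorphism $H^1(P_{E,\dot S_E} \rw \mc{E}^{(1)}_{E,\dot S_E},Z \rw T) \rw H^1(P_{E,\dot S_E} \rw \mc{E}^{(2)}_{E,\dot S_E},Z \rw T)$; these isomorphisms are functorial, restrict to the identity on $H^1(\Gamma_S,T(O_S))$ and on $\tx{Hom}(P_{E,\dot S_E},Z)^\Gamma$, hence are unique with these properties by another application of Lemma \ref{lem:unigen}, and consequently compose coherently and identify the groups attached to all extensions in a canonical way. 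Defining $\iota_{E,\dot S_E} = (\psi^*)^{-1} \circ \dot\iota_{E,\dot S_E}$ for any choice of $\mc{E}_{E,\dot S_E}$ and $\psi$ then gives the canonical functorial isomorphism of the statement, and it fits into the diagram of Proposition \ref{pro:iota} by transport of structure along $\psi^*$. I expect the only mildly delicate part to be the bookkeeping needed to apply Lemma \ref{lem:unigen} --- in particular the injectivity hypothesis --- but this reduces entirely to the inflation-restriction sequence \eqref{eq:infres} and the finiteness of $H^1(\Gamma_S,T(O_S))$; the real content, namely the $\N_S$-divisibility of Lemma \ref{lem:nsdiv} and the rigidity Lemma \ref{lem:unigen}, is already in place.
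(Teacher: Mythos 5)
Your first step is essentially the paper's first step: both apply Lemma \ref{lem:unigen} to the pair of functors $H^1(P_{E,\dot S_E} \rw \mc{E}_{E,\dot S_E},-)$ and $H^1(P_{E,\dot S_E} \rw \mc{\dot E}_{E,\dot S_E},-)$, using Lemma \ref{lem:nsdiv}, the finiteness of $H^1(\Gamma_S,T(O_S))$, and the fact that $\tx{Hom}(P_{E,\dot S_E},Z)^\Gamma$ is killed by $\exp(Z)\in\N_S$, to conclude that the isomorphism induced by an isomorphism of extensions $\psi$ is independent of $\psi$. That part of your argument, including the verification of the injectivity hypothesis (which can be seen even more directly: for $Z\subset Z'$ both groups are subgroups of $H^1(\mc{E}_{E,\dot S_E},T(O_S))$), is correct, and the asymmetry of the hypotheses of Lemma \ref{lem:unigen} causes no trouble when you swap the roles of $\Delta_2^1$ and $\Delta_2^2$ relative to the paper, since Lemma \ref{lem:nsdiv} holds for any realization.

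The gap is in the claim that $\iota_{E,\dot S_E} := (\psi^*)^{-1}\circ\dot\iota_{E,\dot S_E}$ is \emph{canonical}. The explicit realization $\mc{\dot E}_{E,\dot S_E}$ and the isomorphism $\dot\iota_{E,\dot S_E}$ of Proposition \ref{pro:iota} are not intrinsic: their construction requires choosing a cocycle $\alpha_3(E,S)$ representing the Tate class, a co-final sequence $N_i$, and the root functions $k_i$. Different choices produce a different explicit extension $\mc{\ddot E}_{E,\dot S_E}$ and a different isomorphism $\ddot\iota_{E,\dot S_E}$, and your construction anchored to $\mc{\ddot E}_{E,\dot S_E}$ could a priori yield a different map $\iota_{E,\dot S_E}$. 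To complete the proof one must show that $\dot\iota_{E,\dot S_E}^{-1}\circ\ddot\iota_{E,\dot S_E}$ coincides with the canonical identification $H^1(P_{E,\dot S_E}\rw\mc{\dot E}_{E,\dot S_E},Z\rw T)\rw H^1(P_{E,\dot S_E}\rw\mc{\ddot E}_{E,\dot S_E},Z\rw T)$ you have just produced; this is exactly the second half of the paper's proof and is handled by one more application of Lemma \ref{lem:unigen}. (Alternatively, you could apply Lemma \ref{lem:unigen} with $\Delta_2^1$ taken to be the linear-algebraic functor $[Z\rw T]\mapsto \bar Y[S_E,\dot S_E]_0^{N_{E/F}}/I_{E/F}Y[S_E]_0$ itself, with $\Delta_1^1=\hat H^{-1}(\Gamma_{E/F},Y[S_E]_0)$ and $\Delta_3^1=A^\vee[\dot S_E]^{N_{E/F}}$: since the side maps ``TN'' and $\Psi_{E,S}^{-1}$ in the diagram of Proposition \ref{pro:iota} are canonical, this shows there is at most one functorial homomorphism fitting into that diagram, so any candidate is automatically independent of all auxiliary choices.) Your final paragraph establishes coherence of the comparison isomorphisms between different abstract extensions, but it does not address this dependence of $\dot\iota_{E,\dot S_E}$ itself on $\alpha_3(E,S)$, $N_i$, $k_i$, so as written the canonicity asserted in the corollary is not fully proved.
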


\begin{proof}
First we apply Lemma \ref{lem:unigen} with $\Delta_2^1=H^1(P_{E,\dot S_E} \rw \mc{\dot E}_{E,\dot S_E})$ and $\Delta_2^2=H^1(P_{E,\dot S_E} \rw \mc{E}_{E,\dot S_E})$. We know that any isomorphism of extensions $\mc{E}_{E,\dot S_E} \rw \mc{\dot E}_{E,\dot S_E}$ provides an isomorphism $\Delta_2^1 \rw \Delta_2^2$ as in the statement of this lemma. The assumptions of the Lemma are verified by Lemma \ref{lem:nsdiv} and the finiteness of $H^1(\Gamma_S,T(O_S))$. The lemma then guarantees that the isomorphism is unique, i.e. independent of the choice of isomorphism of extensions $\mc{E}_{E,\dot S_E} \rw \mc{\dot E}_{E,\dot S_E}$.

Next recall that in order to construct the explicit extension $\mc{\dot E}_{E,\dot S_E}$ and the isomorphism $\dot\iota_{E,\dot S_E}$ of Proposition \ref{pro:iota} we had to choose the cocycle $\alpha_3(E,S)$ representing the Tate-class as well as the sequences $N_i$ and $k_i$. Say that we now made different choices for these and obtained an explicit extension $\mc{\ddot E}_{E,\dot S_E}$ and isomorphism $\ddot\iota_{E,\dot S_E}$. In order to complete the proof we must show that the composition $\dot\iota_{E,\dot S_E}^{-1}\circ\ddot\iota_{E,\dot S_E}$ coincides with the canonical isomorphism $H^1(P_{E,\dot S_E} \rw \mc{\dot E}_{E,\dot S_E}) \rw H^1(P_{E,\dot S_E} \rw \mc{\ddot E}_{E,\dot S_E})$ established in the above paragraph. This however is another application of Lemma \ref{lem:unigen}.
\end{proof}

We will now study how the group $H^1(P_{E,\dot S_E} \rw \mc{E}_{E,\dot S_E},Z \rw T)$ changes when we enlarge $E$ and $S$. Let $K/F$ be a finite Galois extension containing $E$. Let $(S',\dot S'_K)$ be a pair satisfying Conditions \ref{cnds:pes} with respect to $K/F$. Assume further that $S \subset S'$ and $\dot S_E \subset (\dot S'_K)_E$. Consider any extension $\mc{E}_{K,\dot S'_K}$ corresponding to the class $\xi_{K,\dot S'_K} \in H^2(\Gamma_{S'},P_{K,\dot S'_K}(O_{S'}))$ as well as any extension $\mc{E}_{E,\dot S_E}$ corresponding to the class $\xi_{E,\dot S_E}$. Given $[Z \rw T] \in \mc{T}_{E,S}$ we construct an inflation map
\begin{equation} \label{eq:infmap}
\tx{Inf}: H^1(P_{E,\dot S_E} \rw \mc{E}_{E,\dot S_E},Z \rw T)\rw H^1(P_{K,\dot S'_K} \rw \mc{E}_{K,\dot S'_K},Z \rw T)  \end{equation}
as follows.

Take $\Box_1$ to be the pull-back of $\mc{E}_{E,\dot S_E}$ along the projection $\Gamma_{S'} \rw \Gamma_S$, then take $\Box_2$ to be the push-out of $\Box_1$ along the inclusion $P_{E,\dot S_E}(O_S) \rw P_{E,\dot S_E}(O_{S'})$, and finally choose a homomorphism of extensions $\mc{E}_{K,\dot S'_K} \rw \Box_2$. The latter exists by Lemma \ref{lem:mesn_es2}. This results in the commutative diagram
\begin{equation} \label{eq:diaginf} \xymatrix{
	1\ar[r]&P_{K,S'_K}(O_{S'})\ar[r]\ar[d]&\mc{E}_{K,\dot S'_K}\ar[r]\ar@{.>}[d]&\Gamma_{S'}\ar[r]\ar@{=}[d]&1\\
	1\ar[r]&P_{E,\dot S_E}(O_{S'})\ar[r]&\Box_2\ar[r]&\Gamma_{S'}\ar[r]\ar@{=}[d]&1\\
	1\ar[r]&P_{E,\dot S_E}(O_{S})\ar[u]\ar[r]&\Box_1\ar[r]\ar[d]\ar[u]&\Gamma_{S'}\ar[r]\ar[d]&1\\
	1\ar[r]&P_{E,\dot S_E}(O_{S})\ar[r]\ar@{=}[u]&\mc{E}_{E,\dot S_E}\ar[r]&\Gamma_{S}\ar[r]&1
}\end{equation}
where the dotted arrow is the one we chose. Given $z \in Z^1(P_{E,\dot S_E} \rw \mc{E}_{E,\dot S_E},Z \rw T)$, let $\phi \in \tx{Hom}(P_{E,\dot S_E},Z)^\Gamma$ be its image. We compose $z$ with the map $\Box_1 \rw \mc{E}_{E,\dot S_E}$ and with the inclusion $T(O_S) \rw T(O_{S'})$ to obtain a 1-cocycle $z_1 : \Box_1 \rw T(O_{S'})$. It's restriction to $P_{E,\dot S_E}(O_S)$ is still given by $\phi$. Now $\phi$ determines a homomorphism $P_{E,\dot S_E}(O_{S'}) \rw Z(O_{S'}) \rw T(O_{S'})$, which glues with $z_1$ to a 1-cocycle $z_2 : \Box_2 \rw T(O_{S'})$.  We then compose $z_2$ with the homomorphism $\mc{E}_{K,\dot S'_K} \rw \Box_2$ to obtain $z_K \in Z^1(P_{K,\dot S'_K} \rw \mc{E}_{K,\dot S'_K},Z \rw T)$. The assignment $z \mapsto z_K$ provides a functorial in $[Z \rw T] \in \mc{T}_{E,S}$ homomorphism
\[ Z^1(P_{E,\dot S_E} \rw \mc{E}_{E,\dot S_E},Z \rw T)\rw Z^1(P_{K,\dot S'_K} \rw \mc{E}_{K,\dot S'_K},Z \rw T) \]
which respects coboundaries. The corresponding homomorphism on the level of cohomology gives \eqref{eq:infmap}. We also have the homomorphism
\[ !: \frac{\bar Y[S_E,\dot S_E]_0^{N_{E/F}}}{I_{E/F}Y[S_E]_0} \rw \frac{\bar Y[S'_K,\dot S'_K]_0^{N_{K/F}}}{I_{K/F}Y[S'_K]_0}
 \]
discussed in Lemma \ref{lem:inftn}.

\begin{pro} \label{pro:inf} The inflation map \eqref{eq:infmap} is independent of the choice of dotted arrow in Diagram \eqref{eq:diaginf}. It is injective, functorial in $[Z \rw T] \in \mc{T}_{E,S}$, and fits into the following commutative diagrams
\[ \xymatrix{
H^1(P_{E,\dot S_E} \rw \mc{E}_{E,\dot S_E},Z \rw T)\ar[r]^{\tx{Inf}}&H^1(P_{K,\dot S'_K} \rw \mc{E}_{K,\dot S'_K},Z \rw T) \\
\frac{\bar Y[S_E,\dot S_E]_0^{N_{E/F}}}{I_{E/F}Y[S_E]_0}\ar[u]^{\iota_{E,\dot S_E}}\ar[r]^{!}&\frac{\bar Y[S'_K,\dot S'_K]_0^{N_{K/F}}}{I_{K/F}Y[S'_K]_0}\ar[u]_{\iota_{K,\dot S'_K}}
} \]
and
\[ \xymatrix{
1\ar[d]&1\ar[d]\\
H^1(\Gamma_S,T(O_S))\ar[r]^{\tx{Inf}}\ar[d]&H^1(\Gamma_{S'},T(O_{S'}))\ar[d]\\
H^1(P_{E,\dot S_E} \rw \mc{E}_{E,\dot S_E},Z \rw T)\ar[d]\ar[r]^{\tx{Inf}}&H^1(P_{K,\dot S'_K} \rw \mc{E}_{K,\dot S'_K},Z \rw T)\ar[d]\\
\tx{Hom}(P_{E,\dot S_E},Z)^\Gamma\ar[r]^{}&\tx{Hom}(P_{K,\dot S'_K},Z)^\Gamma
}\]
\end{pro}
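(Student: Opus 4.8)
The proof rests on the uniqueness principle of Lemma~\ref{lem:unigen} together with the compatibility statements for Tate's classes (\S\ref{sub:tatetori}) and for the maps $\Psi$ (\S\ref{sub:h2z}); essentially no new cocycle computation is needed beyond unwinding the construction of \eqref{eq:infmap}. Fix $[Z\rw T]\in\mc{T}_{E,S}$, write $Y=X_*(T)$, $\bar Y=X_*(T/Z)$, $A=X^*(Z)$, and recall the inflation-restriction sequences \eqref{eq:infres} for both cohomology groups. First I would unwind the construction: if $z\in Z^1(P_{E,\dot S_E}\rw\mc{E}_{E,\dot S_E},Z\rw T)$ is inflated from a cocycle of $\Gamma_S$ valued in $T(O_S)$, then $z_1$, $z_2$ and $z_K$ are successively inflated from $\Gamma_{S'}$, so the map induced on $H^1(\Gamma_S,T(O_S))$ is ordinary inflation; and the restriction of $z_K$ to $P_{K,\dot S'_K}(O_{S'})$ is $\phi\circ\eqref{eq:mesn_es}$ with $\phi=z|_{P_{E,\dot S_E}}$, so the map induced on the $\tx{Hom}$-terms is dual to \eqref{eq:mesn_es}. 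In particular both ``outer'' maps are independent of the chosen dotted arrow, and the second displayed diagram commutes for every choice of dotted arrow.

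\textbf{Independence, functoriality, injectivity.} Two choices of dotted arrow give functorial homomorphisms $H^1(P_{E,\dot S_E}\rw\mc{E}_{E,\dot S_E},Z\rw T)\rw H^1(P_{K,\dot S'_K}\rw\mc{E}_{K,\dot S'_K},Z\rw T)$ fitting into the same commutative square as in Lemma~\ref{lem:unigen}, with the two outer maps as above. The hypotheses of that lemma hold: the rows are \eqref{eq:infres}; for $Z\subset Z'$ the transition maps are injective since both groups are subgroups of $H^1(\mc{E},T(O))$; $H^1(\Gamma_{S'},T(O_{S'}))$ is finite by Lemma~\ref{lem:inf1} and \cite[Thm.~8.3.20]{NSW08}; $\varinjlim_Z H^1(P_{E,\dot S_E}\rw\mc{E}_{E,\dot S_E},Z\rw T)$ is $\N_S$-divisible by Lemma~\ref{lem:nsdiv}; and $\tx{Hom}(P_{E,\dot S_E},Z)^\Gamma$ is finite of exponent dividing $\exp(Z)\in\N_S$. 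Hence the two maps coincide, so \eqref{eq:infmap} is well defined, and functoriality in $[Z\rw T]$ is immediate from the cocycle-level construction. For injectivity, note that \eqref{eq:mesn_es} is injective: if $f^K=0$ and $f(\sigma,w)\neq 0$ then $\sigma^{-1}w\in\dot S_E\subset(\dot S'_K)_E$, so there is $u'\in\dot S'_K\cap S_K$ with $p(u')=\sigma^{-1}w$; lifting $\sigma$ to $\tilde\sigma\in\Gamma_{K/F}$ and putting $u=\tilde\sigma u'$ gives $f^K(\tilde\sigma,u)=f(\sigma,w)\neq 0$, a contradiction. Thus the bottom map of the second diagram is injective; ordinary inflation $H^1(\Gamma_S,T(O_S))\rw H^1(\Gamma_{S'},T(O_{S'}))$ is injective since both groups embed compatibly into $H^1(\Gamma,T(\ol F))$ by Lemma~\ref{lem:inf2} (whose hypothesis is part~3 of Conditions~\ref{cnds:pes}); a diagram chase in the second diagram, whose columns are exact by \eqref{eq:infres}, yields injectivity of \eqref{eq:infmap}.

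\textbf{The first diagram.} Both $\tx{Inf}\circ\iota_{E,\dot S_E}$ and $\iota_{K,\dot S'_K}\circ{!}$ are functorial homomorphisms from $\bar Y[S_E,\dot S_E]_0^{N_{E/F}}/I_{E/F}Y[S_E]_0$ into $H^1(P_{K,\dot S'_K}\rw\mc{E}_{K,\dot S'_K},Z\rw T)$; via the diagrams of Proposition~\ref{pro:iota} (resp.\ Corollary~\ref{cor:unique}) they fit into four-term diagrams of the shape required by Lemma~\ref{lem:unigen}. Their left-hand columns agree: ordinary inflation $H^1(\Gamma_S,T(O_S))\rw H^1(\Gamma_{S'},T(O_{S'}))$ factors as change of $S$ followed by change of $E$, and on the Tate--Nakayama side Lemma~\ref{lem:tnstori} and Corollary~\ref{cor:tnshriek} identify this with $\tx{TN}_{E,S}$ followed by ${!}$, i.e.\ with $\tx{TN}_{K,S'}\circ{!}$. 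Their right-hand columns also agree, once one checks that under the isomorphisms $\Psi_{E,S},\Psi_{K,S'}$ of Lemma~\ref{lem:mesn_h2} the transition map \eqref{eq:mesn_es} is carried to ${!}$ on $A^\vee[\dot S_E]$; this follows by combining Lemma~\ref{lem:mesn_es1}, both parts of Lemma~\ref{lem:tnz_var}, and the injectivity of $\Theta_{K,S'}$ from Proposition~\ref{pro:tnz_es}. Lemma~\ref{lem:unigen} then forces $\tx{Inf}\circ\iota_{E,\dot S_E}=\iota_{K,\dot S'_K}\circ{!}$.

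\textbf{Main obstacle.} The one point needing care is the identification of \eqref{eq:mesn_es} with ${!}$ under $\Psi$: the two combinatorial formulas do not literally agree --- they differ by an element of the relevant augmentation submodule, compare the section-independence in Lemmas~\ref{lem:tnshriek} and \ref{lem:inftn} --- so the equality of the right-hand columns must be extracted from the cohomological compatibilities of \S\ref{sub:tatetori} and \S\ref{sub:h2z} (via the injectivity of $\Theta_{K,S'}$) rather than verified by a direct matching of formulas. Everything else is bookkeeping through the push-out/pull-back construction of Diagram~\eqref{eq:diaginf}.
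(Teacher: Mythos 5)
Your overall strategy is the same as the paper's: the second diagram is read off directly from the construction of \eqref{eq:infmap}, injectivity follows by a five-lemma chase from Lemma \ref{lem:inf2} together with the surjectivity of $P_{K,\dot S'_K} \to P_{E,\dot S_E}$ (your check that \eqref{eq:mesn_es} is injective is a correct way to obtain that surjectivity), and both the independence of the dotted arrow and the first diagram are forced by Lemma \ref{lem:unigen}, applied to the two dotted-arrow inflations and to $\iota_{K,\dot S'_K}\circ{!}\circ\iota_{E,\dot S_E}^{-1}$ as competing functorial homomorphisms.

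The gap is in your ``main obstacle'' paragraph. To place $\iota_{K,\dot S'_K}\circ{!}$ in the same Lemma \ref{lem:unigen} diagram as $\tx{Inf}\circ\iota_{E,\dot S_E}$ you must show that the two composites into $\Delta_3^2(Z)=\tx{Hom}(P_{K,\dot S'_K},Z)^\Gamma$ are \emph{equal}, not merely cohomologous. The route you propose (Lemma \ref{lem:mesn_es1}, Lemma \ref{lem:tnz_var}, injectivity of $\Theta_{K,S'}$) only identifies the two images in $\hat H^{-1}(\Gamma_{K/F},A^\vee[S'_K]_0)$, i.e.\ up to $I_{K/F}A^\vee[S'_K]_0$. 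That is strictly weaker: if you replace $\Delta_3^2$ by this quotient, the bottom row is no longer exact at the middle term (there are classes whose restriction to $P_{K,\dot S'_K}$ is a nonzero element of $A^\vee[\dot S'_K]_0$ lying in $I_{K/F}A^\vee[S'_K]_0$), and the proof of Lemma \ref{lem:unigen} uses that exactness to force the difference of the two candidates into $H^1(\Gamma_{S'},T(O_{S'}))$. So as written the uniqueness lemma cannot be invoked.

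Fortunately the premise of your detour is false: the two formulas do agree on the nose. An element $\bar\Lambda \in \bar Y[S_E,\dot S_E]_0$ has, by definition, its image $[\bar\Lambda]\in A^\vee[S_E]_0$ supported on $\dot S_E$; the section $s$ used to define ${!}$ satisfies $s(\dot S_E)\subset \dot S'_K$; and over each $v \in S$ there is exactly one place in $\dot S'_K$, which therefore must equal $s(\dot v)$. Unwinding $\Psi_{E,S}$ and $\Psi_{K,S'}$ then gives $[s_!\bar\Lambda]=[\bar\Lambda]^K$ exactly: the section-ambiguity visible in Lemmas \ref{lem:tnshriek} and \ref{lem:inftn} only affects values lying in $Y$, which die in $A^\vee$. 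With this direct verification replacing the cohomological detour (and with the left-hand columns matched via Lemma \ref{lem:tnstori} and Corollary \ref{cor:tnshriek}, as you do), your application of Lemma \ref{lem:unigen} is exactly the paper's argument and the proof is complete.
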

\begin{proof}
The commutativity of the second of the two diagrams is obvious from the construction of the inflation map. That this map is injective follows from the five-lemma, Lemma \ref{lem:inf2}, and the surjectivity of $P_{K,\dot S'_K} \rw P_{E,\dot S_E}$.

We will now apply Lemma \ref{lem:unigen} to show that \eqref{eq:infmap} does not depend on the choice of dotted arrow in Diagram \eqref{eq:diaginf}, and also that the first diagram in the statement of the proposition commutes. For this we take $\Delta_2^1$ to be $H^1(P_{E,\dot S_E} \rw \mc{E}_{E,\dot S_E})$ and take $\Delta_2^2$ to be  the restriction to $\mc{T}_{E,S}$ of $H^1(P_{K,\dot S'_K} \rw \mc{E}_{K,\dot S'_K})$. We take as $f_1$ the map \eqref{eq:infmap} constructed using one choice of dotted arrow, as $f_2$ the map \eqref{eq:infmap} constructed using another choice of dotted arrow, and as $f_3$ the composition $\iota_{K,\dot S'_K} \circ ! \circ \iota_{E,\dot S_E}^{-1}$. Then Lemma \ref{lem:unigen} implies that $f_1=f_2=f_3$.
\end{proof}

We now recall the exhaustive tower $E_i$ of finite Galois extensions of $F$ and the exhaustive tower $S_i$ of finite full subsets of $V_F$. Each $S_i$ was paired with $\dot S_i \subset S_{i,E_i}$ such that $(S_i,\dot S_i)$ satisfies Conditions \ref{cnds:pes} with respect to $E_i/F$ and moreover $\dot S_i \subset \dot S_{i+1,E_i}$. We will now show that
\begin{equation} \label{eq:h1lim}
H^1(P_{\dot V} \rw \mc{E}_{\dot V},Z \rw T)=\varinjlim_i H^1(P_{E,\dot S_E} \rw \mc{E}_{E,\dot S_E},Z \rw T),
\end{equation}
the colimit being taken with respect to \eqref{eq:infmap}. In order to do this, we first define inflation maps
\begin{equation} \label{eq:infmap1} \tx{Inf} : H^1(P_{E_i,\dot S_i} \rw \mc{E}_{E_i,\dot S_i},Z \rw T) \rw H^1(P_{\dot V} \rw \mc{E}_{\dot V},Z \rw T) \end{equation}
functorial in $\mc{T}_{E_i,S_i}$ in a manner analogous to the definition of \eqref{eq:infmap}. Namely, we consider the diagram
\begin{equation} \label{eq:diaginf1} \xymatrix{
	1\ar[r]&P_{\dot V}(\ol{F})\ar[r]\ar[d]&\mc{E}_{\dot V}\ar[r]\ar@{.>}[d]&\Gamma\ar[r]\ar@{=}[d]&1\\
	1\ar[r]&P_{E_i,\dot S_i}(\ol{F})\ar[r]&\Box_2\ar[r]&\Gamma\ar[r]\ar@{=}[d]&1\\
	1\ar[r]&P_{E_i,\dot S_i}(O_{S_i})\ar[u]\ar[r]&\Box_1\ar[r]\ar[d]\ar[u]&\Gamma\ar[r]\ar[d]&1\\
	1\ar[r]&P_{E_i,\dot S_i}(O_{S_i})\ar[r]\ar@{=}[u]&\mc{E}_{E_i,\dot S_i}\ar[r]&\Gamma_{S_i}\ar[r]&1
}\end{equation}
analogous to \eqref{eq:diaginf} and use the same procedure to produce the inflation map \eqref{eq:infmap1}.

\begin{pro} \label{pro:hl} The maps \eqref{eq:infmap1} splice together to an isomorphism
\[ \varinjlim_i H^1(P_{E_i,\dot S_i} \rw \mc{E}_{E_i,\dot S_i},Z \rw T) \rw H^1(P_{\dot V} \rw \mc{E}_{\dot V},Z \rw T) \]
of functors $\mc{T} \rw \tx{AbGrp}$ that is independent of any choices.
\end{pro}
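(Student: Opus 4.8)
The plan is to proceed in three steps: first pin down the maps \eqref{eq:infmap1} and show that they are independent of the choices entering their construction; second, show that they are compatible with the transition maps \eqref{eq:infmap}, so that they splice to a map out of the colimit; and third, identify that spliced map with $H^1(P_{\dot V}\rw\mc{E}_{\dot V},Z\rw T)$ by comparing inflation-restriction sequences.

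For the first two steps I would invoke Lemma \ref{lem:unigen}, exactly as in the proof of Proposition \ref{pro:inf}. Take $\Delta_2^1=H^1(P_{E_i,\dot S_i}\rw\mc{E}_{E_i,\dot S_i})$ on $\mc{T}_{E_i,S_i}$ and $\Delta_2^2$ the restriction to $\mc{T}_{E_i,S_i}$ of $H^1(P_{\dot V}\rw\mc{E}_{\dot V})$, with $\Delta_1^1(T)=H^1(\Gamma_{S_i},T(O_{S_i}))$, $\Delta_1^2(T)=H^1(\Gamma,T)$, $\Delta_3^1(Z)=\tx{Hom}(P_{E_i,\dot S_i},Z)^\Gamma$, $\Delta_3^2(Z)=\tx{Hom}_F(P_{\dot V},Z)$, the fixed maps $\Delta_1^1\rw\Delta_1^2$ being ordinary inflation and $\Delta_3^1\rw\Delta_3^2$ being restriction (dual to the projection $P_{\dot V}\rw P_{E_i,\dot S_i}$). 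The hypotheses of Lemma \ref{lem:unigen} hold: $\varinjlim_Z\Delta_2^1$ is $\N_{S_i}$-divisible by Lemma \ref{lem:nsdiv}, $\Delta_3^1(Z)$ is $\N_{S_i}$-torsion since its exponent divides $\exp(Z)$, and $\Delta_1^2(T)=H^1(\Gamma,T)$ admits a homomorphism with finite kernel (namely $\tx{ker}^1(\Gamma,T)$, finite for a torus over a number field) to the direct sum $\bigoplus_v H^1(\Gamma_v,T)$ of finite abelian groups. Since any map \eqref{eq:infmap1} restricts to inflation on $H^1(\Gamma_{S_i},T(O_{S_i}))$ and induces the fixed restriction map on $\tx{Hom}$-groups — the second by construction — Lemma \ref{lem:unigen} shows it is independent of the dotted arrow in \eqref{eq:diaginf1}; the same instance applied to the two composites shows that \eqref{eq:infmap1} for index $i$ equals \eqref{eq:infmap} from $i$ to $i+1$ followed by \eqref{eq:infmap1} for index $i+1$, since both composites again restrict to inflation and to the dual-of-projection map. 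Hence the \eqref{eq:infmap1} splice to a functorial homomorphism $\varinjlim_i H^1(P_{E_i,\dot S_i}\rw\mc{E}_{E_i,\dot S_i},Z\rw T)\rw H^1(P_{\dot V}\rw\mc{E}_{\dot V},Z\rw T)$.

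For the third step I would take the colimit over $i$ of the inflation-restriction sequences \eqref{eq:infres} — a filtered colimit, hence exact — and map it to \eqref{eq:infres1} using Proposition \ref{pro:inf} together with its (evident) analog for the maps \eqref{eq:infmap1}. Three colimit computations are needed. Since $T$ splits over $E_i$ and $\exp(Z)\in\N_{S_i}$ for $i\gg0$, Lemma \ref{lem:inf1} gives $H^1(\Gamma_{E_i/F},T(O_{E_i,S_i}))\cong H^1(\Gamma_{S_i},T(O_{S_i}))$, and since $\Gamma_{E/F}$ is finite and $T(\ol F)=\varinjlim_{E,S}T(O_{E,S})$, cofinality of $(E_i,S_i)$ yields $\varinjlim_i H^1(\Gamma_{S_i},T(O_{S_i}))\cong H^1(\Gamma,T)$; likewise Lemmas \ref{lem:inf1} and \ref{lem:inf2} make $\varinjlim_i H^2(\Gamma_{S_i},T(O_{S_i}))\rw H^2(\Gamma,T)$ injective. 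Since $P_{\dot V}=\varprojlim_i P_{E_i,\dot S_i}$ and $Z$ is finite, $\varinjlim_i\tx{Hom}(P_{E_i,\dot S_i},Z)^\Gamma\cong\tx{Hom}_F(P_{\dot V},Z)$, exactly as in the proof of Lemma \ref{lem:hompdotvz}; and the boundary maps $\tx{Hom}(P_{E_i,\dot S_i},Z)^\Gamma\rw H^2(\Gamma_{S_i},T(O_{S_i}))$ are compatible with the global boundary map because both factor through $\Theta^P_{E_i,\dot S_i,N}$, resp. $\Theta^P_{\dot V}$, and these are compatible by Lemma \ref{lem:mesn_es1}. A four/five-lemma chase (using that both sequences begin with an injection, the left vertical map is an isomorphism, the third is an isomorphism, and the fourth is injective) then gives that the middle colimit map is an isomorphism. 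Finally, independence of choices follows because the $\iota_{E_i,\dot S_i}$ are canonical by Corollary \ref{cor:unique}, $H^1(P_{\dot V}\rw\mc{E}_{\dot V},Z\rw T)$ is independent of the realizing extension $\mc{E}_{\dot V}$ by Proposition \ref{pro:globrig}, and the \eqref{eq:infmap1} are independent of the dotted arrows by the first step.

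The main obstacle I anticipate is not a single deep point but the careful matching of the two inflation-restriction sequences after passing to the colimit: verifying that the boundary maps into $H^2$ genuinely correspond under the colimit identifications, that $\varinjlim_i H^2(\Gamma_{S_i},T(O_{S_i}))\rw H^2(\Gamma,T)$ is injective, and — for the first two steps — that the hypotheses of Lemma \ref{lem:unigen} are met, in particular the finiteness of $\tx{ker}^1(\Gamma,T)$ and the existence of the required homomorphism out of $H^1(\Gamma,T)$. These are routine but need to be assembled with some care, since the global groups $H^1(\Gamma,T)$ and $H^2(\Gamma,T)$ are no longer finite, unlike their $(E,S)$-level counterparts.
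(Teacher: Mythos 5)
Your proposal follows essentially the same route as the paper: independence of the dotted arrow in \eqref{eq:diaginf1} and compatibility with the transition maps \eqref{eq:infmap} are both obtained from Lemma \ref{lem:unigen} (with $\Delta_2^1=H^1(P_{E_i,\dot S_i}\rw\mc{E}_{E_i,\dot S_i})$, $\Delta_2^2=H^1(P_{\dot V}\rw\mc{E}_{\dot V})$), and the spliced map is shown to be an isomorphism by passing to the colimit of the inflation-restriction sequences \eqref{eq:infres}, comparing with \eqref{eq:infres1}, and running a five-lemma chase using Lemma \ref{lem:inf2} and the surjectivity in the limit of the maps on $H^1(\Gamma_{S_i},T(O_{S_i}))$ and on the $\tx{Hom}$-groups. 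The only cosmetic difference is that the paper also gives a separate direct cocycle argument for injectivity of \eqref{eq:infmap1}, which your five-lemma chase subsumes, and you spell out the hypotheses of Lemma \ref{lem:unigen} (finiteness of $\tx{ker}^1(\Gamma,T)$ and of the local $H^1(\Gamma_v,T)$) that the paper leaves implicit.
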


\begin{proof}
\ul{Step 1:} We argue that \eqref{eq:infmap1} is injective and independent of the choice of dotted arrow in Diagram \eqref{eq:diaginf1}. For injectivity, note if the class of $z$ maps to zero, then already $z|_{P_{\dot V}}=0$ due to the surjectivity of $P_{\dot V} \rw P_{E,\dot S_i}$. In other words $z$ is inflated from $Z^1(\Gamma_{S_i},T(O_{S_i}))$ and the image in $Z^1(P_{\dot V} \rw \mc{E}_{\dot V},Z \rw T)$ under the inflation map just defined is inflated from $Z^1(\Gamma,T(\ol{F}))$. But the inflation map $H^1(\Gamma_{S_i},T(O_{S_i})) \rw  H^1(\Gamma,T(\ol{F}))$ is injective according to Lemma \ref{lem:inf2}. The independence of \eqref{eq:infmap1} from the choice of dotted arrow follows from Lemma \ref{lem:unigen} applied to $\Delta_2^1=H^1(P_{E,\dot S_E} \rw \mc{E}_{E,\dot S_E})$ and $\Delta_2^2=H^1(P_{\dot V} \rw \mc{E}_{\dot V})$.

\ul{Step 2:} For $j>i$ the diagram
\[ \xymatrix{
	H^1(P_{E_i,\dot S_i} \rw \mc{E}_{E_i,\dot S_i},Z \rw T)\ar[rd]\ar[dd]&\\
	&H^1(P_{\dot V} \rw \mc{E}_{\dot V},Z \rw T)\\
	H^1(P_{E_j,\dot S_j} \rw \mc{E}_{E_j,\dot S_j},Z \rw T)\ar[ru]&
}\]
commutes. Here the vertical arrow is the inflation map \eqref{eq:infmap}, while the diagonal arrows are \eqref{eq:infmap1}. This follows immediately from Lemma \ref{lem:unigen}.

\ul{Step 3:} Combining Steps 1,2 we obtain the homomorphism stated in the proposition and we already know that it is functorial, injective, and independent of choices. We will now argue that it is surjective.

For this, consider the following diagram for an arbitrary index $i$
\[ \xymatrix@C=1pc{
	1 \ar[r]&{H^1(\Gamma_{S_i},T(O_{S_i}))} \ar[r]\ar[d]& H^1(P_{E_i,\dot S_i} \rw \mc{E}_{E_i,\dot S_i},Z \rw T) \ar[r]\ar[d]& \tx{Hom}(P_{E_i,\dot S_i},Z)^\Gamma \ar[r]\ar[d]& H^2(\Gamma_{S_i},T(O_{S_i}))\ar[d]\\
	1 \ar[r]&{H^1(\Gamma,T(\ol{F}))} \ar[r]& H^1(P_{\dot V} \rw \mc{E}_{\dot V},Z \rw T) \ar[r]& \tx{Hom}(P_{\dot V},Z)^\Gamma \ar[r]& H^2(\Gamma,T(\ol{F}))\\
}\]
Taking the direct limit over all $i$ preserves the exactness of the top row, the first and fourth vertical maps stay injective by Lemma \ref{lem:inf2} and become surjective, and so also does the third vertical map. The five-lemma implies that the second vertical map is an isomorphism.
\end{proof}

The proof of Theorem \ref{thm:tn+} is almost complete. We have established the existence of the functorial isomorphism $\iota_{\dot V}$ and the commutativity of the first diagram. The uniqueness of $\iota_{\dot V}$ follows immediately from Lemma \ref{lem:unigen} applied to $\Delta_2^1=\bar Y[V_{\ol{F}},\dot V]_{0,+,\tx{tor}}$ and $\Delta_2^2=H^1(P_{\dot V} \rw \mc{E}_{\dot V})$. In this case the set $S$ in the Lemma is the set of all places of $\Q$. The commutativity of the second diagram in the statement of the Theorem is another immediate application of Lemma \ref{lem:unigen}. This time we take $\Delta_2^1=H^1(P_{\dot V} \rw \mc{E}_{\dot V})$ and $\Delta_2^2=H^1(u_v \rw W_v)$ and compare the two functorial homomorphisms $\tx{loc}_v$ and $\iota_v\circ l_v\circ\iota_{\dot V}^{-1}$. The theorem is now proved.

\subsection{Duality for connected reductive groups} \label{sub:tn+g}

In this subsection we are going to extend the duality results of the previous subsection to the case of connected reductive groups. Let $\mc{R} \subset \mc{A}$ be the full subcategory consisting of those $[Z \rw G]$ for which $G$ is connected and reductive. In \cite[\S3.4]{KalRI} we defined in the setting of a local field $F$ a quotient $H^1_\tx{ab}(u \rw W,Z \rw G)$ of the set $H^1(u \rw W,Z \rw G)$. The same definition works in the context of a global field $F$. Namely, on the set $H^1(P_{\dot V} \rw \mc{E}_{\dot V},Z \rw G)$ we impose the following equivalence relation. Let $z_1,z_2 \in Z^1(P_{\dot V} \rw \mc{E}_{\dot V},Z \rw G)$. Let $G^1$ be the twist of $G$ by the image of $z_1$ in $Z^1(\Gamma,G_\tx{ad})$. Tautologically we have $z_2 \cdot z_1^{-1} \in Z^1(P_{\dot V} \rw \mc{E}_{\dot V},Z \rw G^1)$ and we declare the classes of $z_1$ and $z_2$ equivalent if $z_2\cdot z_1^{-1}$ belongs to the image of $Z^1(\Gamma,G^1_\tx{sc})$. We denote by $H^1_\tx{ab}(P_{\dot V} \rw \mc{E}_{\dot V},Z \rw G)$ the set of equivalence classes under this equivalence relation. In this way we obtain a functor
\[ \mc{R} \rw \tx{Sets},\quad [Z \rw G] \mapsto H^1_\tx{ab}(P_{\dot V} \rw \mc{E}_{\dot V},Z \rw G) \]
and a functorial surjective map $H^1(P_{\dot V} \rw \mc{E}_{\dot V}) \rw H^1_\tx{ab}(P_{\dot V} \rw \mc{E}_{\dot V})$. The localization map \eqref{eq:locmap2} descends to a map $loc_v : H^1_\tx{ab}(P_{\dot V} \rw \mc{E}_{\dot V}) \rw H^1_\tx{ab}(u_v \rw W_v)$.

Next we extend the functor $\bar Y[V_{\ol{F}},\dot V]_{0,+,\tx{tor}}$ from $\mc{T}$ to $\mc{R}$ by following the constructions of \cite[\S4.1]{KalRI}. Let $T_1,T_2 \subset G$ be two maximal tori defined over $F$. Write $\bar Y_i=X_*(T_i/Z)$ and $Q_i^\vee=X_*(T_{i,\tx{sc}})$. According to \cite[Lemma 4.2]{KalRI} any $g \in G(\ol{F})$ with $\tx{Ad}(g)T_1=T_2$ provides the same (and hence $\Gamma$-equivariant) isomorphism $\tx{Ad}(g) : \bar Y_1/Q_1^\vee \rw \bar Y_2/Q_2^\vee$. Let $E/F$ be a finite Galois extension splitting $T_1$ and $T_2$ and $(S_E,\dot S_E)$ a pair satisfying Conditions \ref{cnds:pes}. Writing $\bar Y_1/Q_1^\vee[S_E,\dot S_E]_0^{N_{E/F}}$ for the subgroup of $Y_1/Q_1^\vee[S_E]_0^{N_{E/F}}$ consisting of functions whose value at $w \in S_E \sm \dot S_E$ lies in $Y_1/Q_1^\vee$ we obtain from $\tx{Ad}(g)$ an isomorphism
\begin{equation} \label{eq:ytti} \frac{\bar Y_1/Q_1^\vee[S_E,\dot S_E]_0^{N_{E/F}}}{I_{E/F}(Y_1/Q_1^\vee[S_E]_0)} \rw \frac{\bar Y_2/Q_2^\vee[S_E,\dot S_E]_0^{N_{E/F}}}{I_{E/F}(Y_2/Q_2^\vee[S_E]_0)}. \end{equation}
We let the value of the functor $\bar Y[V_{\ol{F}},\dot V]_{0,+,\tx{tor}}$ at $[Z \rw G]$ be given by the colimit of
\[ \frac{\bar Y/Q^\vee[S_{i,E_i},\dot S_i]_0^{N_{E_i/F}}}{I_{E_i/F}(Y/Q^\vee[S_{i,E_i}]_0)}, \]
where $i$ runs over the set of natural numbers, with the transition map $i \mapsto i+1$ given by the map $!$ of Lemma \ref{lem:inftn}, and where $T \subset G$ runs over all maximal tori of $G$, with the transition maps between two maximal tori given by the isomorphism \eqref{eq:ytti}.

The map $l_v : \bar Y[V_{\ol{F}},\dot V]_{0,+,\tx{tor}} \rw \bar Y_{+_v,\tx{tor}}$ extends to a map of functors $\mc{R} \rw \tx{Sets}$ by the same formula.

There is a related but simpler functor, which we shall call $\bar Y_{+,\tx{tor}}$. Its definition mimics that of the local functor $\bar Y_{+_v,\tx{tor}}$. Its value at $[Z \rw G]$ is defined as the colimit of
\[ \frac{\bar Y/Q^\vee}{I(Y/Q^\vee)}[\tx{tor}] \]
over all maximal tori $T \subset G$, where the transition maps are again induced by $\tx{Ad}(g)$.

\begin{thm} \label{thm:tn+g} The isomorphism $\iota_{\dot V}$ of Theorem \ref{thm:tn+} extends to an isomorphism
\[ \iota_{\dot V}: \bar Y[V_{\ol{F}},\dot V]_{0,+,\tx{tor}} \rw H^1_\tx{ab}(P_{\dot V} \rw \mc{E}_{\dot V}) \]
of functors $\mc{R} \rw \tx{Sets}$. We have the commutative diagram of sets with exact bottom row
\[ \xymatrix{
H^1_\tx{ab}(P_{\dot V} \rw \mc{E}_{\dot V},Z \rw G)\ar[r]^-{(loc_v)_v}&\coprod\limits_{v \in \dot V}H^1_\tx{ab}(u_v \rw W_v,Z \rw G)\\
\bar Y[V_{\ol{F}},\dot V]_{0,+,\tx{tor}}(Z \rw G)\ar[u]^{\iota_{\dot V}}\ar[r]^{(l_v)_v}&\bigoplus\limits_{v \in \dot V} \bar Y_{+_v,\tx{tor}}\ar[r]^-{\sum}\ar[u]^{(\iota_v)_v}(Z \rw G)&\bar Y_{+,\tx{tor}}(Z \rw G)
}\]
\end{thm}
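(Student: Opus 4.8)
The plan is to deduce Theorem~\ref{thm:tn+g} from the torus case, i.e. from Theorem~\ref{thm:tn+} and Corollary~\ref{cor:tnexact}, by reduction to a maximal torus, following the template of \cite[\S4.1]{KalRI} and feeding it the new global inputs Lemma~\ref{lem:cohapprox1}, Corollary~\ref{cor:tnexact}, and an approximation statement for maximal tori. \textbf{Construction of $\iota_{\dot V}$ on $\mc{R}$.} Fix $[Z \to G] \in \mc{R}$ and a maximal torus $T \subset G$ defined over $F$. Composing the torus isomorphism $\iota_{\dot V}$ of Theorem~\ref{thm:tn+} for $[Z \to T]$ with the natural maps $H^1(P_{\dot V} \to \mc{E}_{\dot V}, Z \to T) \to H^1(P_{\dot V} \to \mc{E}_{\dot V}, Z \to G) \to H^1_{\tx{ab}}(P_{\dot V} \to \mc{E}_{\dot V}, Z \to G)$ yields a homomorphism out of $\bar Y[V_{\ol F},\dot V]_{0,+,\tx{tor}}(Z \to T)$. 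I would first check it kills the image of $Q^\vee = X_*(T_{\tx{sc}})$: replacing a representing cocycle $z_{\bar\Lambda}$ by $z_{\bar\Lambda'}$ with $\bar\Lambda - \bar\Lambda'$ coming from $Q^\vee$-data alters $z$ by the image of a class in $Z^1(\Gamma, T_{\tx{sc}}) \subset Z^1(\Gamma, G_{\tx{sc}})$, hence within its abelianization class; so it factors through $\bar Y/Q^\vee[\dots]_0^{N}/I_{E/F}(Y/Q^\vee[\dots]_0)$. Next I would verify compatibility with the transition isomorphisms \eqref{eq:ytti} between two maximal tori: this is precisely the functoriality of the torus-level $\iota_{\dot V}$ under the $\Gamma$-equivariant isomorphism $\tx{Ad}(g) : \bar Y_1/Q_1^\vee \to \bar Y_2/Q_2^\vee$ of \cite[Lemma~4.2]{KalRI}, together with the fact that conjugating a cocycle by $g \in G(\ol F)$ moves it within its $H^1_{\tx{ab}}$-class. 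Passing to the colimit over maximal tori gives $\iota_{\dot V} : \bar Y[V_{\ol F},\dot V]_{0,+,\tx{tor}}(Z \to G) \to H^1_{\tx{ab}}(P_{\dot V} \to \mc{E}_{\dot V}, Z \to G)$, functorial in $[Z \to G]$.

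\textbf{Bijectivity.} Surjectivity is immediate from Lemma~\ref{lem:cohapprox1}: every class in $H^1(P_{\dot V} \to \mc{E}_{\dot V}, Z \to G)$, hence every class in $H^1_{\tx{ab}}$, comes from a maximal torus, which by the torus case lies in the image of $\iota_{\dot V}$. For injectivity, the groupoid of maximal tori of $G$ with the maps $\tx{Ad}(g)$ is connected, so two elements of the source are represented on a common maximal torus $T$, and one is reduced to showing that the composite $\bar Y/Q^\vee[\dots]_0^{N}/I_{E/F}(Y/Q^\vee[\dots]_0) \to H^1_{\tx{ab}}$ is injective. This follows by comparing the torus isomorphism of Theorem~\ref{thm:tn+} with the description of the abelianization equivalence on $H^1(P_{\dot V}\to\mc{E}_{\dot V},Z\to T)$: its fibers are the cosets of the image of $H^1(\Gamma,T_{\tx{sc}})$, which under the classical Tate--Nakayama isomorphism (appearing in Theorem~\ref{thm:tn+} as $TN$) corresponds exactly to the subgroup generated by $Q^\vee$, i.e. to what we already quotiented by. Here one must keep track of the fact that $Y/Q^\vee$ need not be torsion-free, which is why the colimit-of-torsion-subgroups conventions of \eqref{eq:ytorglob} are used throughout; this bookkeeping is identical to \cite[\S4.1]{KalRI}.

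\textbf{The square and the exact bottom row.} The square commutes: given $x \in H^1_{\tx{ab}}(P_{\dot V}\to\mc{E}_{\dot V},Z\to G)$, pick by Lemma~\ref{lem:cohapprox1} a maximal torus $T$ through which $x$ is represented, and invoke the second diagram of Theorem~\ref{thm:tn+} for $T$ together with functoriality of $loc_v$ and $l_v$ under $[Z\to T]\to[Z\to G]$. That $\sum \circ (l_v)_v = 0$ reduces likewise, via Lemma~\ref{lem:cohapprox1}, to Corollary~\ref{cor:tnexact} for a single $F$-torus. The remaining assertion --- that a family $(y_v)_v$, almost all trivial, with $\sum_v y_v = 0$ in $\bar Y_{+,\tx{tor}}(Z\to G)$ lies in the image of $(l_v)_v$ --- is where the real work is. I would argue as follows: each of the finitely many nontrivial $y_v$ is represented on a maximal torus of $G_{F_v}$, and the relation $\sum_v y_v = 0$ is witnessed on some maximal torus of $G$ over $F$; using weak approximation for the variety of maximal tori $G/N_G(T)$ one can produce a single maximal torus $T \subset G$ over $F$ which at each of these finitely many places is $G(F_v)$-conjugate to the prescribed local torus and which also witnesses the relation. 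One then applies the exact bottom row of Corollary~\ref{cor:tnexact} for this $T$ to obtain the required global class, and maps it forward via $\iota_{\dot V}$.

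\textbf{Expected main obstacle.} The delicate point is the last one: assembling the local data $(y_v)_v$ together with the vanishing relation onto one maximal torus defined over $F$. This requires simultaneous approximation of maximal tori at finitely many places, and it has to be carried out compatibly with the $Q^\vee$-quotients --- which, as noted, are not torsion-free --- so that Corollary~\ref{cor:tnexact} applies without modification. Everything else is a faithful transcription of the local arguments of \cite[\S4.1]{KalRI}, now driven by the global results Theorem~\ref{thm:tn+}, Corollary~\ref{cor:tnexact}, and Lemma~\ref{lem:cohapprox1}.
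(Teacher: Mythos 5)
Your construction and the surjectivity step (via Lemma \ref{lem:cohapprox1}) follow the paper, but two essential steps are missing, and they are exactly where the paper spends most of its effort. First, the compatibility of your map with the transition isomorphisms \eqref{eq:ytti} between two maximal tori $T_1,T_2$ is not a formal consequence of "functoriality of the torus-level $\iota_{\dot V}$ plus conjugation by $g$". The map $\tx{Ad}(g)$ is not a morphism in $\mc{T}$: it is not defined over $F$, and only the induced map $\bar Y_1/Q_1^\vee \rw \bar Y_2/Q_2^\vee$ is $\Gamma$-equivariant, so Theorem \ref{thm:tn+} gives you no comparison between $H^1(P_{\dot V}\rw\mc{E}_{\dot V},Z\rw T_1)$ and $H^1(P_{\dot V}\rw\mc{E}_{\dot V},Z\rw T_2)$. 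Moreover, conjugating a cocycle valued in $T_1$ by $g$ does not yield a cocycle valued in $T_2$, and the two elements you must compare agree only modulo $Q_2^\vee$, so their difference is not a coboundary; one has to show it lifts to $Z^1(\Gamma,G^1_\tx{sc})$, which is the definition of agreeing in $H^1_\tx{ab}$. The paper does this by an explicit computation with the cocycles $z_{\bar\Lambda_i}$ of Proposition \ref{pro:iota}: writing $\tx{Ad}(g)\bar\Lambda_1=\bar\Lambda_2+M$ with $M\in Q_2^\vee$, decomposing $\bar\Lambda_i=p_i+r_i$ along $Y_i\otimes\Q=P_i^\vee\otimes\Q\oplus Q_i^\perp\otimes\Q$ so the central parts cancel, and then checking $dc_1=dc_2$ in $Z^2(\Gamma,Z(G_\tx{sc}))$ via $dc_i=dk\alpha_3\cup Np_i$. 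Without some version of this argument your map on the colimit over tori is not even well defined.

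Second, you implicitly assume that elements of $\bar Y[V_{\ol{F}},\dot V]_{0,+,\tx{tor}}(Z\rw G)$ (and pairs of them, for injectivity and additivity) can be lifted from the $Q^\vee$-quotient back to $\bar Y[V_{\ol{F}},\dot V]_{0,+,\tx{tor}}(Z\rw T)$ on a common maximal torus. This is not automatic: by the exact sequence \eqref{eq:tn+gseq} the obstruction lies in $Q^\vee[S_E]_0^\Gamma/N(Q^\vee[S_E]_0)$, which is nonzero for a general torus. The paper's "first point" removes it by transferring to a torus $T'$ that is fundamental at the relevant places (and at one non-archimedean place), where the local obstructions die by \cite[Lemma 10.4]{Kot86}, and then invoking the absence of nontrivial locally trivial classes in $H^2(\Gamma,T'_\tx{sc})$ \cite[Prop.\ 6.12]{PR94}. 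Your proposal never addresses this, while the step you flag as "the real work" (exactness of the bottom row) is in fact the easy part: since the transition maps between tori are canonical isomorphisms, all terms of the bottom row are computed on a single $F$-rational maximal torus, so no weak approximation for tori is needed and the exactness is the same computation as in Corollary \ref{cor:tnexact}.
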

We have used $\coprod$ to denote the subset of the direct product of pointed sets consisting of those elements almost all of whose coordinates are trivial. The bottom row is in fact an exact sequence of abelian groups.

\begin{cor} \label{cor:tn+gexact} The image of
\[ H^1(P_{\dot V} \rw \mc{E}_{\dot V},Z \rw G)\stackrel{(loc_v)_v}{\lrw} \coprod\limits_{v \in \dot V}H^1(u_v \rw W_v,Z \rw G) \]
consists precisely of those elements which map trivially under
\[ \begin{aligned} \coprod\limits_{v \in \dot V}H^1(u_v \rw W_v,Z \rw G)&\rw \coprod\limits_{v \in \dot V}H^1_\tx{ab}(u_v \rw W_v,Z \rw G)\\
&\rw \bigoplus\limits_{v \in \dot V} \bar Y_{+_v,\tx{tor}}(Z \rw G) \rw \bar Y_{+,\tx{tor}}(Z \rw G)
\end{aligned}\]
\end{cor}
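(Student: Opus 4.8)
The plan is to deduce this from Theorem \ref{thm:tn+g} together with the abelianization map $H^1(P_{\dot V} \rw \mc{E}_{\dot V},Z\rw G) \rw H^1_\tx{ab}(P_{\dot V} \rw \mc{E}_{\dot V},Z \rw G)$ and its local counterparts. The key observation is that the bottom composite in the displayed chain of maps in the corollary is precisely the composite $\iota_v^{-1}\circ(\text{ab})$ followed by $(l_v)_v$ and then $\sum$ from Theorem \ref{thm:tn+g}; so an element $(x_v)_v \in \coprod_v H^1(u_v \rw W_v, Z \rw G)$ maps trivially under the chain of the corollary if and only if its image $(\bar x_v)_v$ in $\coprod_v H^1_\tx{ab}(u_v \rw W_v, Z \rw G)$ lies in the kernel of $(\iota_v)_v$ composed with $\sum$, which by the exactness statement in Theorem \ref{thm:tn+g} means exactly that $(\bar x_v)_v$ is in the image of $(loc_v)_v : H^1_\tx{ab}(P_{\dot V} \rw \mc{E}_{\dot V},Z \rw G) \rw \coprod_v H^1_\tx{ab}(u_v \rw W_v,Z \rw G)$.

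First I would establish the (easy) inclusion: if $(x_v)_v$ is in the image of the total localization map from $H^1(P_{\dot V} \rw \mc{E}_{\dot V},Z \rw G)$, then by functoriality of the abelianization maps with respect to localization (which follows from the construction in Subsection \ref{sub:tn+g}) its image in $\coprod_v H^1_\tx{ab}(u_v \rw W_v,Z \rw G)$ factors through $H^1_\tx{ab}(P_{\dot V} \rw \mc{E}_{\dot V},Z \rw G)$, and hence by the exactness in Theorem \ref{thm:tn+g} maps trivially under $(\iota_v)_v$ and then $\sum$. For the converse — the substantive direction — suppose $(x_v)_v$ maps trivially under the chain of the corollary. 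Let $(\bar x_v)_v$ be its image in $\coprod_v H^1_\tx{ab}(u_v \rw W_v, Z \rw G)$; by Theorem \ref{thm:tn+g} there is a global class $\bar y \in H^1_\tx{ab}(P_{\dot V} \rw \mc{E}_{\dot V},Z \rw G)$ with $loc_v(\bar y) = \bar x_v$ for all $v$. The task is then to lift $\bar y$ to an actual cocycle class in $H^1(P_{\dot V} \rw \mc{E}_{\dot V},Z \rw G)$ whose localizations are the given $x_v$ (not merely classes with the correct abelianization).

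I expect the main obstacle to be exactly this lifting step: passing from agreement in $H^1_\tx{ab}$ back to agreement in $H^1$. The strategy I would follow is the one used already in \cite[\S5]{KalRI} and implicit in Lemma \ref{lem:cohapprox1}: use Lemma \ref{lem:cohapprox1} to choose a maximal torus $T \subset G$ through which a representative of $\bar y$ (or rather a global lift) is carried, so that the problem is reduced to tori, where $H^1 = H^1_\tx{ab}$ by definition and Corollary \ref{cor:tnexact} and Theorem \ref{thm:tn+} apply directly; one then has to check that the given local classes $x_v$ can simultaneously be conjugated into the images of $H^1(u_v \rw W_v, Z \rw T)$, using the fact that almost all $x_v$ are trivial (Corollary \ref{cor:loctriv} combined with the hypothesis) and that at the finitely many remaining places one has enough freedom — this is where one invokes the fact that two cocycles with the same image in $H^1_\tx{ab}(u_v \rw W_v, Z \rw G)$ differ by the image of $Z^1(\Gamma_v, G^1_\tx{sc})$ and applies a Hasse-principle-type statement for simply connected groups (vanishing of $\ker^1$ for $G_\tx{sc}$) to globalize the correction. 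Once everything is pushed into a common maximal torus, the exactness of the bottom row of Corollary \ref{cor:tnexact}, or equivalently of Theorem \ref{thm:tn+g} restricted to $\mc{T}$, produces the desired global class, and its localizations agree with $x_v$ by construction. This completes the proof. $\qed$
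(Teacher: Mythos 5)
Your setup matches the paper: the easy inclusion, and then using Theorem \ref{thm:tn+g} to produce a global class $\bar x \in H^1_\tx{ab}(P_{\dot V} \rw \mc{E}_{\dot V},Z \rw G)$ whose localizations are the abelianized images $(\bar x_v)_v$. Where your argument diverges is the lifting step, and there it has a genuine gap. The paper does not reduce to a torus at this point: it takes an \emph{arbitrary} lift $x^1 \in H^1(P_{\dot V} \rw \mc{E}_{\dot V},Z \rw G)$ of $\bar x$ (the abelianization map is surjective by construction), observes that by the very definition of the equivalence relation defining $H^1_\tx{ab}$ the differences $\delta_v := x_v \cdot (x^1_v)^{-1}$ lift to classes in $H^1(\Gamma_v,G^1_\tx{sc})$ for the twist $G^1$ of $G$ by $x^1$, and then invokes the Kneser--Harder--Chernousov theorem that the localization map $H^1(\Gamma,G^1_\tx{sc}) \rw \prod_v H^1(\Gamma_v,G^1_\tx{sc})$ is \emph{bijective} to find a global $\delta$ with these localizations; the class $x := \delta \cdot x^1$ then has the prescribed localizations.

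Your proposed execution has two problems. First, the claim that the given local classes $x_v$ "can simultaneously be conjugated into the images of $H^1(u_v \rw W_v,Z \rw T)$" for a single global maximal torus $T$ (chosen via Lemma \ref{lem:cohapprox1} from a global lift) is unjustified and false in general: the $x_v$ are arbitrary local classes with matching abelianizations and need not factor through any fixed globally defined torus at the finitely many bad places. The torus reduction is what powers the proof of Theorem \ref{thm:tn+g} itself, but the corollary does not need, and cannot comfortably use, a second such reduction. Second, the Hasse-principle input you cite, vanishing of $\ker^1$ for $G_\tx{sc}$, is the injectivity half of the statement; to "globalize the correction", i.e.\ to produce a global class in $H^1(\Gamma,G^1_\tx{sc})$ with prescribed localizations $(\delta_v)_v$, you need \emph{surjectivity} of the localization map (which for simply connected groups holds, together with injectivity, by Kneser, Harder, and Chernousov). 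Also note that the correction must be carried out for the twisted group $G^1$, so the arbitrary lift $x^1$ has to be fixed before the twist is formed. With the direct correction argument in place of the torus step, your outline becomes the paper's proof.
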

\begin{proof}
It is clear that the image of the first map is contained in the kernel of the second. Conversely, let $(x_v) \in \coprod_{v \in \dot V}H^1(u_v \rw W_v,Z \rw G)$ belong to the kernel of the second map. Denote by $(\bar x_v) \in \coprod_{v \in \dot V} H^1_\tx{ab}(u_v \rw W_v,Z \rw G)$ its image. By Theorem \ref{thm:tn+g} there exists $\bar x \in H^1_\tx{ab}(P_{\dot V} \rw \mc{E}_{\dot V},Z \rw G)$ mapping to $(\bar x_v)_v$. Let $x^1 \in H^1(P_{\dot V} \rw \mc{E}_{\dot V},Z \rw G)$ be an arbitrary lift of $x$ and let $(x^1_v)_v \in \coprod_{v \in \dot V}H^1(u_v \rw W_v,Z \rw G)$ be its image. Then for each $v \in \dot V$, $\delta_v := x_v \cdot (x^1_v)^{-1}$ lifts to an element of $H^1(\Gamma_v,G^1_\tx{sc})$, where $G^1$ is the twist of $G$ by $x^1$. By work of Kneser, Harder, and Chernousov, \cite[\S6.1, Thm 6.6]{PR94} the localization map
\[ H^1(\Gamma,G^1_\tx{sc}) \rw \prod_{v \in \dot V} H^1(\Gamma_v,G^1_\tx{sc}) \]
is bijective, so there exists (a unique) $\delta \in H^1(
\Gamma,G^1_\tx{sc})$ whose localization is equal to $(\delta_v)_v$. Then $x := \delta \cdot x^1 \in H^1(P_{\dot V} \rw \mc{E}_{\dot V},Z \rw G)$ has the desired localizations.
\end{proof}

\begin{proof}[Proof of Theorem \ref{thm:tn+g}]
Let $T \subset G$ be a maximal torus. Consider the map
\[ \bar Y[V_{\ol{F}},\dot V]_{0,+,\tx{tor}}(Z \rw T) \rw H^1(P_{\dot V} \rw \mc{E}_{\dot V},Z \rw T) \rw H^1_\tx{ab}(P_{\dot V} \rw \mc{E}_{\dot V},Z \rw G). \]
We claim that the fibers of this map are torsors for the image of
\[ Y[V_{\ol{F}}]_{0,\Gamma,\tx{tor}}(T_\tx{sc}) \rw \bar Y[V_{\ol{F}},\dot V]_{0,+,\tx{tor}}(Z \rw T). \]
By the usual twisting argument it is enough to consider the fiber over the trivial element. Let $x \in H^1(P_{\dot V} \rw \mc{E}_{\dot V},Z \rw T)$ map to the trivial element in $H^1_\tx{ab}(P_{\dot V} \rw \mc{E}_{\dot V},Z \rw G)$. Then the image of $x$ in $H^1(P_{\dot V} \rw \mc{E}_{\dot V},Z \rw G)$ is equal to the image of a class from $H^1(\Gamma,G_\tx{sc})$. In particular, the restriction of $x$ to $P_{\dot V}$ is trivial, thus $x$ is inflated from $H^1(\Gamma,T)$. The crossed modules $T_\tx{sc} \rw T$ and $G_\tx{sc} \rw G$ are quasi-isomorphic, which implies that $x$ lifts to a class $x_\tx{sc} \in H^1(\Gamma,T_\tx{sc})$. This proves the claim.

The claim implies that we have an injective map
\[ \tx{cok}(Y[V_{\ol{F}}]_{0,\Gamma,\tx{tor}}(T_\tx{sc}) \rw \bar Y[V_{\ol{F}},\dot V]_{0,+,\tx{tor}}(Z \rw T)) \rw H^1_\tx{ab}(P_{\dot V} \rw \mc{E}_{\dot V},Z \rw G). \]
To understand the source of this map we consider the exact sequence
\begin{equation} \label{eq:tn+gseq} \frac{Q^\vee[S_E]_0^N}{IQ^\vee[S_E]_0}\rw \frac{\bar Y[S_E,\dot S_E]_0^N}{IY[S_E]_0} \rw \frac{\bar Y/Q^\vee[S_E,\dot S_E]_0^N}{I(Y/Q^\vee[S_E]_0)}\rw \frac{Q^\vee[S_E]_0^\Gamma}{N(Q^\vee[S_E]_0)}.\end{equation}
Here the subscript $N$ signifies the kernel of the norm map $N_{E/F}$, $I$ stands for the augmentation ideal in $\Z[\Gamma_{E/F}]$, and the last map is given by taking a representative $x \in \bar Y[S_E,\dot S_E]_0$ and mapping it to $N(x) \in Q^\vee[S_E]_0^\Gamma$. Taking the colimit as $E$ traverses the sequence $E_i$, the first term becomes $Y[V_{\ol{F}}]_{0,\Gamma,\tx{tor}}(T_\tx{sc})$, the second term becomes $\bar Y[V_{\ol{F}},\dot V]_{0,+,\tx{tor}}(Z \rw T)$, and the third term becomes a subset of $\bar Y[V_{\ol{F}},\dot V]_{0,+,\tx{tor}}(Z \rw G)$. Thus the above cokernel is equal to the image of $\bar Y[V_{\ol{F}},\dot V]_{0,+,\tx{tor}}(Z \rw T)$ in $\bar Y[V_{\ol{F}},\dot V]_{0,+,\tx{tor}}(Z \rw G)$ and we have obtained an injective map
\begin{equation} \label{eq:tn+ginj}
\xymatrix{
	\tx{im}(\bar Y[V_{\ol{F}},\dot V]_{0,+,\tx{tor}}(Z \rw T) \rw \bar Y[V_{\ol{F}},\dot V]_{0,+,\tx{tor}}(Z \rw G))\ar[d]\\
	H^1_\tx{ab}(P_{\dot V} \rw \mc{E}_{\dot V},Z \rw G)
}
\end{equation}

We will now show the following:
\begin{enumerate}
	\item Every two elements of $\bar Y[V_{\ol{F}},\dot V]_{0,+,\tx{tor}}(Z \rw G)$ belong to the image of $\bar Y[V_{\ol{F}},\dot V]_{0,+,\tx{tor}}(Z \rw T)$ for some $T \subset G$.
	\item If $x_i \in \bar Y[V_{\ol{F}},\dot V]_{0,+,\tx{tor}}(Z \rw T_i)$ for $i=1,2$ map to the same element in $\bar Y[V_{\ol{F}},\dot V]_{0,+,\tx{tor}}(Z \rw G)$, then $\iota_{\dot V}(x_i) \in H^1(P_{\dot V} \rw \mc{E}_{\dot V},Z \rw T_i)$ map to the same element in $H^1_\tx{ab}(P_{\dot V} \rw \mc{E}_{\dot V},Z \rw G)$
\end{enumerate}
These two points imply that the maps \eqref{eq:tn+ginj} splice together to an injective map
\[ \iota_{\dot V} : \bar Y[V_{\ol{F}},\dot V]_{0,+,\tx{tor}}(Z \rw G) \rw H^1_\tx{ab}(P_{\dot V} \rw \mc{E}_{\dot V},Z \rw G) \]
which is also surjective according to Lemma \ref{lem:cohapprox1}. By construction, this map is functorial for maps of the form $[Z \rw T] \rw [Z \rw G]$, where $T \subset G$ is any maximal torus. Its general functoriality follows from the functoriality of $\iota_{\dot V}$ on the category $\mc{T}$. The commutativity of the square in the diagram follows from the functoriality of $\iota_{\dot V}$ and Corollary \ref{cor:tnexact}. The exactness of the bottom row is proved by the same argument as the exactness of the bottom row in that corollary. The proof of the theorem will be complete once the above two points are shown.

To show the first point, let $x_1,x_2 \in \bar Y[V_{\ol{F}},\dot V]_{0,+,\tx{tor}}(Z \rw G)$. This set is defined as a double colimit, but in one direction the transition maps are isomorphisms, while in the other direction the system is indexed by natural numbers. Thus there exists an object in this system which gives rise to both $x_1$ and $x_2$. This object is of the form
\[ \frac{\bar Y/Q^\vee[S_{i,E_i},\dot S_i]_0^{N_{E_i/F}}}{I_{E_i/F}(Y/Q^\vee[S_{i,E_i}]_0)}, \]
for some $i$ and some maximal torus $T \subset G$ split by $E_i/F$.
Let $T'$ be a maximal torus of $G$ that is fundamental \cite[\S10]{Kot86} at each place $v \in S_i$ as well as at at least one non-archimedean place, in case $S_i$ only contains archimedean places. Such a torus exists by \cite[\S7.1,Cor. 3]{PR94}. Let $j>i$ be such that $E_j/F$ splits $T'$. We map $x_1$ and $x_2$ into the above displayed quotient, but with $i$ replaced by $j$ and with $T$ replaced by $T'$. To prove the first point it is enough to show that both $x_1$ and $x_2$ belong to the image of the middle map in the exact sequence \eqref{eq:tn+gseq}. For this we must show that their image under the third map is zero. By construction, the localization at each $v \in \dot S_j$ of this image is zero: For $v \in \dot S_j$ lying away from $\dot S_i$ the localization is zero because $x_1$ and $x_2$ are not supported on $v$, and for $v \in \dot S_j$ lying over $\dot S_i$ the localization is an element of $\hat H^0(\Gamma_{E_j/F,v},Q^\vee)=H^2(\Gamma_{E_j/F,v},T'_\tx{sc}(E_{j,v})) \subset H^2(\Gamma_v,T'_\tx{sc})$, which is trivial by \cite[Lemma 10.4]{Kot86}. Thus the images of $x_1$ and $x_2$ are locally everywhere trivial elements of $\hat H^0(\Gamma_{E_j/F},Q^\vee[S_{E_j}]_0)$. But this group is isomorphic to $H^2(\Gamma_{E_j/F},T'_\tx{sc}(O_{E_j,S_j}))$, which by Lemma \ref{lem:inf2} is a subgroup of $H^2(\Gamma,T')$. According to \cite[\S6.3,Prop. 6.12]{PR94} the latter has no non-trivial but everywhere trivial elements. This completes the proof of the first point.

To prove the second point, choose $j$ so that $E_j$ splits $T_i$, $\exp(Z) \in \N_{S_j}$, and $x_i$ comes from $\Lambda_i \in \bar Y_i[S_{j,E_j},\dot S_j]_0^{N_{E_j/F}}$, where $\bar Y_i=X_*(T_i/Z)$. We will use the explicit construction of $\dot\iota_{E_j,\dot S_j}$ of Proposition \ref{pro:iota}. For this we fix a Tate-cocycle $\alpha_3(E_j,S_j) \in Z^2(\Gamma_{E_j/F},\tx{Hom}(\Z[S_{j,E_j}]_0,O_{E_j,S_j}^\times))$, let $N \in \N_{S_j}$ be a multiple of $\exp(Z)$, and let $k : O_{S_j}^\times \rw O_{S_j}^\times$ be an $N$-th root function. This data leads to the explicit extension $\mc{\dot E}_{E_j,\dot S_j,N}$ and to the elements of $Z^1(P_{E_j,\dot S_j,N} \rw \mc{\dot E}_{E_j,\dot S_j,N}, Z \rw T_i)$ defined by
\[ z_{\bar\Lambda_i,N} : \mc{\dot E}_{E_j,S_j,N} \rw T_i(O_{S_j}),\quad x \boxtimes \sigma \mapsto \Psi^{-1}_{E_j,S_j,N}([\bar\Lambda_i])(x) \cdot (k\alpha_3(E_j,S)\sqcup_{E_j/F}N\bar\Lambda_i). \]
We inflate each $z_{\bar\Lambda_i,N}$ to an element $z_{\bar\Lambda_i}$ of $Z^1(P_{E_j,\dot S_j} \rw \mc{\dot E}_{E_j,\dot S_j}, Z \rw T_i)$ via the projection $\mc{\dot E}_{E_j,S_j} \rw \mc{\dot E}_{E_j,S_j,N}$. We then choose a diagram \eqref{eq:diaginf1} and obtain elements $z_{\bar\Lambda_i}$ of $Z^1(P_{\dot V} \rw \mc{E}_{\dot V}, Z \rw T_i)$ whose classes represent $\iota_{\dot V}(x_i)$. We compose $z_{\bar\Lambda_i}$ with the inclusion $T_i \rw G$ and want to show that $z_{\bar\Lambda_2} \cdot (z_{\bar\Lambda_1})^{-1} \in Z^1(P_{\dot V} \rw \mc{E}_{\dot V},Z \rw G^1)$ lifts to $Z^1(\Gamma,G^1_\tx{sc})$. Since we are using the same diagram \eqref{eq:diaginf1} for both $z_{\bar\Lambda_i}$, it will be enough to show this for the elements of $Z^1(P_{E_j,\dot S_j} \rw \Box_2,Z \rw T_i)$ whose composition with $\mc{E}_{\dot V} \rw \Box_2$ is equal to $z_{\bar\Lambda_i}$. Let us reuse the notation $z_{\bar\Lambda_i}$ for these elements. The explicit extension $\mc{\dot E}_{E_j,S_j}$ has the form $P_{E_j,\dot S_j}(O_{S_j}) \boxtimes \Gamma_{S_j}$ and so we have $\Box_2 = P_{E_j,S_j}(\ol{F}) \boxtimes \Gamma$. The elements $z_{\bar\Lambda_i}$ are given by the same explicit formula as $z_{\bar\Lambda_i,N}$. The images $[\bar\Lambda_i] \in \tx{Hom}(A,M_{E_j,S_j,N})^\Gamma=A^\vee[S_{j,E_j}]_0^{N_{E_j/F}}$ of $\Lambda_i$ are equal and we see
\[ [z_{\bar\Lambda_2} \cdot (z_{\bar\Lambda_1})^{-1}](x \boxtimes \sigma) = (k\alpha_3(E_j,S)\sqcup_{E_j/F}N\bar\Lambda_2)(\sigma)(k\alpha_3(E_j,S)\sqcup_{E_j/F}N\bar\Lambda_1)(\sigma)^{-1}. \]
Thus $z_{\bar\Lambda_2} \cdot (z_{\bar\Lambda_1})^{-1}$ is inflated from an element of $Z^1(\Gamma,G^1)$. We now show that this element comes from $Z^1(\Gamma,G^1_\tx{sc})$. By assumption there exists $g \in G(\ol{F})$ and $M \in Q_2^\vee[S_{j,E_j}]_0$ such that $\tx{Ad}(g)\bar\Lambda_1=\bar\Lambda_2+M$. We have the $\Gamma_{E_j/F}$-invariant decomposition $Y_i \otimes \Q = P^\vee_i \otimes \Q \oplus Q_i^\perp \otimes \Q$, where $P^\vee_i=X_*(T_{i,\tx{ad}})$ and $Q_i=X^*(T_{i,\tx{ad}})$. It is realized by the projection $Y_i \rw P^\vee_i$ given by interpreting elements of $Y_i$ as linear forms on $X^*(T_i)$ and restricting them to $Q_i$. We write $\bar\Lambda_i = p_i + r_i$ according to this decomposition. We then have $p_2=\tx{Ad}(g)p_1+M$ and $r_2=\tx{Ad}(g)r_1$. We enlarge $N$ if necessary so that $Np_i \in Q_i^\vee$ and $Nr_i \in Q_i^\perp$, where $Q_i^\vee=X_*(T_{i,\tx{sc}})$. The inclusions $Z(G)^\circ \rw T_i$ give the identifications $X_*(Z(G)^\circ) \rw Q_i^\perp$. It follows that the contributions of $r_1$ and $r_2$ to $z_{\bar\Lambda_2} \cdot (z_{\bar\Lambda_1})^{-1}$ cancel out and we are left with
\[ [z_{\bar\Lambda_2} \cdot (z_{\bar\Lambda_1})^{-1}](x \boxtimes \sigma) = \underbrace{(k\alpha_3(E_j,S)\sqcup_{E_j/F}Np_2)}_{c_2}(\sigma)\underbrace{(k\alpha_3(E_j,S)\sqcup_{E_j/F}Np_1)}_{c_1}(\sigma)^{-1}. \]
We have $c_i \in C^1(\Gamma,T_{i,\tx{sc}})$. The right-hand side is thus a 1-cochain of $\Gamma$ valued in $G_\tx{sc}$ and we want to argue that it is a 1-cocycle for the twisted group $G^1_\tx{sc}$. Its differential is given by
\[ [c_2(\sigma)c_1(\sigma)^{-1}][c_1(\sigma)\cdot{^\sigma c_2(\tau)}\cdot {^\sigma c_1(\tau)^{-1}}\cdot c_1(\sigma)^{-1}][c_2(\sigma\tau)c_1(\sigma\tau)^{-1}]^{-1}.\]
Opening brackets, using the commutativity of $T_{1,\tx{sc}}$ and rearranging, this is equal to
\[ c_2(\sigma)\cdot {^\sigma c_2(\tau)} \cdot dc_1(\sigma,\tau)^{-1} \cdot c_2(\sigma\tau)^{-1}. \]
The image of $c_1$ in $C^1(\Gamma,T_{1,\tx{ad}})$ is equal to the image of $z_{\bar\Lambda_1}$ there and is thus a 1-cocycle, hence $dc_1 \in Z^2(\Gamma,Z(G_\tx{sc}))$. This allows us to rearrange once more and obtain
\[ dc_2(\sigma,\tau) \cdot dc_1(\sigma,\tau)^{-1}. \]
Since $Np_i \in [Q_i^\vee]^{N_{E_j/F}}$ we conclude from \cite[Fact 4.3]{KalRI} that $dc_i = dk\alpha_3(E_j,S) \cup Np_i$. The inclusions $Z(G_\tx{sc}) \rw T_{i,\tx{sc}}$ provide the identifications $P_i^\vee/Q_i^\vee \rw \tx{Hom}(\mu_N,Z(G_\tx{sc}))$ under which the images of $p_1$ and $p_2$ coincide and hence $dc_1=dc_2$.
\end{proof}

\subsection{A ramification result} \label{sub:ram}

We review here a result \cite{TaibGRI} of Ta\"ibi that is essential for our applications. For this result is is important that we take $\xi$ to be the canonical class of Definition \ref{dfn:canclass} and let $\mc{E}_{\dot V}$ be the corresponding gerbe.

Let $G$ be a connected reductive group and $Z \subset G$ a finite central subgroup, both defined over $F$. For an element $z \in Z^1( P_{\dot V} \rw \mc{E}_{\dot V},Z \rw G)$ we have the collection of elements $\tx{loc}_v(z) \in Z^1(u_v \rw W_v,Z \rw G)$ for all $v \in \dot V$. Each element $\tx{loc}_v(z)$ is well-defined up to an element of $B^1(\Gamma_v,Z)$. Since $z|_{P_{\dot V}}$ factors through $P_{E_i,S_i,N_i}$ for some $i$, it follows that for almost all $v$ the restriction $\tx{loc}_v(z)|_{u_v}$ is trivial, so $\tx{loc}_v(z)$ is in fact inflated from $Z^1(\Gamma_v,G)$. At the same time, for almost all $v$ we have $Z(\ol{F_v})=Z(O_{F_v^u})$ and this is a $\Gamma_v/I_v$-module, hence $B^1(\Gamma_v,Z)=B^1(\Gamma_v/I_v,Z(O_{F_v^u}))$. Fix an integral model of $G$, i.e. an extension of $G$ to a reductive group scheme over $O_{F,S}$ for some finite set $S \subset V$. For almost all $v$, the subgroup $G(O_{F^u_v}) \subset G(F^u_v)$ does not depend on the choice of an integral model. We may thus ask the following question: Is it true that for almost all $v$ the 1-cocycle $\tx{loc}_v(z)$ is inflated from $Z^1(\Gamma_v/I_v,G(O_{F^u_v}))$? It was shown in \cite{TaibGRI} that the answer is indeed positive:

\begin{pro}[{\cite[Proposition 6.1.1]{TaibGRI}}] \label{pro:ramcoh} Let $z \in Z^1( P_{\dot V} \rw \mc{E}_{\dot V},Z \rw G)$. For all but finitely many $v \in \dot V$, the cocycle $\tx{loc}_v(z) \in Z^1(u_v\to W_v,Z \to G)/B^1(\Gamma_v,Z)$ is inflated from $Z^1(\Gamma_v/I_v,G(O_{F_v^u}))$.
\end{pro}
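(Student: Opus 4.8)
The plan is to reduce the statement about the cocycle $\tx{loc}_v(z)$ to a statement about its class in $H^1_\tx{ab}(u_v \rw W_v, Z \rw G)$ together with a statement about inner forms, and then to control each of these using the structure theory already developed. First I would invoke Lemma \ref{lem:cohapprox1} to find a maximal torus $T \subset G$ defined over $F$ and an element $\tilde z \in Z^1(P_{\dot V} \rw \mc{E}_{\dot V}, Z \rw T)$ whose image in $H^1(P_{\dot V} \rw \mc{E}_{\dot V}, Z \rw G)$ is the class of $z$. Replacing $z$ by a cohomologous cocycle and absorbing the difference (a $B^1(\Gamma,G)$-term, which localizes to a $B^1(\Gamma_v,G)$-term that is unramified at almost all $v$ once we fix an integral model and use that the reduction map $G(O_{F_v^u}) \rw G(\kappa_v^{\text{sep}})$ is surjective with connected kernel for almost all $v$), we may assume $z$ takes values in $T(\ol F)$. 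Thus it suffices to prove the proposition for tori, i.e. for $[Z \rw T] \in \mc{T}$.

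For the torus case I would use Theorem \ref{thm:tn+}, which identifies $z$ (up to the ambiguity already accounted for) with an element of $\bar Y[V_{\ol F},\dot V]_{0,+,\tx{tor}}$, represented at some finite level by a map $f \in \bar Y[S_{i,E_i},\dot S_i]_0^{N_{E_i/F}}$ supported on the finite set $\dot S_i$ (Lemma \ref{lem:sup}). The second diagram of Theorem \ref{thm:tn+} then computes $\tx{loc}_v(z)$ through the map $l_v$ followed by the local Tate--Nakayama isomorphism $\iota_v$ of \cite[\S4]{KalRI}. For $v \notin \dot S_i$ one has $l_v^i f = 0$, so $\tx{loc}_v(z)$ is already trivial in $H^1(u_v \rw W_v, Z \rw T)$ and in particular inflated from $Z^1(\Gamma_v/I_v, T(O_{F_v^u}))$ (the trivial cocycle). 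So the real content is concentrated at the finitely many $v \in \dot S_i$ together with those $v$ where the integral model or the residue characteristic behaves badly --- but that is again only finitely many places, so there is nothing further to prove. Here is where I expect the subtlety: the identification in Theorem \ref{thm:tn+} is only an identification of \emph{cohomology classes}, whereas the proposition is a statement about the cocycle $\tx{loc}_v(z)$ modulo $B^1(\Gamma_v,Z)$, i.e. it retains slightly more information than the abelianized class. One must check that the representative produced by the explicit construction in Proposition \ref{pro:iota} (the cup product $k\alpha_3(E,S) \sqcup_{E/F} N\bar\Lambda$ with the Tate cocycle) is, after localization at $v$ for almost all $v$, visibly inflated from $Z^1(\Gamma_v/I_v, G(O_{F_v^u}))$. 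This is exactly where the \emph{canonical} choice of $\xi$ (Definition \ref{dfn:canclass}) is needed rather than an arbitrary lift: the ramification properties of the Tate class $\alpha_3(E,S)$ --- more precisely, the fact that for $v$ unramified in $E$ and away from $S$ the local component of $\alpha_3(E,S)$ is the unramified fundamental class, valued in units --- propagate through the cup product only because the canonical class is built compatibly with the local canonical classes $\xi_v$ (Corollary \ref{cor:xiloc}), and this compatibility would fail for a different lift, as the cited \cite[Proposition 6.3.1]{TaibGRI} shows.

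The main obstacle, then, is a bookkeeping one rather than a conceptual one: tracking the explicit cocycle $\tx{loc}_v(z)$ through the chain of constructions (the map $!$ of Lemma \ref{lem:inftn}, the localization map of Subsection \ref{sub:coh} realized via the diagram \eqref{eq:diagloc2}, and the Shapiro/cup-product formulas of Proposition \ref{pro:iota}) and verifying at each stage that, for all but finitely many $v$, every ingredient --- the Tate cocycle component, the root function $k$, the section defining the Shapiro map --- can be chosen to take values in $O_{F_v^u}^\times$ respectively in $G(O_{F_v^u})$, and to be inflated from the unramified quotient $\Gamma_v/I_v$. Since the honest proof of this is carried out in \cite{TaibGRI}, I would at this point simply cite \cite[Proposition 6.1.1]{TaibGRI} and remark that its necessity for us lies in the applications to automorphic forms in Section \ref{sec:mult}, where one needs the adelic cocycle built from a global class to be unramified almost everywhere so that products over all places converge and match the classical adelic transfer factor.
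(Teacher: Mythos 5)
The paper itself offers no proof of this statement: Proposition \ref{pro:ramcoh} is imported verbatim from Ta\"ibi, and the citation to \cite[Proposition 6.1.1]{TaibGRI} \emph{is} the paper's entire argument. Since your proposal ends by making exactly that citation, your treatment coincides with the paper's.

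However, do not mistake the sketch preceding your citation for a proof. The step claiming that, for $v$ outside the support $\dot S_i$ of the Tate--Nakayama representative, triviality of the class of $\tx{loc}_v(z)$ in $H^1(u_v \rw W_v, Z \rw T)$ already yields inflation from $Z^1(\Gamma_v/I_v, T(O_{F_v^u}))$ is false: $\tx{loc}_v(z)$ is only well-defined modulo $B^1(\Gamma_v,Z)$, and a coboundary $\sigma \mapsto t^{-1}\sigma(t)$ with $t \in T(\ol{F_v})$ arbitrary can be ramified, so class-level triviality says nothing about the cocycle being inflated from the unramified quotient. Almost-everywhere triviality of the classes is exactly Corollary \ref{cor:loctriv}, and, as the introduction stresses, the proposition is precisely the strengthening of that statement to the level of cocycles; so the ``real content'' is not confined to the finitely many bad places, and the argument ``nothing further to prove'' does not go through. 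You do then correctly identify the subtlety, and correctly observe that the canonical class of Definition \ref{dfn:canclass} (rather than an arbitrary lift of $(\xi_i)$) is what makes the statement true, with \cite[Proposition 6.3.1]{TaibGRI} showing failure otherwise --- but that gap is closed in your write-up only by the citation, not by the reduction to tori plus the support argument. A further small caution: the reduction via Lemma \ref{lem:cohapprox1} operates on cohomology classes, so replacing $z$ by a cohomologous cocycle changes $\tx{loc}_v(z)$ by a $G(\ol{F})$-coboundary rather than by an element of $B^1(\Gamma_v,Z)$; your remark about absorbing this using a fixed $g \in G(\ol{F})$ that is integral and unramified at almost all $v$ is the right idea, but it is an additional verification, not a formality.
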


\section{Applications to the trace formula and automorphic multiplicities} \label{sec:mult}

In this section we are going to use the refined local endoscopic objects introduced in \cite{KalRI} to produce two essential
global endoscopic objects -- the adelic transfer factor used in the stabilization of the Arthur-Selberg trace formula and the conjectural pairing between an adelic $L$-packet and its global $S$-group. The first of these objects -- the adelic transfer factor -- is already defined, see e.g. \cite[\S6.3]{LS87} or \cite[\S7.3]{KS99}, where it is denoted by $\Delta_\A(\gamma,\delta)$. The point here is to show that this factor admits a decomposition as the product of the normalized local transfer factors introduced in \cite[\S5.3]{KalRI}. The second global object -- the pairing between an adelic $L$-packet and its global $S$-group -- has so far not been constructed for general non-quasi-split reductive groups. As discussed in \cite[\S12]{Kot84} and \cite[\S3.4]{BR94}, it is expected that this pairing is canonical and has a decomposition as a product of local pairings. We will construct this pairing here using the refined local pairings of \cite[\S5.4]{KalRI} and then show that the result is canonical.

In the last subsection, we will summarize the results of this section in the language of \cite{Art06}. We will show that the local conjecture of \cite{KalRI} implies the local conjecture of \cite{Art06}, and we will show that Proposition \ref{pro:tfprod} (which is unconditional) implies \cite[Hypothesis 9.5.1]{Art13}.

\subsection{Notation for this section} \label{sub:multnot}

Let $G$ be a connected reductive group over the number field $F$ with quasi-split inner form $G^*$ and let $\Psi$ be a $G$-conjugacy class of inner twists $\psi : G^* \rw G$. We write $G_\tx{der}$, $G_\tx{sc}$, and $G_\tx{ad}$, for the derived subgroup of $G$ and its simply connected cover and adjoint quotient, respectively. We use the superscript $*$ to denote the same objects relative to $G^*$. Let $\hat G^*$ be the Langlands dual group of $G^*$ and $^LG^*=\hat G^* \rtimes W_F$ the Weil-form of the $L$-group of $G^*$. We will write $\hat G^*_\tx{der}$ for the derived subgroup of $\hat G^*$ and $\hat G^*_\tx{sc}$ and $\hat G^*_\tx{ad}$ for the simply connected cover and adjoint quotient of $\hat G^*_\tx{der}$. We will write $Z_\tx{sc}$ for the center of $G^*_\tx{sc}$ and $\hat Z_\tx{sc}$ for the center of $\hat G^*_\tx{sc}$. We will write $Z_\tx{der}$ for the center of $G_\tx{der}^*$.

We set $\bar G^* = G^*/Z_\tx{der} = G^*_\tx{ad} \times Z(G^*)/Z_\tx{der}$ and $\bar G^*_\tx{sc} = G^*_\tx{sc}/Z_\tx{sc}=G^*_\tx{ad}$. Then we have $\hat{\bar G^*}=\hat G^*_\tx{sc} \times Z(\hat G^*)^\circ$ and $\hat{\bar G^*_\tx{sc}}=\hat G^*_\tx{sc}$. Recall the notation $Z(\hat{\bar G^*})^+$ for the elements of $Z(\hat{\bar G^*})$ which map to $Z(\hat G^*)^\Gamma$. We will use the analogous notation $Z(\hat{\bar G^*})^{+_v}$ for those elements that map to $Z(\hat G^*)^{\Gamma_v}$ when $v$ is a place of $\ol{F}$.

With this notation, we can give the following interpretation of Theorem \ref{thm:tn+}, Corollary \ref{cor:tnexact}, and Theorem \ref{thm:tn+g}. As was argued in \cite[Proposition 5.3]{KalRIBG} there is a functorial embedding $\bar Y_{+_v,\tx{tor}}(Z \rw G^*) \rw \pi_0(Z(\hat{\bar G^*})^{+_v})^*$. The same argument shows that there is a functorial isomorphism
\[ \bar Y_{+,\tx{tor}}(Z \rw G) \rw \pi_0(Z(\hat{\bar G^*})^+)^*.\]
Moreover, the map $\sum$ in the bottom row of the diagram in Theorem \ref{thm:tn+g} is dual to the diagonal embedding
\[ Z(\hat{\bar G^*})^+ \rw \prod_{v \in \dot V} Z(\hat{\bar G^*})^{+_v}. \]

Let $\xi$ be the canonical class of Definition \ref{dfn:canclass} and let $\mc{E}_{\dot V}$ be the corresponding gerbe.

\subsection{From global to refined local endoscopic data} \label{sub:endolg}

Let us first recall the notion of a global endoscopic datum for $G$ following \cite[\S1.2]{LS87}. It is a tuple $(H,\mc{H},s,\xi)$ consisting of a quasi-split connected reductive group $H$ defined over $F$, a split extension $\mc{H}$ of $W_F$ by $\hat H$, an element $s \in Z(\hat H)$, and an $L$-embedding $\xi : \mc{H} \rw {^LG^*}$ satisfying the following conditions:
\begin{enumerate}
\item The homomorphism $W_F \rw \tx{Out}(\hat H)=\tx{Out}(H)$ determined by the extension $\mc{H}$ coincides with the homomorphism $\Gamma \rw \tx{Out}(H)$ determined by the rational structure of $H$, in the sense that both homomorphisms factor through some finite quotient $W_F/W_E=\Gamma_F/\Gamma_E$ and are equal
\item $\xi$ induces an isomorphism of complex algebraic groups $\hat H \rw \tx{Cent}(t,\hat G^*)^\circ$, where $t=\xi(s)$.
\item The image $\bar s \in Z(\hat H)/Z(\hat G^*)$ of $s$ is fixed by $W_F$ and maps under the connecting homomorphism $H^0(W_F,Z(\hat H)/Z(\hat G^*)) \rw H^1(W_F,Z(\hat G^*))$ to a locally trivial element.
\end{enumerate}
In condition 3 we have used the fact that conditions 1 and 2 provides a $\Gamma$-equivariant embedding $Z(\hat G^*) \rw Z(\hat H)$. Recall that a locally trivial element of $H^1(W_F,Z(\hat G^*))$ is one whose image in $H^1(W_{F_v},Z(\hat G^*))$ is trivial for all $v \in V(\ol{F})$. Up to equivalence, the datum $(H,\mc{H},s,\xi)$ depends only on the image of $s$ in $\pi_0((Z(\hat H)/Z(\hat G^*))^\Gamma)$.

On the other hand, a refined endoscopic datum is a tuple $(H,\mc{H},\dot s,\xi)$ where $H$, $\mc{H}$, and $\xi$ are as above (but now over a local field $F$), and $\dot s$ is an element of $Z(\hat{\bar H})^+$ (whose image in $Z(\hat H)^\Gamma$ we denote by $s$) such that conditions 1 and 2 are satisfied. Condition 3 is unnecessary. We have used here the fact that conditions 1 and 2 provide an $F$-embedding $Z(G) \rw Z(H)$ which allows us to form the quotient $\bar H=H/Z_\tx{der}$ and hence obtain $\hat{\bar H}$. Recall that $Z(\hat{\bar H})^+$ is the preimage of $Z(\hat H)^\Gamma$ under the isogeny $\hat{\bar H} \rw \hat H$. Up to equivalence, the datum $(H,\mc{H},\dot s,\xi)$ depends only on the image of $\dot s$ in $\pi_0(Z(\hat{\bar H})^+)$. Note that, unlike in the global case, we are \emph{not} allowing to change $\dot s$ by elements of $Z(\hat G^*)$.

We will now show how a global endoscopic datum $\mf{e}=(H,\mc{H},s,\xi)$ gives rise to a collection, indexed by $\dot V$, or refined local endoscopic data. Using the embedding $Z(G) \rw Z(H)$ we form $\bar H=H/Z_\tx{der}$ and obtain from $\xi$ the embedding $\hat{\bar H} \rw \hat{\bar G^*}$ whose image is equal to $\tx{Cent}(t,\hat{\bar G^*})^\circ$. In order to simplify the notation, we will use the same letter for elements of $\hat H$ or $\hat{\bar H}$ and their image under $\xi$, so in particular we have $s=t$. Let $s_\tx{sc} \in \hat G^*_\tx{sc}$ be a preimage in $\hat G^*_\tx{sc}$ of the image $s_\tx{ad} \in \hat G^*_\tx{ad}$ of $s$. According to condition 3 above, given a place $v \in \dot V$ there exists $y_v \in Z(\hat G^*)$ such that $s_\tx{der} \cdot y_v \in Z(\hat H)^{\Gamma_v}$, where $s_\tx{der} \in \hat G^*_\tx{der}$ is the image of $s_\tx{sc}$. Write $y_v=y'_v \cdot y''_v$ with $y'_v \in Z(\hat G^*_\tx{der})$ and $y''_v \in Z(\hat G^*)^\circ$ and choose a lift $\dot y'_v \in \hat Z_\tx{sc}$. Then $\dot s_v := (s_\tx{sc}\cdot \dot y'_v,y''_v)$ belongs to $Z(\hat{\bar H})^{+_v}$ and $\mf{\dot e}_v =(H,\mc{H},\dot s_v,\xi)$ is a refined local endoscopic datum at the place $v$. We thus obtain the collection $(\mf{\dot e}_v)_{v\in \dot V}$ of refined local endoscopic data.

We emphasize that we use the same $s_\tx{sc}$ to form all $\mf{\dot e}_v$. The collection $(\mf{\dot e}_v)_v$ depends on the choice of $s_\tx{sc}$ and each member $\mf{\dot e}_v$ depends furthermore on the choices of $\dot y'_v$ and $y''_v$. We shall see that these choices do not influence the resulting global objects, which means that they depend only on the equivalence class of the global datum $\mf{e}$.

There are two special cases in which the construction of the collection $(\mf{\dot e}_v)$ simplifies. The first case is when $G^*_\tx{der}$ is simply connected. Then $Z(\hat G^*)$ is connected and we can take $y''_v=y_v$ and $\dot y'_v=1$, so that $\dot s_v=(s_\tx{sc},y_v)$. The second case is when $G^*$ satisfies the Hasse principle. Then the choices of $\dot y'_v$ and $y''_v$ can be made globally and this leads to a global element $\dot s$. Indeed, according to \cite[(4.2.2), Lemma 11.2.2]{Kot84} we have $\tx{ker}^1(W_F,Z(\hat G^*))=0$ and we can thus choose $y \in Z(\hat G^*)$ such that $s_\tx{der}\cdot y \in Z(\hat H)^\Gamma$. Write $y=y'\cdot y''$ with $y' \in Z(\hat G^*_\tx{der})$ and $y'' \in Z(\hat G^*)^\circ$ and choose a lift $\dot y' \in Z(\hat G^*_\tx{sc})$. Then $\dot s := (s_\tx{sc}\cdot \dot y',y'')$ belongs to $Z(\hat{\bar H})^+$. We let $\dot s_v := \dot s$ for each $v \in \dot V$.

\subsection{Coherent families of local rigid inner twists} \label{sub:cohfam}

We are given an equivalence class $\Psi$ of inner twists $G^* \rw G$ over $F$. At each place $v \in \dot V$ this leads to an equivalence class $\Psi_v : G^* \rw G$ of inner twists over $F_v$ and we would like to enrich it to an equivalence class $\dot\Psi_v : G^* \rw G$ of rigid inner twists such that the collection $(\dot\Psi_v)_{v \in \dot V}$ is in some sense coherent. We do this as follows. The class $\Psi$ provides an element of $H^1(\Gamma,G^*_\tx{ad}(\ol{F}))$. According to Lemma \ref{lem:cohsurj} this class has a lift to $H^1(P_{\dot V}\rw \mc{E}_{\dot V},Z_\tx{sc} \rw G^*_\tx{sc})$. This means that for every $\psi \in \Psi$ there is an element $z_\tx{sc} \in Z^1(P_{\dot V}\rw \mc{E}_{\dot V},Z \rw G^*_\tx{sc})$ with the property that $\psi^{-1}\sigma(\psi)=\tx{Ad}(z_\tx{ad}(\sigma))$, where $z_\tx{ad} \in Z^1(\Gamma,G^*_\tx{ad})$ is the composition of $z_\tx{sc}$ with the projection $G^*_\tx{sc} \rw G^*_\tx{ad}$. For each $v \in \dot V$, let $z_{\tx{sc},v} \in Z^1(u_v \rw W_v,Z \rw G^*_\tx{sc})$ be the localization of $z_\tx{sc}$ at $v$ as defined in Subsection \ref{sub:coh} and let $z_v \in Z^1(u_v \rw W_v,Z \rw G^*)$ be its image. Then $(\psi,z_v) : G^* \rw G$ is a rigid inner twist over $F_v$.

We have thus obtained a collection $(\psi,z_{v})_v$ of local rigid inner twists indexed by $v \in \dot V$. The collection depends on the choice of $z_\tx{sc}$, as well as on the choices of localizing diagrams \eqref{eq:diagloc2} (the dotted arrows in these diagrams needs to be chosen). As we discussed in Subsection \ref{sub:coh}, Proposition \ref{pro:locrig} implies that the choice of localizing diagrams is irrelevant, because changing it will change $z_{\tx{sc},v}$ by an element of $B^1(\Gamma_v,Z_\tx{sc})$ and thus no cohomology class constructed from $z_{\tx{sc},v}$ will detect this change. The potential influence of $z_\tx{sc}$ is stronger, as it could influence the local cohomology classes. We shall see however that the global objects obtained from these local cohomology classes remain unaffected and thus depend only on $\Psi$.

It is interesting to note the parallel of choices made in this subsection and in Subsection \ref{sub:endolg}. The choice of $z_\tx{sc}$ lifting $\Psi$ here corresponds to the choice of $s_\tx{sc}$ lifting $s_\tx{ad}$ there.

\subsection{A product formula for the adelic transfer factor} \label{sub:tfprod}

In this subsection we will show that the adelic transfer factor admits a decomposition as a product of normalized local transfer factors. We are given an equivalence class $\Psi$ of inner twists $G^* \rw G$, an endoscopic datum $\mf{e}=(H,\mc{H},s,\xi)$ for $G^*$, and a $z$-pair $\mf{z}=(H_1,\xi_{H_1})$ for $\mf{e}$. The adelic transfer factor is a function
\[ \Delta_\A : H_{1,G\tx{-sr}}(\A) \times G_\tx{sr}(\A) \rw \C, \]
which associates to a pair of semi-simple and strongly $G$-regular elements $\gamma_1 \in H_{1,G\tx{-sr}}(\A)$ and $\delta \in G_\tx{sr}(\A)$ a complex number $\Delta_\A(\gamma_1,\delta) \in \C$. This factor is defined in \cite[\S6.3]{LS87}, but see also \cite[\S7.3]{KS99}, where $z$-pairs are explicitly used. It is identically zero unless there exists a pair of related elements $\gamma_{1,0} \in H_{1,G\tx{-sr}}(F)$ and $\delta_0 \in G_\tx{sr}(F)$, which we now assume.

We let $(\mf{\dot e}_v)_{v\in \dot V}$ be the collection of refined local endoscopic data obtained from $\mf{e}$ as in Subsection \ref{sub:endolg}. We obtain in a straightforward way a collection $(\mf{z}_v)_{v\in \dot V}$ of local $z$-pairs from the global $z$-pair $\mf{z}$. Let $\mf{w}$ be a Whittaker datum for the quasi-split group $G^*$, and let $\mf{w}_v$ denote its localization at a place $v$. We furthermore let $(\psi,z_v)_{v\in \dot V}$ be the collection of local rigid inner twists obtained from $\Psi$ as in Subsection \ref{sub:cohfam}. Note here that $\psi \in \Psi$ was chosen during that construction. For each $v \in \dot V$ we have the normalized local transfer factor
\[ \Delta[\mf{w}_v,\mf{\dot e}_v,\mf{z}_v,\psi,z_v] : H_{1,G\tx{-sr}}(F_v) \times G_\tx{sr}(F_v) \rw \C \]
defined in \cite[\S5.3]{KalRI}.

\begin{pro} \label{pro:tfprod} For any $\gamma_1 \in H_{1,G\tx{-sr}}(\A)$ and $\delta \in G_\tx{sr}(\A)$ we have
\[ \Delta_\A(\gamma_1,\delta) = \prod_{v \in \dot V} \<z_{\tx{sc},v},\dot y'_v\>\Delta[\mf{w}_v,\mf{\dot e}_v,\mf{z}_v,\psi,z_v](\gamma_{1,v},\delta_v). \]
For almost all $v \in \dot V$, the corresponding factor in the product is equal to $1$. For all $v$, the factor in the product is independent of the choices of $\dot y'_v$ and $y''_v$ made in Subsection \ref{sub:endolg}.
\end{pro}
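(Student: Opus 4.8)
The strategy is to prove the three assertions of the proposition in the order: (a) the almost-everywhere triviality of the factors, (b) the product formula itself, (c) the independence of the choices $\dot y'_v, y''_v$. For (a), recall that $\gamma_1 \in H_{1,G\text{-sr}}(\A)$ means $\gamma_{1,v}$ lies in $H_1(O_{F_v})$ for almost all $v$ and similarly $\delta_v \in G(O_{F_v})$ for almost all $v$; combining this with Corollary \ref{cor:loctriv} (which gives $\text{loc}_v(z_{\text{sc}})$ trivial for almost all $v$) and with Proposition \ref{pro:ramcoh} (which gives that $\text{loc}_v(z_{\text{sc}})$ is unramified for almost all $v$), we are in the situation where both the endoscopic datum and the rigid inner twist are unramified at $v$ and the elements are integral. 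Here one invokes the known fact (from \cite{LS87,KS99}) that the Langlands--Shelstad transfer factor with the standard normalization equals $1$ on such integral regular pairs, together with the fact that for unramified rigid inner twist data the extra normalization in \cite[\S5.3]{KalRI} relative to the standard one is also trivial. One also needs $\<z_{\text{sc},v},\dot y'_v\> = 1$ for almost all $v$, which follows because $z_{\text{sc},v}$ is unramified and $\dot y'_v$ is a fixed element whose pairing with an unramified class is trivial at almost all $v$; hence the product in the statement is a finite product and makes sense.

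For the product formula (b) — the main content, proved in the body of the paper — the idea is to compare the \emph{adelic} transfer factor $\Delta_\A$ with the product of normalized local factors by analyzing the difference, which (by the definitions in \cite[\S6.3]{LS87}, \cite[\S7.3]{KS99} and \cite[\S5.3]{KalRI}) is governed by a cohomological pairing evaluated on the global cocycle $z_{\text{sc}} \in Z^1(P_{\dot V} \rw \mc{E}_{\dot V}, Z_\text{sc} \rw G^*_\text{sc})$ and the endoscopic element $s$. The normalized local transfer factor $\Delta[\mf{w}_v,\mf{\dot e}_v,\mf{z}_v,\psi,z_v]$ differs from the Langlands--Shelstad local factor by the term $\<\text{inv}(z_v), \dot s_v\>^{-1}$ coming from the $\Delta_{\text{new}}$-part of \cite[\S5.3]{KalRI}, where $\text{inv}(z_v)$ is built from $\text{loc}_v(z_{\text{sc}})$ and the cocycle relating $\delta$ to its norm. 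The product $\prod_v \Delta_{\text{LS},v}(\gamma_{1,v},\delta_v)$ equals $\Delta_\A(\gamma_1,\delta)$ up to a global class-field-theory pairing (the adelic product formula for the standard transfer factor, using that $\gamma_{1,0},\delta_0$ are rational), so what remains is to show that the product over $v$ of the extra normalizing factors $\<z_{\text{sc},v},\dot y'_v\>\cdot\<\text{inv}(z_v),\dot s_v\>^{-1}$ cancels exactly that global pairing. This is precisely where the bridge between refined local and global endoscopy enters: the collection $(\dot s_v)_v$ localizes a global object (via condition 3 on $\mf{e}$ and the construction of Subsection \ref{sub:endolg}), the collection $(z_v)_v$ localizes the global $z_{\text{sc}}$, and the sum over $v$ of the local pairings is computed by a global reciprocity statement — concretely, by Corollary \ref{cor:tnexact} / Theorem \ref{thm:tn+g} which express exactness of the total localization sequence, so that a sum of local invariants of a globally-coherent family vanishes (or equals the prescribed class-field-theory term). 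This global vanishing/reciprocity is the main obstacle; it requires carefully matching the normalization of the Tate--Nakayama pairing used in \cite{KalRI} locally with the one coming from $H^1(P_{\dot V}\rw\mc{E}_{\dot V})$ globally, and tracking the fundamental class of global class field theory through the construction of the canonical class $\xi$ of Definition \ref{dfn:canclass}.

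For the independence statement (c) — which is the assertion I am specifically asked to address — I would argue place by place. Fix $v \in \dot V$. The choices made in Subsection \ref{sub:endolg} at $v$ were: a decomposition $y_v = y'_v \cdot y''_v$ with $y'_v \in Z(\hat G^*_\text{der})$, $y''_v \in Z(\hat G^*)^\circ$, and a lift $\dot y'_v \in \hat Z_\text{sc}$ of $y'_v$; these determine $\dot s_v = (s_\text{sc}\cdot\dot y'_v, y''_v) \in Z(\hat{\bar H})^{+_v}$. Two different choices $(\dot y'_v, y''_v)$ and $(\dot y'^{\,*}_v, y''^{\,*}_v)$ differ by an element of $\hat Z_\text{sc}$ mapping into $Z(\hat G^*)^\circ$, i.e. by a central element, and the resulting $\dot s_v, \dot s_v^*$ differ by an element of the image of $\hat Z_\text{sc} \cap (\text{preimage of }Z(\hat G^*)^\circ)$. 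The $v$-th factor $\<z_{\text{sc},v},\dot y'_v\>\,\Delta[\mf{w}_v,\mf{\dot e}_v,\mf{z}_v,\psi,z_v](\gamma_{1,v},\delta_v)$ depends on this choice through two terms: directly through the pairing $\<z_{\text{sc},v},\dot y'_v\>$, and through the normalized transfer factor which depends on $\mf{\dot e}_v$ hence on $\dot s_v$. The key computation is that these two dependences cancel: changing $\dot y'_v$ to $\dot y'_v \cdot c$ (with $c$ in the relevant central subgroup) multiplies $\<z_{\text{sc},v},\dot y'_v\>$ by $\<z_{\text{sc},v}, c\>$, while the change in $\mf{\dot e}_v$ multiplies $\Delta[\mf{w}_v,\mf{\dot e}_v,\mf{z}_v,\psi,z_v]$ by $\<z_{\text{sc},v}, c\>^{-1}$ — the latter because the normalized transfer factor of \cite[\S5.3]{KalRI} transforms under scaling of $\dot s_v$ by a central element $c$ exactly via the inverse of the pairing of $c$ with the cohomology class $\text{inv}(z_v)$ attached to the rigid inner twist, restricted to the component coming from $z_{\text{sc},v}$. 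Since $y''_v$ only enters through $\dot s_v$ and changing the splitting $y_v = y'_v y''_v$ changes $\dot y'_v$ by such a central $c$, this covers all the choices. I would also note that the choice of localizing diagram \eqref{eq:diagloc2} used to define $z_{\text{sc},v}$ (and $z_v$) only changes these cocycles by elements of $B^1(\Gamma_v, Z_\text{sc})$ by Proposition \ref{pro:locrig}, and neither the pairing $\<z_{\text{sc},v},-\>$ nor the normalized transfer factor detects coboundaries, so that choice is also irrelevant. The main subtlety in (c) is bookkeeping: being precise about which central subgroup $c$ ranges over and verifying that the transformation law of $\Delta[\mf{w}_v,\mf{\dot e}_v,\mf{z}_v,\psi,z_v]$ under central translation of $\dot s_v$ is exactly $\<z_{\text{sc},v}, c\>^{-1}$ — this is a direct unwinding of the definition in \cite[\S5.3]{KalRI} of the factor $\Delta_{\text{new}}$ as a Tate--Nakayama pairing, using the compatibility of that pairing with the maps $\hat Z_\text{sc} \rw Z(\hat{\bar H})^{+_v}$.
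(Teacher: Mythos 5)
Your proposal follows essentially the paper's route on the load-bearing points: you decompose the normalized factor of \cite[\S5.3]{KalRI} into the Whittaker-normalized quasi-split factor times the term $\<\tx{inv}(\cdot),\dot s_v\>^{-1}$, you use the Langlands--Shelstad global results for the quasi-split part, you identify Corollary \ref{cor:tnexact} (exactness of the total localization) applied to the coherent family obtained by localizing a global cocycle built from $z_\tx{sc}$ as the global reciprocity input, and your cancellation in (c) --- that $\<z_{\tx{sc},v},\dot y'_v\>$ cancels against the $\dot y'_v$-part of the inv-pairing because the restriction of the character $\<\tx{inv},-\>$ to $\hat Z_\tx{sc}$ equals $\<z_{\tx{sc},v},-\>$ --- is precisely the paper's passage from \eqref{eq:p1} to \eqref{eq:p3}. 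The organizational differences are worth noting. The paper does not compare $\Delta_\A$ with the local factors at arbitrary adelic pairs ``up to a global class-field-theory pairing''; it invokes \cite[Corollary 6.4.B]{LS87}, which characterizes $\Delta_\A$ as the product of any family of absolute factors that equal $1$ at the rational pair $(\gamma_{1,0},\delta_0)$ for almost all $v$ and whose product there is $1$, so all verifications (almost-everywhere triviality, the $\epsilon_v$-factors, the $\Delta_?$-terms, and the inv-terms) happen only at $(\gamma_{1,0},\delta_0)$; the inv-part is then killed by Corollary \ref{cor:tnexact} because $\hat\varphi_{\gamma_0,\delta_0^*}(s_\tx{sc})$ lies in $\pi_0([\hat{\ol{T_{0,\tx{sc}}}}]^{+})$, which is where condition 3 on $\mf{e}$ enters. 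Your separate treatment of almost-everywhere triviality is fine in spirit but heavier than necessary: Proposition \ref{pro:ramcoh} plays no role in this proof (it is needed only for Lemma \ref{lem:ramification}), and once $\tx{loc}_v(z_\tx{sc})$ is cohomologically trivial (Corollary \ref{cor:loctriv}) the extra normalization term is literally $1$, so the only unramified input you need concerns the Whittaker-normalized quasi-split factor; do not forget the $\epsilon_v$-product, which the paper disposes of via self-duality of the relevant virtual representation over $\Q$. Finally, in (c) your description of how two choices differ is imprecise: $(\dot y'_v,y''_v)$ gets replaced by $(\dot y'_v\dot x'_v,\,y''_vx''_v)$ with $\dot x'_v \in \hat Z_\tx{sc}$, $x''_v \in Z(\hat G^*)^\circ$ and only the product of their images constrained to lie in $Z(\hat G^*)^{\Gamma_v}$; the clean way out, as in the paper, is that the $y''_v$-component drops out altogether because the pairing factors through the projection dual to $[Z_\tx{sc}\to T_{0,\tx{sc}}]\to[Z_\tx{der}\to T_0]$, after which \eqref{eq:p3} visibly contains neither $\dot y'_v$ nor $y''_v$.
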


Before we give the proof, we want to make a remark on the statement. At first sight it may seem surprising that we had to include the factor $\<z_{\tx{sc},v},\dot y'_v\>$ in the formula. This is an explicitly given complex number and can of course be hidden by including it into the definition of the normalized transfer factor. This seems natural from the global point of view and is part of the reformulation of our results given in Subsection \ref{sub:arthur}. However, from the local point of view this appears less natural to us, which is why we have kept the formulation as it is. Note that when $G_\tx{der}$ is simply connected, which is often assumed in the literature when dealing with the passage from global to local endoscopy, the discussion at the end of Subsection \ref{sub:endolg} shows that we may choose $\dot y'_v=1$,  and this factor disappears. If $G$ satisfies the Hasse principle, then the same discussion shows that $\dot y'_v=\dot y'$ for a global element $\dot y' \in Z(\hat G^*_\tx{sc})$. Then the product over all $v$ of the terms $\<z_{\tx{sc},v},\dot y'_v\>$ is equal to $1$ by Corollary \ref{cor:tn+gexact}, so again these terms can be removed from the above formula.

\begin{proof}
For each $\mf{\dot e}_v$, let $\mf{e}_v$ be the corresponding usual (non-refined) local endoscopic datum. Then the collection $(\mf{e}_v)_v$ is associated to $\mf{e}$ as in \cite[\S6.2]{LS87}. It is shown in \cite[Corollary 6.4.B]{LS87} that if $\Delta^{(v)} : H_{1,G\tx{-sr}}(F_v) \times G_\tx{sr}(F_v) \rw \C$ is an absolute transfer factor normalized so that $\Delta^{(v)}(\gamma_{1,0},\delta_{0})=1$ for almost all $v$ and such that $\prod_{v \in \dot V}\Delta^{(v)}(\gamma_{1,0},\delta_0)=1$, then
\[ \Delta_\A(\gamma_1,\delta) = \prod_{v \in \dot V} \Delta^{(v)}(\gamma_{1,v},\delta_v). \]
We will show that our normalized transfer factors $\<z_{\tx{sc},v},\dot y'_v\>\Delta[\mf{w}_v,\mf{\dot e}_v,\mf{z}_v,\psi,z_v]$ satisfy these properties. First off, they are indeed absolute transfer factors, according to \cite[Proposition 5.6]{KalRI}. Now let $\gamma_0 \in H(F)$ be the image of $\gamma_{1,0}$ and let $T_0^H \subset H$ be the centralizer of $\gamma_0$. Choose an admissible embedding $T_0^H \rw G^*$ and let $\delta_0^*$ be the image of $\gamma_0$. Then $\delta^*_0$ and $\delta_0$ are stably conjugate, i.e. there exists $g \in G^*(\ol{F})$ such that $\psi(g\delta^*_0g^{-1})=\delta_0$. We recall \cite[(5.1)]{KalRI} that the factor $\Delta[\mf{w}_v,\mf{\dot e}_v,\mf{z}_v,\psi,z_v](\gamma_{1,0},\delta_0)$ is defined as the product
\[
\Delta[\mf{w}_v,\mf{e}_v,\mf{z}_v](\gamma_{1,0},\delta^*_0) \cdot \<\tx{inv}(\delta^*_0,(G,\psi,z_v,\delta_0)),\dot s_{v,\gamma_0,\delta_0^*})\>^{-1},
\]
where $\Delta[\mf{w}_v,\mf{e}_v,\mf{z}_v]$ is the Whittaker normalization of the transfer factor for $\mf{e}_v$ and the quasi-split group $G^*$. It is the product of terms $\epsilon_v$, $\Delta_I$, $\Delta_{II}$, $\Delta_2$, and $\Delta_{IV}$ (we have arranged that $\Delta_1=1$ by taking $\delta^*_0$ to be the image of $\gamma_0$ under the admissible embedding $T_0^H \rw G^*$ that we are using to construct the factors). Here $\epsilon_v$ is the local $\epsilon$-factor defined in \cite[\S5.3]{KS99} and the other terms are defined in \cite{LS87}. For almost all $v$ the local $\epsilon$-factor is equal to $1$ and the product over all $v$ gives the global $\epsilon$-factor of a virtual representation which is defined over $\Q$, and hence self-dual, so the result is equal to $1$. All the $\Delta_?$ terms are also equal to $1$ for almost all $v$ and their product over all $v$ is equal to $1$ according to \cite[Theorem 6.4.A]{LS87}. It will thus be enough to show that for almost all $v$ the term
\begin{equation} \label{eq:p1} \<z_{\tx{sc},v},\dot y'_v\>^{-1}\<\tx{inv}(\delta^*_0,(G,\psi,z_v,\delta_0)),\dot s_{v,\gamma_0,\delta_0^*})\> \end{equation}
is equal to $1$ and that the product over all $v$ of thee terms is equal to $1$. For this, recall that the class $\tx{inv}(\delta^*_0,(G,\psi,z_v,\delta_0)) \in H^1(u_v \rw W_v,Z_\tx{der} \rw T_0)$ is represented by the $1$-cocycle
\[ x_v : W_v \rw T_0(\ol{F_v}),\qquad w \mapsto g^{-1}z_v(w)\sigma_w(g) \]
where $\sigma_w \in \Gamma_v$ is the image of $w$. The class of $x_v$ does not depend on the choice of $g$ and it will be convenient to assume that $g$ is the image of $g_\tx{sc} \in G^*_\tx{sc}(\ol{F})$. Then $x_{v,\tx{sc}}(w)=g^{-1}_\tx{sc}z_{\tx{sc},v}(w)\sigma_w(g_\tx{sc})$ is a lift of $x_v$ to an element of $Z^1(u_v \rw W_v,Z_\tx{sc} \rw T_{0,\tx{sc}})$.

On the other hand, $\dot s_{v,\gamma_0,\delta^*_0} \in \pi_0(\hat{\bar T_0}^{+_v})$ is the image of $\dot s_v \in \pi_0(Z(\hat{\bar H})^{+_v})$ under the map $\hat\varphi_{\gamma_0,\delta_0^*} : Z(\hat{\bar H}) \rw \hat{\bar T_0^H} \rw \hat{\bar T_0}$ coming from the chosen admissible embedding. We have $\hat{\bar T_0} = [\hat T_0]_\tx{sc} \times Z(\hat G^*)^\circ$ and by construction $\dot s_{v,\gamma_0,\delta^*_0} = (\hat\varphi_{\gamma_0,\delta^*_0}(s_\tx{sc})\dot y_v',y''_v)$. The map $[Z_\tx{sc} \rw T_{0,\tx{sc}}] \rw [Z_\tx{der} \rw T_0]$ dualizes to the map $[\hat T_0]_\tx{sc} \times Z(\hat G)^\circ \rw [\hat T_0]_\tx{sc}$ given by projection onto the first factor. It follows that \eqref{eq:p1} is equal to
\begin{equation} \label{eq:p2} \<z_{\tx{sc},v},\dot y'_v\>^{-1}\<x_{v,\tx{sc}},\hat\varphi_{\gamma_0,\delta^*_0}(s_\tx{sc})\dot y_v'\>. \end{equation}
Now $\<x_{v,\tx{sc}},-\>$ is a character of $\pi_0([\hat{\ol{T_{0,\tx{sc}}}}]^{+_v})$ whose restriction to $Z(\hat{\bar G_\tx{sc}^*})^{+_v}$ is equal to $\<z_{\tx{sc},v},-\>$. This means that \eqref{eq:p2} becomes
\begin{equation} \label{eq:p3} \<x_{v,\tx{sc}},\hat\varphi_{\gamma_0,\delta^*_0}(s_\tx{sc})\>. \end{equation}

Consider the element of $Z^1(P_{\dot V} \rw \mc{E}_{\dot V},Z_\tx{sc} \rw T_{0,\tx{sc}})$ given by
\[ x_\tx{sc} : e \mapsto g_\tx{sc}^{-1}z_\tx{sc}(e)\sigma_e(g_\tx{sc}). \]
The 1-cocycles $loc_v(x_\tx{sc})$ and $x_{v,\tx{sc}}$ are equal, where $loc_v$ is the localization map \eqref{eq:locmap2} constructed via Diagram \eqref{eq:diagloc2}. According to Corollary \ref{cor:tnexact}, for almost all $v$ the cohomology class $loc_v([x_\tx{sc}])$ is trivial and hence the corresponding term \eqref{eq:p3} is equal to $1$. Moreover, if we restrict each character $\<loc_v([x_\tx{sc}]),-\>$ of $\pi_0([\hat{\ol{T_{0,\tx{sc}}}}]^{+_v})$ to the group $\pi_0([\hat{\ol{T_{0,\tx{sc}}}}]^{+})$ and take the product over all $v$, the result is the trivial character. But since the image of $s$ in $Z(\hat H)/Z(\hat G)$ is $\Gamma$-fixed, $\hat\varphi_{\gamma_0,\delta^*_0}(s_\tx{sc})$ belongs to $\pi_0([\hat{\ol{T_{0,\tx{sc}}}}]^{+})$ and the proof of the product formula is complete.

The statement of the independence of choices is immediate from equation \eqref{eq:p3}. Indeed, neither $\dot y'_v$ or $y''_v$ appear in this formula. Moreover, only the cohomology class of $x_\tx{sc}$ and its localizations are involved, but these are independent of the particular localization diagrams.
\end{proof}

The remaining choices that we made -- of the element $s_\tx{sc}$ in Subsection \ref{sub:endolg}, which we may multiply by any element of $\hat Z_\tx{sc}$; of the lift $z_\tx{sc}$ of $\Psi$, which we may multiply by any element of $H^1(\mc{E}_{\dot V},Z_\tx{sc})$; and of the global Whittaker datum $\mf{w}$, which we may conjugate by an element of $G^*_\tx{ad}(F)$ -- can influence the individual factors in Proposition \ref{pro:tfprod}. However, the proposition implies that they do not influence the product.

\subsection{The multiplicity formula for discrete automorphic representations} \label{sub:mult}
In this subsection we will recall the conjectural multiplicity formula for tempered discrete automorphic representations due to Kottwitz \cite[\S12]{Kot84} and then, assuming the validity of the local conjecture stated in \cite[\S5.4]{KalRI}, we will construct the global pairing that occurs in this formula.

Let $\varphi$ be a generic global Arthur parameter for $G^*$. This can either take the form of an $L$-homomorphism $\varphi : L_F \rw {^LG^*}$ with bounded image, where $L_F$ is the hypothetical Langlands group of $F$, or it can be a formal global parameter modelled on the cuspidal spectrum of $\tx{GL}_N$, as in \cite[\S1.4]{Art13}. We will assume here that we are dealing with an $L$-homomorphism, since the arguments needed in the case of formal parameters are independent of the problem we are discussing. At each place $v \in \dot V$, the parameter $\varphi$ has a localization, which is a parameter $\varphi_v : L_{F_v} \rw {^LG^*}$. The validity of the local conjecture ensures the existence of an $L$-packet $\Pi_{\varphi_v}$ of tempered representations of rigid inner twists of $G^*$ together with a bijection
\[ \iota_{\varphi_v,\mf{w}_v} : \Pi_{\varphi_v} \rw \tx{Irr}(S_{\varphi_v}^+). \]
The set $\Pi_{\varphi_v}$ consists of equivalence classes of tuples $(G'_v,\psi'_v,z'_v,\pi'_v)$, where $(\psi'_v,z'_v): G^* \rw G'_v$ is a rigid inner twist over $F_v$ and $\pi'_v$ is an irreducible tempered representation of $G'(F_v)$. The group $S_{\varphi_v}^+$ is the preimage in $\hat{\bar G^*}$ of the group $S_{\varphi_v} = \tx{Cent}(\varphi_v,\hat G^*)$. The bijection $\iota_{\varphi_v,\mf{w}_v}$ depends on a choice of local Whittaker datum $\mf{w}_v$. As in Subsection \ref{sub:tfprod}, we chose a global Whittaker datum $\mf{w}$ for $G^*$ and let $\mf{w}_v$ be its localization at each place $v$.

We are interested in the particular group $G$. We choose a family of local rigid inner twists $(\psi,z_v) : G^* \rw G$ indexed by $v \in \dot V$ as in Subsection \ref{sub:cohfam}. Then the subset of $\Pi_{\varphi_v}$ consisting of tuples $(G,\psi,z_v,\pi_v)$ is the $L$-packet of representations of $G(F_v)$ corresponding to $\varphi_v$.
We consider the adelic $L$-packet
\[ \Pi_\varphi = \{ \pi=\otimes'_v\pi_v| (G,\psi,z_v,\pi_v) \in \Pi_{\varphi_v}, \iota_{\varphi_v}((G,\psi,z_v,\pi_v)) = 1\tx{ for almost all }v \}.\]
\begin{lem}[Ta\"ibi] \label{lem:ramification}

The set $\Pi_\phi$ consists of irreducible admissible tempered representations of $G(\A)$.
\end{lem}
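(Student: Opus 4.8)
The claim is that a restricted tensor product $\pi=\otimes'_v\pi_v$ with $(G,\psi,z_v,\pi_v)\in\Pi_{\varphi_v}$ and $\iota_{\varphi_v,\mf{w}_v}((G,\psi,z_v,\pi_v))=1$ for almost all $v$ is a genuine admissible representation of $G(\A)$. The only thing that needs checking is that the ``almost all $v$'' condition above actually produces, for almost all $v$, an \emph{unramified} representation $\pi_v$ of $G(F_v)$ with respect to a hyperspecial maximal compact subgroup — then the restricted tensor product makes sense in the usual way and the result is admissible and tempered because each $\pi_v$ is. So the plan is: (i) fix an integral model of $G$ over $O_{F,S_0}$ for a suitable finite set $S_0$, so that for $v\notin S_0$ the group $G(O_{F_v})$ is a hyperspecial maximal compact and $\varphi_v$ is unramified; (ii) invoke the ramification result, Proposition \ref{pro:ramcoh} (Ta\"ibi), which says that for all but finitely many $v$ the localization $\tx{loc}_v(z_\tx{sc})$ — hence $z_v$ — is inflated from $Z^1(\Gamma_v/I_v,G(O_{F_v^u}))$, i.e.\ the rigid inner twist $(G,\psi,z_v)$ is unramified at $v$; (iii) conclude from the expected compatibility of the local Langlands correspondence for rigid inner twists with unramified data that for almost all $v$ the member of $\Pi_{\varphi_v}$ picked out by the normalization $\iota_{\varphi_v,\mf{w}_v}(\cdot)=1$ is the unramified constituent, so $\pi_v$ is $G(O_{F_v})$-spherical.

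\textbf{Key steps in order.} First I would record that, since $\varphi$ is a global parameter, it is unramified outside a finite set of places: its image lands in $^LG^*$ and is trivial on inertia $I_v$ for $v$ outside some finite $S_1$, so $\varphi_v$ is an unramified local parameter there. Enlarging $S_1$ to contain $S_0$ and all archimedean places, for $v\notin S_1$ we have a hyperspecial $K_v=G^*(O_{F_v})$ and the unramified $L$-packet $\Pi_{\varphi_v}^\tx{ur}$ of $G^*(F_v)$ is a singleton, whose member is $K_v$-spherical and is sent by $\iota_{\varphi_v,\mf{w}_v}$ to the trivial character of $S_{\varphi_v}^+$ (this is part of the local conjecture's compatibility with the unramified/Satake picture, together with the fact that the global Whittaker datum $\mf{w}$ is unramified at almost all $v$). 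Next, apply Proposition \ref{pro:ramcoh} to $z_\tx{sc}\in Z^1(P_{\dot V}\rw\mc{E}_{\dot V},Z_\tx{sc}\rw G^*_\tx{sc})$: for all but finitely many $v$, $\tx{loc}_v(z_\tx{sc})$ is (up to $B^1(\Gamma_v,Z_\tx{sc})$) inflated from $Z^1(\Gamma_v/I_v,G^*_\tx{sc}(O_{F_v^u}))$, hence the same holds for its image $z_v$, so $(G,\psi,z_v)$ is an unramified rigid inner twist of $G^*$ at $v$; in particular $G$ itself is unramified at $v$ and $\psi$ can be chosen unramified, giving a hyperspecial $K_v\subset G(F_v)$. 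Then the expected functoriality of the rigid-inner-twist local Langlands correspondence in the unramified situation (Satake normalization) forces the unique $\pi_v\in\Pi_{\varphi_v}$ with $\iota_{\varphi_v,\mf{w}_v}((G,\psi,z_v,\pi_v))=1$ to be $K_v$-spherical for all but finitely many $v$. Having established this, the restricted tensor product $\pi=\otimes'_v\pi_v$ with respect to the spherical vectors at $v\notin S$ (for suitable finite $S$) is a well-defined irreducible admissible representation of $G(\A)$, and it is tempered since each local factor is.

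\textbf{Main obstacle.} The serious input — and the step that does all the real work — is the ramification result Proposition \ref{pro:ramcoh}, which we are entitled to assume. Granting it, the remaining difficulty is purely one of bookkeeping around the \emph{unramified normalization} of the local bijection $\iota_{\varphi_v,\mf{w}_v}$: one must know that at a place where $\varphi_v$, $\mf{w}_v$, and the rigid inner twist $(G,\psi,z_v)$ are all unramified, the representation of $G(F_v)$ attached to the trivial character of $S_{\varphi_v}^+$ is exactly the Satake/spherical one. This is the content of the unramified-compatibility clause of the local conjecture of \cite[\S5.4]{KalRI}; under the standing assumption that the local conjecture holds, there is nothing further to prove, and the statement follows. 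I should also note the minor point that ``almost all'' in the defining condition of $\Pi_\varphi$ and ``almost all'' coming from Proposition \ref{pro:ramcoh} must be reconciled on a common cofinite set of places, which is immediate since a finite union of finite exceptional sets is finite.
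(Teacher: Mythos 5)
Your skeleton matches the paper's: reduce everything to showing that $\pi_v$ is spherical for almost all $v$, and obtain unramifiedness of the rigid inner twist at almost all $v$ from Proposition \ref{pro:ramcoh}. But your step (iii) --- and your ``main obstacle'' paragraph --- rests on an ``unramified-compatibility clause of the local conjecture of \cite[\S 5.4]{KalRI}'' which does not exist: the normalization built into that conjecture is the Whittaker one (the trivial character of $\pi_0(S_{\varphi_v}^+)$ corresponds to the $\mf{w}_v$-generic member), not a Satake/spherical one. So the statement you declare to be ``nothing further to prove'' is exactly the non-trivial content of the lemma, and assuming it is circular. (Your auxiliary claim that the unramified $L$-packet of $G^*$ is a singleton is also false in general: packets of unramified parameters can contain non-spherical members.)

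The paper closes this gap with two concrete inputs that you omit. First, once Proposition \ref{pro:ramcoh} guarantees that $z_v$ is inflated from $Z^1(\Gamma_{F_v^\tx{ur}/F_v},G^*(O_{F_v^\tx{ur}}))$, the vanishing of $H^1(\Gamma_{F_v^\tx{ur}/F_v},G^*(O_{F_v^\tx{ur}}))$ (Lang's theorem) produces $g \in G^*(O_{F_v^\tx{ur}})$ such that $f=\psi\circ\tx{Ad}(g^{-1})$ is an isomorphism $G^*\rw G$ defined over $O_{F_v}$; then $(g,f)$ is an isomorphism of rigid inner twists from $(\psi,z_v)$ to the trivial rigid inner twist of $G^*$, so the condition $\iota_{\varphi_v,\mf{w}_v}(G,\psi,z_v,\pi_v)=1$ translates, by the genuine (Whittaker) normalization of the conjecture, into $\pi_v\circ f$ being $\mf{w}_v$-generic. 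Your assertion that ``$\psi$ can be chosen unramified'' also needs this Lang's-theorem step to be justified. Second, the passage from ``$\mf{w}_v$-generic with $\varphi_v$ and $\mf{w}_v$ unramified'' to ``spherical'' is not a conjectural compatibility but the theorem of Casselman--Shalika \cite{CS80}; since $f$ is defined over $O_{F_v}$, sphericality of $\pi_v\circ f$ with respect to $G^*(O_{F_v})$ gives sphericality of $\pi_v$ with respect to $G(O_{F_v})$. With these two ingredients supplied in place of the asserted unramified-compatibility clause, your outline becomes the paper's proof.
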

\begin{proof} The only non-obvious point is that $\pi_v$ is unramified for almost all $v$. In fact, this is a non-trivial statement that relies in Ta\"ibi's result Proposition \ref{pro:ramcoh}, and the work of Casselman-Shalika \cite{CS80}.

There exists a finite set $S$ of places of $F$ such that $G^*$ and $G$ have integral models over $O_{F,S}$, $\psi : G^* \to G$ is an isomorphism over $O_S$, and for all $v \notin S$ the Whittaker datum $\mf{w}_v$ is unramified, the local parameter $\varphi_v$ is unramified, and by Proposition \ref{pro:ramcoh} the 1-cocycle $z_v$ is inflated from $Z^1(\Gamma_{F_v^\tx{ur}/F_v},G^*(O_{F_v^{\tx{ur}}}))$. The triviality of $H^1(\Gamma_{F_v^\tx{ur}/F_v},G^*(O_{F_v^{\tx{ur}}}))$ implies the existence of $g \in G^*(O_{F_v^{\tx{ur}}})$ such that $f = \psi \circ \tx{Ad}(g^{-1})$ is an isomorphism $G^* \to G$ defined over $O_{F_v}$. Now $(g,f)$ is an isomorphism from the rigid inner twist $(\psi,z_v) : G^* \to G$ to the trivial rigid inner twist $(\tx{id},1) : G^* \to G^*$. It follows that $\iota_{\varphi_v}(G^*,\tx{id},1,\pi_v \circ f)=1$. In other words, the representation $\pi_v \circ f$ of $G^*(F)$ is $\mf{w}_v$-generic. According to \cite{CS80} $\pi_v \circ f$ is unramified with respect to the hyperspecial subgroup $G^*(O_{F_v})$ of $G^*(F_v)$. Since $f$ is an isomorphism defined over $O_{F_v}$, $\pi$ is unramified with respect to the hyperspecial subgroup $G(O_{F_v})$ of $G(F_v)$.
\end{proof}

It is expected that every tempered discrete automorphic representation of $G(\A)$ belongs to such a set $\Pi_\varphi$ for a suitable $\varphi$ that is discrete (elliptic in the language of \cite[\S10.3]{Kot84}). Moreover, Kottwitz has given a conjectural formula \cite[(12.3)]{Kot84} for the multiplicity in the discrete spectrum of $G$ of any $\pi \in \Pi_\varphi$. For this, consider the exact sequence
\[ 1 \rw Z(\hat G^*) \rw \hat G^* \rw \hat G^*_\tx{ad} \rw 1 \]
of complex algebraic groups with action of $L_F$ given by $\tx{Ad}\circ\varphi$. It leads to the connecting homomorphism
\[ \tx{Cent}(\varphi,\hat G^*_\tx{ad}) \rw H^1(L_F,Z(\hat G^*)). \]
Let $S_\varphi^\tx{ad}$ be the subgroup of $\tx{Cent}(\varphi,\hat G^*_\tx{ad})$ consisting of elements whose image in $H^1(L_F,Z(\hat G^*))$ is locally trivial, i.e. has trivial image in $H^1(L_{F_v},Z(\hat G^*))$ for every place $v \in \dot V$. This group is denoted by $S_\varphi/Z(\hat G^*)$ in \cite[(10.2.3)]{Kot84}. Let $\mc{S}_\varphi = \pi_0(S_\varphi^\tx{ad})$. Kottwitz conjectures that there exists a pairing
\begin{equation} \label{eq:gp} \<-,-\> : \mc{S}_\varphi \times \Pi_\varphi \rw \C \end{equation}
realizing each element of $\Pi_\varphi$ as the character of a finite-dimensional representation of $\mc{S}_\varphi$, so that the integer
\[ m(\varphi,\pi) = |\mc{S}_\varphi|^{-1}\sum_{x \in\mc{S}_\varphi} \<x,\pi\> \]
gives the $\varphi$-contribution of $\pi$ to the discrete spectrum of $G$. In other words, the multiplicity with which $\pi$ occurs in the discrete spectrum of $G$ should be the sum  $\sum_{\varphi} m(\varphi,\pi)$, where $\varphi$ runs over the set of equivalence classes \cite[\S10.4]{Kot84} of discrete generic global Arthur parameters with $\pi \in \Pi_\varphi$.

We will now give a construction of \eqref{eq:gp}. Let $s_\tx{ad} \in S_\varphi^\tx{ad}$ and choose a lift $s_\tx{sc} \in S_\varphi^\tx{sc}$, where $S_\varphi^\tx{sc}$ is the preimage in $\hat G^*_\tx{sc}$ of $S_\varphi^\tx{ad}$. We follow the argument of Subsection \ref{sub:endolg} to obtain from $s_\tx{sc}$ an element $\dot s_v \in S_{\varphi_v}^+$ for each $v \in \dot V$. Namely, by assumption there exists an element $y_v \in Z(\hat G^*)$ such that $s_\tx{der}\cdot y_v \in S_{\varphi_v}$ for each $v \in \dot V$. Decompose $y_v=y'_v \cdot y''_v$ with $y'_v \in Z(\hat G^*_\tx{der})$ and $y''_v \in Z(\hat G^*)^\circ$ and choose a lift $\dot y_v' \in Z(\hat G^*_\tx{sc})$ of $y'_v$. Then $(s_\tx{sc}\cdot\dot y'_v,y''_v) \in S_{\varphi_v}^+$. Let us write $\<(s_\tx{sc}\cdot\dot y'_v,y''_v),(G,\psi,z_v,\pi_v)\>$ for the character of the representation $\iota_{\varphi_v}((G,\psi,z_v,\pi_v))$ of $\pi_0(S_{\varphi_v}^+)$ evaluated at the element $(s_\tx{sc}\cdot\dot y'_v,y''_v)$.

\begin{pro} \label{pro:pairprod}
Almost all factors in the product
\[ \<s_\tx{ad},\pi\> = \prod_{v \in \dot V} \<z_{\tx{sc},v},\dot y'_v\>^{-1}\<(s_\tx{sc}\cdot\dot y'_v,y''_v),(G,\psi,z_v,\pi_v)\> \]
are equal to $1$ and the product is independent of the choices of $s_\tx{sc}$, $\dot y'_v$, $y''_v$, the collection $(z_{\tx{sc},v})_v$, and the global Whittaker datum $\mf{w}$. The function $s_\tx{ad} \mapsto \<s_\tx{ad},\pi\>$ is the character of a finite-dimensional representation of $\mc{S}_\varphi$.
\end{pro}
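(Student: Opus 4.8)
I would adapt, nearly line for line, the proof of Proposition \ref{pro:tfprod}: the role played there by the Langlands--Shelstad identities is now played by the defining normalization properties of the local pairings of \cite[\S5.4]{KalRI}, and the role of \cite[Cor. 6.4.B]{LS87} is played by the global duality results of Section \ref{sec:cohf} --- Corollary \ref{cor:tnexact}, Corollary \ref{cor:tn+gexact} and Theorem \ref{thm:tn+g}. Write $\dot s_v = (s_\tx{sc}\dot y'_v,y''_v) \in S_{\varphi_v}^+$ and abbreviate $\<-,\pi_v\> = \<-,(G,\psi,z_v,\pi_v)\>$ for the irreducible character of $\pi_0(S_{\varphi_v}^+)$ attached to $\pi_v$ by $\iota_{\varphi_v,\mf{w}_v}$. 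The finiteness statement is then immediate: by Corollary \ref{cor:loctriv} applied to $[Z_\tx{sc}\rw G^*_\tx{sc}]$ and the global cocycle $z_\tx{sc}$, the localization $z_{\tx{sc},v}$ is cohomologically trivial for all but finitely many $v$, so for such $v$ the class $z_v$ is trivial and $\<z_{\tx{sc},v},\dot y'_v\>=1$; and by the definition of $\Pi_\varphi$ one has $\iota_{\varphi_v}((G,\psi,z_v,\pi_v))=1$ for almost all $v$, so $\<\dot s_v,\pi_v\>$, being a value of the trivial character, is $1$ there.

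\textbf{Independence of the local data.} Fix the global objects $s_\tx{sc}$, $z_\tx{sc}$, $\mf{w}$ and vary, at a place $v$, the auxiliary choices of Subsection \ref{sub:endolg}: the element $y_v$ within $\{y\in Z(\hat G^*) : s_\tx{der}y\in S_{\varphi_v}\}$, its decomposition $y_v=y'_v\cdot y''_v$, and the lift $\dot y'_v$. Any such change multiplies $\dot s_v$ by an element $\zeta_v$ which is central in $\hat{\bar G^*}$ and whose image in $\hat G^*$ lies in $Z(\hat G^*)^{\Gamma_v}$ (this uses that two admissible $y_v$'s differ by an element of $S_{\varphi_v}\cap Z(\hat G^*)=Z(\hat G^*)^{\Gamma_v}$), hence $\zeta_v\in Z(\hat{\bar G^*})^{+_v}$; and it simultaneously multiplies $\dot y'_v$ by the image $\zeta'_v$ of $\zeta_v$ under the projection $Z(\hat{\bar G^*})\rw Z(\widehat{\bar G^*_\tx{sc}})$. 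The $v$-th factor is therefore multiplied by $\<z_{\tx{sc},v},\zeta'_v\>^{-1}\<\zeta_v,\pi_v\>$. Now the normalization axiom of \cite[\S5.4]{KalRI} says $\<-,\pi_v\>$ restricts on $\pi_0(Z(\hat{\bar G^*})^{+_v})$ to $\<z_v,-\>$, and the functoriality of the local pairing for the morphism $[Z_\tx{sc}\rw G^*_\tx{sc}]\rw[Z_\tx{der}\rw G^*]$ (the same compatibility invoked in the proof of Proposition \ref{pro:tfprod}) gives $\<z_v,\zeta_v\>=\<z_{\tx{sc},v},\zeta'_v\>$; so the extra factor collapses to $1$. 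Thus each factor, hence the product, is unchanged by the local data.

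\textbf{Independence of the global data and the character property.} For the remaining choices the point is that each variation produces, at each place, a correction to the $v$-th factor, and these corrections form the localization of a single global object, so that the product of the corrections is trivial by the global duality. For $s_\tx{sc}\mapsto s_\tx{sc}c$ with $c\in Z(\hat G^*_\tx{sc})$ one re-chooses $y_v\mapsto \bar c^{-1}y_v$ and $\dot y'_v\mapsto c^{-1}\dot y'_v$, so $\dot s_v$ is unchanged and the $v$-th factor is multiplied by $\<z_{\tx{sc},v},c\>$; since $(z_{\tx{sc},v})_v$ is the localization of a global class, Corollary \ref{cor:tn+gexact} (equivalently the exactness of the bottom row of Theorem \ref{thm:tn+g}, applied to $[Z_\tx{sc}\rw G^*_\tx{sc}]$, where $Z(\widehat{\bar G^*_\tx{sc}})^{+_v}=\hat Z_\tx{sc}$) gives $\prod_v\<z_{\tx{sc},v},-\>=1$ as a character of $\pi_0(\hat Z_\tx{sc})$, hence $\prod_v\<z_{\tx{sc},v},c\>=1$. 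For $z_\tx{sc}\mapsto z_\tx{sc}c$ with $c\in Z^1(\mc{E}_{\dot V},Z_\tx{sc})$ one uses, at $v$, the behavior of $\iota_{\varphi_v,\mf{w}_v}$ under change of rigidification from \cite[\S5.4]{KalRI} together with bilinearity of the local pairing in $z_{\tx{sc},v}$; again the corrections are governed by the global $c$ and vanish in the product by Corollary \ref{cor:tn+gexact} (changes of the localizing diagrams in \eqref{eq:diagloc2} alter $z_{\tx{sc},v}$ only within $B^1(\Gamma_v,Z_\tx{sc})$, hence have no effect, by Proposition \ref{pro:locrig}). For $\mf{w}\mapsto\tx{Ad}(g)\mf{w}$ with $g\in G^*_\tx{ad}(F)$, the parametrizations $\iota_{\varphi_v,\mf{w}_v}$ twist by the character attached to the image of $g$ under $G^*_\tx{ad}(F_v)\rw H^1(F_v,Z_\tx{sc})$; these form the localization of the class of $g$ in $H^1(F,Z_\tx{sc})$, and the product of the twists is trivial by the reciprocity contained in global Tate duality (again a special case of Corollary \ref{cor:tn+gexact}). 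Granting these independences, $s_\tx{ad}\mapsto\<s_\tx{ad},\pi\>$ descends from $S_\varphi^\tx{sc}$ to a function on $\mc{S}_\varphi=\pi_0(S_\varphi^\tx{ad})$: multiplicativity in $s_\tx{sc}$ follows from the compatible choices $y_v^{(st)}=y_v^{(s)}y_v^{(t)}$, which give $\dot s_v^{(st)}=\dot s_v^{(s)}\dot s_v^{(t)}$; local constancy follows since $\<-,\pi_v\>$ and $\<z_{\tx{sc},v},-\>$ factor through $\pi_0$ and $y_v$ may be taken to vary algebraically in one-parameter families; and as a finite product of pullbacks of characters of finite-dimensional representations it is itself such a character.

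\textbf{Main obstacle.} The routine part is the purely local cancellation of the second paragraph and the character property of the last; the delicate part is the bookkeeping in the third paragraph, namely matching the precise normalization axioms of the local conjecture of \cite[\S5.4]{KalRI} (the central-character normalization, the behavior under change of the rigidifying cocycle, and the behavior under change of Whittaker datum) with the global duality sequences of Section \ref{sec:cohf}, so as to see that every ambiguity coming from a \emph{global} choice yields a coherent family of local corrections whose product is forced to be trivial. Lining up the several central subgroups ($Z_\tx{sc}$, $Z_\tx{der}$ and their dual groups) together with the ``$+$''/``$+_v$'' decorations and the functoriality of the various pairings is where the care is required.
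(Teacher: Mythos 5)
Your first two paragraphs and the character-property part of the last one track the paper's proof closely: the finiteness of the set of nontrivial factors is obtained exactly as in the paper (definition of $\Pi_\varphi$ plus Corollary \ref{cor:loctriv}), and your cancellation of the local ambiguities in $y_v,\dot y'_v,y''_v$ is precisely the well-definedness of the paper's homomorphism \eqref{eq:sschomo} of $S_\varphi^\tx{sc}$ into $S_{\varphi_v}^+\times_{Z(\hat{\bar G^*})^{+_v}}Z(\hat G^*_\tx{sc})$, along which the paper pulls back $\iota_{\varphi_v}((G,\psi,z_v,\pi_v))\otimes\<z_{\tx{sc},v},-\>$; your "multiplicativity of $s_\tx{sc}\mapsto\dot s_v$" and "product of pullbacks of characters" remarks are this construction in slightly looser packaging. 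Your direct treatment of the lift $s_\tx{sc}\mapsto s_\tx{sc}c$, $c\in\hat Z_\tx{sc}$ (re-choose $\dot y'_v\mapsto c^{-1}\dot y'_v$ so that $\dot s_v$ is unchanged, pick up $\prod_v\<z_{\tx{sc},v},c\>=1$ from the exactness in Theorem \ref{thm:tn+g}/Corollary \ref{cor:tn+gexact}) is a valid alternative route for that one choice; it is the same mechanism the paper uses at the end of the proof of Proposition \ref{pro:tfprod}.

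The gap is in your third paragraph, for the independence of $z_\tx{sc}$ and of $\mf{w}$. The paper does not argue these directly at all: it observes that any change in the $v$-th spectral factor caused by changing $z_\tx{sc}$, $s_\tx{sc}$ or $\mf{w}$ is tied, through the endoscopic character identities \cite[(5.11)]{KalRI} (which \emph{are} part of the assumed local conjecture), to the identical change in the $v$-th geometric factor of Proposition \ref{pro:tfprod}; since that proposition says the product of the geometric factors equals the canonical adelic transfer factor for \emph{any} admissible choice, the product of the changes is forced to be $1$. Your substitute invokes "the behavior of $\iota_{\varphi_v,\mf{w}_v}$ under change of rigidification" and an explicit twisting formula under change of Whittaker datum. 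Neither is among the stated ingredients of the conjecture in \cite[\S5.4]{KalRI}; they can only be \emph{derived} from the unconditional dependence of the normalized transfer factor on $z_v$ and $\mf{w}_v$ together with the uniqueness of the pairing satisfying the character identities --- which is exactly the detour the paper's argument renders unnecessary, and which you do not carry out.

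Even granting those formulas, your appeal to Corollary \ref{cor:tn+gexact} is not yet licit as written. The corrections you would accumulate are of the form $\<\tx{loc}_v(c),\dot s_v\>$ (resp.\ the analogous term for the class of $g\in G^*_\tx{ad}(F)$), evaluated at the place-dependent elements $\dot s_v=(s_\tx{sc}\dot y'_v,y''_v)$, and in the $z_\tx{sc}$ case they are accompanied by the simultaneous change of $\<z_{\tx{sc},v},\dot y'_v\>^{-1}$. The global duality only kills a product of the local characters $\<\tx{loc}_v(c),-\>$ after restriction to the single global subgroup $Z(\hat{\bar G^*})^+$ (equivalently, evaluation at one fixed element there). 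So one must first show that the auxiliary data $\dot y'_v,y''_v$ cancel out of the correction and that what remains is the evaluation of each local character at one global element attached to $s_\tx{sc}$ lying in the correct "$+$"-group --- a fiber-product bookkeeping of the same kind as in your second paragraph, which you do not perform. Without it, the claim "the corrections are governed by the global $c$ and vanish in the product" is an assertion, not a proof; this is the step the paper's character-identity argument is designed to avoid.
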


As in the case of Proposition \ref{pro:tfprod} we remark that if $G^*_\tx{der}$ is simply connected, or if $G^*$ satisfies the Hasse principle, then the factors $\<z_{\tx{sc},v},\dot y'_v\>^{-1}$ can be removed from the formula.

\begin{proof}
By construction of $\Pi_\varphi$ we have $\<(s_\tx{sc}\cdot\dot y'_v,y''_v),(G,\psi,z_v,\pi_v)\>=1$ for almost all $v$. Recall from Subsection \ref{sub:endolg} that the class of $z_{\tx{sc},v}$ in $H^1(u_v \rw W_v,Z_\tx{sc} \rw G^*_\tx{sc})$ is the localization $loc_v([z_\tx{sc}])$ of the global class $[z_\tx{sc}] \in H^1(P_{\dot V} \rw \mc{E}_{\dot V},Z_\tx{sc} \rw G^*_\tx{sc})$. According to Corollary \ref{cor:loctriv} this localization is trivial for almost all $v$.

We have established that almost all factors in the product are equal to $1$. Now we will show that each factor is the character of a finite-dimensional representation of the group $S_\varphi^\tx{sc}$.  Recall from Subsection \ref{sub:multnot} that $\hat{\bar G^*} = \hat G^*_\tx{sc} \times Z(\hat G^*)^\circ$. We have the maps $Z(\hat{\bar G^*})^{+_v} \rw S_{\varphi_v}^+$ and $Z(\hat{\bar G^*})^{+_v} \rw Z(\hat G^*_\tx{sc})$, the second given by projection onto the first factor of $Z(\hat{\bar G^*}) = Z(\hat G^*_\tx{sc}) \times Z(\hat G^*)^\circ$. We claim that we have a well-defined homomorphism
\begin{equation} \label{eq:sschomo} S_{\varphi}^\tx{sc} \rw S_{\varphi_v}^+ \times_{Z(\hat{\bar G^*})^{+_v}} Z(\hat G^*_\tx{sc}), \end{equation}
sending an element $s_\tx{sc} \in S_{\varphi}^{\tx{sc}}$ to the element $[(s_\tx{sc}\cdot \dot y'_v,y''_v),(\dot y'_v)^{-1}]$. Indeed, the tuple $(\dot y'_v,y''_v)$ can only be replaced by $(\dot y'_v \dot x'_v,y''_vx''_v)$, where $\dot x'_v \in Z(\hat G^*_\tx{sc})$, $x''_v \in Z(\hat G^*)^\circ$, and if $x'_v \in Z(\hat G^*_\tx{der})$ is the image of $\dot x'_v$, then $x=x'_vx''_v \in Z(\hat G^*)^{\Gamma_v}$. But then $(\dot x'_v,x''_v) \in Z(\hat{\bar G^*})^{+_v}$ and thus in the target of \eqref{eq:sschomo} we have the equality
\[ [(s_\tx{sc}\cdot \dot y'_v\dot x'_v,y''_vx''_v),(\dot y'_v\dot x'_v)^{-1}] = [(s_\tx{sc}\cdot \dot y'_v,y''_v),(\dot y'_v)^{-1}]. \]
This shows that \eqref{eq:sschomo} is well-defined. Checking that it is a homomorphism is straightforward.
Since the character of $Z(\hat{\bar G})^{+_v}$ by which the representation $\iota_{\varphi_v}((G,\psi,z_v,\pi_v))$ of $S_{\varphi_v}^+$ transforms is the pull-back of the character $\<z_{\tx{sc},v},-\>$ of $Z(\hat G_\tx{sc})$, the tensor product $\iota_{\varphi_v}((G,\psi,z_v,\pi_v)) \otimes \<z_{\tx{sc},v},-\>$ descends to a representation of the target of \eqref{eq:sschomo} and pulls back to a representation of $S_{\varphi}^\tx{sc}$, whose character is the factor corresponding to $v$ in the product above.

We have thus proved that this product is the character of a finite-dimensional representation of $S_\varphi^\tx{sc}$ and that it is independent of the choices of $\dot y'_v$ and $y''_v$ for each place $v$. We shall now argue that this representation is independent of the choice of $z_\tx{sc}$ and descends to the group $\mc{S}_\varphi = S_\varphi^\tx{sc}/Z(\hat G^*_\tx{sc})S_\varphi^{\tx{sc},\circ}$. First, $S_\varphi^{\tx{sc},\circ}$ maps to the connected component of the target of \eqref{eq:sschomo}, but the latter maps trivially to the finite group $\pi_0(S_{\varphi_v}^+) \times_{Z(\hat{\bar G^*})^{+_v}} Z(\hat G^*_\tx{sc})$, from which the representation $\iota_{\varphi_v}((G,\psi,z_v,\pi_v)) \otimes \<z_{\tx{sc},v},-\>$ is inflated. Second, if we either choose a different lift $z_\tx{sc}$ of $\Psi$, or a different lift $s_\tx{sc}$ of $s_\tx{ad}$, or a different global Whittaker datum $\mf{w}$, then some individual factors in the product might change. However, each such change will simultaneously occur in the same factor of the product in Proposition \ref{pro:tfprod}, because the two factors are related by the endoscopic character identities \cite[(5.11)]{KalRI}, which are part of the local conjecture whose validity we are assuming in our construction. However, we know from Proposition \ref{pro:tfprod} that the product over all places equals the adelic transfer factor and thus remains unchanged. This means that the product over all places of the changes in the factors is equal to $1$.
\end{proof}

\subsection{Relationship to  Arthur's framework} \label{sub:arthur}

In \cite{Art06}, Arthur states a version of the local Langlands conjecture and endoscopic transfer for general connected reductive groups, motivated by the stable trace formula. His formulation is different from the one given in \cite[\S5.4]{KalRI}. For one, it uses a modification of the classical local $S$-group that is different from ours. Moreover, besides the conjectural local pairings between $L$-packets and $S$-groups, it introduces further conjectural objects -- the mediating functions $\rho(\Delta,\tilde s)$ and the spectral transfer factors $\Delta(\phi',\pi)$. Their purpose is to encode the lack of normalization of the (geometric) transfer factors of Langlands-Shelstad as well as the non-invariance of these transfer factors under automorphisms of local endoscopic data, both of which are problems in local endoscopy that arise in the case of non-quasi-split groups. The local pairing is then supposed to be given as the product of the mediating function and the spectral transfer factor.

In this subsection, we will show that the local conjecture formulated in \cite[\S5.4]{KalRI} implies a stronger version of Arthur's conjecture of \cite{Art06}. The strengthening is due to the fact that the conjecture in \cite[\S5.4]{KalRI} implies that all objects in Arthur's formulation are canonically specified, and that furthermore the mediating functions have a simple explicit formula which in fact allows them to be eliminated if so desired. The reason we can achieve this is that the main local problems introduced by non-quasi-split groups -- namely the lack of canonical normalization of the Langlands-Shelstad transfer factor and its non-invariance under automorphisms of endoscopic data -- were resolved in \cite[\S5.3]{KalRI} based on the cohomology of local Galois gerbes.

We feel that the translation between our statement and that of Arthur, even though not difficult, allows one to combine the strengths of both formulations of the local conjectures. Arthur's framework makes the application of the local conjecture to global questions somewhat simpler. In particular, it makes the extraneous factors in the product expansions of Propositions \ref{pro:tfprod} and \ref{pro:pairprod} disappear. On the other hand, our local conjecture is more intrinsic to the group $G$ and makes many local operations, for example descent to Levi subgroups, more transparent. It is also closely related (and in some sense extends to arbitrary local fields) some of the discussion of \cite{ABV92}.

We begin by recalling Arthur's conjecture from \cite[\S3]{Art06}. It is stated for non-archimedean local fields of characteristic zero. We let $F$ be such a field and we let $\Psi : G^* \rw G$ be an equivalence class of inner twists with $G^*$ quasi-split. Let $\mf{w}$ be a Whittaker datum for $G^*$ and let $\varphi : L_F \rw {^LG^*}$ be a tempered Langlands parameter. Let $S_\varphi=\tx{Cent}(\varphi,\hat G^*)$ be its centralizer, $S_\varphi^\tx{ad}$ the image of $S_\varphi$ in $\hat G^*_\tx{ad}$, and $S_\varphi^\tx{sc}$ the preimage of $S_\varphi^\tx{ad}$ in $\hat G^*_\tx{sc}$. From $\Psi$ we obtain an element of $H^1(\Gamma,G^*_\tx{ad})$ and hence by Kottwitz's isomorphism \cite[\S1]{Kot86} a character $\zeta_\Psi : \hat Z_\tx{sc}^\Gamma \rw \C^\times$. Arthur proposes to choose an arbitrary extension $\dot\zeta_\Psi : Z_\tx{sc} \rw \C^\times$ of this character and to consider the set $\tx{Irr}(\pi_0(S_\varphi^\tx{sc}),\dot\zeta_\Psi)$ of irreducible representations of the finite group $\pi_0(S_\varphi^\tx{sc})$ that transform under $\hat Z_\tx{sc}$ by $\dot\zeta_\Psi$. Arthur expects this set to be in non-canonical bijection with the $L$-packet $\Pi_\varphi(G)$ of representations of $G(F)$ corresponding to $\varphi$. This non-canonical bijection depends on the choice of a normalization $\Delta$ of the Langlands-Shelstad transfer factor as well as on the (independent) choice of a mediating function $\rho(\Delta,\tilde s)$. Part of the conjecture is the expectation that these two choices can be made in such a way that $\tilde s \mapsto \<\tilde s,\pi\> = \rho(\Delta,\tilde s)^{-1}\Delta(\varphi',\pi)$ is the character of the irreducible representation of $\pi_0(S_\varphi^\tx{sc})$ corresponding to $\pi$, see \cite[\S3]{Art06}. The endoscopic character identities are supposed to match the stable character of an endoscopic parameter $\varphi'$ with the virtual character of the $L$-packet $\Pi_\varphi(G)$ in which the character of $\pi \in \Pi_\varphi(G)$ is weighted by the scalar $\Delta(\varphi',\pi)$. For global applications, Arthur formulates the following hypothesis \cite[Hypothesis 9.5.1]{Art13}: If $F$ is a number field and at each place $v$ of $F$ one has fixed a normalization $\Delta_v$ of the Langlands-Shelstad transfer factor such that $\prod_v \Delta_v = \Delta_\A$, then the mediating functions $\rho(\Delta_v,\tilde s_v)$ can be chosen coherently so as to satisfy the formula $\prod_v \rho(\Delta_v,\tilde s)=1$ whenever $\varphi$ is a global parameter and $\tilde s \in S_\varphi^\tx{sc}$.

We will now discuss how the local conjecture of \cite{KalRI} and the global results of the current paper imply a stronger form of Arthur's local and global expectations. Let $F$ be a number field, and $\Psi : G^* \rw G$ an equivalence class of inner twists of connected reductive groups defined over $F$ with $G^*$ quasi-split, and $\mf{w}$ a global Whittaker datum for $G^*$.

To discuss the local conjecture, we focus on a non-archimedean place $v \in \dot V$. Fix a lift $\dot\Psi_{v,\tx{sc}} \in H^1(u_v \rw W_v,Z_\tx{sc} \rw G^*_\tx{sc})$ of the localization $\Psi_v \in H^1(\Gamma_v,G^*_\tx{ad})$ of $\Psi$. It exists by \cite[Corollary 3.8]{KalRI}. The Tate-Nakayama-type duality theorem of \cite[\S4]{KalRI} interprets $\dot\Psi_{v,\tx{sc}}$ as a character $\zeta_{\dot\Psi_{v,\tx{sc}}} : \hat Z_\tx{sc} \rw \C^\times$ that extends $\zeta_\Psi$ and thus naturally provides a version of the character $\dot\zeta_\Psi$ in Arthur's formulation. However, we are now in a considerably stronger position, because besides providing the character $\zeta_{\dot\Psi_{v,\tx{sc}}}$, the cohomology class $\dot\Psi_{v,\tx{sc}}$ also normalizes the Langlands-Shelstad transfer factor and the pairings between $L$-packets and $S$-groups. In fact, since in the non-archimedean case $\dot\Psi_{v,\tx{sc}}$ and $\zeta_{\dot\Psi_{v,\tx{sc}}}$ determine each other \cite[Theorem 4.11 and Proposition 5.3]{KalRI}, this implies in Arthur's language that the choice of $\zeta_{\dot\Psi_{v,\tx{sc}}}$ normalizes the transfer factor and local pairings. Note however that in the archimedean case $\dot\Psi_{v,\tx{sc}}$ cannot be recovered from $\zeta_{\dot\Psi_{v,\tx{sc}}}$ and is thus the more fundamental object.

More precisely, let $\dot\Psi_v \in H^1(u_v \rw W_v,Z_\tx{der} \rw G^*)$ be the image of $\dot\Psi_{v,\tx{sc}}$, which in turn gives a character $\zeta_{\dot\Psi_v} : \pi_0(Z(\hat{\bar G^*})^{+_v}) \rw \C^\times$. We recall that since we are working with $Z_\tx{der}$, we have $\hat{\bar G^*}=\hat G^*_\tx{sc} \times Z(\hat G^*)^\circ$. The character $\zeta_{\dot\Psi_v}$ that we obtain is the pull-back of $\zeta_{\dot\Psi_{v,\tx{sc}}}$ under the map
\[ \pi_0(Z(\hat{\bar G^*})^{+_v}) \rw \hat Z_\tx{sc} \]
given by projection onto the first factor. The conjecture of \cite[\S5.4]{KalRI} then states that given a tempered parameter $\varphi_v : L_{F_v} \rw {^LG^*}$ there is a canonical bijection
\[ \Pi_{\varphi_v}(G) \rw \tx{Irr}(\pi_0(S_{\varphi_v}^+),\zeta_{\dot\Psi_v}), \]
where $\Pi_{\varphi_v}(G)$ is the $L$-packet on $G$ corresponding to $\varphi_v$. This packet is related to the compound $L$-packet $\Pi_{\varphi_v}$ of \cite[\S5.4]{KalRI} by $\Pi_{\varphi_v}(G)=\{\pi| (G,\psi,z_v,\pi) \in \Pi_{\varphi_v}\}$, where $(\psi,z_v) \in \dot\Psi_v$ is any representative. Here, as well as below, we find it convenient to think of $\dot\Psi_v$ as an equivalence class of rigid inner twists $G^* \rw G$.

In order to translate this into Arthur's formulation we need to relate the two sets $\tx{Irr}(\pi_0(S_{\varphi_v}^+),\zeta_{\dot\Psi_v})$ and $\tx{Irr}(\pi_0(S_{\varphi_v}^\tx{sc}),\zeta_{\dot\Psi_{v,\tx{sc}}})$. We claim that there is in fact a canonical bijection between these sets. For this, we consider the homomorphism
\begin{equation} \label{eq:sschomov} S_{\varphi_v}^\tx{sc} \rw S_{\varphi_v}^+ \times_{Z(\hat{\bar G^*})^{+_v}} Z(\hat G^*_\tx{sc}), \end{equation}
defined in the same way as \eqref{eq:sschomo}, the only difference being that the source is $S_{\varphi_v}^\tx{sc}$ instead of the global $S_{\varphi}^\tx{sc}$ used there. Namely, to $s_\tx{sc} \in S_{\varphi_v}^\tx{sc}$ we choose $y \in Z(\hat G^*)$ such that $s_\tx{der}\cdot y \in S_{\varphi_v}$, where $s_\tx{der} \in \hat G^*_\tx{der}$ is the image of $s_\tx{sc}$. Then we choose a decomposition $y=y'\cdot y''$ with $y' \in Z(\hat G^*_\tx{der})$ and $y'' \in Z(\hat G^*)^\circ$, and a lift $\dot y' \in Z(\hat G^*_\tx{sc})$ of $y'$, and then define the image of $s_\tx{sc}$ under \eqref{eq:sschomov} to be $[(s_\tx{sc}\dot y',y''),\dot y'^{-1}]$. One sees immediately that \eqref{eq:sschomov} is an isomorphism and that its inverse sends the pair $(\dot s,(\dot y')^{-1})$ with $\dot s=(\dot s_1,x) \in S_{\varphi_v}^+ \subset \hat{\bar G^*}=\hat G^*_\tx{sc} \times Z(\hat G^*)^\circ$ and $\dot y' \in Z(\hat G^*_\tx{sc})$ to $\dot s_1\cdot (\dot y')^{-1}$. We now obtain the bijection
\begin{equation} \label{eq:irrsbij}
\tx{Irr}(\pi_0(S_{\varphi_v}^+),\zeta_{\dot\Psi_v}) \rw \tx{Irr}(S_{\varphi_v}^\tx{sc}, \zeta_{\dot\Psi_{v,\tx{sc}}})
\end{equation}
that sends $\rho$ to the pull-back under \eqref{eq:sschomov} of $\rho \otimes \zeta_{\dot\Psi_{v,\tx{sc}}}$.

In order to discuss endoscopic transfer we fix a representative $(\psi,z_{v,\tx{sc}}) \in \dot\Psi_{v,\tx{sc}}$, where $\psi \in \Psi$ is an inner twist $\psi : G^* \rw G$ and $z_{v,\tx{sc}} \in Z^1(u_v \rw W_v,Z_\tx{sc} \rw G^*_\tx{sc})$ lifts the 1-cocycle $\psi^{-1}\sigma(\psi) \in Z^1(\Gamma,G^*_\tx{ad})$. Given an element $s_\tx{sc} \in S_{\varphi_v}^\tx{sc}$ let $(\dot s,(\dot y'_v)^{-1}) \in S_{\varphi_v}^+ \times Z(\hat G_\tx{sc})$ correspond to $s_\tx{sc}$ under \eqref{eq:sschomov}. We obtain from the pair $(\dot s,\varphi_v)$ the refined endoscopic datum $\mf{\dot e}_v$ and consider the product
\[ \Delta[s_\tx{sc},v] := \<z_{\tx{sc},v},\dot y'_v\>\Delta[\mf{w},\mf{\dot e}_v,\mf{z}_v,\psi,z_v], \]
which was already studied in Proposition \ref{pro:tfprod}. This product is of course an absolute transfer factor, being a scalar multiple of the normalized transfer factor of \cite[\S5.3]{KalRI}. Moreover, it depends only on the image of $(\dot s,(\dot y'_v)^{-1})$ in $S_{\varphi_v}^+ \times_{Z(\hat{\bar G^*})^{+_v}} Z(\hat G^*_\tx{sc})$, and hence on $s_\tx{sc}$, which justifies the notation. Proposition \ref{pro:tfprod} can then be reformulated as the equality
\begin{equation} \label{eq:tfprod1} \Delta_\A = \prod_{v \in \dot V} \Delta[s_\tx{sc},v], \end{equation}
for any $s_\tx{sc} \in S_{\varphi}^\tx{sc}$.

Although we will not need it, we can give a description of $\Delta[s_\tx{sc},v]$ analogous to \cite[(5.1)]{KalRI}. It is
\[ \Delta[s_\tx{sc},v](\gamma_z,\delta') = \Delta[\mf{w},\mf{e},\mf{z}](\gamma_\mf{z},\delta) \cdot \<g_\tx{sc}^{-1}z_{\tx{sc},v}(w)\sigma_w(g_\tx{sc}),s_{\tx{sc},\gamma,\delta}\>^{-1}, \]
where $g_\tx{sc} \in G^*_\tx{sc}$ and $\delta \in G^*(F)$ satisfy $\psi(g_\tx{sc}\delta g_\tx{sc}^{-1})=\delta' \in G(F)$. Thus $w \mapsto g_\tx{sc}^{-1}z_{\tx{sc},v}(w) \sigma_w(g_\tx{sc}^{-1})$ is an element of $Z^1(u_v \rw W_v,Z_\tx{sc} \rw S_\tx{sc})$, where $S=\tx{Cent}(\delta,G^*)$, and we are pairing the class of this element with an element $s_{\tx{sc},\gamma,\delta} \in \pi_0(\hat{\bar S_\tx{sc}})$ obtained from $s_\tx{sc}$ as follows. Identify $\hat H$ with $\tx{Cent}(s,\hat G^*)^\circ$ and write $\hat H_\tx{sc}$ for the preimage of $\hat H$ in $\hat G^*_\tx{sc}$. Then we have $s_\tx{sc} \in Z(\hat H_\tx{sc})$. Setting $S^H=\tx{Cent}(\gamma,H)$ we have the admissible isomorphism $\phi_{\gamma,\delta} : S^H \rw S$ satisfying $\phi_{\gamma,\delta}(\gamma)=\delta$. Together with the canonical embedding $Z(\hat H) \rw \hat{S^H}$ this leads to an embedding $Z(\hat H_\tx{sc}) \rw \hat S_\tx{sc} = \hat{\bar S_\tx{sc}}$ and $s_{\tx{sc},\gamma,\delta}$ is the image of $s_\tx{sc}$ under this embedding.

Finally, the endoscopic character identities \cite[(5.11)]{KalRI} state that when the transfer factor $\Delta[s_\tx{sc}^{-1},v]$ is used to match orbital integrals, the stable character of the endoscopic parameter corresponding to $(\varphi_v,s)$ is matched with the virtual character
\[ e(G)\sum_{\pi: (G,\psi,z_v,\pi) \in \Pi_{\varphi_v}} \<\dot s,(G,\psi,z_v,\pi)\> \cdot \<z_{\tx{sc},v},\dot y'_v\>^{-1} \Theta_\pi. \]
The scalar in front of $\Theta_\pi$ again depends only on the image of $(\dot s,(\dot y'_v)^{-1})$ in $S_{\varphi_v}^+ \times_{Z(\hat{\bar G})^{+_v}} Z(\hat G_\tx{sc})$, as shown in Proposition \ref{pro:pairprod}, and hence only on $s_\tx{sc}$. If we denote it by $\<s_\tx{sc},\pi\>$ for short, then the map $s_\tx{sc} \mapsto \<s_\tx{sc},\pi\>$ is the character of an irreducible representation of $S_{\varphi_v}^\tx{sc}$ and the map $\pi \mapsto \<-,\pi\>$ is the bijection $\Pi_{\varphi_v} \rw \tx{Irr}(S_{\varphi_v}^\tx{sc},\zeta_{\dot\Psi_{v,\tx{sc}}})$ given by \eqref{eq:irrsbij}. Proposition \ref{pro:pairprod} implies that for any $s_\tx{ad} \in S_\varphi^\tx{ad}$ the product
\begin{equation} \label{eq:pairprod1} \<s_\tx{ad},\pi\> = \prod_{v \in \dot V} \<s_\tx{sc},\pi\> \end{equation}
is independent of the lift $s_\tx{sc} \in S_\varphi^\tx{ad}$ of $s_\tx{ad}$ and provides the character of a finite-dimensional representation of $S_\varphi^\tx{ad}$.

Arthur's mediating function is now given by the simple formula
\[ \rho(\Delta[s_\tx{sc},v],s_\tx{sc})=1, \]
where $\Delta[s_\tx{sc},v]$ is of course the transfer factor corresponding to $s_\tx{sc} \in S_{\varphi_v}^\tx{sc}$ defined above. This formula, together with equation \eqref{eq:tfprod1}, which itself is a consequence of Proposition \ref{pro:tfprod}, imply the validity of \cite[Hypothesis 9.5.1]{Art13}.

\appendix
\section*{Appendices}
\addcontentsline{toc}{section}{Appendices}
\renewcommand{\thesubsection}{\Alph{subsection}}

\subsection{Approximating the cohomology of a connected reductive group by the cohomology of tori} \label{app:coh}

In this appendix we will formulate and prove a well-known lemma about the Galois cohomology of reductive groups, for which we have not been able to find a convenient reference.

\begin{lem} \label{lem:cohapprox}
	Let $h \in H^1(\Gamma,G(\ol{F}))$. There exists a maximal torus $T \subset G$ such that $h$ is in the image of $H^1(\Gamma,T(\ol{F})) \rw H^1(\Gamma,G(\ol{F}))$.
\end{lem}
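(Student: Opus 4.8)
The statement is a classical fact, and the plan is to deduce it from the density of regular semisimple elements together with the fact that any cocycle, after an appropriate twist, can be evaluated against a rational maximal torus. Concretely, given $h \in H^1(\Gamma, G(\ol F))$, I would first reduce to the case where the group is quasi-split by nothing — instead, the right move is to twist. Choose a cocycle $z \in Z^1(\Gamma, G(\ol F))$ representing $h$ and let $G_z$ be the inner (more precisely, the pure) twist of $G$ by $z$; since $z$ takes values in $G$ and $G$ acts on itself by conjugation, $G_z$ is an $F$-form of $G$. The key point is that $h$ lies in the image of $H^1(\Gamma, T(\ol F)) \to H^1(\Gamma, G(\ol F))$ for a maximal torus $T \subset G$ if and only if, after twisting, the trivial class in $H^1(\Gamma, G_z(\ol F))$ lies in the image of $H^1(\Gamma, T'(\ol F))$ for the corresponding maximal torus $T' \subset G_z$ — and that reduces to finding a maximal torus of $G_z$ that is \emph{defined over $F$} and transferred correctly. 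So the heart of the matter becomes: every connected reductive group over $F$ contains a maximal torus defined over $F$, and one must track the cocycle through this statement.

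The cleaner formulation I would actually write down: let $z \in Z^1(\Gamma, G)$ represent $h$. I want to find $g \in G(\ol F)$ such that $T := g^{-1} T_0 g$ is defined over $F$, where $T_0$ is some fixed maximal torus, \emph{and} such that $z$ is cohomologous to a cocycle valued in $T(\ol F)$. Equivalently, I seek $g \in G(\ol F)$ with $g z(\sigma) \sigma(g)^{-1} \in T_0(\ol F)$ for all $\sigma$, which is the same as asking that the twisted Galois action $\sigma \mapsto \mathrm{Int}(z(\sigma)) \circ \sigma$ on $G$ stabilizes a maximal torus; call that twisted action $\ast$, making $G$ into the $F$-group $G_z$. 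Since $G_z$ is a connected reductive group over the field $F$ (infinite, as a number field), it possesses a maximal torus $S$ defined over $F$: this is Grothendieck's theorem, e.g.\ \cite[Theorem 18.2]{Bor91} or \cite[Corollary 11.3]{BS68}. Pulling $S$ back along the identity $\ol F$-isomorphism $G_z \cong G$ gives a maximal torus of $G$, $\ast$-stable, hence of the form $\mathrm{Int}(g) T_0$ for some $g \in G(\ol F)$, and $g$ conjugates $z$ into a cocycle valued in $T_0(\ol F)$; setting $T = \mathrm{Int}(g^{-1}) T_0$... wait, one must be careful with which side, but after fixing conventions the cocycle $g z(\sigma)\sigma(g)^{-1}$ lands in a maximal torus of $G$ that is genuinely defined over $F$ (it is the $\Gamma$-stable, not merely $\ast$-stable, torus corresponding to $S$ under the untwisting), and that torus is the desired $T$.

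Let me restate the steps in order. (1) Fix $z \in Z^1(\Gamma, G(\ol F))$ representing $h$ and form the $F$-group $G_z$, the twist of $G$ by $z$ via conjugation; note $G_{z,\ol F} = G_{\ol F}$ canonically. (2) Invoke Grothendieck's existence theorem to get a maximal torus $S \subset G_z$ defined over $F$. (3) Transport $S$ via the canonical $\ol F$-isomorphism to a maximal $\ol F$-torus $T$ of $G$; observe that the subgroup $T$ is stable under the ordinary Galois action $\sigma$ precisely because $S$ is stable under $\sigma$ in $G_z$ and the two actions differ by $\mathrm{Int}(z(\sigma))$, which is an inner automorphism — here one checks that the "inner" part is absorbed since $S$ is a maximal torus and $z(\sigma)$ normalizes... actually the correct statement is that $T$ is $\Gamma$-stable as a subvariety of $G$, hence defined over $F$, and that $z(\sigma) \in N_G(T)(\ol F)$ for all $\sigma$. (4) Since $z(\sigma)$ normalizes $T$, modify $z$ by a coboundary to push it into $T(\ol F)$: this requires the vanishing of the relevant $H^1$ of the Weyl group, which is \emph{not} automatic, so instead the standard argument is that $z(\sigma) T \sigma(g)$... — the honest route is to choose $g \in G(\ol F)$ with $g^{-1}\cdot{}^\sigma g = $ (something) realizing $T = \mathrm{Int}(g)(T_{\mathrm{split}})$, and directly verify $g z(\sigma) \sigma(g)^{-1} \in T(\ol F)$; this last verification is the genuinely fiddly bookkeeping step. (5) Conclude that $h = [z]$ is the image of $[g z \sigma(g)^{-1}] \in H^1(\Gamma, T(\ol F))$.

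\textbf{Main obstacle.} The conceptual content — existence of an $F$-rational maximal torus in the twisted group — is immediate from Grothendieck. The only real work is step (4)/(5): keeping the conjugating element and the twisting cocycle on the correct sides and confirming that the resulting cocycle genuinely lands in $T(\ol F)$ rather than merely in $N_G(T)(\ol F)$. I expect this to require observing that when one transports $S$ back to $G$, the Galois action on the \emph{original} $G$ already preserves $T$ (because $S$ was $F$-rational in $G_z$ and the two actions agree on the \emph{normalizer data} in the way needed), so that after conjugating the cocycle into $N_G(T)$ one can further conjugate by an element of $T(\ol F)$-coset representatives to land in $T$ itself; this is where a short but careful cocycle manipulation is needed. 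None of it is deep, but it is the part a referee would want spelled out, so in the actual write-up I would present it as: twist, apply Grothendieck, untwist, and chase the cocycle.
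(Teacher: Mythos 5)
Your argument breaks at steps (3)--(5), and the break is not a matter of bookkeeping. What the twist-and-Grothendieck step actually gives is this: choose a maximal torus $S$ of the twisted group $G_z$ defined over $F$; its defining property is $z(\sigma)\,\sigma(S)\,z(\sigma)^{-1}=S$, i.e. $\sigma(S)=z(\sigma)^{-1}S\,z(\sigma)$. So $S$ is stable under the ordinary Galois action only up to conjugacy; it is in general \emph{not} $\Gamma$-stable in $G$, and $z(\sigma)$ does not normalize it -- the inner automorphism $\mathrm{Int}(z(\sigma))$ is not ``absorbed'' as you assert in step (3). If you now fix any maximal $F$-torus $T_0\subset G$ and write $S=gT_0g^{-1}$, the rationality of $S$ in $G_z$ translates exactly into $g^{-1}z(\sigma)\sigma(g)\in N_G(T_0)(\ol{F})$. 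In other words, the purely formal argument proves only that $h$ lifts to $H^1(\Gamma,N_G(T_0))$ -- a statement valid over every field -- and the entire content of the lemma is the further descent from the normalizer to the torus, which you defer to a ``fiddly verification.'' That verification cannot succeed in general: the lemma is false over general fields (there are torsors of reductive groups over suitable fields that are not in the image of any maximal torus), so no field-independent cocycle chase can close the gap. Any correct proof must use arithmetic specific to number fields.

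This is precisely what the paper's proof does, and why it looks so different from your outline. It first reduces to $G$ semisimple via the adjoint quotient and the center, then chooses a maximal torus $T$ that is \emph{fundamental} at a finite set of places $S$ containing the archimedean places, all places where $h$ is locally nontrivial, and at least one non-archimedean place. The descent from the normalizer problem to the torus is then carried out cohomologically: Kottwitz's Lemma 10.4 kills the local obstructions in $H^2(\Gamma_v,T_\tx{sc})$ at places of $S$, Kneser's degree-$2$ Hasse principle for $T_\tx{sc}$ kills the global obstruction, the Kneser--Harder--Chernousov theorem identifies $H^1(\Gamma,G'_\tx{sc})$ with the product of its archimedean localizations, surjectivity of $H^1(\Gamma_v,T_\tx{sc})\to H^1(\Gamma_v,G'_\tx{sc})$ at real places (again using fundamentality) produces local lifts, and a final adelic adjustment at the auxiliary non-archimedean place (via the finiteness of $\hat T_\tx{sc}^{\Gamma}$ for an anisotropic localization) glues them to a global class in $H^1(\Gamma,T_\tx{sc})$ hitting $h$. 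None of these inputs appears in your proposal, and without them the passage from $N_G(T)$ to $T$ is simply unavailable.
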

\begin{proof}
We first reduce to the case that $G$ is semi-simple. Let $h_\tx{ad} \in H^1(\Gamma,G_\tx{ad})$ be the image of $h$. Let $T \subset G$ be a maximal torus such that $h_\tx{ad}$ is the image of $h_{T,\tx{ad}} \in H^1(\Gamma,T_\tx{ad})$. The image of $h_{T,\tx{ad}}$ in $H^2(\Gamma,Z(G))$ is equal to the image of $h_\tx{ad}$ there, which is trivial. Hence there exists $h_T \in H^1(\Gamma,T)$ mapping to $h_{T,\tx{ad}}$. Both $h_T$ and $h$ have the same image in $H^1(\Gamma,G_\tx{ad})$, so there exists $h_Z \in H^1(\Gamma,Z(G))$ such that $h_Z \cdot h_T \in H^1(\Gamma,T)$ maps to $h$.

We assume from now on that $G$ is semi-simple and let $G_\tx{sc} \rw G$ be its simply connected cover with kernel $C$. Let $S \subset V_F$ be a finite set of places containing all $v \in V_F$ for which the image of $h$ in $H^1(\Gamma_v,G)$ is non-trivial. Assume in addition that $S$ contains all archimedean places and at least one non-archimedean place. Let $T \subset G$ be a maximal torus which is fundamental \cite[\S10]{Kot86} at all $v \in S$. Such a maximal torus exists by \cite[\S7.1, Cor. 3]{PR94}. We consider the diagram
\[ \xymatrix{
	H^1(\Gamma,G_\tx{sc})\ar[r]&H^1(\Gamma,G)\ar[r]&H^2(\Gamma,C)\\
	H^1(\Gamma,T_\tx{sc})\ar[r]\ar[u]&H^1(\Gamma,T)\ar[r]\ar[u]&H^2(\Gamma,C)\ar@{=}[u]\ar[r]&H^2(\Gamma,T_\tx{sc})
}
\]
We claim that the image of $h$ in $H^2(\Gamma,T_\tx{sc})$ is trivial. Indeed, it has the property that for $v \notin S$ its image in $H^2(\Gamma_v,T_\tx{sc})$ is trivial due to the definition of $S$, and for $v \in S$ its image in $H^2(\Gamma_v,T_\tx{sc})$ is also trivial because of \cite[Lemma 10.4]{Kot86}. The claim follows from a theorem of Kneser \cite[\S6.3, Prop 6.12]{PR94} that asserts that $T_\tx{sc}$ satisfies the Hasse-principle in degree $2$.

Let $h'_T \in H^1(\Gamma,T)$ be a preimage of the image of $h$ in $H^2(\Gamma,C)$ and let $h' \in H^1(\Gamma,G)$ be the image of $h'_T$. It may not be the case that $h=h'$. Let $G'$ be the twist of $G$ by $h'$. Then $T$ is a maximal torus in $G'$ and we have the bijection
\[ H^1(\Gamma,G) \rw H^1(\Gamma,G'),\qquad y \mapsto y\cdot h'^{-1}.\]
The image of $h\cdot h'^{-1} \in H^1(\Gamma,G')$ in $H^2(\Gamma,C)$ is trivial. Let $h''_\tx{sc} \in H^1(\Gamma,G'_\tx{sc})$ be a preimage of $h\cdot h'^{-1}$. We claim that there exists $h''_{T,\tx{sc}} \in H^1(\Gamma,T_\tx{sc})$ mapping to $h''_\tx{sc}$. Granting this, let $h''_T \in H^1(\Gamma,T)$ be the image of $h''_{T,\tx{sc}}$. Then the image of $h''_T \cdot h'_T$ in $H^2(\Gamma,G)$ is equal to $h$ and the proof is complete.

We will now prove the outstanding claim. For this, consider the diagram
\[ \xymatrix{
H^1(\Gamma,G'_\tx{sc})\ar[r]&\prod_{v \in V_{F,\infty}} H^1(\Gamma_v,G'_\tx{sc})\\
H^1(\Gamma,T_\tx{sc})\ar[r]\ar[u]&\prod_{v \in V_{F,\infty}} H^1(\Gamma_v,T_\tx{sc})\ar[u]\\
} \]
By work of Kneser, Harder, and Chernousov \cite[\S6.1, Thm 6.6]{PR94} the top map is bijective. The right map is surjective by \cite[10.2]{Kot86} because $T_\tx{sc,v}$ is fundamental for all $v \in V_{F,\infty} \subset S$. Let $(t_v)_{v \in V_{F,\infty}}$ be an element of the bottom right term whose image in the top right term corresponds to $h''_\tx{sc}$. We claim that $(t_v)_v$ has a lift $h''_{T,\tx{sc}} \in H^1(\Gamma,T_\tx{sc})$. For this we inspect the image of $(t_v)_v$ in $H^1(\Gamma,T_\tx{sc}(\A_{\ol{F}})/T_\tx{sc}(\ol{F}))$. Let $v_0 \in S$ be non-archimedean. Then $T_{\tx{sc},v_0}$ is anisotropic, which implies that $\hat{T_\tx{sc}}^{\Gamma_{v_0}}$, and thus also $\hat{T_\tx{sc}}^\Gamma$, is finite. This in turn implies that the map $\pi_0(\hat{T_\tx{sc}}^\Gamma) \rw \pi_0(\hat{T_\tx{sc}}^{\Gamma_{v_0}})$ is injective. This map is dual to the composition
\[ H^1(\Gamma_{v_0},T_\tx{sc}) \stackrel{\cong}{\lrw} H^1(\Gamma,T_\tx{sc}(F_{v_0}\otimes_F \ol{F}))\rw H^1(\Gamma,T_\tx{sc}(\A_{\ol{F}})/T_\tx{sc}(\ol{F})), \]
which therefore must be surjective. Hence we may augment the collection $(t_v)_{v \in V_{F,\infty}}$ to a collection $(t_v)_{v \in V_{F,\infty} \cup \{v_0\}}$ so that the image of the new collection in $H^1(\Gamma,T_\tx{sc}(\A_{\ol{F}})/T_\tx{sc}(\ol{F}))$ vanishes. But then this new collection lifts to an element $h''_{T,\tx{sc}} \in H^1(\Gamma,T_\tx{sc})$. The image of this element in the top right term of the above diagram is the same as the image of the old collection $(t_v)_{v \in V_{F,\infty}}$, hence the same as the image of $h''_\tx{sc}$. By the bijectivity of the top map this means that the image of $h''_{T,\tx{sc}}$ in $H^1(\Gamma,G'_\tx{sc})$ equals $h''_\tx{sc}$.
\end{proof}

\subsection{The Shapiro isomorphism for pro-finite groups} \label{app:shap}

Let $\Delta \subset \Gamma$ be two finite groups and $X$ a group with an action of $\Delta$. All groups are written multiplicatively. We have the $\Delta$-equivariant homomorphisms
\[ X \stackrel{i}{\lrw} \tx{Ind}_\Delta^\Gamma X \stackrel{j}{\lrw} X \]
where $i(x)$ is the function $\Gamma \to X$ defined by $i(x)(\delta)=\delta x$ for $\delta \in \Delta$ and $i(x)(\gamma)=1$ for $\gamma \in \Gamma \sm \Delta$; and $j(f)=f(1)$. It is well known that
\[ \xymatrix{
	H^k(\Delta,X)\ar@<.5ex>[r]^-{\tx{cor}_\Delta^\Gamma \circ i}&H^k(\Gamma,\tx{Ind}_\Delta^\Gamma X)\ar@<.5ex>[l]^-{j\circ\tx{res}_\Delta^\Gamma}
}\]
are mutually inverse isomorphisms for all $k$ (if $X$ is not abelian we only consider $k=0,1$), where $\tx{res}$ and $\tx{cor}$ denote the restriction and corestriction maps respectively.

Let now $\Gamma$ be a second-countable pro-finite group (i.e. an inverse limit of a totally ordered countable inverse system of finite groups), $\Delta \subset \Gamma$ a closed subgroup, and $X$ a smooth $\Delta$-module (a discrete topological group with a continuous $\Delta$-action). Then $j\circ \tx{res}_\Delta^\Gamma$ still makes sense and is still an isomorphism \cite[Ch II,\S2, Theorem 8]{Shatz}. On the other hand, the corestriction map does not make sense in this context. The purpose of this appendix is to show that nonetheless, the composition $\tx{cor}_\Delta^\Gamma \circ i$ does make sense, and to give some explicit constructions related to it.

We shall construct for every $k \in \N$ a map $S_\Delta^\Gamma : C^k(\Delta,X) \rw C^k(\Gamma,\tx{Ind}_{\Delta}^\Gamma X)$, that is functorial in $X$, commutes with the differentials on both complexes, and is a section of $j\circ\tx{res}_\Delta^\Gamma$. For the construction, we will need a set-theoretic section $s : \Delta \lmod \Gamma \to \Gamma$ of the natural projection, normalized by $s(\Delta)=1$. We can think of $s$ as a map $\Gamma \to \Gamma$ that is invariant under multiplication by $\Delta$ on the left. Via the equation $\gamma = r(\gamma)s(\gamma)$ giving such a map $s$ is equivalent to giving a map $r : \Gamma \to \Delta$ that is equivariant under multiplication by $\Delta$ on the left and satisfies $r(1)=1$.

Write $\Gamma = \varprojlim \Gamma_i$, where $\Gamma_i$ are finite quotients indexed by the natural numbers with kernels $\Gamma^i$, and $\Delta = \varprojlim \Delta_i$, where $\Delta_i$ is the image of $\Delta$ in $\Gamma_i$ and $\Delta^i = \Delta \cap \Gamma^i$ is the kernel of $\Delta \to \Delta^i$.

\begin{lem} \label{lem:sh1} The following are equivalent for each $i$:
\begin{enumerate}
	\item The composition $\Gamma \stackrel{s}{\lrw} \Gamma \to \Gamma_i$ is inflated from a map $\Gamma_i \to \Gamma_i$;
	\item $s(a)^{-1}s(ab) \in \Gamma^i$ for $a \in \Gamma$ and $b \in \Gamma^i$;
	\item The composition $\Gamma \stackrel{r}{\lrw} \Delta \to \Delta_i$ is inflated from a map $\Gamma_i \to \Delta_i$;
	\item $r(a)^{-1}r(ab) \in \Delta^i$ for $a \in \Gamma$ and $b \in \Gamma^i$.
\end{enumerate}
\end{lem}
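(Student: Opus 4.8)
The plan is to prove the two ``formal'' equivalences $(1)\Leftrightarrow(2)$ and $(3)\Leftrightarrow(4)$ first, and then to connect the two halves of Lemma \ref{lem:sh1} by means of the factorization $\gamma = r(\gamma)s(\gamma)$.

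For $(1)\Leftrightarrow(2)$ I would argue as follows. The composition $\Gamma \stackrel{s}{\lrw} \Gamma \to \Gamma_i$ is a map with values in $\Gamma_i$, and it is inflated from a map $\Gamma_i \to \Gamma_i$ precisely when it is constant on the fibers of the projection $\Gamma \to \Gamma_i$, that is, on the cosets $a\Gamma^i$. Constancy on $a\Gamma^i$ says that for every $b \in \Gamma^i$ the images of $s(a)$ and $s(ab)$ in $\Gamma_i$ agree, which is exactly the assertion $s(a)^{-1}s(ab) \in \Gamma^i$ of $(2)$. The equivalence $(3)\Leftrightarrow(4)$ is obtained by the identical argument applied to $\Gamma \stackrel{r}{\lrw} \Delta \to \Delta_i$, using that $\Delta^i = \Delta \cap \Gamma^i$ is the kernel of $\Delta \to \Delta_i$.

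It then remains to show, say, $(2)\Leftrightarrow(4)$. Fix $i$, take $a \in \Gamma$ and $b \in \Gamma^i$. From $ab = r(ab)s(ab)$ and $ab = r(a)s(a)b$ one reads off the identity
\[ r(a)^{-1}r(ab) = s(a)\,b\,s(ab)^{-1}. \]
Since $\Gamma^i$ is a normal subgroup of $\Gamma$ and $b \in \Gamma^i$, the element $s(a)\,b\,s(a)^{-1}$ lies in $\Gamma^i$; writing $s(a)\,b\,s(ab)^{-1} = \big(s(a)\,b\,s(a)^{-1}\big)\big(s(a)\,s(ab)^{-1}\big)$ we conclude that $r(a)^{-1}r(ab) \in \Gamma^i$ if and only if $s(a)\,s(ab)^{-1} \in \Gamma^i$, and, $\Gamma^i$ being normal, this holds if and only if $s(a)^{-1}s(ab) \in \Gamma^i$. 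On the other hand $r(a)^{-1}r(ab)$ always lies in $\Delta$, so $r(a)^{-1}r(ab) \in \Delta^i = \Delta \cap \Gamma^i$ if and only if $r(a)^{-1}r(ab) \in \Gamma^i$. Combining these two observations, $(4)$ holds for this $i$ if and only if $(2)$ does, which closes the circle of implications.

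There is no genuine obstacle here; the lemma is a piece of bookkeeping that sets up the constructions that follow it. The only points requiring care are the conventions — $s$ is left-invariant under $\Delta$ and $r$ is left-equivariant, so the factorization must be written as $\gamma = r(\gamma)s(\gamma)$ in that order — and the repeated use of the normality of $\Gamma^i$ in $\Gamma$, both to slide the element $b \in \Gamma^i$ past $s(a)$ and to pass between $s(a)s(ab)^{-1}$ and $s(a)^{-1}s(ab)$.
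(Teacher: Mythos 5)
Your argument is correct: the equivalences $(1)\Leftrightarrow(2)$ and $(3)\Leftrightarrow(4)$ are exactly the statement that a map out of $\Gamma$ factors through $\Gamma_i$ iff it is constant on cosets of $\Gamma^i$, and your identity $r(a)^{-1}r(ab)=s(a)\,b\,s(ab)^{-1}$, together with normality of $\Gamma^i$ and $\Delta^i=\Delta\cap\Gamma^i$, cleanly closes the loop. The paper leaves this verification to the reader, and your proof is precisely the intended routine bookkeeping.
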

We leave the proof to the reader.

\begin{lem} \label{lem:sh2} There exists a section $s$ satisfying the equivalent conditions of the above lemma.
\end{lem}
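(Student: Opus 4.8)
Here is how I would prove Lemma \ref{lem:sh2}.

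\textbf{Approach.} The plan is to build the section $s : \Delta \lmod \Gamma \to \Gamma$ (equivalently the retraction $r : \Gamma \to \Delta$) level by level along the tower $\Gamma = \varprojlim \Gamma_i$, and then observe that the construction is consistent with the transition maps so that the resulting family of maps $\Gamma_i \to \Delta_i$ lifts to a map $\Gamma \to \Delta$ satisfying condition (4) of Lemma \ref{lem:sh1}. The key reduction is that conditions (1)--(4) of Lemma \ref{lem:sh1}, taken over all $i$, amount precisely to saying that $r$ is a uniform limit of maps factoring through the finite quotients; so it suffices to construct, for each $i$, a $\Delta_i$-equivariant (under left multiplication) map $r_i : \Gamma_i \to \Delta_i$ with $r_i(1) = 1$, in a way that is compatible with the projections $\Gamma_{i+1} \to \Gamma_i$ and $\Delta_{i+1} \to \Delta_i$, i.e. such that $r_i \circ (\text{proj})$ and $(\text{proj}) \circ r_{i+1}$ agree on $\Gamma_{i+1}$.

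\textbf{Key steps.} First I would fix, for each $i$, a set of coset representatives for $\Delta_i \lmod \Gamma_i$, i.e. a set-theoretic section $s_i : \Delta_i \lmod \Gamma_i \to \Gamma_i$ normalized by $s_i(\Delta_i) = 1$; this is automatic since everything in sight is finite. Such an $s_i$ gives the retraction $r_i : \Gamma_i \to \Delta_i$ via $\gamma = r_i(\gamma) s_i(\gamma)$, which is left-$\Delta_i$-equivariant and sends $1$ to $1$. The only real content is to choose the $s_i$ \emph{compatibly}: I would do this inductively, using that the projection $\pi_{i+1,i} : \Gamma_{i+1} \to \Gamma_i$ carries $\Delta_{i+1}$ onto $\Delta_i$, hence induces a surjection $\Delta_{i+1} \lmod \Gamma_{i+1} \to \Delta_i \lmod \Gamma_i$ of finite coset spaces. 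Having chosen $s_i$, I would choose, for each coset $c$ of $\Delta_i \lmod \Gamma_i$ and each coset $\tilde c$ of $\Delta_{i+1} \lmod \Gamma_{i+1}$ mapping to $c$, a representative $s_{i+1}(\tilde c) \in \Gamma_{i+1}$ lying in the preimage of $s_i(c)$; this is possible because $\pi_{i+1,i}$ is surjective and each fiber is a union of $\Gamma^i_{i+1}$-cosets. One checks this forces $\pi_{i+1,i} \circ r_{i+1} = r_i \circ \pi_{i+1,i}$ and, for the normalization, that the coset $\tilde c = \Delta_{i+1}$ maps to $c = \Delta_i$, so we may (and do) take $s_{i+1}(\Delta_{i+1}) = 1$. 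Passing to the inverse limit, the compatible family $(s_i)$ assembles to a map $s : \Gamma \to \Gamma$, invariant under left multiplication by $\Delta$, with $s(\Delta) = 1$; equivalently a section $\Delta \lmod \Gamma \to \Gamma$. By construction the composite $\Gamma \xrightarrow{s} \Gamma \to \Gamma_i$ factors through $\Gamma_i$, which is condition (1) of Lemma \ref{lem:sh1}, so $s$ satisfies the equivalent conditions there.

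\textbf{Main obstacle.} The substantive point — and the only place where second countability is used — is the compatibility of the inductive choices: one must arrange that the finitely (or at least countably) many representative choices made at level $i+1$ refine those at level $i$, and that the normalization $s(\Delta)=1$ survives. The countable, totally ordered nature of the inverse system is what allows a straightforward recursive construction; with an arbitrary cofiltered system one would need a compactness or Zorn-type argument instead. Everything else (left-$\Delta$-equivariance, the identity $\gamma = r(\gamma)s(\gamma)$, the translation between conditions (1)--(4)) is formal and I would not belabor it.
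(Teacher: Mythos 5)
Your proposal is correct and takes essentially the same route as the paper: both construct compatible normalized sections $s_i : \Delta_i \lmod \Gamma_i \to \Gamma_i$ inductively (the paper's representatives $b_j\dot a_k$ are precisely representatives lying in the fibers over the level-$i$ representatives) and then pass to the inverse limit. One small repair: the existence of a representative of a coset $\tilde c = \Delta_{i+1}x$ inside the fiber over $s_i(c)$ is not a consequence of the fibers of $\pi_{i+1,i}$ being unions of kernel cosets, but of the surjectivity of $\Delta_{i+1} \to \Delta_i$ — writing $s_i(c) = \delta\,\pi_{i+1,i}(x)$ with $\delta \in \Delta_i$ and lifting $\delta$ to $d \in \Delta_{i+1}$, the element $dx \in \tilde c$ projects to $s_i(c)$.
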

\begin{proof}
For each $i$, we will give a section $s_i : \Delta_i \lmod \Gamma_i \rw \Gamma_i$, normalized by $s_i(\Delta_i)=1$, so that for $i$ and $i+1$ we have the commutative diagram of sets
\[ \xymatrix{
	\Delta_{i+1} \lmod \Gamma_{{i+1}}\ar[d]\ar[r]^-{s_{i+1}}& \Gamma_{i+1}\ar[d]\\
	\Delta_i \lmod \Gamma_i\ar[r]^-{s_i}&\Gamma_i
}
\]
This is easy to do inductively: If $s_i$ is given, let $a_1,\dots,a_n \in \Gamma_i$ be its values, a set of representatives in $\Gamma_i$ for the elements of $\Delta_i \lmod \Gamma_i$. Choose a lift $\dot a_i \in \Gamma_{i+1}$ for each $a_i$. Since $s_i$ is normalized we have $a_k=1$ for some $k$ and then we take $\dot a_k=1$. Moreover, choose a set $b_1, \dots, b_m \in \Gamma^{i+1} \lmod \Gamma^i$ of representatives for $\Delta^i \cdot \Gamma^{i+1} \lmod \Gamma^i$, again normalized so that $b_k=1$ for some $k$. Then $\{b_j \dot a_i\}$ is a set of representatives in $\Gamma_{i+1}$ for $\Delta_{i+1} \lmod \Gamma_{i+1}$ and hence provides $s_{i+1}$. Composing each $s_i$ with $\Delta \lmod \Gamma \rw \Delta_i \lmod \Gamma_i$ we obtain an inverse system of maps $\Delta \lmod \Gamma \rw \Gamma_i$ and this system gives the map $s$.
\end{proof}

Given such a section $s$, the Shapiro map is defined by the formula
\begin{eqnarray*} S^\Gamma_{\Delta}(c)(\sigma_0,\dots,\sigma_k)(a)&=&r(a)c(s(a)\sigma_0s(a\sigma_0)^{-1},\dots,s(a)\sigma_ks(a\sigma_k)^{-1})\\
&=&r(a)c(r(a)^{-1}r(a\sigma_0),\dots,r(a)^{-1}r(a\sigma_k)),
\end{eqnarray*}
for any $c \in C^k(\Delta,X)$, where we have presented the cochains in homogeneous form. Here $\sigma_0,\dots,\sigma_k \in \Gamma$ are the arguments of the $k$-cochain $S^k_{\dot v}(c)$.

\begin{lem} \label{lem:sh3} For any $\sigma_0,\dots,\sigma_k \in \Gamma$, the assignment $a \mapsto S^\Gamma_\Delta(c)(\sigma_0,\dots,\sigma_k)$ is a continuous $\Delta$-equivariant map $\Gamma \to X$ and hence belongs to $\tx{Ind}_\Delta^\Gamma X$. Moreover, $S^\Gamma_\Delta(c) : \Gamma^{k+1} \to \tx{Ind}_\Delta^\Gamma$ is continuous and $\Gamma$-equivariant, thus belongs to $C^k(\Gamma,\tx{Ind}_\Delta^\Gamma X)$.
\end{lem}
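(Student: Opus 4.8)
The statement to prove is Lemma \ref{lem:sh3}: that the formula
\[ S^\Gamma_\Delta(c)(\sigma_0,\dots,\sigma_k)(a)=r(a)c(r(a)^{-1}r(a\sigma_0),\dots,r(a)^{-1}r(a\sigma_k)) \]
produces, for fixed $\sigma_0,\dots,\sigma_k$, an element of $\tx{Ind}_\Delta^\Gamma X$, and that the resulting assignment $(\sigma_0,\dots,\sigma_k)\mapsto S^\Gamma_\Delta(c)(\sigma_0,\dots,\sigma_k)$ is a continuous $\Gamma$-equivariant cochain. The proof is a direct verification in three parts.

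\emph{Step 1: $\Delta$-equivariance in $a$.} Fix $\sigma_0,\dots,\sigma_k$ and set $f(a)=S^\Gamma_\Delta(c)(\sigma_0,\dots,\sigma_k)(a)$. I would check that $f(\delta a)=\delta f(a)$ for $\delta\in\Delta$. Since $r:\Gamma\to\Delta$ satisfies $r(\delta a)=\delta r(a)$ (left $\Delta$-equivariance, which is built into the definition of $r$ via $\gamma=r(\gamma)s(\gamma)$ with $s$ invariant under left multiplication by $\Delta$), we have $r(\delta a)^{-1}r(\delta a\sigma_j)=r(a)^{-1}\delta^{-1}\delta r(a\sigma_j)=r(a)^{-1}r(a\sigma_j)$. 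Hence the argument of $c$ is unchanged, and the prefactor becomes $r(\delta a)=\delta r(a)$, so $f(\delta a)=\delta r(a)c(\cdots)=\delta f(a)$, using that $c$ takes values in the $\Delta$-module $X$ and that the $\Delta$-action is a group action. This shows $f\in\tx{Ind}_\Delta^\Gamma X$ once continuity is checked.

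\emph{Step 2: continuity.} Here I would invoke the conditions of Lemma \ref{lem:sh1} (available because by Lemma \ref{lem:sh2} we chose $s$, equivalently $r$, to satisfy them). Writing $\Gamma=\varprojlim\Gamma_i$, the map $a\mapsto r(a)\bmod\Delta^i$ factors through $\Gamma_i$ by condition (3), and $X$ is smooth, so $X$ is the union of its $\Delta^i$-fixed submodules $X^{\Delta^i}$; choosing $i$ large enough that $c$ takes values in $X^{\Delta^i}$ (possible on the relevant finite set of arguments, or because $c$ itself is a continuous cochain of $\Delta$) and that all the finitely many elements $r(a)^{-1}r(a\sigma_j)$ depend only on $a\bmod\Gamma^i$, one sees $f$ is locally constant, hence continuous; and $S^\Gamma_\Delta(c)$ as a function of $(\sigma_0,\dots,\sigma_k)$ is likewise locally constant by the same factorization argument applied to the dependence on the $\sigma_j$'s.

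\emph{Step 3: $\Gamma$-equivariance of the cochain.} With the standard (homogeneous) convention $[\gamma\cdot g](\sigma_0,\dots,\sigma_k)(a)=g(\gamma^{-1}\sigma_0,\dots,\gamma^{-1}\sigma_k)(a\gamma)$ — or whatever sign/convention \cite{KalRI} uses, which I would match — I would compute $S^\Gamma_\Delta(c)(\gamma\sigma_0,\dots,\gamma\sigma_k)(a)$ against $[\gamma\cdot S^\Gamma_\Delta(c)](\sigma_0,\dots,\sigma_k)(a)$ and observe they agree because $r(a\gamma)^{-1}r(a\gamma\sigma_j)$ is exactly the $j$-th argument appearing in $S^\Gamma_\Delta(c)(\sigma_0,\dots,\sigma_k)(a\gamma)$, and the prefactors $r(a)$ versus $r(a\gamma)$ are reconciled by evaluating the induced-module element at $a\gamma$ rather than $a$; this is just unwinding the definition of the $\Gamma$-action on $\tx{Ind}_\Delta^\Gamma X$. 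The main obstacle, such as it is, is purely bookkeeping: getting the conventions for homogeneous cochains and for the $\Gamma$-action on $\tx{Ind}_\Delta^\Gamma X$ to line up so that Step 3 is a genuine identity and not off by an argument-shift; once the conventions are fixed this is routine, and Steps 1–2 are immediate given Lemmas \ref{lem:sh1} and \ref{lem:sh2}. (The compatibility with differentials and the section property, asserted in the surrounding text, would be handled in subsequent lemmas, not here.)
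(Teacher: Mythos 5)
Your proposal is correct and follows essentially the same route as the paper: $\Delta$-equivariance in $a$ from the left $\Delta$-equivariance of $r$, continuity from Lemma \ref{lem:sh1} together with the continuity of $c$ (so the whole map is inflated from a finite level $\Gamma_i$), and $\Gamma$-equivariance by direct computation. The only point your Step 3 leaves implicit is that matching $S^\Gamma_\Delta(c)(\tau\sigma_0,\dots,\tau\sigma_k)(a)$ with $S^\Gamma_\Delta(c)(\sigma_0,\dots,\sigma_k)(a\tau)$ is not merely unwinding the $\Gamma$-action on $\tx{Ind}_\Delta^\Gamma X$: one must also invoke the $\Delta$-equivariance of the homogeneous cochain $c$ to move $\eta_{a,\tau}=r(a)^{-1}r(a\tau)\in\Delta$ between the arguments and the prefactor, which is exactly the short computation the paper carries out.
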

\begin{proof}
By assumption $c$ is continuous and hence inflated from $C^k(\Delta_i,X^{\Delta^i})$ for some $i$. If we multiply either $a$ or some $\sigma_j$ by an element of $\Gamma^i$, then according to Lemma \ref{lem:sh1} both $r(a)$ and $r(a\sigma_j)$ are multiplied by an element of $\Delta^i$ and the value of $S^\Gamma_\Delta(\sigma_0,\dots,\sigma_k)(a)$ is unchanged. Thus the map $(a,\sigma_0,\dots,\sigma_k) \mapsto S^\Gamma_\Delta(\sigma_0,\dots,\sigma_k)(a)$ is inflated from $\Gamma_i$. The equality $S^\Gamma_\Delta(c)(\sigma_0,\dots,\sigma_k)(\delta a)=\delta S^\Gamma_\Delta(c)(\sigma_0,\dots,\sigma_k)(a)$ is immediately checked. Finally, given $\tau \in \Gamma$ let $r(a\tau)=r(a)\eta_{a,\tau}$ with $\eta_{a,\tau} \in \Delta$. Then the $\Delta$-equivariance of the cochain $c$ implies
\begin{eqnarray*}
S^\Gamma_{\Delta}(c)(\tau\sigma_0,\dots,\tau\sigma_k)(a)&=&r(a)c(r(a)^{-1}r(a\tau\sigma_0),\dots,r(a)^{-1}r(a\tau\sigma_k))\\
&=&r(a)\eta_{a,\tau}c(\eta_{a,\tau}^{-1}r(a)^{-1}r(a\tau\sigma_0),\dots,\eta_{a,\tau}^{-1}r(a)^{-1}r(a\tau\sigma_k))\\
&=&S^\Gamma_{\Delta}(c)(\sigma_0,\dots,\sigma_k)(a\tau).
\end{eqnarray*}
\end{proof}

If we replace the section $s$, or equivalently the map $r$, by another choice, say $\bar s$ and $\bar r$, we obtain another Shapiro map $\bar S_\Delta^\Gamma$. We know a-priori that for any $z \in Z^k(\Delta,X)$ the two $k$-cocycles $S_\Delta^\Gamma(z)$ and $\bar S_\Delta^\Gamma(z)$ will be cohomologous. We record here a formula for a $1$-cochain whose differential realizes the required coboundary in the case $k=2$.

\begin{lem} \label{lem:sh4} Assume that $X$ is abelian. Given $z \in Z^2(\Delta,X)$ let
\[ c(\sigma_0,\sigma_1)(a) = z(r(a\sigma_0)r(a\sigma_1),\bar r(a\sigma_0)) \cdot z(r(a\sigma_1),\bar r(a\sigma_0),\bar r(a\sigma_1))^{-1}.\]
Then $\partial c = S_\Delta^\Gamma(z) \cdot \bar S_\Delta^\Gamma(z)^{-1}$.
\end{lem}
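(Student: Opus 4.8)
The plan is to prove the identity $\partial c = S_\Delta^\Gamma(z)\cdot \bar S_\Delta^\Gamma(z)^{-1}$ by a direct computation, using only the explicit formula for the Shapiro map recorded just before Lemma \ref{lem:sh3} and the $2$-cocycle relation for $z$. Since $z \in Z^2(\Delta,X)$ and each of $S_\Delta^\Gamma$, $\bar S_\Delta^\Gamma$ commutes with the differentials, both $S_\Delta^\Gamma(z)$ and $\bar S_\Delta^\Gamma(z)$ lie in $Z^2(\Gamma,\tx{Ind}_\Delta^\Gamma X)$, so their ratio is a $2$-cocycle of $\Gamma$ and it is enough to exhibit a $1$-cochain trivializing it. The first step is to write out, at a point $a\in\Gamma$,
\[ S_\Delta^\Gamma(z)(\sigma_0,\sigma_1,\sigma_2)(a) = r(a)\,z\bigl(r(a)^{-1}r(a\sigma_0),\,r(a)^{-1}r(a\sigma_1),\,r(a)^{-1}r(a\sigma_2)\bigr), \]
and the analogous formula with $\bar r$ in place of $r$, where the leading $r(a)$ (resp.\ $\bar r(a)$) acts through the $\Delta$-module structure of $X$.

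Next I would substitute the proposed $c$ into the homogeneous coboundary formula
\[ (\partial c)(\sigma_0,\sigma_1,\sigma_2)(a) = c(\sigma_1,\sigma_2)(a)\cdot c(\sigma_0,\sigma_2)(a)^{-1}\cdot c(\sigma_0,\sigma_1)(a), \]
expand every factor as a product of values of $z$ at arguments built from the elements $r(a\sigma_i)$ and $\bar r(a\sigma_j)$ of $\Delta$, and then apply the cocycle identity for $z$ repeatedly. The goal is to arrange that the contributions involving $\bar r$ telescope against one another, producing $\bar S_\Delta^\Gamma(z)(\sigma_0,\sigma_1,\sigma_2)(a)^{-1}$, while the contributions involving $r$ reassemble into $S_\Delta^\Gamma(z)(\sigma_0,\sigma_1,\sigma_2)(a)$. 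Selecting the right instances of the cocycle relation and the order in which the terms cancel is the only genuine content of the lemma; this bookkeeping is the step I expect to be the main obstacle, everything else being routine.

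Finally I would check that $c$ is a bona fide element of $C^1(\Gamma,\tx{Ind}_\Delta^\Gamma X)$, i.e.\ that $a\mapsto c(\sigma_0,\sigma_1)(a)$ is continuous and $\Delta$-equivariant, so that it lands in $\tx{Ind}_\Delta^\Gamma X$, and that $(\sigma_0,\sigma_1)\mapsto c(\sigma_0,\sigma_1)$ is continuous and $\Gamma$-equivariant. This is the same verification carried out in the proof of Lemma \ref{lem:sh3}: continuity holds because $z$ is inflated from some $C^2(\Delta_i, X^{\Delta^i})$ and, by Lemma \ref{lem:sh1}, multiplying $a$ or any $\sigma_j$ by an element of $\Gamma^i$ alters each $r(a\sigma_j)$ and $\bar r(a\sigma_j)$ only by an element of $\Delta^i$; and equivariance follows from the change-of-variables relations $r(\gamma a\sigma_j)=\eta_{a,\gamma}\,r(a\sigma_j)$ with $\eta_{a,\gamma}\in\Delta$ independent of $j$, combined with the $\Delta$-equivariance of $z$, exactly as in Lemma \ref{lem:sh3}.
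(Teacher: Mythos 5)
Your strategy is the right one --- the paper itself offers nothing beyond ``we leave the verification to the reader,'' so the intended proof is exactly the direct expansion you describe --- but as written your proposal has a genuine gap: the one step that constitutes the proof, namely the cocycle manipulation turning $(\partial c)(\sigma_0,\sigma_1,\sigma_2)(a)$ into $S_\Delta^\Gamma(z)(\sigma_0,\sigma_1,\sigma_2)(a)\cdot \bar S_\Delta^\Gamma(z)(\sigma_0,\sigma_1,\sigma_2)(a)^{-1}$, is announced but never performed. You yourself identify this bookkeeping as ``the only genuine content of the lemma'' and the expected main obstacle; deferring it means the identity has not been verified, and nothing in the proposal rules out that the particular cochain $c$ given in the statement fails to work. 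Two preliminary points you should also make explicit: the first factor in the definition of $c$ must be read as the homogeneous $2$-cochain $z$ evaluated at the three arguments $r(a\sigma_0),\,r(a\sigma_1),\,\bar r(a\sigma_0)$ (as printed it shows only two slots), and one should use the $\Delta$-equivariance of $z$ to simplify $S_\Delta^\Gamma(z)(\sigma_0,\sigma_1,\sigma_2)(a)=z(r(a\sigma_0),r(a\sigma_1),r(a\sigma_2))$ and likewise with $\bar r$ throughout; without this simplification the leading factors $r(a)$, $\bar r(a)$ clutter the expansion.

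To close the gap, set $r_i=r(a\sigma_i)$ and $\bar r_i=\bar r(a\sigma_i)$ and apply the homogeneous cocycle relation $z(\delta_1,\delta_2,\delta_3)z(\delta_0,\delta_2,\delta_3)^{-1}z(\delta_0,\delta_1,\delta_3)z(\delta_0,\delta_1,\delta_2)^{-1}=1$ three times, to the $4$-tuples $(r_0,r_1,r_2,\bar r_0)$, $(r_1,r_2,\bar r_0,\bar r_1)$ and $(r_2,\bar r_0,\bar r_1,\bar r_2)$. The first gives $z(r_0,r_1,r_2)=z(r_1,r_2,\bar r_0)\,z(r_0,r_2,\bar r_0)^{-1}\,z(r_0,r_1,\bar r_0)$; substituting the second for $z(r_1,r_2,\bar r_0)$ and then the third for $z(r_2,\bar r_0,\bar r_1)$, and moving $z(\bar r_0,\bar r_1,\bar r_2)$ to the other side (commutativity of $X$ is used here to reorder factors), yields
\[ z(r_0,r_1,r_2)\,z(\bar r_0,\bar r_1,\bar r_2)^{-1} = \bigl[z(r_1,r_2,\bar r_1)z(r_2,\bar r_1,\bar r_2)^{-1}\bigr]\cdot\bigl[z(r_0,r_2,\bar r_0)z(r_2,\bar r_0,\bar r_2)^{-1}\bigr]^{-1}\cdot\bigl[z(r_0,r_1,\bar r_0)z(r_1,\bar r_0,\bar r_1)^{-1}\bigr], \]
whose right-hand side is precisely $c(\sigma_1,\sigma_2)(a)\,c(\sigma_0,\sigma_2)(a)^{-1}\,c(\sigma_0,\sigma_1)(a)=(\partial c)(\sigma_0,\sigma_1,\sigma_2)(a)$, as required. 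So your plan does succeed, but the lemma is only proved once this chain of substitutions is actually written down. Your final paragraph --- continuity and $\Delta$-, $\Gamma$-equivariance of $c$, argued exactly as in Lemma \ref{lem:sh3} --- is correct and completes the argument once the displayed identity is in place.
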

We leave the verification to the reader.

\bibliographystyle{amsalpha}
\bibliography{/Users/tashokaletha/Dropbox/Work/TexMain/bibliography.bib}

\end{document}